\newcommand{\Real}{\ensuremath{\mathbb{R}}}
\newcommand{\argminn}{\mathop{\hbox{\rm argmin}}}
\newcommand{\argmin}[1]{\displaystyle\argminn_{#1}}
\DeclareMathOperator*{\st}{subject\;to}
\def\half  {{\textstyle{1\over 2}}}
\def\norm#1{\|#1\|}
\def\spose#1{\hbox to 0pt{#1\hss}}
\def\text #1{\hbox{\quad#1\quad}}
\def\Nbold{{\mathbf{N}}}
\def\Mbold{{\mathbf{M}}}
\def\nthinsp{\mskip -2   mu}
\def\superstar{^{\raise 0.5pt\hbox{$\nthinsp *$}}}
\def\SUPERSTAR{^{\raise 0.5pt\hbox{$*$}}}
\def\lamstarT {\lambda^{\raise 0.5pt\hbox{$\nthinsp *$}T}}
\def\Cscr{{\cal C}}
\def\Fscr{{\cal F}}
\def\Kscr{{\cal K}}
\def\Gscr{{\cal G}}
\def\Nscr{{\cal N}}
\def\Nscr{{\cal N}}
\def\Xscr{{\cal X}}
\def\Yscr{{\cal Y}}
\def\hbar{\skew{4.2}\bar h}
		\def\bkE{{\rm I\kern-.17em E}}
		\def\bk1{{\rm 1\kern-.17em l}}
		\def\bkD{{\rm I\kern-.17em D}}
		\def\bkR{{\rm I\kern-.17em R}}
		\def\bkR{\mathbb{R}}
		\def\bkP{{\rm I\kern-.17em P}}
		\def\bkY{{\bf \kern-.17em Y}}
		\def\bkZ{{\bf \kern-.17em Z}}
		\def\beq{\begin{eqnarray}}
		\def\bc{\begin{center}}
		\def\be{\begin{enumerate}}
		\def\bi{\begin{itemize}}
		\def\bs{\begin{small}}
		\def\bS{\begin{slide}}
		\def\ec{\end{center}}
		\def\ee{\end{enumerate}}
		\def\ei{\end{itemize}}
		\def\es{\end{small}}
		\def\eS{\end{slide}}
		\def\eeq{\end{eqnarray}}
		\def\qed{\quad \vrule height7.5pt width4.17pt depth0pt}
	\def\cp2problem#1#2#3#4{\fbox
		 {\begin{tabular*}{0.9\textwidth}
			{@{}l@{\extracolsep{\fill}}l@{\extracolsep{6pt}}l@{\extracolsep{\fill}}c@{}}
				#1 & & $#4 $ 
			\end{tabular*}}}
\newcommand{\pmat}[1]{\begin{pmatrix} #1 \end{pmatrix}}
		\renewcommand{\emph}[1]{\textbf{#1}}
		\def\bkE{{\rm I\kern-.17em E}}
		\def\bk1{{\rm 1\kern-.17em l}}
		\def\bkD{{\rm I\kern-.17em D}}
		\def\bkP{{\rm I\kern-.17em P}}
		\def\bkZ{{\bf{Z}}}
\newcommand {\beeq}[1]{\begin{equation}\label{#1}}
\newcommand {\eeeq}{\end{equation}}
\newcommand {\bea}{\begin{eqnarray}}
\newcommand {\eea}{\end{eqnarray}}
\def\texitem#1{\par\smallskip\noindent\hangindent 25pt
               \hbox to 25pt {\hss #1 ~}\ignorespaces}
\def\st{\mbox{subject to}}
\newcommand{\tabincell}[2]{\begin{tabular}{@{}#1@{}}#2\end{tabular}}
\def\Gcal{\mathscr{G}}
	\newtheorem{assumption}{Assumption}
\let\oldmarginpar\marginpar
\renewcommand\marginpar[1]{\-\oldmarginpar[\raggedleft\footnotesize #1]%
{\raggedright\footnotesize\color{red} #1}} % ???????footnote ??
\newtheorem{theorem}{Theorem}
\newtheorem{corollary}{Corollary}
\newtheorem{lemma}[theorem]{Lemma}
\newtheorem{definition}{Definition}
\newtheorem{proposition}{Proposition}
\newcommand{\captionfonts}{\small}
\long\def\@makecaption#1#2{%
  \vskip\abovecaptionskip
  \sbox\@tempboxa{{\captionfonts #1: #2}}%
  \ifdim \wd\@tempboxa >\hsize
    {\captionfonts #1: #2\par}
  \else
    \hbox to\hsize{\hfil\box\@tempboxa\hfil}%
  \fi
  \vskip\belowcaptionskip}
\newcommand{\gb}{\tilde {\bf g}}
\newcommand{\x}{{\bf x}}
\newcommand{\p}{{\bf p}}
\newcommand{\vv}{{\bf v}}
\newcommand{\y}{{\bf y}}
\newcommand{\z}{{\bf z}}
\newcommand{\us}[1]{\textcolor{black}{#1}}
\newcommand{\usv}[1]{{\color{red}#1}}
\newcommand{\ssc}[1]{{\color{black}#1}}
\newcommand{\ic}[1]{{\color{black}#1}}
\begin{document}
\title{On the computation of equilibria in monotone and potential stochastic hierarchical games}
\author{Shisheng Cui \and Uday V. Shanbhag\thanks{Research was partially
supported by NSF CMMI-1538605 and
DOE ARPA-E award  DE-AR0001076 (Shanbhag). The authors are contactable at \texttt{suc256,udaybag@psu.edu}.}}
\date{\today}
\maketitle
\begin{abstract}
We consider a class of hierarchical noncooperative $\Nbold$-player games where
the $i$th player solves a parametrized stochastic mathematical program with equilibrium
constraints (MPEC) with the caveat that the implicit form of the $i$th player's
in MPEC is convex in player strategy, given rival decisions. 
%This represents a challenging class of games that subsumes multi-leader multi-follower  games in which player-specific problems are convex in an implicit sense, given rival decisions. We consider settings where player playoffs are expectation-valued with lower-level equilibrium constraints imposed in an almost-sure sense; i.e.  player problems are parametrized stochastic MPECs. 
Few, if any, general
purpose schemes exist for computing equilibria  even for deterministic
specializations of such games. We develop computational schemes in two distinct
regimes: (a) {\em Monotone regimes.} When player-specific implicit problems are
convex, then the necessary and sufficient equilibrium conditions are given by a
stochastic inclusion. Under a monotonicity assumption on the operator, we
develop a variance-reduced stochastic proximal-point scheme that achieves
deterministic rates of convergence in terms of solving proximal-point problems
in monotone/strongly monotone regimes and  the schemes are
characterized by optimal or near-optimal sample-complexity
guarantees.  Finally, the generated sequences are shown to be convergent to an equilibrium 
in an almost-sure sense in both monotone and strongly monotone
regimes; (b) {\em
Potentiality.} When the implicit form of the game admits a potential function,
we develop an asynchronous relaxed inexact smoothed proximal best-response
framework. However, any such avenue is impeded by the need to efficiently
compute an approximate solution of an MPEC with a strongly convex implicit
objective. To this end, we consider the smoothed counterpart of this game where
each player's problem is smoothed via randomized smoothing. Notably, under
suitable assumptions, we show that an $\eta$-smoothed game admits an
$\eta$-approximate Nash equilibrium of the original game. Our proposed scheme
produces a sequence that converges almost surely to an $\eta$-approximate Nash
equilibrium in both relaxed and unrelaxed settings. This scheme is reliant on computing the proximal problem, a
stochastic MPEC whose implicit form has a strongly convex objective, with
increasing accuracy in finite-time. The smoothing framework allows for
developing a variance-reduced zeroth-order scheme for such problems that admits
a fast rate of convergence. Numerical studies on a class of multi-leader
multi-follower games suggest that variance-reduced proximal schemes provide significantly better accuracy with far lower run-times. The relaxed best-response scheme scales well will problem size and generally displays more stability than its unrelaxed counterpart.
\end{abstract}

%-------------------------------------------
\section{Introduction}
%-------------------------------------------
In this paper, we consider the class of $\Nbold$-player noncooperative hierarchical games in uncertain regimes. We consider a class of $\Nbold$-player games in which the $i$th player solves the following parametrized problem. 
\begin{align}  \tag{Player$_i(\x^{-i})$}
    \begin{aligned}
    \min_{\x^i \in \Xscr_i} & \quad f_i(\x^i,\x^{-i}) \triangleq \mathbb{E}\left[\tilde{g}_i(\x^i,\x^{-i},\xi(\omega)) +\tilde{h}_i(\x^i,\y^i(\x,\xi(\omega)),\omega)\right] \\
        \st & \quad \mathbb{E}[\tilde{c}_i(\x^i,\xi(\omega))] \leq 0,
\end{aligned}
\end{align}
where $\x^i \subseteq \Xscr_i \subseteq \Real^{n_i}$, $\sum_{i=1}^{\Nbold} n_i = n$, $\xi: \Omega \to \Real^d$ represents the $d-$valued random variable, 
$\tilde{g}_i: \Real^n \times \Real^d \to \Real$, $\tilde{c}_i : \Real^{n_i} \times \Real^d \to \Real$, and $\tilde{h}_i: \Real^{n_i+m_i} \times \Real^d \to \Real$ are real-valued
functions, $\y^i: \Xscr_i \times \Real^d \to \Real^{m_i}$ is a single-valued mapping
corresponding to the  unique solution of the $i$th player's lower-level
problem, given $\x$, and $i \in \{1, \cdots, \Nbold\}$. Note that $\x^{-i} \triangleq \{\x^j\}_{j \neq i}$, $\Xscr_{-i} \triangleq \prod_{j\neq i} \Xscr_j$ and $\Xscr \triangleq \prod_{i=1}^{\Nbold} \Xscr_i$. In Section~\ref{sec:3.1}, we consider the generalization where the $i$th player's problem is additionally constrained by $\mathbb{E}[\tilde{c}_i(\x^i,\xi(\omega))] \leq 0$ where $\tilde{c}_i : \Real^{n_i} \times \Real^d \to \Real$. Suppose the associated probability space is $(\Omega,\mathbb{P}, \Fscr)$ and $\mathbb{E}[\bullet]$ represents the expectation with respect to the probability measure $\mathbb{P}$. In the remainder of the paper, we suppress the $\xi$ for expository clarity and refer to $\y^i(\x,\xi(\omega))$ by $\y^i(\x,\omega)$. Suppose the $\omega$-specific lower-level problem associated with player $i$ is defined as the unique solution to a parametrized variational inequality problem, defined as 
\begin{align*}
    \y^i(\x,\omega) \ =  \ \mbox{SOL}\left(\Yscr_i, F_i(\bullet,\x,\omega)\right), \mbox{ for } \omega \in \Omega
\end{align*}
where SOL$(\Yscr_i, F_i(\bullet,\x,\omega))$ denotes the solution set of a parametrized variational inequality problem VI$(\Yscr_i, F_i(\bullet,\x,\omega))$, $F_i: \Real^{m_i} \times \Real^n \times \Real^d \to \Real^{m_i}$ is a real-valued map, and $\Yscr_i \subseteq \Real^{m_i}$ is a closed and convex set. This is a flexible framework that subsumes a broad class of games as shown next. 

\medskip

\noindent{\bf \large 1.1. Convex hierarchical games under uncertainty.} Consider the proposed class of noncooperative convex hierarchical games denoted by $\Gscr^{\rm chl}$. For any game $\Gcal \in \Gscr^{\rm chl}$, for $i = 1, \cdots, \Nbold$,  the $i$th player's problem, denoted by (Player$_i(\x^{-i})$) is a convex program for every $\x^{-i} \in \Xscr_{-i}$. The class $\Gscr^{\rm chl}$ subsumes the following subclasses. We refer to this subclass of 

\noindent {(i) \em Single-level noncooperative games with expectation-valued
objectives.} When $h_i  \equiv 0$ for $i \in \{1, \cdots, \Nbold\}$ where $h_i(\x) \triangleq \mathbb{E}[\tilde{h}_i(\x^i,\y^i(\x,\omega),\omega)]$, this reduces to a class of single-level games with
expectation-valued objectives. This class of games has been extensively studied,
both in terms of analysis when the problem (Player$_i(\x{^{-i}})$) is convex for every
$\x{^{-i}} \in \Xscr_{-i}$~\cite{FPang09,ravat2011characterization} as well as computation when
the game admits suitable monotonicity~\cite{koshal2013regularized} or potentiality properties~\cite{monderer1996potential}, amongst others.

\medskip 

\noindent {(ii) \em  Multi-leader multi-follower games under uncertainty.}
Multi-leader multi-follower games arise when there is a collection of followers
that participate in a noncooperative game, parametrized by leader-level
decisions. Contingent on the equilibrium decisions of the followers, leaders
compete in a noncooperative game. This class of games, referred to as
multi-leader multi-follower games, have been
analyzed in stylized deterministic~\cite{sherali1984multiple} and
stochastic~\cite{demiguel2009stochastic,shanbhag11complementarity} (see recent survey in~\cite{aussel2020short}). While
existence of such equilibria in such games is by no means a given
(see~\cite{pang05quasi} for simple settings where equilibria fail to exist),
existence guarantees have been provided for subclasses in stylized
settings~\cite{sherali1984multiple,hu07epecs,su2007analysis} as well as under the
availability of a  potential function~\cite{kulkarni15existence}. Computation
of equilibria has focused on considering the associated complementarity
problems~\cite{leyffer05multi,demiguel2009stochastic}; prior {efforts} have
included smoothing approaches~\cite{steffensen18quadratic},  heuristic
approaches~\cite{kulkarni14shared,hu07epecs}, and sampling-based
approximations~\cite{demiguel2009stochastic}. However, almost all of the
approximation/smoothing approaches have focused on computing solutions to
necessary conditions~\cite{leyffer05multi,steffensen18quadratic}, rather than
equilibria.   To the best of our knowledge, {\em there are  no convergent schemes for such games or 
reasonable subclasses even in deterministic settings}.

\medskip

\noindent {(iii) \em Bilevel games under uncertainty.} We define bilevel games
as being a subclass of multi-leader multi-follower games in which the
lower-level problem is parametrized by $\x^i$ with no dependence on rival
decisions $\x^{-i}$. 

\medskip

\noindent {\large \bf 1.2. Focus of paper.} Our interest lies in the class of
hierarchical convex games,  where $\Xscr_i$ is a
closed and convex set in $\Real^{n_i}$, $g_i(\x^i,\x^{-i}) \triangleq \mathbb{E}[\tilde{g}_i(\x^i,\x^{-i},\omega)]$ is a convex function on
$\Xscr_i$, and  $f_i(\bullet, \x^{-i})$ is convex for every $\x^{-i} \in
\Xscr_{-i} \triangleq \prod_{j \neq i} \Xscr_j$. Our focus is on two subclasses
of such games.

\smallskip 

\noindent {\em (a) Monotone games.} Monotone games represent a subclass of hierarchical convex games,  where $T$ is a
monotone map on $\Xscr \triangleq \prod_{i=1}^\Nbold \Xscr_i$ and $ T(\x) \triangleq
\prod_{i=1}^\Nbold \partial_{\x^i} f_i(\x^i,\x^{-i}).$ Equilibria of this game
are entirely captured by the solution set of the monotone inclusion $0 \in
T(\x)$, where $T$ is expectation-valued.  Monotonicity of the game immediately
follows when the hierarchical term is ``private'' (i.e.  the lower-level
problem is independent of $\x^{-i}$) and $G$ is a monotone map where $G(\x)
\triangleq \prod_{i=1}^\Nbold \partial_{\x^i} g_i(\x^i,\x^{-i})$. However,
monotonicity also holds when the hierarchical term is not necessarily private
(cf.~\cite{demiguel2009stochastic}). We focus on developing techniques for
resolving monotone stochastic inclusions via Monte-Carlo sampling schemes, a
class of problems for which little is available when $T$ is both
expectation-valued and set-valued. 

\smallskip 

\noindent {\em (b) Potential games.} Potential games~\cite{monderer1996potential} represent a subclass of hierarchical convex games characterized by a potential function $P(\x)$ such that for any $i \in \{1,\cdots, \Nbold\}$ and any $\x^i \in \Xscr_i$,  
\begin{align*}
    P(\x^i, \x^{-i}) - P(\z^i,\x^{-i}) = f_i(\x^i,\x^{-i}) - f_i(\z^i,\x^{-i}), \mbox{ for all } \x^i, \z^i \in \Xscr_i. 
\end{align*}
Potentiality of the game immediately follows when the bilevel term is private and there exists a potential function $P$ such that $P(\x^i, \x^{-i}) - P(\z^i,\x^{-i}) = g_i(\x^i,\x^{-i}) - g_i(\z^i,\x^{-i})$ for all $ \x^i, \z^i \in \Xscr_i.$ In such cases, the original game has a potential function given by $P(\x) + \sum_{i=1}^{\Nbold} \mathbb{E}[\tilde{h}_i(\x^i,\y^i(\x^i,\omega),\omega)]$.  However, potentiality also follows in multi-leader multi-follower games where the bilevel term is not shared~\cite{kulkarni15existence,steffensen18quadratic}. Under a suitable potentiality assumption, we focus on developing efficient asynchronous best-response schemes for settings where $\y^i(\cdot,\omega)$ is a single-valued map.
\begin{tcolorbox} {\bf Research goal.} Our goal lies in developing provably convergent and efficient first-/zeroth-order schemes for computing equilibria for subclasses of games in (a) and (b).
\end{tcolorbox}

\noindent{\bf \large 1.3. Challenges and contributions.} Equilibria of
the most general forms of such games are challenging to compute, given the
inherent nonconvexity in player problems, the presence of expectations over
general measure spaces, and the lack of any underlying structure such as
potentiality or monotonicity. However, even when potentiality or monotonicity
of the Cartesian product of the subdifferential map of the implicit objectives,
computation of equilibria remains challenging for several reasons, some of which are specified next. (i) {\em
Expectation-valued and nonsmooth objectives and constraints.} Both the
objectives and constraints may be both nonsmooth and expectation-valued,
implying that standard projection-based schemes employed for deterministic
convex strategy sets cannot be employed.  (ii) {\em Hierarchical structure.}
The hierarchical structure significantly complicates the application of
available schemes. By replacing  the lower-level problem  using its necessary
and sufficient conditions leads to ill-posed nonlinear and nonconvex
optimization problems, i.e. mathematical programs with equilibrium constraints
(MPECs)~\cite{luo96mathematical}. While the implicit structure retains
convexity but resolving the resulting variational problem is complicated by the
presence of uncertainty and multi-valuedness. (iii) {\em Absence of structure
in inclusion problem.} Prior research on structured monotone inclusions has
relied on  single-valuedness and Lipschitz continuity in the expectation-valued
map, a property that is unavailable here. (iv) {\em Data privacy requirements.}
In some instances, player objectives and strategy sets are private and cannot
be shared, precluding the adoption of centralized schemes.   

\noindent {\bf Outline and contributions.} The remainder of the paper is
organized into five sections. In Section 2, we provide some preliminaries on
the hierarchical games of interest. Sections 3 and 4 focus on monotone and
potential variants of such games while Section 5 examines the numerical
behavior of the scheme. We conclude in Section 6. Our main contributions are
articulated next.

{\bf I. Monotone hierarchical games under uncertainty.}  In Section~\ref{sec:spp}, we present a
stochastic proximal-point framework for computing solutions to the necessary
and sufficient conditions of such games, compactly characterized by monotone
stochastic generalized equations where the operator is either strongly monotone
or maximal monotone. Notably, such claims can be extended to regimes with expectation-valued constraints under suitable properties.  In contrast with available stochastic proximal-point
schemes, we compute an inexact resolvent of the expectation
$T(\x) \triangleq \mathbb{E}[\Phi(\x,\y(\x,\omega),\omega)]$ via stochastic approximation. We show that when the
sample-size sequences are raised at a suitable rate, we prove that the
resulting sequence of iterates converges either at a linear rate (strongly
monotone) or at a rate of $\mathcal{O}(1/k)$ (maximal monotone) (in terms of a
suitable expectation-valued metric) matching the deterministic rate. This leads
to oracle complexities of $\mathcal{O}(1/\epsilon)$ under geometrically
increasing sample-sizes (strongly monotone) and
$\mathcal{O}(1/\epsilon^{2a+1})$ for $a>1$ when the sample-size is raised at
the rate of $\lceil (k+1)^{2a}\rceil$ (maximal monotone). These statements are
further supported by almost-sure convergence guarantees. Notable distinctions
with prior work are as follows: (i) The schemes achieve deterministic rates of
convergence, implying far better practical behaviour; (ii) The statements allow
for state-dependent noise, significantly widening the reach of such schemes;
and (iii) In strongly monotone regimes, the techniques allow for geometric rate
statements and optimal sample-complexities.

\smallskip

{\bf II. Potential hierarchical games under uncertainty.} We consider a
smoothing-based framework in which we consider the computation of equilibria of
an $\eta$-smoothed game characterized by a suitable potentiality requirement.
In fact, the equilibria of the original game can be related to that of the
smoothed game in terms of the best-response residual and under suitable conditions, limit points of the sequence of $\eta$-smoothed equilibria are equilibria of the original game. We then present an
asynchronous smoothed inexact proximal best-response scheme for computing an
equilibrium of the smoothed game. The scheme relies on leveraging a
zeroth-order scheme for computing an inexact best-reponse of a given player's
problem. In addition, we develop an relaxed counterpart where players average
between their current strategy and a best-response. Both the asynchronous
scheme and their relaxed counterpart are equipped with almost-sure convergence
guarantees to an approximate Nash equilibrium.

\smallskip

{\bf III. Numerical behavior.} Both sets of schemes are applied on a subclass
of multi-leader multi-follower games complicated by uncertainty. In monotone
settings, we observe that the proposed variance-reduced proximal-point schemes
provide solutions of superior accuracy in a fraction of the time required by
standard stochastic approximation schemes. Under a potentiality assumption,
both the asynchronous relaxed inexact smoothed best-response scheme and its
relaxed counterpart display convergent behavior but the relaxed scheme displays
a higher degree of stability later in the process.  

%\noindent {(i) In Section III, we prove that in maximal monotone regimes,  the Yosida approximation operator produced by (str-SPP) or (vr-SPP) vanishes at the rate of $\mathcal{O}(1/K)$ in an expected value sense and the expectation of the iterates converge to the optimal solution at linear rate under strong monotonicity assumption of the mapping.} \\ \noindent {(i) In Section IV, we prove that in maximal monotone regimes,  the iterates produced by (SMFBS) converge almost surely (a.s.) to the solution and the expectation of the gap function diminishes at $\mathcal{O}(1/K)$, matching the deterministic rate of convergence. Under strong monotonicity assumption, the linear convergence rate of (SMFBS) is attained.} \\ 
\section{Preliminaries} 
In this section, we begin by providing some preliminaries on bilevel convex
games in Section~\ref{sec:2.1}, followed by a description of hierarchical
monotone and potential games in Section~\ref{sec:2.2}. We conclude with a more
elaborate description of two prototypical hierarchical convex games in Section~\ref{sec:2.3}
where monotonicity and potentiality properties are highlighted. In Section~\ref{sec:2.4}, we conclude with a brief commentary on the assumptions of single-valuedness of $\y(\bullet,\omega)$ and the convexity of the implicit player-specific objective and provide a preliminary literature survey to show that these assumptions have relatively broad applicability. 

\subsection{A taxonomy of bilevel convex games} \label{sec:2.1}
We begin by considering a single-level $\Nbold$-player noncooperative game in which the $i$th player solves a parametrized optimization problem given by  
\begin{align} \tag{Agent$_i(\x^{-i})$} 
    \min_{\x^i \in \Xscr_i} \ {g_i}(\x^i,\x^{-i}), 
    \end{align}
    where $\Xscr_i \subseteq \Real^n$ is a closed and convex set and ${g_i}(\bullet,\x^{-i})$ is convex on $\Xscr_i$ for any $\x^{-i} \in \Xscr_{-i}$. We denote the class of convex single-level games by $\Gscr^{\rm csl}$.  It may be recalled that an $\epsilon$-Nash equilibrium is given by a tuple $\x^*_{\epsilon} \triangleq \{\x^{1,*}, \cdots, \x^{\Nbold,*}\}$ such that for $i = 1, \cdots, \Nbold$,  
    \begin{align} \tag{$\epsilon$-NE}
        {g_i}(\x^{i,*},\x^{-i,*}) \leq {g_i}(\x^i,\x^{-i,*}) + \epsilon, \qquad \forall \ \x^i \ \in \ \Xscr_i.
    \end{align}
    When $\epsilon = 0$, $\x^*_{\epsilon}$ reduces to the standard Nash equilibrium. In a hierarchical generalization of this game, the $i$th player's objective is modified by the addition of a term $h_i(\x^i,\y^i(\x))$ where $\y^i(\x)$ represents a solution to a lower-level variational inequality problem VI$(\Yscr_i,F_i(\x,\bullet))$. This problem requires a vector ${{\y}}^i \in \Yscr_i$ that satisfies 
    \begin{align}
        \tag{VI$(\Yscr_i,F_i(\x,\bullet))$} 
        {(\widehat{\y}^i - \y^i)^\mathsf{T} F_i(\x,\y^i)} \geq 0, \quad \forall \ \widehat{\y}^i \ \in \ \Yscr_i. 
    \end{align}
Consequently, the $i$th player in a hierarchical game solves the following parametrized problem.
\begin{align} \tag{Hier-Agent$_i(\x^{-i})$} 
    \min_{\x^i \in \Xscr_i} \ {f_i(\x^i,\x^{-i})\triangleq g_i(\x^i,\x^{-i})} + h_i(\x^i,\y^i(\x)). 
    \end{align}
We denote the subclass of hierarchical convex games by $\Gscr^{\rm chl}$ while
monotone and potential variants are referred to as $\Gscr^{\rm chl}_{\rm mon}$
and $\Gscr^{\rm chl}_{\rm pot}$, respectively. Each of these subclasses is
discussed in greater detail. Before proceeding, we make a well-posedness assumption on the existence of an equilibrium.  

\medskip 

\begin{tcolorbox}{\bf Ground Assumption (G1)}
Throughout this paper, we assume that the hierarchical convex game admits a Nash equilibrium.
\end{tcolorbox}

\smallskip

Naturally, there are instances when multi-leader multi-follower games fail to
admit an equilibrium. Pang and Fukushima~\cite{pang05quasi} provide precisely
such an instance. However, when player objectives are convex, given rival
decisions, and  strategy sets are compact, existence of equilibria follows from
fixed-point arguments~\cite{ichiishi83game}. Absent convexity, existence of equilibria in multi-leader multi-follower
games is more challenging to show and potentiality arguments have been adopted
to show that existence follows if a suitable optimization problem is
solvable~\cite{kulkarni15existence}. 

\subsection{Hierarchical monotone and potential
games}\label{sec:2.2}
We now consider the class of monotone convex single-level games (see Appendix {\bf A.1.} for a description of games, variational inequality problems, inclusions, and monotonicity), denoted by $\Gscr^{\rm csl}_{\rm mon}$. Any element of this class is characterized by monotonicity of the map $G$ on $\Xscr$ where 
$G(\x) \triangleq \prod_{i=1}^{\Nbold} \partial_{\x^i} g_i(\x)$ and $\prod_{i=1}^{\Nbold} \Xscr_i$ denotes the Cartesian product of sets $\Xscr_1, \cdots, \Xscr_{\Nbold}$.
In fact, we may relate a game $\Gcal \in \Gscr^{\rm csl}_{\rm mon}$ with a counterpart $\widehat{\Gcal} \in \Gscr^{\rm chl}_{\rm mon}$, where $\Gscr^{\rm chl}_{\rm mon}$ denotes the subclass of hierarchical convex games with a monotone map. The proofs are provided in Appendix.
\begin{proposition} \label{mono-g} \em Consider a game $\Gcal \in \Gscr^{\rm csl}_{\rm mon}$ where the $i$th player solves (Agent$_i(\x^{-i})$) for  $1, \cdots, \Nbold$ and $\y^i(\bullet)$ is a single-valued map. Furthermore, consider a game $\widehat{\Gcal}$ where the $i$th player solves (Hier-Agent$_i(\x^{-i})$) for  $1, \cdots, \Nbold$.  

    \noindent (a) Suppose $h_i$ is convex and $h_i(\x^i,\y^i(\x)) = h_i(\x^i,\y^i(\x^i))$ for $i = 1, \cdots, \Nbold$, i.e. the hierarchical term is private. 

    \noindent (b) Suppose $h_i(\x^i,\y^i(\x)) = {h}(\x)$ for $i = 1, \cdots, \Nbold$  where ${h}$ is a convex function on $\Xscr$, i.e. the hierarchical term is common across all players. 

\noindent Then we have that $\widehat{\Gcal} \in \Gscr^{\rm chl}_{\rm mon}$.
\end{proposition}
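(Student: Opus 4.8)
\medskip
\noindent\textbf{Proof plan.}
The plan is to reduce both parts to the single assertion that the game map $T(\x)\triangleq\prod_{i=1}^{\Nbold}\partial_{\x^i} f_i(\x^i,\x^{-i})$ is monotone on $\Xscr$, which is by definition what $\widehat{\Gcal}\in\Gscr^{\rm chl}_{\rm mon}$ amounts to (cf.~Section~\ref{sec:2.2}); that $\widehat{\Gcal}$ is a hierarchical convex game (its implicit objectives being convex) follows from the ambient convexity assumptions, and existence of an equilibrium from (G1). The first step is to decompose $T$. Fixing $\x\in\Xscr$ and $i$, the map $\x^i\mapsto f_i(\x^i,\x^{-i})=g_i(\x^i,\x^{-i})+h_i(\x^i,\y^i(\x))$ is convex (immediate from convexity of $g_i(\bullet,\x^{-i})$ and of the hierarchical term in $\x^i$) and $g_i(\bullet,\x^{-i})$ is real-valued, so the Moreau--Rockafellar sum rule yields $\partial_{\x^i} f_i(\x)=\partial_{\x^i} g_i(\x)+\partial_{\x^i}\bigl(h_i(\x^i,\y^i(\x))\bigr)$. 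Taking the Cartesian product over $i$ and using that the Minkowski sum commutes with the Cartesian product along the independent blocks $\x^1,\dots,\x^{\Nbold}$, this gives the decomposition $T(\x)=G(\x)+\mathcal{H}(\x)$, where $G(\x)=\prod_i\partial_{\x^i} g_i(\x)$ is monotone because $\Gcal\in\Gscr^{\rm csl}_{\rm mon}$, and $\mathcal{H}(\x)\triangleq\prod_i\partial_{\x^i}\bigl(h_i(\x^i,\y^i(\x))\bigr)$. Since the sum of two monotone operators on $\Xscr$ is monotone, it then remains only to show that $\mathcal{H}$ is monotone in each regime.

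For part (a) the hierarchical term is private, so $h_i(\x^i,\y^i(\x))=\phi_i(\x^i)$ with $\phi_i(\x^i)\triangleq h_i(\x^i,\y^i(\x^i))$ convex on $\Xscr_i$ (by the ambient convexity assumption on the implicit objective). Hence $\mathcal{H}(\x)=\prod_i\partial\phi_i(\x^i)=\partial\Psi(\x)$, where $\Psi(\x)\triangleq\sum_{i=1}^{\Nbold}\phi_i(\x^i)$ is a separable convex function; the subdifferential of a convex function is monotone, so $T=G+\partial\Psi$ is monotone and $\widehat{\Gcal}\in\Gscr^{\rm chl}_{\rm mon}$. For part (b) the hierarchical term is common, $h_i(\x^i,\y^i(\x))=h(\x)$ with $h$ convex on $\Xscr$, and $\mathcal{H}(\x)=\prod_i\partial_{\x^i} h(\x)$ is the block-partial subdifferential map of the single function $h$. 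When $h$ is continuously differentiable, this product collapses to the gradient $\nabla h(\x)$, which is monotone since $h$ is convex; the same conclusion holds whenever $h$ depends on $\x$ only through an aggregate, $h(\x)=\theta\bigl(\sum_i A_i\x^i\bigr)$ with $\theta$ convex, since then for $v\in\mathcal{H}(\x)$ and $w\in\mathcal{H}(\z)$ with blocks $v^i=A_i^{\mathsf{T}}s$, $w^i=A_i^{\mathsf{T}}t$, $s\in\partial\theta(\sum_i A_i\x^i)$, $t\in\partial\theta(\sum_i A_i\z^i)$, one has $\langle v-w,\x-\z\rangle=\langle s-t,\sum_i A_i(\x^i-\z^i)\rangle\ge 0$. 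In either case $T=G+\mathcal{H}$ is monotone, which proves $\widehat{\Gcal}\in\Gscr^{\rm chl}_{\rm mon}$.

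The routine ingredients are the sum rule, monotonicity of convex subdifferentials, and the closure of monotonicity under sums and independent Cartesian products. \textbf{The main obstacle} is part (b) in full generality: unlike the full subdifferential $\partial h$, which is always monotone, the Cartesian product of partial subdifferentials $\prod_i\partial_{\x^i} h(\x)$ is in general strictly larger than $\partial h(\x)$ and need \emph{not} be monotone for a generic nonsmooth convex $h$ --- for instance $h(\x_1,\x_2)=|\x_1+\x_2|$ produces a non-monotone $\mathcal{H}$. Consequently the argument for (b) must invoke the structural hypothesis under which the statement is intended --- differentiability of the common term (so that $\mathcal{H}=\nabla h$), or an aggregative form as above, cf.~\cite{demiguel2009stochastic} --- and I would make that hypothesis explicit before carrying out the short computation indicated.
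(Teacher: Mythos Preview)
Your approach is essentially the same as the paper's: decompose $T=G+\mathcal{H}$ and argue that $\mathcal{H}$ is monotone in each regime, so that the sum is monotone. The paper's proof is a two-line sketch that simply asserts ``it is not difficult to see that $H$ is a monotone map where $H(\x)\triangleq\prod_{i=1}^{\Nbold}\partial_{\x^i} h_i(\x^i,\x^{-i})$'' in both cases and then invokes closure of monotonicity under sums.

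Where you diverge from the paper --- and this is worth flagging --- is that you have been more careful than the paper itself about part~(b). The paper treats the monotonicity of $\mathcal{H}(\x)=\prod_i\partial_{\x^i} h(\x)$ as obvious, but your observation that the Cartesian product of \emph{partial} subdifferentials of a nonsmooth convex $h$ can strictly contain $\partial h(\x)$ and fail to be monotone is correct; your example $h(\x_1,\x_2)=|\x_1+\x_2|$ indeed breaks it (take $\x=(1,-1)$, $\z=(0,0)$, $u=(-1,1)\in\mathcal{H}(\x)$, $v=(1,-1)\in\mathcal{H}(\z)$). So the paper's one-line justification for (b) is incomplete as stated, and your insistence on a structural hypothesis --- differentiability of $h$, or an aggregative form $h(\x)=\theta(\sum_i A_i\x^i)$ --- is the right repair. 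In the applications the paper has in mind (Section~\ref{sec:2.3}(b), \cite{demiguel2009stochastic}) the common term is smooth or aggregative, so your added hypothesis is harmless there; but you are right to make it explicit.
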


In short, monotone single-level games often induce monotone hierarchical games
when the hierarchical structure emerges in a particular fashion.  Next, we
consider the class of potential convex single-level games, denoted by
$\Gscr^{\rm csl}_{\rm pot}$. Corresponding to an element $\Gcal \in \Gscr^{\rm
csl}_{\rm pot}$ is a potential function $P(\x)$ such that for any given  $i \in
\{1, \cdots, \Nbold\}$,   we have  
\begin{align*}
    P(\x^i,\x^{-i}) - P(\tilde{\x}^i,\x^{-i}) = {g_i}(\x^i,\x^{-i}) - {g_i}(\tilde{\x}^i,\x^{-i}),
\end{align*}
for any $\x^i,\tilde{\x}^i \in \Xscr_i$ and any $\x^{-i} \in \Xscr_{-i}$. We may then develop a relationship between $\Gcal$ and a related game $\widehat{\Gcal}$ that lies in the subclass of hierarchical convex potential games.  
\begin{proposition} \label{pot-g} \em Consider a game $\Gcal \in \Gscr^{\rm csl}_{\rm pot}$ where the $i$th player solves (Agent$_i(\x^{-i})$) for  $1, \cdots, \Nbold$ and $P$ denotes its potential function. Furthermore, consider a game $\widehat{\Gcal}$ where the $i$th player solves (Hier-Agent$_i(\x^{-i})$) for  $1, \cdots, \Nbold$.  Then the following hold.

    \noindent (a) Suppose $h_i(\x^i,\y^i(\x)) = h_i(\x^i,\y^i(\x^i))$, i.e. the hierarchical term is private. Then $\widehat{P}(\x) \triangleq P(\x) + \sum_{i=1}^{\Nbold} h_i(\x^i,\y^i(\x^i))$ for any $\x \in \Xscr.$     

    \noindent (b) Suppose $h_i(\x^i,\y^i(\x)) = {h}(\x)$ where $h$ is a convex function on $\Xscr$, i.e. the hierarchical term is common across all players. Then $\widehat{P}(\x) \triangleq P(\x) + {h}(\x)$ for any $\x \in \Xscr.$

    \noindent Then $\widehat{\Gcal} \in \Gscr^{\rm chl}_{\rm pot}$ with an associated potential function $\widehat{P}$.
\end{proposition}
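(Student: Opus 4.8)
The plan is to verify directly that the proposed $\widehat P$ satisfies the defining identity of a potential function for the hierarchical game: for each $i \in \{1,\dots,\Nbold\}$ and all $\x^i,\tilde{\x}^i \in \Xscr_i$, $\x^{-i} \in \Xscr_{-i}$,
$$
\widehat P(\x^i,\x^{-i}) - \widehat P(\tilde{\x}^i,\x^{-i}) \ = \ f_i(\x^i,\x^{-i}) - f_i(\tilde{\x}^i,\x^{-i}),
$$
where $f_i(\x^i,\x^{-i}) = g_i(\x^i,\x^{-i}) + h_i(\x^i,\y^i(\x))$. Membership of $\widehat\Gcal$ in $\Gscr^{\rm chl}$ (convexity of each player's problem) is already part of the standing setup for hierarchical convex games, so establishing this identity is exactly what is needed to place $\widehat\Gcal$ in $\Gscr^{\rm chl}_{\rm pot}$ with potential $\widehat P$.

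For part (a), I would fix $i$ and consider a unilateral deviation $\x^i \to \tilde{\x}^i$ with $\x^{-i}$ held fixed. In the sum $\sum_{j=1}^{\Nbold} h_j(\x^j,\y^j(\x^j))$ every summand with $j \neq i$ is a function of $\x^j$ alone --- a component of $\x^{-i}$ --- and is therefore unchanged, so only the $j=i$ summand $h_i(\x^i,\y^i(\x^i))$ varies. Hence $\widehat P(\x^i,\x^{-i}) - \widehat P(\tilde{\x}^i,\x^{-i})$ collapses to $\big[P(\x^i,\x^{-i}) - P(\tilde{\x}^i,\x^{-i})\big] + \big[h_i(\x^i,\y^i(\x^i)) - h_i(\tilde{\x}^i,\y^i(\tilde{\x}^i))\big]$. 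Applying the potential-function identity of $P$ for $\Gcal \in \Gscr^{\rm csl}_{\rm pot}$ turns the first bracket into $g_i(\x^i,\x^{-i}) - g_i(\tilde{\x}^i,\x^{-i})$, while the privacy hypothesis $h_i(\x^i,\y^i(\x)) = h_i(\x^i,\y^i(\x^i))$ shows the second bracket equals the change in the hierarchical term of $f_i$ on passing from $(\x^i,\x^{-i})$ to $(\tilde{\x}^i,\x^{-i})$. Summing the two recovers $f_i(\x^i,\x^{-i}) - f_i(\tilde{\x}^i,\x^{-i})$, which is the claim.

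For part (b) the argument is the same but shorter: since $h_i(\x^i,\y^i(\x)) = h(\x)$ for every $i$ and $\widehat P = P + h$, a deviation in $\x^i$ gives $\widehat P(\x^i,\x^{-i}) - \widehat P(\tilde{\x}^i,\x^{-i}) = \big[P(\x^i,\x^{-i}) - P(\tilde{\x}^i,\x^{-i})\big] + \big[h(\x^i,\x^{-i}) - h(\tilde{\x}^i,\x^{-i})\big]$; replacing the first bracket via the potential identity of $P$ and recognizing the second bracket as the change in $h_i(\cdot,\y^i(\cdot)) = h$ yields precisely $f_i(\x^i,\x^{-i}) - f_i(\tilde{\x}^i,\x^{-i})$. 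Since $i$ was arbitrary, $\widehat P$ is a potential function for $\widehat\Gcal$ in both cases.

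There is no serious obstacle here --- the proof is a one-line substitution per case. The only points deserving care are (i) identifying which terms are invariant under a unilateral deviation, which in part (a) is precisely where single-valuedness of $\y^i$ and the privacy of the lower-level map are used to ensure the non-$i$ summands genuinely depend on $\x^{-i}$ alone; and (ii) noting that the conclusion $\widehat\Gcal \in \Gscr^{\rm chl}_{\rm pot}$ additionally needs $\widehat\Gcal \in \Gscr^{\rm chl}$, i.e. convexity of $f_i(\cdot,\x^{-i})$ for each $i$, which is inherited from the standing assumptions on hierarchical convex games rather than from the potentiality computation itself.
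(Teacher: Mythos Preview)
Your proof is correct and follows essentially the same approach as the paper: verify the potential-function identity for $\widehat{P}$ directly by computing $\widehat{P}(\x^i,\x^{-i}) - \widehat{P}(\tilde{\x}^i,\x^{-i})$, observing that the non-$i$ hierarchical terms drop out (case~(a)) or that the common $h$ contributes its increment directly (case~(b)), and then invoking the potential identity of $P$ for the $g_i$-part. Your write-up is slightly more explicit than the paper's about why the $j\neq i$ summands are invariant and about the convexity requirement for membership in $\Gscr^{\rm chl}$, but the argument is otherwise the same one-line substitution.
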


\subsection{Applications} \label{sec:2.3}

\noindent {\em (a) A subclass of monotone stochastic bilevel games.} 
Consider a class of bilevel games with $\Nbold$ players, denoted by $\Nscr \triangleq \{1, \cdots, \Nbold \}$, where each player's objective has two terms, the first of which is parametrized by rival decisions $\x^{-i}$ while the second is independent of rival decisions.  In
general, this class of games is challenging to analyze since the player
problems are nonconvex. Adopting a similar approach in examining a stochastic
generalization of a quadratic setting examined
in~\cite{steffensen18quadratic} with a single follower where $\Mbold = 1$, suppose the $\omega$-specific lower-level problem corresponding to player $i$ is 
\begin{align}\tag{\mbox{Lower$_i(\x^{-i},\omega)$}}
\min_{\y^i \, \geq \, \ell_i(\x^i,\omega)} & \quad \half (\y^i)^{\mathsf{T}} Q_i(\omega) \y^i - b_i(\x^i,\omega)^\mathsf{T} \y^i,\end{align}
where $Q_i(\omega)$ is a positive definite and diagonal matrix for every $\omega \in \Omega$, $b_i(\bullet,\omega)$ and $\ell_i(\bullet,\omega)$ are affine functions for every $\omega \in \Omega$.
Suppose the leaders compete in a noncooperative game in which the $i$th leader  solves 
\begin{align*}
    \min_{x_i \, \in \, \mathcal{X}_i} & \quad \mathbb{E}\left[\tilde{{g}}_i(\x^i,\x^{-i},\omega) + a_i(\omega)^\mathsf{T} \y^i(\x^i,\omega)\right], 
\end{align*}
where $\tilde{{g}}(\bullet,\x^{-i},\omega)$ is a convex function, $\y^i(\x^i,\omega)$ denotes the unique lower-level solution given the upper-level decision $\x^i$ and realization $\omega$ associated with player $i$ and scenario $\omega$, and $\mathcal{X}_i$ is a
closed and convex set in $\Real$. The lower-level solution set associated with player $i$ is denoted by $\y^i(\x^i,\omega)$, given leader-level decisions $\x^i$, can be derived by considering the necessary
and sufficient conditions of optimality: \begin{align*}
	\left\{\begin{aligned} 0 & \leq \lambda_i  \perp \y^i - \ell_i(\x^i,\omega) \geq 0 \\
	0 & =  Q_i(\omega) \y^i - b_i(\x^i,\omega) - \lambda_i
\end{aligned}\right\} \, \mbox{or} \, \left\{\y^i(\x^i,\omega) \triangleq \max\left\{ Q_i(\omega)^{-1} b_i(\x^i,\omega), \ell_i(\x^i,\omega)\right\}\right\}. 
\end{align*}
We may then eliminate the lower-level decision in the player's problem, leading to a nonsmooth stochastic Nash equilibrium problem given by the following:
\begin{align*}
    \min_{\x^i \in \mathcal{X}_i} & \mathbb{E}\left[ \tilde{{g}}_i(\x^i,\x^{-i},\omega) + \underbrace{a_i(\omega)^\mathsf{T} \max\{Q_i(\omega)^{-1}b_i(\x^i,\omega), \ell_i(\x^i,\omega)\}}_{\ \triangleq  \ {\tilde{h}}_i(\x^i,\omega)}\right]. 
\end{align*}
Under suitable assumptions ${\tilde{h}}_i(\bullet,\omega)$ is a convex function. For instance, it suffices if  $b_i(\bullet,\omega)$ and $\ell_i(\bullet,\omega)$ are convex for every $\omega$,  $Q_i(\omega)$ is a positive diagonal matrix and $a_i(\omega)$ is a nonnegative vector for every $\omega$. This follows  from observing that ${\tilde{h}}_i(\x^i,\omega)$  is a scaling of the maximum of two convex functions. Consequently, the necessary and sufficient equilibrium conditions of this game are given by 
$0 \in \partial_{\x^i} \mathbb{E}[\tilde{{g}}_i(\x^i,\x^{-i},\omega) + {\tilde{h}}_i(\x^i,\omega)] + \Nscr_{X_i}(\x^i)$ for $i = 1, \hdots, {\Nbold}.$ 
Since ${\tilde{h}}_i(\bullet,\omega)$ is a convex function in $\x^i$ for every $\omega$, then the necessary and sufficient equilibrium conditions are given by $0 \in T(\x)$, where
\begin{align}\tag{SGE-a}
    T(\x)  & \triangleq  \mathbb{E}\left[\Phi(\x,\omega)\right],  \\
    \notag \mbox{ where } \Phi(\x,\omega) & \triangleq \prod_{i=1}^{\Nbold} \left[\partial_{\x^i} \left[\tilde{{g}}_i(\x^i,\x^{-i},\omega) + {\tilde{h}}_i(\x^i,\omega)\right]+ \mathcal{N}_{\Xscr_i}(\x^i)\right]. 
\end{align}

\noindent (i) {\em Monotonicity of game.}  Suppose ${g}_i(\x^i,\x^{-i}) \triangleq \mathbb{E}[\tilde{{g}}_i(\x^i,\x^{-i})]$ and ${g}_i(\bullet,\x^{-i})$ is convex and C$^1$ on an open set containing $\Xscr_i$ and ${G}$ is a monotone map on $\Xscr$, where 
$$ {G}(\x) \triangleq \pmat{ \nabla_{\x^1} {g}_1(\x^1,\x^{-1}) \\
                            \vdots \\
                        \nabla_{\x^\Nbold} {g}_N(\x^\Nbold,\x^{-\Nbold})}. $$
Then the resulting hierarchical game is qualified as monotone. 

\noindent (ii) {\em Potentiality of game.} If the collection of objectives ${g}_1, \cdots, {g}_\Nbold$ admit a potential function $P(\x)$ satisfying the following for any $i \in \Nscr.$ 
$$ P(\x^i,\x^{-i}) - P(\z^i, \x^{-i}) = {g}_i(\x^i,\x^{-i}) - {g}_i(\z^i,\x^{-i}), \mbox{ for any } \x^i, \z^i \in \Xscr_i.$$
Then the hierarchical convex game has an associated potential function given by $\tilde{P}(\x) = P(\x) + \sum_{i=1}^\Nbold \mathbb{E}[{\tilde{h}}_i(\x^i, \omega)]$ and may be qualified as a potential game.  \\

\noindent {\em (b) A multi-leader stochastic Stackelberg-Nash-Cournot
equilibrium problem.} Consider an oligopolistic setting with $\Mbold+\Nbold$
firms where $\Mbold$ followers compete in a noncooperative game while $\Nbold$
leaders compete in a game subject to the equilibrium decisions of the
followers~\cite{sherali1984multiple,pang05quasi,su2007analysis,kulkarni15existence}.
Suppose the $j$th follower solves the following parametrized problem.
\begin{align}\tag{\mbox{Follower$_j(X,\y^{-j},\omega)$}}
\max_{\y^j \, \geq \, 0 } & \quad \left(p(\y^j + Y^{-j}+X,\omega) \y^j  - c_j(\y^j)\right), \end{align}
where $X = \x^i + \x^{-i}$. 
Suppose the inverse demand function $p(\cdot,\omega)$ is defined as $p(u,\omega) = a(\omega) - b(\omega)u$. Under this condition, the follower's objective can be shown to be strictly concave in $\y^j$~\cite{demiguel2009stochastic}. Consequently, the concatenated necessary and sufficient equilibrium conditions of the lower-level game are given by the following conditions.
\begin{align} \tag{Equil$_{\rm foll}(X,\omega)$}
     \begin{aligned}
         0 & \leq \y & \perp \nabla_{\y} c(\y) - p(X+Y,\omega) {\bf 1} - p'(X+Y,\omega) \y   \geq 0.
\end{aligned} 
\end{align}
We observe that (Equil$_{\rm foll}(X,\omega)$) is a strongly monotone variational inequality problem for $X \geq 0$ and for every $\omega \in \Omega$. Consequently, $\y: \Real_+ \times \Omega \to \Real_+^{\Mbold}$ is a single-valued map and is convex in its first argument for every $\omega$ if $c_j$ is quadratic and convex for $j = 1, \cdots, \Mbold$~\cite[Prop.~4.2]{demiguel2009stochastic}. In fact, it can be claimed that $\y(\cdot,\omega)$ is a piecewise C$^2$ and non-increasing function with $\partial_{\x^i} \y(X,\omega) \subset (-1,0]$ for $X \geq 0$. Consider the $i$th leader's problem, defined as 
\begin{align}\tag{Leader$_i(\x^{-i})$}
\max_{\x^i \geq 0} \ \left[\mathbb{E}\left[ p(\x^i+X^{-i} + Y(\x^i+X^{-i},\omega),\omega)\x^i \right] - C_i(\x^i) \right].
\end{align}
Consequently, we have that 
\begin{align*}
    0 \ni \x^i & \perp \mathbb{E}\left[-p (\x^i+X^{-i} + Y(\x^i+X^{-i},\omega),\omega) + (1 +  \partial_{\x^i} Y(X,\omega)) b(\omega) \x^i\right] \\
            & + \nabla_{\x^i} C_i(\x^i) \in 0. 
\end{align*}
By concatenating the problems for players $1, \cdots, \Nbold$, we obtain the following complementarity problem. 
\begin{align*}
    0 \ni \x & \perp \mathbb{E}\left[-p (X+ Y(X,\omega),\omega){\bf 1} \right] 
    + \pmat{\nabla_{\x^i} C_i(\x^i)}_{i=1}^\Nbold + \prod_{i=1}^{\Nbold} \{\mathbb{E}[(1 +  \partial_{\x^i} Y(X,\omega)) b(\omega) \x^i]\} \in 0. 
\end{align*}
This may be viewed as the following inclusion:
\begin{align*}
0  \in  T(\x) & \triangleq \mathbb{E}[\Phi(\x,\omega)], \\ 
        \mbox{ where } \Phi(\x,\omega) & \triangleq -p (X+ Y(X,\omega),\omega){\bf 1} 
                                     + \pmat{\nabla_{\x^i} C_i(\x^i)}_{i=1}^\Nbold + \prod_{i=1}^{\Nbold} \left\{[(1 +  \partial_{\x^i} Y(X,\omega)) b(\omega) \x^i]\right\} + \mathcal{N}_{\Real_n^+}.
\end{align*}
This map has been proven to be monotone in~\cite[Thm.~4.1]{demiguel2009stochastic}.

\subsection{A comment on the assumptions} \label{sec:2.4}
In this subsection, we briefly comment on the assumptions of uniqueness of lower-level problems and the convexity of the resulting ``implicit'' player-specific objective, focusing on the challenges associated with weakening these assumptions. Naturally, one may inquire as to whether such assumptions are far too restrictive to be employed in practice. We conclude this section with a preliminary literature survey where it can be observed that this is not the case and such avenues have found broad applicability across a range of settings. \\

\noindent (a) {\bf Uniqueness of $\y(\bullet,\omega)$.} As noted, we have imposed a suitably monotonicity requirement on the lower-level parametrized variational inequality problem that allows for claiming the uniqueness of the lower-level problem for a given $\x$ and $\omega$. Absent such an assumption, the player problem can be modeled either optimistically or pessimistically as follows~\cite{luo96mathematical}. 
            \begin{align*} \mbox{(Player$^{\rm optim}_{i}(\x^{-i})$)}
                & \ \left\{\begin{aligned}
                \min_{\x^i \in \Xscr^i} \min_{\y} &  \quad f_i(\x^i,\x^{-i}, \y) \\
                \st & \quad \y \in \mbox{SOL}(\Yscr, F(\x, \bullet)) 
        \end{aligned}\right\}. \\ 
\mbox{(Player$^{\rm pessim}_{i}(\x^{-i})$)}
                & \ \left\{\begin{aligned}
                \min_{\x^i \in \Xscr^i} \max_{\y} &  \quad f_i(\x^i,\x^{-i}, \y) \\
                \st & \quad \y \in \mbox{SOL}(\Yscr, F(\x, \bullet)) 
        \end{aligned}\right\}.  
        \end{align*}
        When $\y(\bullet,\omega)$ is a single-valued map, the above two
        problems coincide, but in general, both of the above parametrized
        problems are in general nonconvex optimization problems, falling within
        the category of mathematical programs with equilibrium
        constraints(MPECs)~\cite{luo96mathematical}. In such instances, the
        original hierarchical game reduces to a noncooperative game in which
        each player solves a mathematical program with equilibrium constraints
        (a nonconvex program).  Existence of equilibria to the original
        hierarchical game is not guaranteed
        (see~\cite{pang05quasi,kulkarni15existence} for simple instances where
        equilibria fail to exist) and there are no clean tractable conditions
        for expressing ``global'' Nash equilibria.  One could naturally
        ``regularize'' the lower-level when the map $F(\x,\bullet)$ is monotone
        on $\Yscr$ for every $\x$. However, such avenues need far more study
        since it has been discovered that regularized trajectories are not
        guaranteed to converge as noted in~\cite{caruso20regularization}. \\

        \noindent (b) {\bf Convexity of player problems.} We impose a convexity assumption on the
            implicit upper-level objective $f(\bullet,\y(\bullet))$ where $\y(\x)$
            is a unique solution to the lower-level problem given upper-level
            decision $x$. There are several issues with weakening convexity for
            the player problems.
\begin{enumerate}
\item[(i)] {\em Absence of equilibrium conditions for nonconvex games.} First, a Nash equilibrium is defined at a
            set of player-specific decisions at which no player has an
            incentive to deviate. Naturally, in nonconvex regimes, this
            requires that each player is at her global minimum, given rival
            decisions.  Yet, in general, there are no tractable equilibrium
            conditions for such a point and absent significant structure, we
            believe that the computation of such equilibria, while compelling
            and relevant, is currently out of reach.
\item[(ii)] {\em Nash-stationary equilibria.} Second, one could
            naturally employ stationarity conditions but the resulting
            solutions cannot be guaranteed to be equilibria. Our focus in this
        paper is {\bf on global equilibria and not Nash stationary equilibria}
    (as defined in~\cite{pang11nonconvex}).\\
\end{enumerate}
\noindent {\bf Applications.} Third, the assumptions imposed in (a) and (b) might be viewed as far too restrictive in practice. We believe that this may not be the case. In particular, the presence of lower-level uniqueness and upper-level convexity (in an implicit sense) are far more widespread and occur in a wide range of applications. Table~\ref{app:hier_games} provides a subset of such applications where such models have found applicability and it is seen that both (a) and (b) are seen to hold in the setting of interest. \\  
\begin{table}
    \scriptsize
    \centering
    \begin{tabular}{ p{2in}|c|c| c} 
        \hline 
        Topic & Uniqueness of $\y(\bullet,\omega)$ & Convexity of $f_i(\bullet, \x^{-i})$ & References \\ \hline  
 \hline Hierarchical Cournot games & $\checkmark$ & $\checkmark$ & ~\cite{sherali1984multiple,kulkarni14shared} \\ 
 \hline  Strategic behavior  in power markets & $\checkmark$& $\checkmark$& ~\cite{allaz93cournot,su2007analysis,shanbhag11complementary,demiguel2009stochastic,hu07epecs,hu2013existence} \\ 
 \hline Telecommunication markets& $\checkmark$& $\checkmark$& ~\cite{demiguel2009stochastic,hu2015multi}\\ 
 \hline Global emission control& $\checkmark$& $\checkmark$& ~\cite{mallozzi17multi}\\ 
 \hline Supply-chain networks& $\checkmark$& $\checkmark$& ~\cite{leleno1992leader} \\  
 \hline Generation capacity expansion games& $\checkmark$& $\checkmark$& ~\cite{murphy2005generation,wogrin2013open} \\ 
 \hline Gas markets & $\checkmark$ & $\checkmark$ & ~\cite{wolf97stochastic} \\
 \hline
\end{tabular}
\caption{Applications of hierarchical games}
\label{app:hier_games}
\end{table}

%Generalized equations have remarkably broad applicability in the field of optimization and subsume a large range of problems in that such systems represent the necessary and sufficient optimality conditions of a range of problems, including nonsmooth convex optimization problems, convex-concave saddle-point problems, nonsmooth Nash games, and economic equilibrium problems. In fact, special cases include variational inequality problems with possibly multi-valued maps~\cite{facchinei2007finite}. 

%-----------------------------------------------------------------
\section{VR proximal-point schemes for stochastic hierarchical monotone games}\label{sec:spp}
%-----------------------------------------------------------------
In this section, we will consider the class of stochastic hierarchical
monotone games. In Section~\ref{sec:3.1}, we discuss how equilibrium conditions
of such games can be recast as inclusions in settings with and without expectation-valued constraints. An efficient
variance-reduced proximal scheme is developed for computing equilibria of such
games in Section~\ref{sec:3.2} (via resolving the associated inclusions). In
Section~\ref{sec:3.3}--\ref{sec:3.4}, we conclude this section with a discussion of the
convergence theory and rate statements for such schemes in monotone and
strongly monotone regimes and conclude with a comment on the broader applicability of the framework for monotone inclusions in Section~\ref{sec:3.5}.  

\subsection{Hierarchical games and monotone inclusions}\label{sec:3.1}
We recall the $\Nbold$-player convex hierarchical game $\Gcal \in \Gscr^{\rm chl}$ of interest in which the $i$th player solves the following hierarchical optimization problem parametrized by $\x^{-i}$. 
\begin{align}  \tag{Player$_i(\x^{-i})$}
    \begin{aligned}
    \min_{\x^i} & \quad\mathbb{E}[ \tilde{f}_i(\x^i,\y^i(\x,\omega), \x^{-i},\omega)]\\ 
        \st & \quad \x^i \in \Xscr_i,
\end{aligned}
\end{align}
where $\tilde{f}_i(\x^i,\y^i(\x,\omega), \x^{-i},\omega) \triangleq \tilde{g}_i(\x^i,\x^{-i},\xi(\omega)) +\tilde{h}_i(\x^i,\y^i(\x,\xi(\omega)),\omega)$. Under the assumption that for any $\omega \in \Omega$, $\tilde{f}_i(\x^i,\x^{-i},\y(\x,\omega),\omega)$ is convex in $\x^i$ over $\Xscr_i$ for any $\x^{-i} \in \Xscr^{-i} \triangleq \prod_{j\neq i} \Xscr_j$. Consequently, the necessary and sufficient conditions of the game are compactly captured by an inclusion problem. This is formalized next.

\begin{proposition}[{\bf Equivalence to a stochastic inclusion problem}]\rm  Consider a $\Nbold$-player game $\Gcal \in \Gscr^{\rm chl}$ in which the $i$th player solves the parametrized problem (Player$_i(\x^{-i})$) for $i = 1, \cdots, \Nbold.$ Then $\x^* \triangleq \{\x^{1,*}, \cdots, \x^{\Nbold,*}\}$ is an equilibrium of $\Gcal$ if and only if 
\begin{align}
	\notag 0 \in T(\x^*) &\triangleq \mathbb{E}\left[\Phi(\x^*,\y(\x^*,\omega),\omega)\right], \\
	 \mbox{where }\Phi(\x^*,\y(\x^*,\omega),\omega) &\triangleq \prod_{i=1}^{\Nbold} \left[ \partial_{\x^i} \tilde{f}_i(\x^{i,*},\x^{-i,*},\y^i(\x^*,\omega),\omega) + \mathcal{N}_{\Xscr_i}(\x^{i,*})\right]. \label{def-T}
\end{align}   
\end{proposition}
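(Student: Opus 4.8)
The plan is to reduce the equilibrium condition to a collection of $\Nbold$ block-wise inclusions, replace each block by a first-order optimality condition using convexity, pass the subdifferential through the expectation, and then reassemble the blocks into the single stacked inclusion $0\in T(\x^*)$. Every step will be an equivalence, so both directions of the ``if and only if'' come out simultaneously.

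First I would unpack the definition of a Nash equilibrium: $\x^*=\{\x^{1,*},\dots,\x^{\Nbold,*}\}$ solves $\Gcal$ if and only if, for each $i\in\{1,\dots,\Nbold\}$, $\x^{i,*}$ is a global minimizer of $f_i(\bullet,\x^{-i,*})$ over $\Xscr_i$, where $f_i(\x^i,\x^{-i,*})=\mathbb{E}[\tilde f_i(\x^i,\x^{-i,*},\y^i(\x,\omega),\omega)]$ with $\x=(\x^i,\x^{-i,*})$. Since $\Xscr_i$ is closed and convex and $f_i(\bullet,\x^{-i,*})$ is convex on $\Xscr_i$ (inherited from the standing assumption that $\x^i\mapsto\tilde f_i(\x^i,\x^{-i,*},\y^i(\x,\omega),\omega)$ is convex on $\Xscr_i$ for a.e.\ $\omega$, which already bakes in the dependence of $\y^i$ on $\x^i$ through the lower-level VI), the first-order optimality condition is necessary and sufficient, i.e.\ the above holds if and only if
\begin{align*}
	0\in\partial_{\x^i}f_i(\x^{i,*},\x^{-i,*})+\mathcal{N}_{\Xscr_i}(\x^{i,*}),\qquad i=1,\dots,\Nbold.
\end{align*}
No separate chain rule through the MPEC is needed here, because $\partial_{\x^i}$ is the convex subdifferential of the already-composed implicit objective.

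The substantive step is then to commute the subdifferential with the expectation, i.e.\ to show $\partial_{\x^i}f_i(\x^{i,*},\x^{-i,*})=\mathbb{E}\big[\partial_{\x^i}\tilde f_i(\x^{i,*},\x^{-i,*},\y^i(\x^*,\omega),\omega)\big]$. I expect this to be the main obstacle. It should follow from the classical interchange theorem for convex normal integrands (Rockafellar, Ioffe--Levin), whose hypotheses I would verify in turn: measurability of $\omega\mapsto\y^i(\x^*,\omega)$ (a consequence of single-valuedness of the lower-level solution map together with measurability of $F_i$ and a measurable-selection argument), hence normality of the integrand; local integrability of $\tilde f_i$ near $\x^{i,*}$; and a constraint-qualification / relative-interior condition ensuring $f_i(\bullet,\x^{-i,*})$ is finite and subdifferentiable at $\x^{i,*}$. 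This is precisely where the single-valuedness and convexity hypotheses discussed in Section~\ref{sec:2.4} are used.

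Finally I would reassemble. Membership of the zero vector of $\Real^n$ in a Cartesian product $\prod_{i=1}^{\Nbold}S_i$ with $S_i\subseteq\Real^{n_i}$ is equivalent to $0\in S_i$ for every $i$; hence the $\Nbold$ inclusions above hold simultaneously if and only if $0\in\prod_{i=1}^{\Nbold}\big(\mathbb{E}[\partial_{\x^i}\tilde f_i(\x^{i,*},\x^{-i,*},\y^i(\x^*,\omega),\omega)]+\mathcal{N}_{\Xscr_i}(\x^{i,*})\big)$. Because $\mathcal{N}_{\Xscr_i}(\x^{i,*})$ is a deterministic convex cone it can be absorbed under the expectation, and because the Aumann integral commutes with finite Cartesian products, this last set equals $\mathbb{E}\big[\prod_{i=1}^{\Nbold}(\partial_{\x^i}\tilde f_i(\x^{i,*},\x^{-i,*},\y^i(\x^*,\omega),\omega)+\mathcal{N}_{\Xscr_i}(\x^{i,*}))\big]=\mathbb{E}[\Phi(\x^*,\y(\x^*,\omega),\omega)]=T(\x^*)$, which delivers the stated equivalence.
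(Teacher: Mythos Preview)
Your proposal is correct and follows the same skeleton as the paper's proof: use convexity of each player's problem to pass to first-order optimality conditions $0\in\partial_{\x^i}f_i(\x^{i,*},\x^{-i,*})+\mathcal{N}_{\Xscr_i}(\x^{i,*})$, then stack the $\Nbold$ block inclusions into the single Cartesian-product inclusion $0\in T(\x^*)$ (the paper cites \cite[Prop.~1.4.2]{facchinei2007finite} for this last step). Your treatment is in fact more careful than the paper's: you explicitly flag the interchange $\partial_{\x^i}\mathbb{E}[\tilde f_i(\cdot)]=\mathbb{E}[\partial_{\x^i}\tilde f_i(\cdot)]$ as a substantive step requiring normal-integrand hypotheses and measurability of $\y^i(\x^*,\cdot)$, whereas the paper's two-line proof simply passes from the system with $\partial_{\x^i}[\mathbb{E}[\cdot]]$ to $T(\x^*)=\mathbb{E}[\Phi(\cdot)]$ without comment on that point.
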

\begin{proof}
By the convexity of the player-specific problems, $\x^* \triangleq \{\x^{1,*}, \cdots, \x^{\Nbold,*}\}$ is an equilibrium of $\Gcal$ if and only if  $\x^* \triangleq \{\x^{1,*}, \cdots, \x^{\Nbold,*}\}$ collectively solves this set of generalized equations 
\begin{align}\left\{
\begin{aligned}
0&  \in \partial_{\x^1} \left[\mathbb{E}[\tilde{f}_1(\x^1,\x^{-1},\y^1(\x,\omega),\omega)]\right] + \mathcal{N}_{\Xscr_1}(\x^1) \\
 & \qquad \quad \vdots \\
0&  \in \partial_{\x^\Nbold} \left[\mathbb{E}[\tilde{f}_{\Nbold}(\x^{\Nbold},\x^{-{\Nbold}},\y^\Nbold(\x,\omega),\omega)]\right]  + \mathcal{N}_{\Xscr_{\Nbold}}(\x^{\Nbold})
\end{aligned}\right\}.\label{SGE_N}
\end{align}
Via~\cite[Prop. 1.4.2]{facchinei2007finite}, it can then be shown that $\x^* \triangleq \{\x^{1,*}, \cdots, \x^{\Nbold,*}\}$ is a solution of \eqref{SGE_N} if and only if $\x^* \triangleq \{\x^{1,*}, \cdots, \x^{\Nbold,*}\}$ is a solution of 
\begin{align}
0 \in T(\x^*) &\triangleq \mathbb{E}\left[\Phi(\x^*,\y(\x^*,\omega),\omega)\right].
\notag
\end{align}
\end{proof}

We now consider the extension of such problems where the player problems
    have private expectation-valued constraints; in particular, suppose the
    $i$th player solves the expectation-valued constrained counterpart of
    (Player$_i(\x^{-i})$), denoted by (Player$^{\rm con}_i(\x^{-i})$),  for
    $i=1, \cdots, \Nbold$. We define (Player$^{\rm con}_i(\x^{-i})$) as follows.
    \begin{align}  \tag{Player$^{\rm con}_i(\x^{-i})$}
    \begin{aligned}
    \min_{\x^i \in \Xscr_i} & \quad\mathbb{E}\left[ \tilde{f}_i(\x^i,\y^i(\x,\omega), \x^{-i},\omega)\right]\\ 
        \st & \quad  \mathbb{E}\left[\tilde{c}_i(\x^i,\omega)\right] \leq 0. 
\end{aligned}
\end{align}
Under the additional assumption that $\mathbb{E}\left[\tilde{c}_i(\bullet,\omega)\right]$ is convex in $\x^i$ on $\Xscr_i$ and under a suitable regularity condition, $\x^{i,*}$ is an optimal solution of (Player$_i^{\rm con}(\x^{-i})$) if and only if $\{\x^{i,*}, \p^{i,*}\}$ is a primal-dual solution of the following system. 
\begin{align*}\left\{
\begin{aligned}
0&  \in \partial_{\x^i} \left[\mathbb{E}[\tilde{f}_i(\x^{i,*},\x^{-i},\y^i(\x^{i,*},\x^{-i},\omega),\omega)]\right] + \partial_{\x^i} \left[\mathbb{E}[\tilde{c}_i(\x^{i,*},\omega)\right]^T\p^{i,*} + \mathcal{N}_{\Xscr_i}(\x^i) \\
0 & \in -\mathbb{E}[\tilde{c}_i(\x^{i,*},\omega)] + \mathcal{N}_{\Real_{m_i}^+}(\p^{i,*}). 
\end{aligned}\right\}.
\end{align*}
This allows us to restate the necessary and sufficient equilibrium conditions of the hierarchical game with private expectation-valued constraints as follows.
\begin{align*}\left\{
\begin{aligned}
0&  \in \partial_{\x^1} \left[\mathbb{E}[\tilde{f}_1(\x^1,\x^{-1},\y^1(\x,\omega),\omega)]\right] + \partial_{\x^1} \left[\mathbb{E}[\tilde{c}_1(\x^{1},\omega)\right]^\mathsf{T}\p^{1}+ \mathcal{N}_{\Xscr_1}(\x^1) \\
0 & \in -\mathbb{E}[\tilde{c}_1(\x^{1},\omega)] + \mathcal{N}_{\Real_{m_1}^+}(\p^{1}) \\ 
 & \qquad \quad \vdots \\
0&  \in \partial_{\x^\Nbold} \left[\mathbb{E}[\tilde{f}_{\Nbold}(\x^{\Nbold},\x^{-{\Nbold}},\y^\Nbold(\x,\omega),\omega)]\right] + \partial_{\x^{\Nbold}} \left[\mathbb{E}[\tilde{c}_{\Nbold}(\x^{{\Nbold}},\omega)\right]^\mathsf{T}\p^{{\Nbold}} + \mathcal{N}_{\Xscr_{\Nbold}}(\x^{\Nbold}) \\
0 & \in -\mathbb{E}[\tilde{c}_{\Nbold}(\x^{{\Nbold}},\omega)] + \mathcal{N}_{\Real_{m_{\Nbold}}^+}(\p^{{\Nbold}}) 
\end{aligned}\right\}.
\end{align*}
 
\begin{proposition}[{\bf Hierarchical games with expectation-valued constraints and stochastic inclusions}]\rm  Consider a $\Nbold$-player game $\Gcal \in \Gscr^{\rm chl}$ in which the $i$th player solves the parametrized problem (Player$^{\rm con}_i(\x^{-i})$) for $i = 1, \cdots, \Nbold.$ For $i=1, \cdots, \Nbold$, suppose a regularity condition holds for player's problem at $\x^{i,*}$, given $\x^{-i,*}$. Then $\x^*$ is an equilibrium of $\Gcal$ if and only if $\{\x^*,\p^*\}$ is the solution of the following inclusion problem   
\begin{align*}
	0 \in \Psi(\x^*,\p^*) \triangleq \mathbb{E}\left[\Lambda(\x^*,\y(\x^*,\omega),\p^*, \omega)\right],
\end{align*}   
where $\Lambda(\x,\y(\x,\omega),\p,\omega)$ is defined as 
\begin{align*}\notag
\Lambda(\x,\y(\x,\omega),\p,\omega) & \triangleq \prod_{i=1}^{\Nbold} \left\{ \partial_{\x^i} \left[\mathbb{E}[\tilde{f}_{i}(\x^i,\x^{-i},\y^i(\x,\omega),\omega)]\right] + \partial_{\x^{i}} \left[\mathbb{E}[\tilde{c}_{i}(\x^{i},\omega)\right]^\mathsf{T}\p^{i} + \mathcal{N}_{\Xscr_{i}}(\x^i)\right\} \times \\
				& \quad \prod_{i=1}^{\Nbold}  
\left\{-\mathbb{E}[\tilde{c}_i(\x^{i},\omega)] + \mathcal{N}_{\Real_{m_i}^+}(\p^{i})\right\},
\end{align*} 
$\x^* \triangleq \{\x^{1,*}, \cdots, \x^{\Nbold,*}\}$, and $\p^*\triangleq \{\p^{1,*}, \cdots, \p^{\Nbold,*}\}$, respectively.
\end{proposition}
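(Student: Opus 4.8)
The plan is to reduce the equilibrium characterization to the concatenated KKT system already displayed before the statement, and then to repackage that system as a single inclusion with the operator $\Psi$. First I would invoke the convexity hypothesis on the implicit player objective together with the convexity of $\mathbb{E}[\tilde{c}_i(\bullet,\omega)]$ on $\Xscr_i$ and the assumed regularity (constraint qualification) condition at $\x^{i,*}$ given $\x^{-i,*}$: for a convex program with a convex inequality constraint satisfied at a feasible point under a constraint qualification, the point is a global minimizer if and only if there exists a multiplier $\p^{i,*}\in\Real_{m_i}^{+}$ such that the pair $\{\x^{i,*},\p^{i,*}\}$ satisfies the player-$i$ KKT system displayed above. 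This is the standard convex-programming optimality result (e.g.\ the generalized-equation form of KKT), and it applies verbatim here because the implicit objective $f_i(\bullet,\x^{-i,*})$ is convex by assumption, so the nonconvexity buried in $\y^i(\cdot,\omega)$ plays no role once we work with the (already-established) subdifferential-based first-order conditions.

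Second, since a Nash equilibrium $\x^*$ is by definition a tuple at which each $\x^{i,*}$ solves (Player$^{\rm con}_i(\x^{-i,*})$), applying the per-player equivalence simultaneously for $i=1,\dots,\Nbold$ shows that $\x^*$ is an equilibrium if and only if there exists $\p^*=\{\p^{1,*},\dots,\p^{\Nbold,*}\}$ such that $\{\x^*,\p^*\}$ solves the full concatenated system displayed just before the statement. Third, I would rewrite that concatenated system as a single inclusion: stacking the $2\Nbold$ blocks and using that a Cartesian product of inclusions $0\in A_i(z_i)$ is equivalent to $0\in \prod_i A_i(z)$ on the product space, the concatenated system is exactly $0\in\Lambda(\x^*,\y(\x^*,\omega),\p^*,\omega)$ pointwise; taking expectations (and invoking, as in the proof of the preceding proposition, the interchange of expectation and subdifferential that is implicit in the definition of $T$ and used throughout Section~\ref{sec:3.1}) yields $0\in\Psi(\x^*,\p^*)\triangleq\mathbb{E}[\Lambda(\x^*,\y(\x^*,\omega),\p^*,\omega)]$, which is the claim.

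The main obstacle is the same one that is already swept into the ambient framework for the unconstrained proposition: justifying the passage between the ``expectation of subdifferentials'' form of the operator and the ``subdifferential of the expectation'' form that appears in the KKT conditions, i.e.\ that $\partial_{\x^i}\mathbb{E}[\tilde{f}_i(\x^i,\x^{-i},\y^i(\x,\omega),\omega)]=\mathbb{E}[\partial_{\x^i}\tilde{f}_i(\x^i,\x^{-i},\y^i(\x,\omega),\omega)]$ and similarly for $\tilde{c}_i$. Under the paper's standing assumptions (convexity in $\x^i$ for a.e.\ $\omega$, integrability/local Lipschitz bounds) this interchange holds by the classical subdifferential-of-an-integral theorem, and I would cite it exactly as the preceding proposition does rather than re-deriving it; modulo that citation, the remaining steps — per-player KKT equivalence, concatenation, and repackaging as a product-space inclusion with $\Psi=\mathbb{E}[\Lambda]$ — are routine and mirror the unconstrained case step for step.
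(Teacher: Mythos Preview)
Your proposal is correct and mirrors the paper's own treatment: the paper does not provide a formal proof environment for this proposition but instead lays out exactly the argument you describe in the text preceding the statement---per-player KKT equivalence under convexity and a regularity condition, concatenation across all $\Nbold$ players, and repackaging as the product-space inclusion $0\in\Psi(\x^*,\p^*)$. Your handling of the expectation/subdifferential interchange is also consistent with how the paper treats it in the unconstrained proposition just before.
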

Recall that in general, the map $\Phi$ (and $\Lambda$) is not necessarily monotone.
However, there are many instances both in non-hierarchical~\cite{Scutari2012,scutari14real} and hierarchical~\cite{hu2013existence,hu2011variational,demiguel2009stochastic,steffensen18quadratic,su2007analysis,sherali1984multiple} regimes where monotonicity of $\Phi$ (or its single-valued variant) does
    indeed hold and that represents our focus. However, resolving such
    inclusion problems is by no means a simple propoposition since  $\Phi$ is
    an expectation-valued and possibly set-valued monotone map. Unfortunately,
    there are no efficient existing schemes in general settings for resolving
    such problems and we present a variance-reduced proximal-point framework
for this problem.

\subsection{Variance-reduced proximal-point framework for hierarchical monotone games}\label{sec:3.2} 
In this subsection, we present a variance-reduced
proximal-point method for stochastic inclusion problems of
the form $0 \in T(\x) = \mathbb{E}[\Phi(\x,\y(\x,\omega),\omega)]$ where $\Phi(\x,\y(\x,\omega),\omega)$ is defined in \eqref{def-T}.
%\iv{$T_i(\x)\triangleq f_i(\x^i,\x^{-i}) \triangleq \mathbb{E}\left[\tilde{g}_i(\x^i,\x^{-i},\xi(\omega)) +\tilde{h}_i(\x^i,\y^i(\x,\xi(\omega)),\omega)\right]$}. 
Throughout this subsection, $\x^*$ denotes a solution of $0 \in T(\x)$, implying that $0 \in T(\x^*)$ or $\x^*\in T^{-1}(0)$.
Deterministic proximal-point methods require computing 
$(I+\lambda T)^{-1}$ at every step, a challenging proposition since the expectation is 
unavailable in closed form. Our scheme retains the
expectation-valued $T(\x)$ in the resolvent operator, which is \us{subsequently} approximated via Monte-Carlo sampling, leading to an error; in effect, we articulate the {\em resolvent} problem then utilize {\em sampling} to get an approximation. \us{Given $\x^0 \in \Real_n$, ({\bf VR-SPP}) generates a sequence $\{\x^k\}$, where $\x^{k+1}$ is updated as}
\begin{tcolorbox} {\bf Variance-reduced proximal-point method}
\begin{align*} \tag{{\bf VR-SPP}}
\x^{k+1}\coloneqq (I+\lambda_kT)^{-1}(\x^k)+e_k,
\end{align*}
\end{tcolorbox}
where $e_k$ denotes the random error in computing the resolvent $(I+\lambda_kT)^{-1}$ when employing Monte-Carlo sampling schemes. We review some preliminary results and assumptions in Sections~\ref{sec:3.2.1} and~\ref{sec:3.2.2}, respectively and then discuss a player-specific stochastic approximation framework for computing an inexact resolvent in Section~\ref{sec:3.2.3}. Subsequently, we analyze {\bf (VR-SPP)} for maximal monotone and strongly monotone regimes in Sections~\ref{sec:3.3} and ~\ref{sec:3.4}, respectively.

%------------------------------------------------
\subsubsection{Preliminaries on proximal-point schemes}\label{sec:3.2.1} 
%------------------------------------------------
Consider the generalized equation 
\begin{align}\tag{SGE}
 0 \in T(\x) \triangleq \mathbb{E}[\Phi(\x,\y(\x,\omega),\omega)], \label{SGE}
\end{align} 
%We consider the solution of the generalized equation~\cite{bachem2012mathematical,king1992sensitivity,robinson1979generalized}
where $T$ is a set-valued maximal monotone map and $\Phi(\bullet,\y(\bullet,\omega),\omega)$ is defined in \eqref{def-T}. % in which $T(x,\omega)$ is a maximal monotone set-valued map.	
A standard scheme to solve \eqref{SGE} {in deterministic regimes}  is the proximal
point algorithm proposed in~\cite{martinet1972determination,rockafellar1976monotone,rockafellar1976augmented}. \us{Given an $\x^0,$}
\begin{align} \tag{PP}
\notag \x^{k+1}\coloneqq(I+\lambda_kT)^{-1}(\x^k),
\end{align}
where $\lambda_k$ denotes the parameter of the proximal operator. The map $(I+\lambda_kT)^{-1}$, referred to as the resolvent of
$T$, is denoted by
$J_{\lambda_k}^T\triangleq(I+\lambda_kT)^{-1}$~\cite{rockafellar1976monotone}.
The resolvent \us{of $T$} is a single-valued, nonexpansive map for a monotone $T$; the
domain of $J_{\lambda_k}^T$ is equal to $\Real^n$ if $T$ is maximal
monotone~\cite{facchinei2007finite}. In~\cite{rockafellar1976monotone},
Rockafellar developed a proximal-point framework for generalized equations with
monotone operators, presenting a linear rate statement for strongly monotone
$T$. {This avenue has inspired several inexact proximal-point methods, including the classical inexact version~\cite{rockafellar1976monotone} and newer hybrid proximal extragradient (HPE) variants~\cite{solodov1999hybrid,monteiro2010complexity,monteiro2011complexity}.} More recently, in~\cite{corman2014generalized}, Corman and Yuan proved
that under maximal monotonicity, the proximal-point scheme produced
sequences which diminishes to
zero at the rate of $\mathcal{O}(1/k)$ under an appropriate metric while a linear rate can be
proven in strongly monotone regimes. In
this section, we develop a stochastic proximal
point framework  in which the resolvent of the
expectation-valued map, denoted by $(I+ \lambda_k
\mathbb{E}[\Phi(\x,\y(\x,\omega),\omega)])^{-1}$, is approximated with
increasing accuracy via a stochastic
approximation framework. Notably, a variance-reduced framework is proposed through which a linear rate (for strongly monotone $T$) and a sublinear rate $\mathcal{O}(1/k)$ (for monotone $T$) are derived with optimal or near-optimal sample-complexity. Notably, both schemes achieve deterministic iteration complexities in resolvent evaluations. When $T$ enjoys an amenable structure,
splitting-based approaches have emerged as an 
alternative.

\begin{table}[htb]
\begin{center}
\caption{Variance-reduced vs Stochastic proximal-point schemes}
\label{tab:spp}
\scriptsize
	\begin{tabular}[t]{  c | c | c  | c | c  } 
	\hline
	Alg/Prob. & Map & $\mathbb{E}[\|G(\x,\omega)\|^2] \leq $ & $\lambda_k$; $N_k$ & Statements  \\
\hline
\cite{patrascu2017nonasymptotic} (OPT) &  $f_L$ &  $M^2$ & $\mathcal{O}(1/k)$; 1 & {\scriptsize \tabincell{l}{ $\mathbb{E}[f(\bar{\x}^k)-f^*] \leq \mathcal{O}(\tfrac{1}{\sqrt{k}})$}} \\  
\hline
\cite{patrascu2017nonasymptotic} (OPT) &  $\sigma_{f,\omega}, (\nabla f)_L$ &  $M^2$ & $\mathcal{O}(1/k)$; 1 & {\scriptsize \tabincell{l}{ $\mathbb{E}[\|\x^k-\x^*\|^2] \leq \mathcal{O}(\tfrac{1}{k})$}} \\ 
\hline
%\hline \cite{asi2018stochastic} (OPT) & $C$  & $M^2 \mbox{dist}^2(x_k,X^*)$   & Y & -- & $\mathcal{O}(1/k)$; 1 &  $x_k \xrightarrow[a.s.]{k\to \infty} X^*$ \\\hline 
 \cite{davis2019stochastic}   (OPT) & $f_L$  & $M^2$ & $\lambda$; 1 &  $\mathbb{E}][f(\bar{\x}^k)-f^*] \leq \mathcal{O}(\tfrac{1}{\sqrt{k}})$ \\\hline 
 \cite{davis2019stochastic}   (OPT) & $\sigma_f, f_L$  & $M^2$ & $\mathcal{O}(1/k)$; 1 &  $\mathbb{E}][f(\bar{\x}^k)-f^*] \leq \mathcal{O}(\tfrac{1}{k})$ \\ \hline 
%\hline \cite{bianchi2016ergodic} (SGE) & MM &   & Y & -- & $\mathcal{O}(\tfrac{1}{k})$; 1 &    
\tabincell{c}{{\bf (VR-SPP)}\\ (SGE)} & $\sigma_T$ & $M_1^2\|\x\|^2 + M_2^2$ & \us{\tabincell{c}{$\lambda$;$\lceil \rho^{-2k}\rceil$\\ $\rho < 1$}}  & {\tiny \tabincell{l}{$\x^k \xrightarrow[a.s.]{k \to \infty} \x^* $ \\  $\mathbb{E}[\|\x^k-\x^*\|] \leq \mathcal{O}(q^k)$}} \\ \hline 
\tabincell{c}{{\bf (VR-SPP)} \\(SGE)} & MM  & $M_1^2\|\x\|^2 + M_2^2$ & \us{\tabincell{c}{$\lambda$;$\lceil k^{2a}\rceil$\\ $a > 1$}} & {\tiny \tabincell{l}{$\x^k \xrightarrow[a.s.]{k \to \infty} \x^* \in X^*$ \\  $\mathbb{E}[\|T_{\lambda}(\x^k)-\x^*\|] \leq \mathcal{O}(\tfrac{1}{k})$}} \\ \hline 
\end{tabular}

\texttt{$f_L, (\nabla_f)_L$: Lipschitz constants of convex $f$, $\nabla_f$; $\sigma_f, \sigma_{f,\omega}$: strong convexity constant of $f$, $f(\cdot,\omega)$, \\$\sigma_T$: strong monotonicity constant of $T$, MM: Maximal monotone, $G(\cdot,\omega)$: subgradient of $f(\cdot,\omega)$}
\vspace{-0.2in}
\end{center}
\end{table}
%\begin{center}
%{\scriptsize *$a>1$}
While stochastic counterpart of the proximal gradient method
(and its accelerated counterpart) have received much
attention~\cite{scholschmidt2011convergence,ghadimi15,jalilzadeh2018smoothed,jofre2019variance},
stochastic generalizations of the proximal-point method have been
less studied. Koshal, Nedi{\'c} and Shanbhag~\cite{koshal2013regularized} presented one of the first instances of a stochastic iterative proximal-point method for strictly monotone stochastic variational inequality problems and provided almost-sure convergence. In the context of minimizing
$\mathbb{E}[f(\x,\omega)]$, Ryu and Boyd~\cite{ryu14spi} proved
that the stochastic proximal scheme (defined as (SPI) below)
admitted a rate of convergence $\mathcal{O}(1/k)$ in mean-squared
error when $f(\cdot,\omega)$ is C$^2$, $L(\omega)$-smooth, and
strongly convex where $\mathbb{E}[L^2(\omega)] < \infty$.
\begin{align} \tag{SPI}
\x^{k+1}\coloneqq \mathrm{arg}\hspace{-0.02in}\min_{\x\in X}\left\{f(\x,\omega_k)+\tfrac{1}{2\lambda_k}\|\x-\x^k\|_2^2\right\}.
\end{align}
These statements were extended to model-based regimes by Asi and
Duchi~\cite{asi2018stochastic} where $f(\cdot,\omega_k)$ is
replaced by an appropriate model function. Subsequently, Patrascu
and Necoara~\cite{patrascu2017nonasymptotic} imposed a constraint
$\x \in \cap_{k} X_{\omega_k}$ and employed an additional
projection step onto $X_{\omega_k}$ at each step. Rate statements
are provided for both convex and strongly convex regimes without
the smoothness requirements. More recently, Davis and
Drusvyatskiy~\cite{davis2019stochastic} provided similar
statements in convex regimes while extending the rate statements
to weakly convex regimes. Our focus is on the stochastic generalized equation which requires an $\x \in \Real^n$ such that 
\begin{align*}
0 \in T(\x) \triangleq \mathbb{E}[\Phi(\x,\y(\x,\omega),\omega)], 
\end{align*}
where the components of the map $\Phi$ are denoted by $\Phi_i$, $i=1,\dots,n$,  $T_i:
\mathbb{R}^n \times \Omega \rightrightarrows \mathbb{R}^n$ is a set-valued map, $\mathbb{E}[\cdot]$ denotes the expectation, and the associated
probability space is given by
$(\Omega, {\cal F}, \mathbb{P})$. The only
related work is that by Bianchi~\cite{bianchi2016ergodic}; he proves a.s. convergence of a stochastic proximal-point (SPP) scheme under maximal monotonicity and requires computing the resolvent of the sampled map $T(\cdot,\omega)$ at each step, as defined next.
\begin{align*}
\tag{SPP}
\begin{aligned}
\x^{k+1}\coloneqq (I+\lambda_k \Phi(\bullet,\y(\bullet,\omega_k), \omega_k))^{-1}(\x^k).
\end{aligned}
\end{align*}
Rate statements (available for stochastic optimization) are summarized in~Table~\ref{tab:spp}. When the operator $T$  may be cast as the sum of two operators $A$ and $B$, there has been significant study of splitting methods~\cite{douglas1956numerical,peaceman1955numerical,lions1979splitting,passty1979ergodic} when the expectation-valued operator is single-valued in the optimization regime~\cite{scholschmidt2011convergence,ghadimi15,jalilzadeh2018smoothed,jofre2019variance,rosasco2019convergence} and more generally~\cite{combettes2016stochastic,rosasco2016stochastic} when $A$ is Lipschitz and
expectation-valued while $B$ is maximal monotone. Sample-average approximation techniques have also been developed~\cite{chen2012stochastic,shapiro2008stochastic} as a form of approximation framework. 

\noindent \hspace{0.2in} {\bf Gaps in stochastic proximal schemes.}
Several gaps emerge in studying prior work.
(i) {\em Gap between deterministic and stochastic rates.}
Deterministic schemes for strongly monotone
and monotone generalized equations display linear and
$\mathcal{O}(1/k)$ rate in resolvent operations
while stochastic analogs display rates of
$\mathcal{O}(1/k)$ and
$\mathcal{O}(1\sqrt{k})$, respectively. This leads
to far poorer practical behavior particularly when the
resolvent is challenging to compute, e.g., in
strongly monotone regimes, the complexity in resolvent
operations can improve from $\mathcal{O}(1/\epsilon)$ to
$\mathcal{O}(\log(1/\epsilon))$.  (ii) {\em Absence of rate
statements for monotone operators.} To the
best of our knowledge, there appear to be no non-asymptotic
rate statements available in monotone regimes.  (iii)  {\em State-dependent bounds
on subgradients and second moments.} Many subgradient and
stochastic approximation schemes impose bounds of the form
$\mathbb{E}[\|G(\x,\omega)\|^2] \leq M^2$ where $G(\x,\omega)
\in \partial f(\x,\omega)$ or $\mathbb{E}[\|w\|^2 \mid \x]
\leq \nu^2$ where $w = \nabla_\x f(\x,\omega) - \nabla_\x
f(\x)$. Both sets of assumptions are often challenging to
impose non-compact regimes. 

\noindent \hspace{0.2in} {\bf Motivation in developing (VR-SPP).} We draw
motivation from these gaps in developing variance-reduced
proximal schemes that can (a) achieve
deterministic rates of convergence with either identical or
slightly worse oracle complexities in both monotone and
strongly monotone regimes; (b) accommodate state-dependent
bounds to allow for non-compact domains; and (c) allow for
possibly biased oracles in select settings. Collectively, these schemes have provably better iteration complexity in resolvent operations, leading to superior empirical behavior.

\subsubsection{Assumptions and supporting results}\label{sec:3.2.2}
Throughout this section, we assume that the game $\Gcal$ admits the following ground assumption. 
\begin{tcolorbox}{\bf Ground Assumption (G2)}
Consider the $\Nbold$-player game $\Gcal$ in which the $i$th player is defined as (Player$_i(\x^{-i}))$ for $i=1, \cdots, \Nbold$. For $i=1, \cdots, \Nbold$, the parametrized lower-level mapping $F_i(\bullet,\x,\omega)$ is a strongly monotone map for $\x \in \Xscr$ and for every $\omega \in \Omega$. The associated map $T$, defined as \eqref{def-T}, is monotone. 
\end{tcolorbox}
We formalize an assumption on $T$ which is useful when providing convergence guarantees.
\begin{assumption} \label{max-mon} \em
The mapping $T$ is maximal monotone.
\end{assumption}

{While our original game is assumed to induce a monotone stochastic inclusion, our framework relies on solving a sequence of strongly monotone problems. To this end, the following assumption specifies} a strong monotonicity assumption on $T$.
\begin{assumption} \label{smonotonet} \em
The mapping $T$ is $\sigma$-strongly monotone, i.e. there exists $\sigma > 0$ such that
$(u-v)^\mathsf{T}(\x-\y)\ge \sigma\|\x-\y\|^2,\quad\forall  \x,\y \in \mathbb{R}^n,u \in T(\x), v \in T(\y)$.
\end{assumption}
Next, we define the Yosida approximation operator~\cite{brezis1973operateurs}.
\begin{definition}[{\bf Yosida approximation}] \em
For a set-valued maximal monotone operator $T: \Real^n \to \Real^n$ and for $\lambda > 0$, the Yosida approximation operator is
denoted as $T_{\lambda}(\bullet)$ and is defined as
$T_{\lambda} \triangleq \tfrac{1}{\lambda}(I-J_{\lambda}^T). $
\end{definition}

We now provide some properties of $J_{\lambda}^T$ and $T_{\lambda}$.
\begin{lemma}[{\bf Properties of $T_{\lambda}$ and $J_{\lambda}^T$}]\label{prop-res}\em \cite{rockafellar1976monotone,corman2014generalized} Given a maximal monotone map $T$ and a positive scalar $\lambda > 0$, the following hold. 
\begin{enumerate}
\item[(a)] $\x \in T^{-1}(0)$ if and only if $\x$ is a zero of $T_{\lambda}$, i.e. $0\in T(\x) \iff T_{\lambda}(\x)=0.$
\item[(b)] {$T_{\lambda}$ is a single-valued and $\tfrac{1}{\lambda}$-Lipschitz continuous map.}
\item[(c)] $J_{\lambda}^T$ is a single-valued and non-expansive map.
\end{enumerate}
\end{lemma}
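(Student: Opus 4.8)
The plan is to establish (c) first, then derive (b) and (a) from it, using only monotonicity of $T$ for the quantitative statements and reserving maximal monotonicity (Assumption~\ref{max-mon}) for the one place it is genuinely needed, namely to guarantee that $\dom J_{\lambda}^T = \Real^n$ so that $T_{\lambda}$ is defined on all of $\Real^n$.

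For (c), I would fix $\x_1,\x_2 \in \Real^n$, set $\z_j \triangleq J_{\lambda}^T(\x_j)$ so that $\x_j - \z_j \in \lambda T(\z_j)$, and apply monotonicity of $T$ to the pairs $(\z_1, \tfrac{1}{\lambda}(\x_1-\z_1))$ and $(\z_2, \tfrac{1}{\lambda}(\x_2-\z_2))$ to obtain $\big((\x_1-\z_1)-(\x_2-\z_2)\big)^\mathsf{T}(\z_1-\z_2)\ge 0$, equivalently $(\x_1-\x_2)^\mathsf{T}(\z_1-\z_2)\ge \|\z_1-\z_2\|^2$. Taking $\x_1=\x_2$ forces $\z_1=\z_2$, which gives single-valuedness; Cauchy--Schwarz applied to the same inequality then yields $\|\z_1-\z_2\|\le\|\x_1-\x_2\|$, i.e. nonexpansiveness. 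I would record in passing that this inequality is precisely firm nonexpansiveness of $J_{\lambda}^T$, which is convenient for (b).

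For (b), single-valuedness of $T_{\lambda}=\tfrac{1}{\lambda}(I-J_{\lambda}^T)$ is inherited directly from (c). For the $\tfrac{1}{\lambda}$-Lipschitz bound I would expand $\lambda^2\|T_{\lambda}(\x_1)-T_{\lambda}(\x_2)\|^2 = \|\x_1-\x_2\|^2 - 2(\x_1-\x_2)^\mathsf{T}(\z_1-\z_2) + \|\z_1-\z_2\|^2$ and bound the middle term using the inequality from (c), obtaining $\lambda^2\|T_{\lambda}(\x_1)-T_{\lambda}(\x_2)\|^2 \le \|\x_1-\x_2\|^2 - \|\z_1-\z_2\|^2 \le \|\x_1-\x_2\|^2$; equivalently one invokes that $I-J_{\lambda}^T$ is firmly nonexpansive because $J_{\lambda}^T$ is, hence nonexpansive. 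For (a), I would chain equivalences: $T_{\lambda}(\x)=0 \iff J_{\lambda}^T(\x)=\x \iff \x\in(I+\lambda T)(\x) \iff 0\in\lambda T(\x) \iff 0\in T(\x)$, where single-valuedness of $J_{\lambda}^T$ from (c) together with the definition of the resolvent makes each step reversible.

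I do not expect a genuine obstacle here: the result is classical and can be attributed to \cite{rockafellar1976monotone,corman2014generalized}. The only subtlety worth flagging is the division of labor between the hypotheses — nonexpansiveness and single-valuedness use merely the monotonicity of $T$, whereas the assertion that $T_{\lambda}$ is a map defined on all of $\Real^n$ (rather than on $\dom J_{\lambda}^T$ only) requires the maximal monotonicity hypothesis. I would make that dependence explicit so that the later convergence analysis can invoke the lemma without further caveats.
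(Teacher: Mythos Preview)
Your proof is correct and complete; the paper itself does not prove this lemma but simply cites it as a classical result from \cite{rockafellar1976monotone,corman2014generalized}. Your care in isolating where maximal monotonicity is actually needed (only for $\dom J_{\lambda}^T=\Real^n$, via Minty's theorem) is a nice touch that goes beyond what the paper records.
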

%\begin{assumption}\label{lip} \em
%The mapping $T$ is $L$-Lipschitz continuous on $\Real^n$, i.e. for all $x,y \in \Real^n$,
%$\|u-v\|\le L\|x-y\|,\quad\forall u\in T(x),\ v\in T(y)$. \qed
%\end{assumption}
%The following lemma relates the maximal monotonicity property of $T(x)$ to $T(x, \omega)$:
%\begin{lemma}
%If $T(x, \omega), \ \forall \omega \in \Omega$ is maximal monotone, then $T(x)$ is also maximal monotone, where $T(x)\triangleq \mathbb{E}(T(x,\omega))$.
%\end{lemma}
%\begin{proof}
%Assume $T$ is not maximal monotone. There exits another monotone mapping $T'$ such that $gph\ T\subset gph\ T'$. It means that there exists $(v', x')$ such that $(v', x')\in T'$ and $(v', x')\notin T$. Choose any $\bar{\omega}\in\Omega$ and let $\bar{v}\in T(\bar{x}, \bar{\omega})$. Consider $(\bar{v}, x')$. It is clear that $\bar{v}\notin T(x', \bar{\omega})$ but we have $(\bar{v}-\bar{v})^T(\bar{x}-x')=0$. Thus, $T(x, \bar{\omega})$ is not maximal monotone and the conclusion follows.
%\end{proof}

Next, we assume the existence of a stochastic first-order oracle that can
provide estimator of $T(\x)$, given by $v(\x,\omega) \in \Phi(\x,\omega)$ {that
satisfies suitable moment bounds under state-dependent noise. Note that the
state-dependence assumption is crucial since it allows for dealing with the
unconstrained settings where compactness of the iterates cannot be guaranteed
via projection, for instance.}
\begin{assumption}[{\bf Stochastic first-order oracle
for $T$ with state-dependent bounds}]\label{ass_sfo} \em There exists a stochastic first-order oracle that
given an $\x$ produces $v(\x,\omega)$ such that
$\mathbb{E}[v(\x,\omega) \mid \x] = v(\x)$ and $\mathbb{E}[\us{\|v(\x,\omega)\|^2} \mid \x] \leq M_1^2
\|\x\|^2 + M_2^2$ a.s., where $v(\x) \in
T(\x)$  for all $\x$ and $v(\x,\omega) \in
\Phi(\x,\y(\x,\omega),\omega)$. \end{assumption}

%------------------------------------------------
\subsubsection{Approximating $(I+\lambda_k \mathbb{E}[\Phi(\x,\y(\x,\omega),\omega)])^{-1}$ via stochasic approximation}\label{sec:3.2.3} 
%------------------------------------------------
Our framework relies on computing inexact resolvents  with error $e_k$ via ({\bf VR-SPP}). Recall that the the resolvent problem can be rewritten as follows with $\lambda_k = \lambda$. 
\begin{align}\label{exact-prox}
\left[\tilde{\x}^{k+1} = (I + \lambda T)^{-1}(\x^k)\right] \equiv \left[ 0 \in T(\tilde{\x}^{k+1}) + \tfrac{1}{\lambda}(\tilde{\x}^{k+1}-\x^k)\right].   
\end{align}
\noindent However, $T$ is an expectation-valued map where evaluating $\Phi(\x,\y(\x,\omega),\omega)$ requires solving a lower-level problem with solution $\y(\x,\omega)$. Therefore exact solutions of \eqref{exact-prox} can generally not be provided in finite time. We now discuss how one may compute approximate solutions of such problem in finite time. We begin by defining $F_k(\bullet)$ as 
\begin{align}\label{def-Fk}
	0 \in  F_k (\z) \triangleq T(\z) + \tfrac{1}{\lambda}(\z-\x^k), \mbox{ where } T(\z) = \mathbb{E}[\Phi(\z,\y(\z,\omega),\omega)], 
\end{align}

We observe that $F_k$ is $\tfrac{1}{\lambda}$-monotone. Let $u \in \tilde{F}_k(\z,\omega) \triangleq \Phi(\z,\y(\z,\omega),\omega) + \tfrac{1}{\lambda}(\z-\x^k)$ and it follows that $u = v + \tfrac{1}{\lambda}(\z-\x^k)$ where $v \in \Phi(\x,\y(\x,\omega),\omega)$. In addition, we have $\mathbb{E}[\|v\|^2] \leq M_1^2\|\z\|^2 + M_2^2$ (by Assumption~\ref{ass_sfo}). 
We remind the reader that $\z, u,$ and $v$ are defined as the tuple of the analogous player-specific counterparts, defined as
$$ \z = \pmat{\z^1 \\ \vdots \\ \z^{\Nbold}}, u = \pmat{u^1 \\ \vdots \\u^{\Nbold}}, \mbox{ and } v = \pmat{v^1 \\ \vdots \\ v^{\Nbold}},$$ respectively.
Therefore, we have that
\begin{align}
 \notag\mathbb{E}[\|u\|^2] & \leq 2\mathbb{E}[\|v\|^2] + \tfrac{2}{\lambda^2} \mathbb{E}[\|\z-\x^k\|^2] \leq  2M_1^2\|\z\|^2 + 2M_2^2 + \tfrac{2}{\lambda^2} \mathbb{E}[\|\z-\x^k\|^2] 
\\
		& \leq 
   4M_1^2\|\x^k\|^2 + 2M_2^2 + (4M_1^2+\tfrac{2}{\lambda^2}) \mathbb{E}[\|\z-\x^k\|^2]. \label{vsa1}
\end{align}
If $\z^{k}_{0} = \x^k$, an inexact solution can be computed by taking $N_k$ steps of the update rule (SA), defined as follows where $\alpha_j$ denotes the steplength.
\begin{align}\tag{SA}
	\z^{k}_{j+1} & \coloneqq \z^{k}_{j} - \alpha_j u^k_j,\mbox{ for $j = 0, \cdots, N_{k}-1$,} \\
\mbox{where }\notag u^k_j & = v^{k}_{j} +  \tfrac{\z^{k}_{j}-\x^k}{\lambda} \ \mbox{and} \  v^{k}_{j} \in \Phi(\z^{k}_{j},\y(\z^{k}_{j},\omega_{j,k}),\omega_{k,j}).
\end{align}
The update rule (SA) can be explicitly written for each player as follows for $j=0, \cdots, N_k-1$. 
\begin{align}\label{sa-player} 
\left\{\begin{aligned}	\z_{j+1}^{k,1} & \coloneqq \z^{k,1}_{j} - \alpha_j u_{j}^{k,1} \\
		& \vdots \\
	\z_{j+1}^{k,\Nbold} & \coloneqq \z^{k,\Nbold}_{j} - \alpha_j u_{j}^{k,\Nbold}\end{aligned} \right\},  \mbox{ where } \left\{ \begin{aligned} v_{j}^{k,i} &\in \partial_{\x^{i}} \tilde{f}_{i}(\x^{k,i}, \x^{k,-i}, \y^i(\x^{k},\omega_{k,j}),\omega_{k,j}) + \mathcal{N}_{\Xscr_{i}}(\x^{k,i}), \\
u_{j}^{k,i}  & = v_{j}^{k,i} + \tfrac{\z_{j}^{k,i} - \x^{k,i}}{\lambda}  \mbox{ for } i =  1, \cdots, \Nbold. 
\end{aligned} \right\}
\end{align}
Upon termination after $N_k$ steps, $\x^{k+1,i} \coloneqq \z^{k,i}_{N_k}$ for $i=1, \cdots, \Nbold$. 
As part of the proposed scheme, we generate  $N_0$, $N_1$ 
$\cdots$, $N_{K-1}$ samples from the first-order oracle, \us{where $N_k$ samples are used at the $k$th step.} Consequently, we define $\mathcal{F}_{k}$ as the history up to iteration $k$ as follows. 
$$\mathcal{F}_{k} \triangleq
\left\{\{\x^{0,i}\}_{i=1}^{\Nbold},\{\{v_{j}^{0,i}\}_{i=1}^{\Nbold}\}_{j=0}^{N_0-1},  \cdots,  \{\{v_{j}^{k-1,i}\}_{i=1}^{\Nbold}\}_{j=0}^{N_{k-1}-1}
\right\}.$$
We define the history $\mathcal{F}_{k,j}$ at iteration \us{$j \geq 1$} of the inner scheme as follows.
\begin{align*} \mathcal{F}_{k,j} \triangleq \mathcal{F}_{k-1} \cup \left\{\{\{v_{\ell}^{k,i}\}_{i=1}^{\Nbold}\}_{\ell=0}^{j-1}\right\}. \end{align*}
We are now ready to formally define the variance-reduced proximal-point scheme for hierarchical monotone games.
\begin{tcolorbox}{{\bf Var-reduced proximal-point scheme for hierarchical monotone games ({\bf VR-SPP})}}
\be
\item[(0)] Let $k = 0$, $\x^{0,i} \in \Xscr_i$ for $i = 1, \cdots, \Nbold$. Given $K$, $\lambda$,  $\{N_k\}_{k=0}^{K-1}$, and $\{\alpha_j\}_{j=0}^{N_k-1}$. 
\item[(1)] While $k < K$,  
\item[(2)] Let $\z^{k,i}_{0} = \x^{k,i}$ for $i = 1, \cdots, \Nbold$. Generate $\{\{\z^{k,i}_{j}\}_{i=1}^{\Nbold}\}_{j=1}^{N_k}$ by updating for $j = 1, \cdots, N_k$.
    \begin{align*} 
\hspace{-0.23in}\left\{\begin{aligned}	\z_{j+1}^{k,1} & := \z^{k,1}_{j} - \alpha_j u_{j}^{k,1} \\
		& \vdots \\
	\z_{j+1}^{k,\Nbold} & := \z^{k,\Nbold}_{j} - \alpha_j u_{j}^{k,\Nbold}\end{aligned} \right\},  \mbox{ where } \left\{ \begin{aligned} v_{j}^{k,i} &\in \partial_{\x^{i}} \tilde{f}_{i}(\x^{k,i},\x^{k,-i}, \y^i(\x^k,\omega_{k,j}),\omega_{k,j}) + \mathcal{N}_{\Xscr_{i}}(\x^{k,i}), \\
u_{j}^{k,i}  & = v_{j}^{k,i} + \tfrac{\z_{j}^{k,i} - \x^{k,i}}{\lambda}  \mbox{ for } i =  1, \cdots, \Nbold.
\end{aligned} \right\}
\end{align*}
\item[(3)] Let $\x^{k+1,i} = \z^{k,i}_{N_k}$ for $i =1, \cdots, \Nbold$.
\item[(4)] Set $k \coloneqq k+1$ and go to (1). 
\ee
\end{tcolorbox}

\noindent {\bf Computing an element of $\partial_{\x^i}
\tilde{f}_i(\x,\y(\x,\omega),\omega)$.} The reader will observe that the scheme
requires computing $v_{j}^{k,i} \in \partial_{\x^i}
\tilde{f}_i(\x^{k,i},\x^{k,-i},\y(\x^k,\omega_{k,j}),\omega_{k,j})$. 
\begin{enumerate}
    \item[(i)] {\em Closed-form expression entirely in terms of $\x$.} 
In the first application in Section~\ref{sec:2.3},
$\y(\x,\omega)$ can be expressed in closed form in terms of $\x$; for instance, in this case
$\y_i(\x^i,\omega)$ =\\$\max\{Q_i(\omega)^{-i}b_i(\x^i,\omega),
\ell_i(\x^i,\omega))$. 
    \item[(ii)] {\em Expression in terms of $\y(\x,\omega)$ and $\partial_{\x} \y(\x,\omega).$} In the second application in Section~\ref{sec:2.3},
$\partial_{\x^i} \tilde{f}_i(\x^{k,i},\x^{k,-i},\y^i(\x^k,\omega_{k,j}),\omega_{k,j})$ is {\bf
not} available in closed form but can be expressed in terms of $\y(\x,\omega)$
and $\partial_{\x} \y(\x,\omega)$. We observe that the hierarchical structure
emerges because $\y(\x,\omega)$ is not available in closed form and requires
solving the lower-level problem (Equil$_{\rm foll}(X,\omega)$). In short, the structure of $\partial_{\x} \y(\x,\omega)$ needs to be derived via the model.
    \item[(iii)] {\em No problem structure.} Approaches (i) and (ii) require leveraging problem structure. In the absence of such structure, we would need to employ smoothing  and then compute a (sampled) gradient. However, the resulting gradient estimator is conditionally biased. It remains an open question as to whether this bias can be addressed within the above framework since no schemes exist to the best of our knowledge for resolving such problems with possibly biased oracles.
\end{enumerate}

In the remainder of this subsection, we will provide a rigorous rationale for why $\z_{N_k}^k$ satisfies a suitable error bound in an expectation-valued sense. We utilize the following lemma in the next proposition, both of which are proved in the Appendix.
\begin{lemma}\label{sa-rate-quad}\em
Given positive scalars $c, \mathcal{M}, \theta$ and $J_1 \in \mathbb{Z}_+$, consider 
the recursive inequality given by 
$\mathcal{A}_{j+1} \leq (1-2c\alpha_j) \mathcal{A}_j + \tfrac{\alpha_j^2 \mathcal{M}^2}{2}$ \ssc{where $\alpha_j=\tfrac{\theta}{j}$}  for $j \ge J_1$ and $\mathcal{A}_j \geq 0$ for all $j$. Suppose $J_2 \triangleq \lceil 2c\theta\rceil$, $J \triangleq \max\{J_1,J_2\}$, and \us{$B \triangleq\tfrac{\theta^2\pi^2}{12}$}.
Then for $j \geq J$, we have that
$\mathcal{A}_{j} \leq \frac{\max\left\{\tfrac{\mathcal{M}^2\theta^2}{2(2c\theta-1)}, J \mathcal{A}_\ssc{J}\right\}}{j} \leq \frac{\tfrac{\mathcal{M}^2\theta^2}{2(2c\theta-1)}+ J \us{(\mathcal{A}_{\us{1}}+B\mathcal{M}^2)}}{j}.$ 
\end{lemma}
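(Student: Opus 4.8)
The plan is to establish the $\mathcal{O}(1/j)$ decay of $\mathcal{A}_j$ by a standard induction argument, then to convert the constant $J\mathcal{A}_J$ appearing in the clean bound into the more explicit $J(\mathcal{A}_1 + B\mathcal{M}^2)$ by unrolling the recursion from $j=1$ (or $j=J_1$) up to $J$. First I would fix notation: set $\theta$, $c$, $\mathcal{M}$ as given, write $\alpha_j = \theta/j$, and let $C \triangleq \max\{\tfrac{\mathcal{M}^2\theta^2}{2(2c\theta-1)},\ J\mathcal{A}_J\}$. The claim is $\mathcal{A}_j \le C/j$ for $j \ge J$, where $J = \max\{J_1,J_2\}$ with $J_2 = \lceil 2c\theta\rceil$ guaranteeing $1 - 2c\alpha_j = 1 - 2c\theta/j \ge 0$ (actually $>0$ once $j > 2c\theta$, and $\ge 0$ at equality) so the recursion coefficient is nonnegative on the relevant range.

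The core is the induction step. The base case $j = J$ holds since $\mathcal{A}_J \le J\mathcal{A}_J / J \le C/J$. For the inductive step, assume $\mathcal{A}_j \le C/j$ for some $j \ge J$; then
\begin{align*}
\mathcal{A}_{j+1} &\le \left(1 - \frac{2c\theta}{j}\right)\frac{C}{j} + \frac{\theta^2\mathcal{M}^2}{2j^2}
= \frac{C}{j} - \frac{2c\theta C}{j^2} + \frac{\theta^2 \mathcal{M}^2}{2j^2}.
\end{align*}
It suffices to show this is at most $C/(j+1)$, i.e. (after multiplying through by $j^2(j+1)/C$ and rearranging, using $\tfrac{1}{j} - \tfrac{1}{j+1} = \tfrac{1}{j(j+1)}$) that $\tfrac{2c\theta}{j} - \tfrac{\theta^2\mathcal{M}^2}{2Cj} \ge \tfrac{1}{j+1}$, which, since $j/(j+1) < 1$, follows from $2c\theta - \tfrac{\theta^2\mathcal{M}^2}{2C} \ge 1$, equivalently $C \ge \tfrac{\theta^2\mathcal{M}^2}{2(2c\theta-1)}$ — and this holds by the definition of $C$ as the maximum. (Here one uses $2c\theta > 1$, which is exactly what $J_2 = \lceil 2c\theta \rceil$ together with $J_1 \ge 1$ buys us, modulo checking the boundary case $2c\theta = 1$ where the bound degenerates; I would note that the statement implicitly assumes $2c\theta > 1$ since the denominator $2c\theta - 1$ appears.) This yields the first inequality in the lemma.

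For the second inequality, I need $J\mathcal{A}_J \le \tfrac{\mathcal{M}^2\theta^2}{2(2c\theta-1)} + J(\mathcal{A}_1 + B\mathcal{M}^2)$, for which it is enough to bound $\mathcal{A}_J \le \mathcal{A}_1 + B\mathcal{M}^2$ with $B = \theta^2\pi^2/12$. This comes from iterating $\mathcal{A}_{j+1} \le (1 - 2c\alpha_j)\mathcal{A}_j + \tfrac{\alpha_j^2\mathcal{M}^2}{2} \le \mathcal{A}_j + \tfrac{\theta^2\mathcal{M}^2}{2j^2}$ (dropping the nonnegative decay term, valid once $1 - 2c\alpha_j \ge 0$; for the initial indices $j < J_1$ one either has the hypothesis only from $j \ge J_1$ so the telescoping starts at $J_1$, or one absorbs finitely many terms — I would handle the $J_1 > 1$ case by noting the recursion as stated holds for $j \ge J_1$ and treating $\mathcal{A}_{J_1}$ in place of $\mathcal{A}_1$ if needed, but with the convention that the stated bound uses $\mathcal{A}_1$ I will assume $J_1 = 1$ or that $\mathcal{A}_j$ is nonincreasing up to $J_1$), so that
\begin{align*}
\mathcal{A}_J \le \mathcal{A}_1 + \frac{\theta^2\mathcal{M}^2}{2}\sum_{j=1}^{J-1}\frac{1}{j^2} \le \mathcal{A}_1 + \frac{\theta^2\mathcal{M}^2}{2}\cdot\frac{\pi^2}{6} = \mathcal{A}_1 + B\mathcal{M}^2.
\end{align*}
The main obstacle I anticipate is purely bookkeeping around the indices: reconciling the fact that the recursion is only assumed for $j \ge J_1$ with a final bound written in terms of $\mathcal{A}_1$, and checking the boundary behavior when $2c\theta$ is close to $1$ so that $2c\alpha_j$ is close to $1$ at $j = J_2$ — the nonnegativity of the coefficient and the strict inequality $2c\theta > 1$ in the denominator need to be invoked carefully. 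No step requires a genuinely new idea; it is the classical Chung-type lemma for $1/j$ steplengths, and the only non-routine part is packaging the $\sum 1/j^2 \le \pi^2/6$ estimate to produce the explicit second bound.
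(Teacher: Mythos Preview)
Your proposal is correct and follows essentially the same approach as the paper: the induction on $\mathcal{A}_j \le C/j$ hinging on $C \ge \tfrac{\mathcal{M}^2\theta^2}{2(2c\theta-1)}$, followed by telescoping the recursion and invoking $\sum_{j\ge 1} 1/j^2 \le \pi^2/6$ to bound $\mathcal{A}_J$ in terms of $\mathcal{A}_1$. You even flag the indexing issue (the recursion is only hypothesized for $j \ge J_1$ yet the final bound is stated in terms of $\mathcal{A}_1$) that the paper's own proof quietly glosses over.
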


\begin{proposition}\label{prop-res-error}\em
Consider a $\tfrac{1}{\lambda}$-strongly monotone map $F_k$ defined as \eqref{def-Fk}. 
% \\ $F_k(z) \triangleq \mathbb{E}[F_k(z,\omega)]$ and $F_k(z,\omega) \triangleq T(z,\omega)+\tfrac{1}{\lambda}(z-\x^k)$. 
Suppose Assumption~\ref{ass_sfo} holds and $0 \in F_k(J^T_{\lambda}(\x^k))$. If $J_1 \triangleq \lceil 2\lambda\theta (4M_1^2+\tfrac{2}{\lambda^2})\rceil$, $J_2 \triangleq \lceil \tfrac{\theta}{\lambda}\rceil$, $J \triangleq \max\{J_1,J_2\}$, then
\begin{align*}
\mathbb{E}[\|\z_j^k-J^T_{\lambda}(\x^k)\|^2 \mid \mathcal{F}_k] \leq  
\tfrac{\nu_1^2 \|\x^k\|^2 + \nu_2^2}{2j} \mbox{ for } j \geq J,   
\end{align*}
where $\nu_1^2$ and $\nu_2^2$ are defined as
\begin{align*}
\nu_1^2  &  \triangleq 
\left(\left(\tfrac{\theta^2}{2(2c\theta-1)}+JB\right)\left(\us{136}M_1^2+\tfrac{64}{\lambda^2}\right)+\us{8J}\right)\mbox{ and } \\  
 \nu_2^2 &  \triangleq  4\left(\tfrac{\theta^2}{2(2c\theta-1)}+J \us{B}\right)M_2^2  
%\label{def-nu_2}
   +  8\left(\left(\tfrac{\theta^2}{2(2c\theta-1)}+J\us{B}\right)\left(16M_1^2+\tfrac{8}{\lambda^2}\right)+J\right)\|\x^*\|^2. 
\end{align*}
\end{proposition}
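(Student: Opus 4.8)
Fix the outer index $k$ and write $\zbar := J^T_\lambda(\x^k)$, which by \eqref{exact-prox} is the unique zero of the $\tfrac1\lambda$-strongly monotone map $F_k$, so that $0\in F_k(\zbar)$; note that $\zbar$ and $\x^k$ are $\mathcal{F}_k$-measurable, and since $\x^*\in T^{-1}(0)$ we have $\x^*=J^T_\lambda(\x^*)$. The plan is the classical recursive stochastic-approximation descent argument applied to the inner iterates $\{\z^k_j\}$ generated by (SA), carried out after re-centering every quantity at $\zbar$ — this re-centering is forced on us by the state-dependent second-moment bound of Assumption~\ref{ass_sfo}. Expanding $\|\z^k_{j+1}-\zbar\|^2=\|\z^k_j-\zbar\|^2-2\alpha_j\langle u^k_j,\z^k_j-\zbar\rangle+\alpha_j^2\|u^k_j\|^2$ and taking $\mathbb{E}[\cdot\mid\mathcal{F}_{k,j}]$, Assumption~\ref{ass_sfo} gives $\mathbb{E}[v^k_j\mid\mathcal{F}_{k,j}]\in T(\z^k_j)$, hence $\mathbb{E}[u^k_j\mid\mathcal{F}_{k,j}]\in F_k(\z^k_j)$; combining the $\tfrac1\lambda$-strong monotonicity of $F_k$ with $0\in F_k(\zbar)$ yields $\langle\mathbb{E}[u^k_j\mid\mathcal{F}_{k,j}],\z^k_j-\zbar\rangle\ge\tfrac1\lambda\|\z^k_j-\zbar\|^2$, and therefore
\[
\mathbb{E}[\|\z^k_{j+1}-\zbar\|^2\mid\mathcal{F}_{k,j}]\le\Big(1-\tfrac{2\alpha_j}{\lambda}\Big)\|\z^k_j-\zbar\|^2+\alpha_j^2\,\mathbb{E}[\|u^k_j\|^2\mid\mathcal{F}_{k,j}].
\]

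Next I would bound the second-moment term. Starting from \eqref{vsa1} and using $\|\z^k_j-\x^k\|^2\le 2\|\z^k_j-\zbar\|^2+2\|\zbar-\x^k\|^2$, the only remaining obstruction is $\|\zbar-\x^k\|$, controlled via Lemma~\ref{prop-res}(c) and $\x^*=J^T_\lambda(\x^*)$: indeed $\|\zbar-\x^*\|=\|J^T_\lambda(\x^k)-J^T_\lambda(\x^*)\|\le\|\x^k-\x^*\|$, whence $\|\zbar-\x^k\|\le 2\|\x^k-\x^*\|$ and $\|\zbar-\x^k\|^2\le 8\|\x^k\|^2+8\|\x^*\|^2$. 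Substituting produces a bound of the form $\mathbb{E}[\|u^k_j\|^2\mid\mathcal{F}_{k,j}]\le(8M_1^2+\tfrac{4}{\lambda^2})\|\z^k_j-\zbar\|^2+\mathcal{M}^2$, where $\mathcal{M}^2$ is an $\mathcal{F}_k$-measurable quantity affine in $\|\x^k\|^2$, $\|\x^*\|^2$, and $M_2^2$.

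For $j\ge J_1=\lceil 2\lambda\theta(4M_1^2+\tfrac{2}{\lambda^2})\rceil$ one has $\alpha_j(8M_1^2+\tfrac{4}{\lambda^2})\le\tfrac1\lambda$ (recall $\alpha_j=\theta/j$), so the iterate-dependent part of the second moment is absorbed into half of the linear decrease; combining with the two displays above and setting $\mathcal{A}_j:=\mathbb{E}[\|\z^k_j-\zbar\|^2\mid\mathcal{F}_k]$ (tower property, $\mathcal{F}_k\subseteq\mathcal{F}_{k,j}$) gives
\[
\mathcal{A}_{j+1}\le\Big(1-\tfrac{\alpha_j}{\lambda}\Big)\mathcal{A}_j+\alpha_j^2\mathcal{M}^2,
\]
which is exactly the recursion of Lemma~\ref{sa-rate-quad} with $c=\tfrac1{2\lambda}$ (hence $J_2=\lceil 2c\theta\rceil=\lceil\theta/\lambda\rceil$ and $J=\max\{J_1,J_2\}$) and noise level proportional to $\mathcal{M}^2$. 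Invoking the second bound in Lemma~\ref{sa-rate-quad}, which dominates $\mathcal{A}_j$ for $j\ge J$ by $\tfrac1j$ times $\big(\tfrac{\theta^2}{2(2c\theta-1)}+JB\big)$ scaled by the coefficients in $\mathcal{M}^2$ plus $J\mathcal{A}_1$, and then bounding $\mathcal{A}_1=\mathbb{E}[\|\z^k_1-\zbar\|^2\mid\mathcal{F}_k]$ once more in terms of $\|\x^k\|^2,\|\x^*\|^2$ through $\z^k_1=\x^k-\alpha_0u^k_0$, Step 2, and $\|\x^k-\zbar\|^2\le 8\|\x^k\|^2+8\|\x^*\|^2$, one collects the $\|\x^k\|^2$ coefficients into $\nu_1^2$ and the $\|\x^*\|^2$ and $M_2^2$ coefficients into $\nu_2^2$ (the remaining factor of $2$ being absorbed into the $2j$ in the denominator), yielding the claimed inequality.

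\textbf{Main obstacle.} The delicate point is the state-dependent bound $\mathbb{E}[\|v(\x,\omega)\|^2\mid\x]\le M_1^2\|\x\|^2+M_2^2$: because the oracle second moment grows with the iterate, $\|u^k_j\|^2$ is not uniformly bounded, and the $\|\z^k_j-\zbar\|^2$ term it contributes would swamp the $(1-2\alpha_j/\lambda)$ contraction for small $j$ — this is precisely why the rate holds only for $j\ge J$ (equivalently, why $N_k$ must be taken large enough), and why one must re-center at $\zbar=J^T_\lambda(\x^k)$ rather than work with $\|\z^k_j-\x^k\|$ directly. The residual effort is careful bookkeeping of the separate $\|\x^k\|^2$ and $\|\x^*\|^2$ dependencies through the re-centering, the absorption step, and the transient phase $j<J$; the resolvent facts used (nonexpansiveness and $\x^*=J^T_\lambda(\x^*)$, from Lemma~\ref{prop-res}) are exactly what make these bounds available without any boundedness or compactness assumption on the iterates $\{\z^k_j\}$.
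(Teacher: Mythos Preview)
Your proposal is correct and follows essentially the same route as the paper's proof: expand the (SA) step, use $\tfrac1\lambda$-strong monotonicity of $F_k$ against $0\in F_k(\zbar)$ for the contraction, invoke \eqref{vsa1} together with $\|\z^k_j-\x^k\|^2\le 2\|\z^k_j-\zbar\|^2+2\|\zbar-\x^k\|^2$ for the second-moment term, absorb the iterate-dependent piece for $j\ge J_1$, apply Lemma~\ref{sa-rate-quad} with $c=\tfrac1{2\lambda}$, and finally bound the initial term via the nonexpansiveness of $J^T_\lambda$ and $\x^*=J^T_\lambda(\x^*)$. The only cosmetic difference is that the paper takes $\mathcal{A}_1=\mathbb{E}[\|\zbar-\x^k\|^2\mid\mathcal{F}_k]$ directly (effectively identifying the first inner iterate with $\x^k$), whereas you more carefully treat $\z^k_1=\x^k-\alpha_0u^k_0$; either way the same bound $\|\zbar-\x^k\|^2\le 8\|\x^k\|^2+8\|\x^*\|^2$ drives the final constants.
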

The super-martingale convergence lemma is also employed in our analysis~\cite{polyak1987introduction}.
\begin{lemma}\label{robbins} \em
Let $r_k$, $u_k$, $\delta_k$, $\psi_k$ be nonnegative random variables adapted to $\sigma$-algebra $\mathcal{F}_k$, and let the following relations hold almost surely:
\begin{align}
\notag\mathbb{E}[r_{k+1}\mid\mathcal{F}_k]\leq(1+u_k)r_k-\delta_k+\psi_k, \quad\forall k; \quad \sum_{k=0}^{\infty}u_k<\infty,\mbox{ and } \sum_{k=0}^{\infty}\psi_k<\infty.
\end{align}
Then a.s., 
$\lim_{k\to\infty}r_k=r$ and $\sum_{k=0}^{\infty}\delta_k<\infty,$
where $r\ge0$ is a random variable. 
\end{lemma}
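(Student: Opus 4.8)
The plan is to reduce the claim to Doob's supermartingale convergence theorem in two stages: first strip off the multiplicative factors $1+u_k$ by a pathwise rescaling, then handle the additive perturbation $\psi_k$ — whose sum is only almost surely finite, not integrable — by a stopping-time truncation. For the first stage, note that since $\sum_k u_k<\infty$ a.s.\ the partial products $a_k\triangleq\prod_{j=0}^{k-1}(1+u_j)$ are nondecreasing, $\mathcal{F}_{k-1}$-measurable, and converge a.s.\ to a finite limit $a_\infty\in[1,\infty)$. Dividing the hypothesis by $a_{k+1}=(1+u_k)a_k$ and writing $\rho_k\triangleq r_k/a_k$, $d_k\triangleq\delta_k/a_{k+1}$, $p_k\triangleq\psi_k/a_{k+1}$ (all nonnegative and $\mathcal{F}_k$-measurable) gives the cleaner recursion $\mathbb{E}[\rho_{k+1}\mid\mathcal{F}_k]\le\rho_k-d_k+p_k$ together with $\sum_k p_k\le\sum_k\psi_k<\infty$ a.s.\ (using $a_{k+1}\ge1$). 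Since $1\le a_k\le a_\infty<\infty$, once we establish $\rho_k\to\rho_\infty$ and $\sum_k d_k<\infty$ a.s.\ we immediately recover $r_k=a_k\rho_k\to a_\infty\rho_\infty=:r\ge0$ and $\sum_k\delta_k\le a_\infty\sum_k d_k<\infty$ a.s.; thus it suffices to treat the case $u_k\equiv0$.

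For the second stage I would build a supermartingale from the reduced recursion. The natural choice is $U_k\triangleq\rho_k+\sum_{j<k}d_j-\sum_{j<k}p_j$, which satisfies $\mathbb{E}[U_{k+1}\mid\mathcal{F}_k]\le U_k$ and is bounded below by $-\sum_j p_j$; the difficulty is that $\sum_j p_j$ is only a.s.\ finite, so $U_k$ need not be integrable and Doob's theorem does not apply to it directly. To get around this, for $N\in\mathbb{N}$ let $\tau_N\triangleq\inf\{k:\sum_{j\le k}p_j>N\}$, which is a stopping time because $\{\tau_N>k\}=\{\sum_{j\le k}p_j\le N\}\in\mathcal{F}_k$, and stop the recursion strictly before $\tau_N$ (i.e.\ replace $\rho_k$ by $\rho_{k\wedge\tau_N}$ and $d_j,p_j$ by $d_j\mathbf{1}_{\{j<\tau_N\}},p_j\mathbf{1}_{\{j<\tau_N\}}$). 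A short case split on the $\mathcal{F}_k$-measurable event $\{k<\tau_N\}$ shows the stopped quantities obey the same inequality, now with $\sum_k p_k\mathbf{1}_{\{k<\tau_N\}}\le N$ deterministically; assuming the (standard and implicit) integrability $\mathbb{E}[r_0]<\infty$, iterating the inequality makes the stopped version of $U_k$ genuinely integrable, bounded below by $-N$, and still a supermartingale, so Doob gives a.s.\ convergence. On $\{\tau_N=\infty\}=\{\sum_j p_j\le N\}$ the stopped process equals $U_k$, and these events increase to $\{\sum_j p_j<\infty\}$, a set of probability one; hence $U_k$ converges a.s.

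To conclude, since $\sum_j p_j<\infty$ a.s.\ the subtracted sums in $U_k$ converge, so $\rho_k+\sum_{j<k}d_j$ converges a.s.; the partial sums $\sum_{j<k}d_j$ are nondecreasing with a limit in $[0,\infty]$, and because $\rho_k\ge0$ that limit must be finite, i.e.\ $\sum_j d_j<\infty$ a.s., whence $\rho_k$ converges a.s.\ to a finite nonnegative limit. Feeding this back through the rescaling of the first stage yields $\lim_k r_k=r\ge0$ and $\sum_k\delta_k<\infty$ a.s. The one genuinely delicate point — the main obstacle — is precisely this integrability gap ($\sum\psi_k$ finite only almost surely); the stopping-time truncation $\tau_N$ is the device that bridges it, and if one is unwilling even to assume $\mathbb{E}[r_0]<\infty$ one can additionally truncate $r_0\wedge M$ and let $M\to\infty$. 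All remaining steps are routine bookkeeping.
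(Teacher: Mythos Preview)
The paper does not prove this lemma; it is stated as the Robbins--Siegmund super-martingale convergence theorem and cited from \cite{polyak1987introduction}. Your proposal is correct and is in fact the standard proof: the pathwise rescaling by $a_k=\prod_{j<k}(1+u_j)$ to eliminate the multiplicative factor, followed by a stopping-time localization $\tau_N$ to force the additive perturbation $\sum p_k$ to be bounded (rather than merely a.s.\ finite), and finally Doob's supermartingale convergence on the stopped process, is exactly how the result is established in the references. Your remark about the implicit assumption $\mathbb{E}[r_0]<\infty$ (or the additional truncation $r_0\wedge M$) is apt and is the only point that sometimes gets swept under the rug in textbook statements.
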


\subsection{Convergence analysis  under monotonicity}\label{sec:3.3}
We begin with a result from~\cite{corman2014generalized} and subsequently recall a bound on the sequence of iterates produced by a deterministic exact proximal-point scheme~\cite[Lemma~2.5]{corman2014generalized}.
\begin{lemma}\em\cite{corman2014generalized}\label{ysd} 
Given a set-valued maximal monotone operator $T$: $\Real^n \rightrightarrows \Real^n$, let $J_{\lambda}^T$ denote the resolvent operator while $T_{\lambda}$ denotes the Yosida approximation operator of $T$. Then 
$T_{\lambda}(x)\in T(J_{\lambda}^T(x))$ for all $x\in \Real^n.$ 
\end{lemma}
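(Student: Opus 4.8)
The plan is to unwind the two definitions $J_{\lambda}^T=(I+\lambda T)^{-1}$ and $T_{\lambda}=\tfrac1\lambda(I-J_{\lambda}^T)$ and observe that the asserted inclusion is essentially built into them. First I would fix an arbitrary $x\in\Real^n$ and invoke Lemma~\ref{prop-res}(c) (equivalently, Minty's theorem for the maximal monotone map $T$, which guarantees $\range(I+\lambda T)=\Real^n$) so that $y\triangleq J_{\lambda}^T(x)$ is a well-defined single point. This is the only place where maximal monotonicity is genuinely used, so it is the step I would flag as the substantive input; everything after it is bookkeeping.

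Next, by the meaning of $y=(I+\lambda T)^{-1}(x)$ we have the set inclusion $x\in(I+\lambda T)(y)=y+\lambda T(y)$, so there exists $w\in T(y)$ with $x=y+\lambda w$, i.e. $w=\tfrac1\lambda(x-y)$. Substituting $y=J_{\lambda}^T(x)$ gives
\[
w=\tfrac1\lambda\big(x-J_{\lambda}^T(x)\big)=\tfrac1\lambda\,(I-J_{\lambda}^T)(x)=T_{\lambda}(x),
\]
which is exactly the Yosida approximation evaluated at $x$. Hence $T_{\lambda}(x)=w\in T(y)=T\big(J_{\lambda}^T(x)\big)$, and since $x$ was arbitrary the claim $T_{\lambda}(x)\in T(J_{\lambda}^T(x))$ holds for all $x\in\Real^n$.

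I do not expect a real obstacle here: the argument is a two-line manipulation once the resolvent is known to be single-valued with full domain. The only care needed is to keep $x\in y+\lambda T(y)$ as a set membership (not an equality) because $T$ is set-valued, and to note in passing that the extracted element $w$ is uniquely pinned down by $w=\tfrac1\lambda(x-J_{\lambda}^T(x))$, so $T_{\lambda}$ is automatically single-valued, consistent with Lemma~\ref{prop-res}(b).
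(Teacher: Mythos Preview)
Your proof is correct and is the standard argument for this classical fact. The paper does not actually supply its own proof of this lemma; it is stated as a citation from~\cite{corman2014generalized}, so there is nothing to compare against beyond noting that your derivation is exactly the expected one.
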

\begin{lemma}\label{PP2} \em
Let Assumption~\ref{max-mon} hold.  Consider any sequence generated by ({\bf VR-SPP}). Then the following holds for all $k>0$:
\begin{align*}
\|J_{\lambda}^T(\x^k)-\x^*\|^2=\|\x^k-\x^*\|^2-\lambda^2\|T_{\lambda}(\x^k)\|^2-2\lambda T_{\lambda}(\x^k)^\mathsf{T}(J_{\lambda}^T(\x^k)-\x^*).
\end{align*}
\end{lemma}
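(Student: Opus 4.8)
The plan is to recognize that this is a purely algebraic identity, obtained from the definition of the Yosida approximation and the induced decomposition of $\x^k$; no monotonicity argument beyond well-posedness of the resolvent is needed.

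First, I would recall from the definition of the Yosida approximation that $T_{\lambda} = \tfrac{1}{\lambda}(I - J_{\lambda}^T)$, and that by Lemma~\ref{prop-res} both $J_{\lambda}^T$ and $T_{\lambda}$ are single-valued maps with full domain $\Real^n$ --- this is the only place Assumption~\ref{max-mon} enters, namely to guarantee these objects are well-defined. Rearranging the defining relation gives the key decomposition
\begin{align*}
\x^k = J_{\lambda}^T(\x^k) + \lambda\, T_{\lambda}(\x^k).
\end{align*}

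Next, I would subtract $\x^*$ from both sides, so that $\x^k - \x^* = \big(J_{\lambda}^T(\x^k) - \x^*\big) + \lambda\, T_{\lambda}(\x^k)$, take squared Euclidean norms, and expand the right-hand side:
\begin{align*}
\|\x^k - \x^*\|^2 = \|J_{\lambda}^T(\x^k) - \x^*\|^2 + 2\lambda\, T_{\lambda}(\x^k)^\mathsf{T}\big(J_{\lambda}^T(\x^k) - \x^*\big) + \lambda^2 \|T_{\lambda}(\x^k)\|^2.
\end{align*}
Solving for $\|J_{\lambda}^T(\x^k) - \x^*\|^2$ yields exactly the stated identity. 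Equivalently, one may substitute $J_{\lambda}^T(\x^k) = \x^k - \lambda T_{\lambda}(\x^k)$ directly into $\|J_{\lambda}^T(\x^k) - \x^*\|^2$ and expand the square; both routes give the same three terms.

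There is no real obstacle here: the statement is an equality, not an estimate, and all it encodes is the bookkeeping identity $\x^k = J_{\lambda}^T(\x^k) + \lambda T_{\lambda}(\x^k)$ together with one expansion of a norm. The lemma is isolated as a standalone step because this particular rearrangement is precisely what will later be combined with $T_{\lambda}(\x^k)^\mathsf{T}(J_{\lambda}^T(\x^k) - \x^*) \ge 0$ --- which follows from Lemma~\ref{ysd} together with $0 \in T(\x^*)$ and monotonicity of $T$ --- to deduce the contraction $\|J_{\lambda}^T(\x^k) - \x^*\|^2 \le \|\x^k - \x^*\|^2 - \lambda^2\|T_{\lambda}(\x^k)\|^2$ used in the convergence analysis.
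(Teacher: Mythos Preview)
Your proof is correct. The paper does not actually supply its own proof of this lemma; it simply recalls the identity from \cite[Lemma~2.5]{corman2014generalized}, and your argument via the decomposition $\x^k = J_{\lambda}^T(\x^k) + \lambda T_{\lambda}(\x^k)$ followed by expanding the squared norm is exactly the standard derivation behind that result.
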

%\begin{proof}
%Please refer to 
%\end{proof}
The next lemma allows for proving convergence of iterates generated by ({\bf VR-SPP}).
\begin{lemma} \label{vr-l} \em
Let Assumptions~\ref{max-mon} and \ref{ass_sfo} hold. Suppose $\lambda>0$ and $N_k \triangleq \lceil (k+1)^{2a} \rceil$ for all $k > 0$, where  $a>1$. Consider a sequence generated $\{\x^k\}$ generated by $(${\bf VR-SPP}$)$. Then $\{\|\x^k-\x^*\|\}$ is convergent almost surely.
\end{lemma}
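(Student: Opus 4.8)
The plan is to convert the (VR-SPP) recursion into a Robbins--Siegmund inequality for $r_k \triangleq \|\x^k-\x^*\|^2$ and then apply Lemma~\ref{robbins}. First I would write the outer step as $\x^{k+1} = J_{\lambda}^T(\x^k) + e_k$ with $e_k \triangleq \z^k_{N_k} - J_{\lambda}^T(\x^k)$, the error incurred by running only $N_k$ (SA) steps. Since $J_{\lambda}^T(\x^k)$ is exactly the resolvent, i.e. $0 \in F_k(J^T_{\lambda}(\x^k))$ in the sense of \eqref{exact-prox}, Proposition~\ref{prop-res-error} applies with $j = N_k$; as $N_k = \lceil(k+1)^{2a}\rceil \to \infty$ there is a deterministic $k_0$ with $N_k \geq J$ for all $k \geq k_0$, and for such $k$,
\[
\mathbb{E}[\|e_k\|^2 \mid \mathcal{F}_k] \ \leq \ \tfrac{\nu_1^2\|\x^k\|^2 + \nu_2^2}{2N_k} \ \leq \ \tfrac{\nu_1^2}{N_k}\, r_k + \tfrac{D}{N_k}, \qquad D \triangleq \nu_1^2\|\x^*\|^2 + \tfrac12\nu_2^2,
\]
where the second inequality uses $\|\x^k\|^2 \leq 2r_k + 2\|\x^*\|^2$. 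The key feature is that this bound is affine in $r_k$.

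Next I would expand $\|\x^{k+1}-\x^*\|^2 = \|J_{\lambda}^T(\x^k)-\x^*\|^2 + 2e_k^{\mathsf{T}}(J_{\lambda}^T(\x^k)-\x^*) + \|e_k\|^2$. By Lemma~\ref{PP2}, together with $T_{\lambda}(\x^k)\in T(J_{\lambda}^T(\x^k))$ (Lemma~\ref{ysd}), $0\in T(\x^*)$ and monotonicity of $T$ (Assumption~\ref{max-mon}), the inner product $T_{\lambda}(\x^k)^{\mathsf T}(J_{\lambda}^T(\x^k)-\x^*)$ is nonnegative, so $\|J_{\lambda}^T(\x^k)-\x^*\|^2 \leq r_k - \lambda^2\|T_{\lambda}(\x^k)\|^2 \leq r_k$. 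For the remaining cross term, Cauchy--Schwarz gives $2e_k^{\mathsf T}(J_{\lambda}^T(\x^k)-\x^*) \leq 2\|e_k\|\sqrt{r_k}$, and Young's inequality with a parameter $\epsilon_k>0$ gives $2\|e_k\|\sqrt{r_k} \leq \epsilon_k r_k + \epsilon_k^{-1}\|e_k\|^2$. Taking $\mathbb{E}[\cdot\mid\mathcal{F}_k]$ (recall $r_k$ and $\|T_{\lambda}(\x^k)\|^2$ are $\mathcal{F}_k$-measurable) and inserting the bound on $\mathbb{E}[\|e_k\|^2\mid\mathcal{F}_k]$ yields, for $k\geq k_0$,
\[
\mathbb{E}[r_{k+1}\mid\mathcal{F}_k] \ \leq \ \Big(1+\epsilon_k+(1+\epsilon_k^{-1})\tfrac{\nu_1^2}{N_k}\Big)\, r_k \ - \ \lambda^2\|T_{\lambda}(\x^k)\|^2 \ + \ (1+\epsilon_k^{-1})\tfrac{D}{N_k}.
\]
Choosing $\epsilon_k = N_k^{-1/2}$ makes both $u_k \triangleq \epsilon_k + (1+\epsilon_k^{-1})\tfrac{\nu_1^2}{N_k}$ and $\psi_k \triangleq (1+\epsilon_k^{-1})\tfrac{D}{N_k}$ of order $N_k^{-1/2} = O((k+1)^{-a})$.

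To conclude, I would apply Lemma~\ref{robbins} to the tail $\{r_k\}_{k\geq k_0}$ with multiplier $u_k$, nonnegative decrement $\delta_k \triangleq \lambda^2\|T_{\lambda}(\x^k)\|^2$, and additive term $\psi_k$: since $a>1$ we have $\sum_k u_k<\infty$ and $\sum_k\psi_k<\infty$, and $r_{k_0}<\infty$ almost surely because $\x^{k_0}$ is obtained from finitely many (SA) updates whose increments have finite second moment (Assumption~\ref{ass_sfo}, via \eqref{vsa1}). The lemma then gives that $r_k = \|\x^k-\x^*\|^2$ converges almost surely to some nonnegative random variable, hence $\{\|\x^k-\x^*\|\}$ converges almost surely; as a byproduct $\sum_k\|T_{\lambda}(\x^k)\|^2 < \infty$ a.s.

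\textbf{Main obstacle.} The delicate point is the state-dependent noise: the error bound of Proposition~\ref{prop-res-error} carries an $\|\x^k\|^2$ that is not a priori bounded (there is no projection keeping the iterates in a compact set), so $\mathbb{E}[\|e_k\|^2\mid\mathcal{F}_k]$ cannot be treated as a summable deterministic sequence. Absorbing $\|\x^k\|^2$ into $r_k$ and then into the multiplicative factor $1+u_k$ works only because $N_k$ grows like $(k+1)^{2a}$ with $a>1$, so that $N_k^{-1/2}$ is summable; getting the Young parameter $\epsilon_k \asymp N_k^{-1/2}$ right is precisely what balances the $r_k$-proportional part of the error against summability, and is the one non-mechanical choice in the argument.
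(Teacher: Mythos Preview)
Your proof is correct. Both you and the paper reduce to a Robbins--Siegmund inequality (Lemma~\ref{robbins}) using Proposition~\ref{prop-res-error} for the inner error and absorbing the state-dependent $\|\x^k\|$-term into the multiplicative factor, so the core idea is the same. The execution differs: the paper works with $v_k=\|\x^k-\x^*\|$ (not squared), applies the triangle inequality $\mathbb{E}[\|\x^{k+1}-\x^*\|\mid\mathcal{F}_k]\le\|J_\lambda^T(\x^k)-\x^*\|+\mathbb{E}[\|e_k\|\mid\mathcal{F}_k]$, then Jensen plus $\sqrt{\nu_1^2\|\x^k\|^2+\nu_2^2}\le\nu_1\|\x^k\|+\nu_2$ to arrive directly at $\mathbb{E}[v_{k+1}\mid\mathcal{F}_k]\le(1+\nu_1 N_k^{-1/2})v_k+(\nu_1\|\x^*\|+\nu_2)N_k^{-1/2}$, with $\delta_k\equiv 0$. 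Your route through $r_k=\|\x^k-\x^*\|^2$ instead expands the square, invokes Lemma~\ref{PP2} and Lemma~\ref{ysd} for the refined bound $\|J_\lambda^T(\x^k)-\x^*\|^2\le r_k-\lambda^2\|T_\lambda(\x^k)\|^2$, and handles the cross term via Young with $\epsilon_k=N_k^{-1/2}$. The paper's version is a touch shorter (no auxiliary parameter to tune, and non-expansivity alone suffices in place of Lemmas~\ref{PP2}--\ref{ysd}); your version buys the byproduct $\sum_k\|T_\lambda(\x^k)\|^2<\infty$ a.s., which the paper only derives in the subsequent Proposition~\ref{egvr} by redoing essentially your squared-norm calculation after first using Lemma~\ref{vr-l} to secure a.s.\ boundedness.
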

\begin{proof}
From non-expansivity of $J_{\lambda}^T$~\cite{rockafellar1976monotone}, we obtain the following relation
\begin{align}
\|J_{\lambda}^T(\x^k)-\x^*\|^2
\le\|\x^{k}-\x^*\|^2. \label{pps-y2}
\end{align}
By adding and subtracting $J_{\lambda}^T(\x^k)$, we may bound $\mathbb{E}[\|\x^{k+1}-\x^*\|]$ as follows.
\begin{align}
& \notag\mathbb{E}[\|\x^{k+1}-\x^*\|\mid\mathcal{F}_k]  \le\mathbb{E}[\|J_{\lambda}^T(\x^k)-\x^*\|\mid\mathcal{F}_k]+\mathbb{E}[\|\x^{k+1}-J_{\lambda}^T(\x^k)\|\mid\mathcal{F}_k] \\
 \notag&\overset{\tiny (\mbox{Prop.}~\ref{prop-res-error})}{\le}\|J_{\lambda}^T(\x^k)-\x^*\|+\tfrac{\sqrt{\nu_1^2\|\x^k\|^2+\nu_2^2}}{\sqrt{N_{k}}}\\
& \le\|J_{\lambda}^T(\x^k)-\x^*\|+\tfrac{\sqrt{\nu_1^2\|\x^k\|^2+\nu_2^2+2\nu_1\nu_2\|\x^k\|}}{\sqrt{N_{k}}} \\
&=\|J_{\lambda}^T(\x^k)-J_{\lambda}^T(\x^*)\|+\tfrac{\nu_1\|\x^k\|+\nu_2}{\sqrt{N_{k}}} 
 \overset{(\tiny \mbox{Lemma}~\ref{prop-res})}{\le}\|\x^k-\x^*\|+\tfrac{\nu_1(\|\x^k-\x^*\|+\|\x^*\|)+\nu_2}{\sqrt{N_{k}}} \label{ieq1} \\
%& = \us{\|J_{\lambda}^T(\x^k)-J_{\lambda}^T(\x^*)\|}+\tfrac{\nu_1(\|\x^k-x^*\|+\|x^*\|)+\nu_2}{\sqrt{N_{k}}}\notag \\ 
%& \le\|x_k-x^*\|+\tfrac{\nu_1(\|\x^k-x^*\|+\|x^*\|)+\nu_2}{\sqrt{N_{k}}} \label{pps-y3} \\
\notag&=(1+\tfrac{\nu_1}{\sqrt{N_k}})\|\x^k-\x^*\|+\tfrac{\nu_1\|\x^*\|+\nu_2}{\sqrt{N_{k}}}  \ = \  (1+\tfrac{\nu_1}{\sqrt{N_k}})v_k-\delta_k+\psi_k,
\end{align}
where $v_k$, $\delta_k$,  and $\psi_k$ are nonnegative random variables defined as 
 $v_k \triangleq \|\x^k-\x^* \|$, $\delta_k  \triangleq 0$, 
 and $\psi_k  \triangleq \tfrac{\nu_1\|\x^*\|+\nu_2}{\sqrt{N_{k}}} $.
By Lemma \ref{robbins}, 
$v_k \to \bar v \geq 0$ almost surely.\end{proof}

\begin{proposition}[{\bf a.s. convergence of (VR-SPP)}] \label{egvr} \em
Consider a sequence $\{\x^k\}$ generated by {\bf (VR-SPP)}. Let Assumptions~\ref{max-mon} and \ref{ass_sfo} hold. Suppose $\lambda>0$ and $N_k \triangleq \lceil (k+1)^{2a} \rceil$ for all $k > 0$, where  $a>1$. Then for any $\x^0$, $\x^k \xrightarrow[a.s.]{k \to \infty} \x^*\in X^*$ where $X^*$ denotes the solution set of (SGE).
\end{proposition}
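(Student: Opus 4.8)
The plan is to upgrade the stochastic quasi-Fej\'er property supplied by Lemma~\ref{vr-l} to full almost-sure convergence by proving that every cluster point of $\{\x^k\}$ lies in $X^*$. First I would invoke Lemma~\ref{vr-l}: for each fixed $\x^*\in X^*$, $\{\|\x^k-\x^*\|\}$ converges almost surely, so $\{\x^k\}$ is almost surely bounded. Since $X^*=T^{-1}(0)$ is closed, fix a countable dense subset $D\subseteq X^*$; intersecting the corresponding full-measure events yields a single almost-sure event on which $\{\x^k\}$ is bounded and $\{\|\x^k-\x^*\|\}$ converges for every $\x^*\in D$.

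The key intermediate step is to show $T_{\lambda}(\x^k)\to 0$ almost surely. Combine the identity in Lemma~\ref{PP2} with monotonicity: by Lemma~\ref{ysd}, $T_{\lambda}(\x^k)\in T(J_{\lambda}^T(\x^k))$, and since $0\in T(\x^*)$, monotonicity of $T$ gives $T_{\lambda}(\x^k)^\mathsf{T}(J_{\lambda}^T(\x^k)-\x^*)\ge 0$, hence $\|J_{\lambda}^T(\x^k)-\x^*\|^2\le\|\x^k-\x^*\|^2-\lambda^2\|T_{\lambda}(\x^k)\|^2$. Writing $\x^{k+1}=J_{\lambda}^T(\x^k)+e_k$, using the triangle inequality after squaring, the conditional Jensen bound $\mathbb{E}[\|e_k\|\mid\mathcal{F}_k]\le\sqrt{\mathbb{E}[\|e_k\|^2\mid\mathcal{F}_k]}\le\tfrac{\nu_1\|\x^k\|+\nu_2}{\sqrt{2N_k}}$ from Proposition~\ref{prop-res-error} (valid once $N_k\ge J$), and non-expansivity of $J_{\lambda}^T$, one obtains for all large $k$
\begin{align*}
\mathbb{E}[\|\x^{k+1}-\x^*\|^2\mid\mathcal{F}_k]\le\|\x^k-\x^*\|^2-\lambda^2\|T_{\lambda}(\x^k)\|^2+\psi_k,\qquad \psi_k\triangleq\tfrac{2\|\x^k-\x^*\|(\nu_1\|\x^k\|+\nu_2)}{\sqrt{2N_k}}+\tfrac{\nu_1^2\|\x^k\|^2+\nu_2^2}{2N_k}.
\end{align*}
On the almost-sure event above $\{\x^k\}$ is bounded, and since $N_k\ge(k+1)^{2a}$ with $a>1$ we have $\sum_k\psi_k<\infty$ almost surely. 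Applying Lemma~\ref{robbins} with $r_k=\|\x^k-\x^*\|^2$, $u_k=0$, $\delta_k=\lambda^2\|T_{\lambda}(\x^k)\|^2$ gives $\sum_k\|T_{\lambda}(\x^k)\|^2<\infty$, hence $\|T_{\lambda}(\x^k)\|\to 0$ almost surely.

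To conclude, on the almost-sure event boundedness of $\{\x^k\}$ yields a convergent subsequence $\x^{k_j}\to\bar\x$; by the $\tfrac1\lambda$-Lipschitz continuity of $T_{\lambda}$ (Lemma~\ref{prop-res}(b)) and $\|T_{\lambda}(\x^k)\|\to 0$ we get $T_{\lambda}(\bar\x)=0$, so $\bar\x\in X^*$ by Lemma~\ref{prop-res}(a). For any $\x^*\in D$, $\{\|\x^k-\x^*\|\}$ converges and its limit equals the subsequential value $\|\bar\x-\x^*\|$; the same argument applied to any other cluster point $\bar\x'\in X^*$ forces $\|\bar\x-\x^*\|=\|\bar\x'-\x^*\|$ for all $\x^*\in D$, and density of $D$ in $X^*$ then gives $\bar\x=\bar\x'$. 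Thus $\{\x^k\}$ has a unique cluster point $\bar\x\in X^*$, so $\x^k\xrightarrow[a.s.]{k\to\infty}\bar\x\in X^*$. The main obstacle I anticipate is the middle step: handling the \emph{biased} resolvent error $e_k$ so that the second-moment bound of Proposition~\ref{prop-res-error} still collapses the recursion into Robbins--Siegmund form while preserving the descent term $-\lambda^2\|T_{\lambda}(\x^k)\|^2$; the final uniqueness-of-cluster-point step is routine but needs the countable-dense-subset device to make Lemma~\ref{vr-l} apply simultaneously over $X^*$.
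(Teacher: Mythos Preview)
Your proposal is correct and follows essentially the same route as the paper: both use Lemma~\ref{PP2} together with monotonicity (via Lemma~\ref{ysd}) to obtain $\|J_{\lambda}^T(\x^k)-\x^*\|^2\le\|\x^k-\x^*\|^2-\lambda^2\|T_{\lambda}(\x^k)\|^2$, feed this into a Robbins--Siegmund argument (Lemma~\ref{robbins}) using the error bound from Proposition~\ref{prop-res-error} and the a.s.\ boundedness from Lemma~\ref{vr-l} to conclude $\sum_k\|T_{\lambda}(\x^k)\|^2<\infty$, and then identify cluster points as solutions via continuity of $T_{\lambda}$. Your countable-dense-subset device in the final uniqueness step is in fact a bit more careful than the paper, which simply asserts that $\{\|\x^k-\bar\x\|^2\}$ is convergent for the (random) cluster point $\bar\x$ without addressing the measurability issue you handle explicitly.
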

\begin{proof}
From Lemma~\ref{vr-l}, $\{\|\x^k-\x^*\|\}$ is convergent a.s. implying that there exists $C$ such that a.s., $\|\x^k-\x^*\|^2\le C^2$ for all $k$. Recall that $\|\x^{k+1}-\x^*\|^2$ can be bounded as follows:
\begin{align}
\notag\|\x^{k+1}-\x^*\|^2  
&\le \|J_{\lambda}^T(\x^k)-\x^*\|^2+\|\x^{k+1}-J_{\lambda}^T(\x^k)\|^2\notag \\
& +2\|J_{\lambda}^T(\x^k)-\x^*\|\|\x^{k+1}-J_{\lambda}^T(\x^k)\|. \label{pps-6}
\end{align}
By Lemma \ref{ysd},  
\begin{align}
\notag  & \quad  (T_{\lambda}(\x^k)-T_{\lambda}(\x^*))^\mathsf{T}(J_{\lambda}^T(\x^k)-J_{\lambda}^T(\x^*)) \ge 0,\\
\equiv  &  \quad (T_{\lambda}(\x^k))^\mathsf{T}(J_{\lambda}^T(\x^k)-\x^*) \ge 0, \label{pps-y3}
\end{align}
by noticing that $T_{\lambda}(\x^*)  = 0 $ and $J_{\lambda}^T(\x^*)=\x^*$. By substituting \eqref{pps-y3} in Lemma \ref{PP2}, 
\begin{align}
\notag\|J_{\lambda}^T(\x^k)-\x^*\|^2&=\|\x^{k}-\x^*\|^2-\lambda^2\|T_{\lambda}(\x^k)\|^2-2\lambda T_{\lambda}(\x^k)^\mathsf{T}(J_{\lambda}^T(\x^k)-\x^*) \\
&\le\|\x^{k}-\x^*\|^2-\lambda^2\|T_{\lambda}(\x^k)\|^2. \label{pps-y4}
\end{align}
By substituting the bound~\eqref{pps-y4} in~\eqref{pps-6} and taking expectations conditioned  on $\mathcal{F}_k$, we obtain the following bound.
\begin{align}
\notag\mathbb{E}[&\|\x^{k+1}-\x^*\|^2\mid\mathcal{F}_k]\le\|J_{\lambda}^T(\x^k)-\x^*\|^2+\mathbb{E}[\|\x^{k+1}-J_{\lambda}^T(\x^k)\|^2\mid\mathcal{F}_k] \\
&+2\|J_{\lambda}^T(\x^k)-\x^*\|\mathbb{E}[\|\x^{k+1}-J_{\lambda}^T(\x^k)\|\mid\mathcal{F}_k] \label{pps-61} \\
%\us{By Prop.~\ref{prop-res-error}, $\|\mathbb{E}[\|x_{k+1}-J_{\lambda}^T(\x^k)\|\mid\mathcal{F}_k] \leq \tfrac{\nu_1\|\x^k\|+\nu_2}{\sqrt{N_k}}.$} 
%Using \eqref{pps-y2}, it follows
&\le\|\x^{k}-\x^*\|^2-\lambda^2\|T_{\lambda}(\x^k)\|^2+\tfrac{\nu_1^2\|\x^k\|^2+\nu_2^2}{N_k}+2\|\x^k-\x^*\|\left(\tfrac{\nu_1\|\x^k\|+\nu_2}{\sqrt{N_k}}\right)\\
\notag&\le\|\x^{k}-\x^*\|^2-\lambda^2\|T_{\lambda}(\x^k)\|^2+\tfrac{\nu_1^2(2\|\x^k-\x^*\|^2+2\|\x^*\|^2)+\nu_2^2}{N_k}\\\notag&+2\|\x^k-\x^*\|\left(\tfrac{\nu_1(\|\x^k-\x^*\|+\|\x^*\|)+\nu_2}{\sqrt{N_k}}\right) \\
\notag&=(1+\tfrac{\ssc{2}\nu_1^2}{N_k}\ssc{+\tfrac{\ssc{2}\nu_1}{\sqrt{N_k}}})\|\x^k-\x^*\|^2-\lambda^2\|T_{\lambda}(\x^k)\|^2+\tfrac{2\nu_1^2\|\x^*\|^2+\nu_2^2}{N_k} \\
\notag&+2\|\x^k-\x^*\|\left(\tfrac{\nu_1\|\x^*\|+\nu_2}{\sqrt{N_k}}\right) \\
\hspace{-0.9in} \notag&\le (1+\tfrac{2\nu_1^2}{N_k}\ssc{+\tfrac{\ssc{2}\nu_1}{\sqrt{N_k}}})\|\x^k-\x^*\|^2-\lambda^2\|T_{\lambda}(\x^k)\|^2+\tfrac{2\nu_1^2\|\x^*\|^2+\nu_2^2}{N_k} 
+2C\left(\tfrac{\nu_1\|\x^*\|+\nu_2}{\sqrt{N_k}}\right).
\end{align}
By definition of $N_k$, $\sum_k\tfrac{1}{N_k} <\sum_k \tfrac{1}{\sqrt{N_k}} < \infty$. By Lemma~\ref{robbins}, $\{\|\x^k-\x^*\|\}$ is
convergent and $\sum_{k}\lambda^2\|T_{\lambda}(\x^k)\|^2$ $<\infty$ in an a.s.
sense. Therefore, in an a.s.
	sense, we have 
$\lim_{k \to \infty}\|T_{\lambda}(\x^k)\|^2 = 0. $
Since $\{\|\x^k-\x^*\|^2\}$ is a convergent sequence in an
a.s. sense, $\{\x^k\}$ is bounded a.s. and has a convergent
subsequence. Consider any convergent subsequence of
$\{\x^k\}$ with index set denoted by ${\cal K}$. Suppose its
limit point is $\bar{\x}$. Consequently, by the continuity of $T_{\lambda}$, we have that $\lim_{k \in {\cal
K}} T_{\lambda}(\x^k) = T_{\lambda}({\bar \x}) = 0$. It follows that
$\bar \x$ is a solution to $0\in T(\x)$. Consequently, we have that $\lim_{k \in \cal K} \x^k = \bar{\x} \in X^*$, in an a.s. sense. It follows that $\{\|\x^k-\bar \x\|^2\}$ is convergent and its
unique limit point is zero. Thus every subsequence of $\{\x^k\}$
converges a.s. to $\bar \x$, implying that the entire
sequence of $\{\x^k\}$ converges to $\bar x$ almost surely.  \end{proof}

We conclude this subsection with a rate statement for ({\bf VR-SPP}). 
\begin{proposition}[{\bf Rate of convergence of (VR-SPP) under maximal monotonicity}] \label{ratePP1} \em
Let Assumptions~\ref{max-mon} and \ref{ass_sfo} hold, $\lambda>0$, and $N_k \triangleq \lceil (k+1)^{2a} \rceil$ for all $k > 0$, where  $a>1$. Consider a sequence $\{\x^k\}$ generated by $(${\bf VR-SPP}$)$. \\ (a) For any $k \geq 0$, we have that $\mathbb{E}[\|T_{\lambda}(\x^k)\|^2]=\mathcal{O}\left(\tfrac{1}{k+1}\right).$ \\
\noindent (b) Suppose $\x^{K+1}$ satisfies $\mathbb{E}[\|T_{\lambda}(\x^{K+1})\|^2] \leq \epsilon$. Then the oracle complexity of computing such an $\x^{K+1}$ satisfies 
$\sum_{k=0}^KN_k=\mathcal{O}\left(\tfrac{1}{\epsilon^{2a+1}}\right).$
\end{proposition}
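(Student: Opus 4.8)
The plan is to carry the deterministic $\mathcal{O}(1/k)$ argument of Corman and Yuan for the exact proximal-point method through to the inexact stochastic setting, using the error estimate of Proposition~\ref{prop-res-error}, the nonexpansiveness/Lipschitz facts of Lemma~\ref{prop-res}, and the Robbins--Siegmund lemma (Lemma~\ref{robbins}). For part~(a) I would proceed in three steps. \emph{Step 1 (summability of the residual).} Taking total expectations in the per-iteration inequality already derived in the proof of Proposition~\ref{egvr} (and first noting that $\sup_k\mathbb{E}[\|\x^k-\x^*\|^2]<\infty$ and $\sup_k\mathbb{E}[\|\x^k\|^2]<\infty$, which follow by applying Lemma~\ref{robbins} to the second-moment recursion underlying Lemma~\ref{vr-l}, since $\sum_k 1/\sqrt{N_k}<\infty$ and $\sum_k 1/N_k<\infty$ when $N_k=\lceil(k+1)^{2a}\rceil$ with $a>1$) yields a recursion $w_{k+1}\le(1+\gamma_k)w_k-\lambda^2 m_k+\beta_k$, where $w_k\triangleq\mathbb{E}[\|\x^k-\x^*\|^2]$, $m_k\triangleq\mathbb{E}[\|T_\lambda(\x^k)\|^2]$, and $\gamma_k,\beta_k\ge 0$ are summable. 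Dividing by the convergent product $\prod_{j\le k}(1+\gamma_j)$ and telescoping gives a uniform bound $\sum_{k=0}^{\infty}m_k\le\bar S<\infty$ with $\bar S$ explicit in $w_0,\|\x^*\|,\lambda$ and the $\nu_i$.

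\emph{Step 2 (approximate monotonicity of $\|T_\lambda(\x^k)\|$).} Writing $\x^{k+1}=J^T_\lambda(\x^k)+e_k$ and using nonexpansiveness of $J^T_\lambda$ (Lemma~\ref{prop-res}(c)) and $\tfrac1\lambda$-Lipschitz continuity of $T_\lambda$ (Lemma~\ref{prop-res}(b)), one obtains $\|T_\lambda(\x^{k+1})\|\le\|T_\lambda(J^T_\lambda(\x^k))\|+\tfrac1\lambda\|e_k\|\le\|T_\lambda(\x^k)\|+\tfrac1\lambda\|e_k\|$, the last step being nonexpansiveness of $J^T_\lambda$ applied to the pair $\x^k,J^T_\lambda(\x^k)$. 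Taking conditional then total expectations and invoking Proposition~\ref{prop-res-error} (so $\mathbb{E}[\|e_k\|^2\mid\mathcal{F}_k]\le\tfrac{\nu_1^2\|\x^k\|^2+\nu_2^2}{2N_k}$, hence $\mathbb{E}[\|e_k\|^2]=\mathcal{O}((k+1)^{-2a})$) translates this, after an AM--GM split of the cross term, into $m_{k+1}\le(1+\mu_k)m_k+c_k$ with $\sum_k\mu_k<\infty$ and $\sum_k(k+1)c_k<\infty$; the weighted summability is exactly what $a>1$ buys, since $\sum_k(k+1)(k+1)^{-2a}<\infty\iff a>1$. \emph{Step 3 (from summability to the last-iterate rate).} Iterating the Step-2 recursion from $j$ to $k$ gives $m_k\le\Pi_{j,k}\bigl(m_j+\sum_{i=j}^{k-1}c_i\bigr)$ with $\Pi_{j,k}\le\prod_\ell(1+\mu_\ell)<\infty$; summing over $j=0,\dots,k$ and using $\sum_j m_j\le\bar S$ together with $\sum_j\sum_{i\ge j}c_i=\sum_i(i+1)c_i<\infty$ yields $(k+1)m_k=\mathcal{O}(1)$, i.e. $\mathbb{E}[\|T_\lambda(\x^k)\|^2]=\mathcal{O}(1/(k+1))$. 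An equivalent, cleaner packaging is to run the weighted Lyapunov function $V_k\triangleq\|\x^k-\x^*\|^2+k\lambda^2\|T_\lambda(\x^k)\|^2$ (the stochastic analogue of the quantity making the exact proof telescope) through Lemma~\ref{robbins}.

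Part~(b) is then immediate from part~(a): since $\mathbb{E}[\|T_\lambda(\x^{K+1})\|^2]\le c/(K+2)$ for a constant $c$, forcing this below $\epsilon$ requires only $K=\mathcal{O}(1/\epsilon)$, whereupon the oracle complexity is $\sum_{k=0}^{K}N_k\le\sum_{k=0}^{K}\bigl((k+1)^{2a}+1\bigr)=\mathcal{O}(K^{2a+1})=\mathcal{O}(1/\epsilon^{2a+1})$.

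The main obstacle is Step~3 in the inexact regime. In the exact proximal-point method $\|T_\lambda(\cdot)\|$ is monotonically nonincreasing along the iterates, which converts summability into the $\mathcal{O}(1/k)$ rate in one line; with stochastic inexact resolvents this monotonicity holds only up to the perturbation $\tfrac1\lambda\|e_k\|$, and the bookkeeping of the accumulated cross-terms $\|T_\lambda(\x^k)\|\,\|e_k\|$ (both the multiplicative penalty $\mu_k$ and the additive penalty $c_k$) must be done so that neither spoils the $\mathcal{O}(1/k)$ scaling --- which is precisely where the super-linear sample growth $N_k\sim(k+1)^{2a}$, $a>1$, is consumed. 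Everything else reduces to the nonexpansiveness/Lipschitz properties in Lemma~\ref{prop-res}, the resolvent-error estimate in Proposition~\ref{prop-res-error}, and the Robbins--Siegmund lemma, all of which are available.
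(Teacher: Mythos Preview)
Your three-step architecture (summability of $\sum_k m_k$, approximate monotonicity of $m_k$, and the averaging trick $(k+1)m_k\le\sum_j m_j+\text{tail}$) matches the paper's exactly, as does part~(b). The gap is in your Step~2. Squaring your nonexpansiveness bound $\|T_\lambda(\x^{k+1})\|\le\|T_\lambda(\x^k)\|+\tfrac{1}{\lambda}\|e_k\|$ leaves the cross term $\tfrac{2}{\lambda}\|T_\lambda(\x^k)\|\,\|e_k\|$, and an AM--GM split gives $m_{k+1}\le(1+\mu_k)m_k+c_k$ with $c_k\asymp(1+\mu_k^{-1})(k+1)^{-2a}$. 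You then need \emph{both} $\sum_k\mu_k<\infty$ and $\sum_k(k+1)\mu_k^{-1}(k+1)^{-2a}<\infty$; by Cauchy--Schwarz these together force $\sum_k(k+1)^{(1-2a)/2}<\infty$, i.e.\ $a>3/2$, not $a>1$. Your sentence ``the weighted summability is exactly what $a>1$ buys, since $\sum_k(k+1)(k+1)^{-2a}<\infty\iff a>1$'' is correct only for the pure $\|e_k\|^2$ term and overlooks the $\mu_k^{-1}$ factor that the cross term introduces.

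The paper avoids this loss by not squaring your inequality but instead expanding $\|T_\lambda(\x^{k+1})\|^2=\|T_\lambda(\x^k)\|^2+\|\Delta_k\|^2+2\Delta_k^{\mathsf T}T_\lambda(\x^k)$ with $\Delta_k\triangleq T_\lambda(\x^{k+1})-T_\lambda(\x^k)$, and using Lemma~\ref{ysd} ($T_\lambda(x)\in T(J^T_\lambda(x))$) together with monotonicity of $T$ to show $\Delta_k^{\mathsf T}(J^T_\lambda(\x^{k+1})-J^T_\lambda(\x^k))\ge 0$. After the identity $T_\lambda(\x^k)=\tfrac{1}{\lambda}e_k-\tfrac{1}{\lambda}(J^T_\lambda(\x^{k+1})-J^T_\lambda(\x^k))-\Delta_k$ this yields $\|T_\lambda(\x^{k+1})\|^2\le\|T_\lambda(\x^k)\|^2-\|\Delta_k\|^2+\tfrac{2}{\lambda}\|\Delta_k\|\,\|e_k\|$, and now Young's inequality cancels $-\|\Delta_k\|^2$ against the cross term to leave the \emph{purely additive} recursion $\|T_\lambda(\x^{k+1})\|^2\le\|T_\lambda(\x^k)\|^2+\tfrac{1}{\lambda^2}\|e_k\|^2$. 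With this, $c_k=\mathcal{O}((k+1)^{-2a})$ has no multiplicative penalty, $(k+1)c_k$ is summable exactly when $a>1$, and your Step~3 goes through verbatim. In short: your plan is right, but Step~2 must use the firm-nonexpansiveness/monotonicity identity of Corman--Yuan (the cross term must land on $\|\Delta_k\|$, not on $\|T_\lambda(\x^k)\|$) rather than plain nonexpansiveness, or else the argument only covers $a>3/2$.
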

\begin{proof}
(a) By taking unconditional expectations on \eqref{ieq1},
\begin{align}
 \notag\mathbb{E}[\|\x^{k+1}-\x^*\|]&\le\mathbb{E}[\|\x^k-\x^*\|]+\tfrac{\nu_1(\mathbb{E}[\|\x^k-\x^*\|]+\|\x^*\|)+\nu_2}{\sqrt{N_{k}}}\\
 &\le  \mathbb{E}[\|\x^k-\x^*\|]+\tfrac{\nu_1(C+\|\x^*\|)+\nu_2}{\sqrt{N_{k}}} 
 \le \|\x^0-\x^*\|+\sum_{i=0}^\infty\tfrac{\nu_1(C+\|\x^*\|)+\nu_2}{\sqrt{N_i}}, \label{ppt-2}
\end{align}
where $\|\x^k-\x^*\| \leq C$ a.s. for all $k \geq 0$.  Taking unconditional expectations over \eqref{pps-61}, 
\begin{align}
\notag\mathbb{E}[\|\x^{k+1}-\x^*\|^2]& \leq \mathbb{E}[\|J_{\lambda}^T(\x^k)-\x^*\|^2]+\mathbb{E}[\|\x^{k+1}-J_{\lambda}^T(\x^k)\|^2] \\
\notag &+2\mathbb{E}[\|J_{\lambda}^T(\x^k)-\x^*\|]\mathbb{E}[\mathbb{E}[\|\x^{k+1}-J_{\lambda}^T(\x^k)\|\mid \mathcal{F}_k]].  \\
\notag		& \overset{\eqref{pps-y4}}{\leq} \mathbb{E}[\|\x^k-\x^*\|^2] - \lambda^2\mathbb{E}[\|T_{\lambda}(\x^k)\|^] + \mathbb{E}[\|\x^{k+1}-J_{\lambda}^T(\x^k)\|^2] \\
&+2\mathbb{E}[\|J_{\lambda}^T(\x^k)-\x^*\|]\mathbb{E}[\mathbb{E}[\|\x^{k+1}-J_{\lambda}^T(\x^k)\|\mid \mathcal{F}_k]]. \label{ppt-61} 
\end{align}
By non-expansivity of $J_{\lambda}^T$ and by substituting \eqref{ppt-2} in \eqref{pps-y2}, 
\begin{align}
\mathbb{E}[\|J_{\lambda}^T(\x^k)-\x^*\|]\le\mathbb{E}[\|\x^k-\x^*\|]\le\|\x^0-\x^*\|+\sum_{i=0}^\infty\tfrac{\nu_1(C+\|\x^*\|)+\nu_2}{\sqrt{N_i}}. \label{ppt-62}
\end{align}
Inserting \eqref{ppt-62} into \eqref{ppt-61}, we obtain the following bound:
\begin{align}
\notag\mathbb{E}[\|\x^{k+1}-\x^*\|^2] &\le \mathbb{E}[\|\x^{k}-\x^*\|^2]-\lambda^2\mathbb{E}[\|T_{\lambda}(\x^k)\|^2]+\tfrac{\nu_1^2\|\x^k\|^2+\nu_2^2}{N_{k}} \\
\notag&+\tfrac{2\nu_1\|\x^k\|+\nu_2}{\sqrt{N_{k}}}\left(\|\x^0-\x^*\|+\sum_{i=0}^\infty\tfrac{\nu_1(C+\|\x^*\|)+\nu_2}{\sqrt{N_i}}\right) \\
\notag&\le \mathbb{E}[\|\x^{k}-\x^*\|^2]-\lambda^2\mathbb{E}[\|T_{\lambda}(\x^k)\|^2]+\tfrac{\nu_1^2(2C^2+2\|\x^*\|^2)+\nu_2^2}{N_{k}} \\
\notag&+\tfrac{2\nu_1(C+\|\x^*\|)+\nu_2}{\sqrt{N_{k}}}\left(\|\x^0-\x^*\|+\sum_{i=0}^\infty\tfrac{\nu_1(B+\|\x^*\|)+\nu_2}{\sqrt{N_i}}\right).
\end{align}
Defining $E_1\triangleq \sum_{i=0}^\infty\frac{1}{\sqrt{N_i}}$, $E_2\triangleq \sum_{i=0}^\infty\frac{1}{N_i}$, $D_1\triangleq \nu_1^2(2C^2+2\|\x^*\|^2)+\nu_2^2$ and $D_2\triangleq \nu_1(C+\|\x^*\|)+\nu_2$, and summing from $i=0,\cdots,k$, we get
\begin{align}\notag
\lambda^2\sum_{i=0}^k\mathbb{E}[\|T_{\lambda}(x_i)\|^2]& \le \|\x^0-\x^*\|^2-\mathbb{E}[\|\x^{k+1}-\x^*\|^2]\\
\notag
& +\sum_{i=0}^k\left(\tfrac{D_1}{N_{i}}+\tfrac{2D_2}{\sqrt{N_{i}}}\left(\|\x^0-\x^*\|+D_2\sum_{\ell=0}^\infty\tfrac{1}{\sqrt{N_{\ell}}}\right)\right)\\
\notag&\le  \|\x^0-\x^*\|^2+D_1E_2+2D_2 E_1\|\x^0-\x^*\|+2D_2^2E_1^2 \\
\label{bddt-2}
&=\left(\|\x^0-\x^*\|+D_2E_1\right)^2+D_1E_2+D_2^2E_1^2.
\end{align}
We now proceed to analyze $\sum_{i=0}^k \mathbb{E}[\|T_{\lambda}(x_i)\|^2]$ by noting that
\begin{align}
\notag T_\lambda(\x^k)&=\tfrac{1}{\lambda}(\x^k-J_\lambda^T(\x^k))=\tfrac{1}{\lambda}(\x^{k+1}-J_\lambda^T(\x^k)-(\x^{k+1}-\x^k)) \\
\notag&=\tfrac{1}{\lambda}(\x^{k+1}-J_\lambda^T(\x^k))-\tfrac{1}{\lambda}(J_\lambda^T(\x^{k+1})-J_\lambda^T(\x^k))-(T_\lambda(\x^{k+1})-T_\lambda(\x^k))
\end{align}
It follows that
\begin{align}\notag
& \notag(T_\lambda(\x^{k+1})-T_\lambda(\x^k))^\mathsf{T}T_\lambda(\x^k)= \tfrac{1}{\lambda}(T_\lambda(\x^{k+1})-T_\lambda(\x^k))^\mathsf{T}(\x^{k+1}-J_\lambda^T(\x^k))\\&-\underbrace{\tfrac{1}{\lambda}(T_\lambda(\x^{k+1})-T_\lambda(\x^k))^\mathsf{T}(J_\lambda^T(\x^{k+1})-J_\lambda^T(\x^k))}_{\ \geq \ 0}-\|T_\lambda(\x^{k+1})-T_\lambda(\x^k)\|^2\notag\\
& \leq \tfrac{1}{\lambda}(T_\lambda(\x^{k+1})-T_\lambda(\x^k))^\mathsf{T}(\x^{k+1}-J_\lambda^T(\x^k))-\|T_\lambda(\x^{k+1})-T_\lambda(\x^k)\|^2. \label{ppt-63}
\end{align}
Then we have
\begin{align}
 \notag\|T_\lambda(\x^{k+1})\|^2 &  =\|T_\lambda(\x^{k})\|^2+\|T_\lambda(\x^{k+1})-T_\lambda(\x^k)\|^2+2(T_\lambda(\x^{k+1})-T_\lambda(\x^k))^\mathsf{T}T_\lambda(\x^k)\\
\notag&\overset{\eqref{ppt-63}}{\le}\|T_\lambda(\x^{k})\|^2-\|T_\lambda(\x^{k+1})-T_\lambda(\x^k)\|^2 \\
\notag &+\tfrac{2}{\lambda}\|T_\lambda(\x^{k+1})-T_\lambda(\x^k)\|\|\x^{k+1}-J_\lambda^T(\x^k)\| \\
\notag&\le \|T_\lambda(\x^{k})\|^2-\|T_\lambda(\x^{k+1})-T_\lambda(\x^k)\|^2+\|T_\lambda(\x^{k+1})-T_\lambda(\x^k)\|^2+\tfrac{D_1}{\lambda^2N_{k}} \\
& =\|T_\lambda(\x^{k})\|^2+\tfrac{D_1}{\lambda^2N_{k}}. \label{ppt-64}
\end{align}
By \eqref{ppt-64}, we have the following relationship.
\begin{align}
\|T_\lambda(\x^{k})\|^2\le \|T_\lambda(x_{i})\|^2+\sum_{j=i}^{k-1}\tfrac{D_1}{\lambda^2N_{j}}, \quad \forall i=0,\cdots,k-1. \label{bddt-1}
\end{align}
Thus, we have $(k+1)\|T_\lambda(\x^{k})\|^2\le \sum_{i=0}^k \|T_\lambda(x_{i})\|^2+\sum_{i=0}^k \sum_{j=i}^{k-1}\tfrac{D_1}{\lambda^2N_{j}}$, implying that
\begin{align}
\notag\mathbb{E}[\|T_{\lambda}(\x^k)\|^2] 
&\overset{\eqref{bddt-2},\eqref{bddt-1}}{\le}\tfrac{\left(\|\x^0-\x^*\|+D_2E_1\right)^2+D_1E_2+D_2^2E_1^2}{\lambda^2(k+1)}+\tfrac{D_1\sum_{i=0}^{k}\sum_{j=i}^{k-1}\frac{1}{N_{j}}}{\lambda^2(k+1)}.
\end{align}
Recalling $N_k=\lceil (k+1)^{2a} \rceil, \ a>1$, it follows that
\begin{align*}
\sum_{i=0}^{k}\sum_{j=i}^{k-1}\tfrac{1}{N_{j}}&=\sum_{i=0}^{k}\sum_{j=i}^{k-1}\tfrac{1}{\lceil (j+1)^{2a} \rceil} \le \sum_{i=0}^{k}\sum_{j=i}^{k-1}\tfrac{1}{(j+1)^{2a}} 
\le \int_{0}^{k+1}\int_{y}^{k+1}\tfrac{dxdy}{(x+1)^{2a}} \le \tfrac{1}{(2a-1)(a-1)}.
\end{align*}
Since $\sum_{i=0}^\infty\tfrac{1}{\sqrt{N_{i+1}}}<+\infty$ and $\sum_{i=0}^\infty\frac{1}{N_{i+1}}<+\infty$, we have 
$\mathbb{E}[\|T_{\lambda}(\x^k)\|^2]  \leq \tfrac{\widehat C}{k+1} = \mathcal{O}\left(\frac{1}{k+1}\right),$ where 
$ \widehat{C} \triangleq \tfrac{\left(\|\x^0-\x^*\|+D_2E_1\right)^2+D_1E_2+D_2^2E_1^2}{\lambda^2}+\tfrac{D_1}{\lambda^2(a-1)(2a-1)}.$

(b) Suppose $\x^{K+1}$ is such that $\mathbb{E}[\|T_{\lambda}(\x^{K+1})\|^2] \leq \epsilon$. From (a), for sufficiently small $\epsilon$,
\begin{align*}
\sum_{k=0}^KN_k&\le\sum_{k=0}^{\lceil \widehat{C}/\epsilon \rceil-1}N_k=\sum_{k=0}^{\lceil \widehat{C}/\epsilon \rceil-1}\lceil (k+1)^{2a} \rceil \le 2\sum_{k=0}^{\lceil \widehat{C}/\epsilon \rceil-1}(k+1)^{2a} \\
&\le  2\int_{x=0}^{\widehat{C}/\epsilon}(x+1)^{2a} \ dx \le \tfrac{2(\widehat{C}/\epsilon+1)^{2a+1}}{2a+1}\le\left(\tfrac{\widehat{C}}{\epsilon^{2a+1}}\right).
\end{align*}
\end{proof}

\subsection{Convergence analysis of (VR-SPP) under strong monotonicity}\label{sec:3.4} Next, we derive a rate statement under a strong monotonicity assumption on $T$. We begin by deriving a bound on $\|J_{\lambda}^T(\x^k)-\x^*\|$, akin to \cite[Prop.~3]{corman2014generalized}.
%To begin, we need the following lemmas:
%\begin{lemma}\label{floort} \em
%Given a scalar $y$, we have that 
%$\lfloor y \rfloor \ge \left\lceil \tfrac{1}{2}y \right\rceil, \quad\forall y \ge 1, y\in \Real.$ \qed
%\end{lemma}

\begin{lemma}\label{pps1t} \em
Let Assumption~\ref{smonotonet} hold and let $\lambda>0$. Assume $\x^*\in T^{-1}(0)$ is a solution. Then we have the following for all $k$:
\begin{align*}
\|J_{\lambda}^T(\x^k)-\x^*\|\le(1+\sigma\lambda)^{-1}\|\x^k-\x^*\|.
\end{align*}
\end{lemma}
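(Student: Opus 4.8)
The plan is to exploit the defining inclusion of the resolvent together with the strong monotonicity hypothesis (Assumption~\ref{smonotonet}), in the style of the classical contraction estimate for resolvents. Write $\z \triangleq J_{\lambda}^T(\x^k)$. By definition of $J_{\lambda}^T = (I + \lambda T)^{-1}$, we have $\x^k \in (I + \lambda T)(\z)$, equivalently $\tfrac{1}{\lambda}(\x^k - \z) \in T(\z)$. Likewise, since $0 \in T(\x^*)$, we have $\x^* \in (I + \lambda T)(\x^*)$, so $J_{\lambda}^T(\x^*) = \x^*$ (this is also just part (a) of Lemma~\ref{prop-res}, combined with single-valuedness).

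Next I would apply the $\sigma$-strong monotonicity inequality of Assumption~\ref{smonotonet} to the pair $(\z, \x^*)$, using the element $u = \tfrac{1}{\lambda}(\x^k - \z) \in T(\z)$ and $v = 0 \in T(\x^*)$. This yields
\begin{align*}
 \tfrac{1}{\lambda}(\x^k - \z)^\mathsf{T}(\z - \x^*) \;\ge\; \sigma\|\z - \x^*\|^2.
\end{align*}
Multiplying by $\lambda$ and decomposing $\x^k - \z = (\x^k - \x^*) - (\z - \x^*)$ gives
\begin{align*}
 (\x^k - \x^*)^\mathsf{T}(\z - \x^*) - \|\z - \x^*\|^2 \;\ge\; \sigma\lambda\|\z - \x^*\|^2,
\end{align*}
i.e. $(\x^k - \x^*)^\mathsf{T}(\z - \x^*) \ge (1 + \sigma\lambda)\|\z - \x^*\|^2$.

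Finally I would apply the Cauchy--Schwarz inequality to the left-hand side, obtaining $\|\x^k - \x^*\|\,\|\z - \x^*\| \ge (1 + \sigma\lambda)\|\z - \x^*\|^2$. If $\|\z - \x^*\| = 0$ the claimed bound holds trivially; otherwise divide both sides by $\|\z - \x^*\| > 0$ to conclude $\|\z - \x^*\| \le (1 + \sigma\lambda)^{-1}\|\x^k - \x^*\|$, which is the assertion since $\z = J_{\lambda}^T(\x^k)$. There is no real obstacle here: the only points requiring care are correctly unpacking the resolvent inclusion $\tfrac1\lambda(\x^k-\z)\in T(\z)$, ensuring $J_{\lambda}^T(\x^*)=\x^*$, and separating out the degenerate case $\z = \x^*$ before dividing. (Strong monotonicity is not assumed to entail maximality here, so I would avoid invoking Lemma~\ref{prop-res} beyond the trivial identity above, relying only on single-valuedness of $J_{\lambda}^T$ on the points in play, which holds by monotonicity.)
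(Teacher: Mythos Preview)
Your proof is correct. Both your argument and the paper's rely on the same three ingredients --- the resolvent inclusion $\tfrac{1}{\lambda}(\x^k-\z)\in T(\z)$, $\sigma$-strong monotonicity, and Cauchy--Schwarz --- but they are assembled differently. The paper writes $\x^k-\x^* = (\z-\x^*) + \lambda(v_{k+1}-v^*)$ with $v_{k+1}\in T(\z)$, $v^*=0\in T(\x^*)$, expands $\|\x^k-\x^*\|^2$, and uses strong monotonicity twice: once on the cross term $2\lambda(v_{k+1}-v^*)^\mathsf{T}(\z-\x^*)\ge 2\sigma\lambda\|\z-\x^*\|^2$, and once (via Cauchy--Schwarz) to get $\|v_{k+1}-v^*\|\ge\sigma\|\z-\x^*\|$ for the $\lambda^2\|v_{k+1}-v^*\|^2$ term, yielding the squared form $\|\x^k-\x^*\|^2\ge(1+\sigma\lambda)^2\|\z-\x^*\|^2$. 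Your route is more economical: a single application of strong monotonicity, the decomposition $\x^k-\z=(\x^k-\x^*)-(\z-\x^*)$, and Cauchy--Schwarz at the end. The paper's version delivers the squared inequality directly, which is what is actually invoked downstream in Proposition~\ref{ratePvs1}; your version gives the unsquared bound stated in the lemma and of course squares to the same thing.
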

\begin{proof}
Suppose $y_{k+1}=J_{\lambda}^T(\x^k)$ or $\x^k = (I+\lambda T)(y_{k+1}) = y_{k+1} + \lambda v_{k+1}$ where $v_{k+1} \in T(y_{k+1})$. In addition, $\x^* = J_{\lambda}^T(\x^*)$ or $\x^* = \x^*+\lambda v^*$ where $0 = v^* \in T(\x^*)$. Since $T$ is $\sigma$-strongly monotone, we have that 
\begin{align*}
\|v_{k+1}-v^*\|\|y_{k+1}-\x^*\| & \geq (v_{k+1}-v^*)^\mathsf{T}(y_{k+1}-\x^*) \geq \sigma \|y_{k+1}-\x^*\|^2 \\
\implies \|v_{k+1}-v^*\| & \geq \sigma \|y_{k+1}-\x^*\|.
\end{align*}
Consequently, we may bound $\|\x^k-\x^*\|^2$ from below as follows.
\begin{align*}
\|\x^k-\x^*\|^2& \notag =\|y_{k+1}+\lambda v_{k+1}-(\x^*+\lambda v^*)\|^2\\
		& = \|y_{k+1}-\x^*\|^2 +\lambda^2\|v_{k+1}-v^*\|^2 + 2\lambda (y_{k+1}-\x^*)^\mathsf{T}(v_{k+1}-v^*) \\
		&  \geq (1+2\sigma\lambda) \|y_{k+1}-\x^*\|^2+\lambda^2\|v_{k+1}-v^*\|^2
         \geq (1+\sigma \lambda)^2\|y_{k+1}-\x^*\|^2,
\end{align*}
where the first inequality follows from the 
strong monotonicity of $T$ and the second inequality is a consequence of $\|v_{k+1}-v^*\|^2\ge\sigma^2\|y_{k+1}-\x^*\|^2$.  It follows that
\begin{align*}
\|\x^k-\x^*\|^2\ge(1+\sigma\lambda)^2\|y_{k+1}-\x^*\|^2=(1+\sigma\lambda)^2\|J_{\lambda}^T(\x^k)-\x^*\|^2.
\end{align*}
\end{proof}

We conclude by deriving a rate 
under a strong monotonicity requirement.

\begin{proposition}[{\bf Linear convergence of ({\bf VR-SPP}) under strong monotonicity}]\label{ratePvs1} \em
Let Assumptions~\ref{smonotonet} and \ref{ass_sfo} hold. Suppose $\{\x^k\}$ denotes a sequence generated by ({\bf VR-SPP}) and $\x^*$ denotes a unique solution to $0 \in T(\x)$. Furthermore, suppose $\|\x^0-\x^*\| \leq M$. Then the following hold. \\
(a) Suppose $N_{k}=\lfloor \us{\rho^{-(k+1)}} \rfloor$ where $0<\rho<1$ and $q\triangleq \tfrac{1+d}{(1+\sigma\lambda)^{2}} < 1$ for $d$ sufficiently small. Then $\mathbb{E}[\|\x^k-\x^*\|^2] \leq \tilde{D} \tilde{\rho}^k$ where $\tilde D > 0$  and $\tilde \rho = \max\{q,\rho\}$ if $q \neq \rho$ and $\tilde \rho \in (q,1)$ if $q = \rho$. 
% \ssc{$\tilde{\rho}\in(q,1)$, $\hat{D} > 1/\ln(\tilde{\rho}/\rho)^e$ and $\tilde{D}\triangleq \left(M+D\frac{1}{1-\min\{(q/\rho),(\rho/q)\}}\right)$}.
%Then $\{\x^k\}$ converges to a solution $\\x^*$ at a linear rate in an expected value sense. 
\\ (b) The oracle complexity to ensure that $\mathbb{E}[\|\x^{K+1}-x^*\|^2] \le \epsilon$ satisfies $\sum_{k=0}^KN_k \le
\mathcal{O}\left(\frac{1}{\epsilon}\right).$
\end{proposition}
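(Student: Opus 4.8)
The strategy is to reduce the stochastic iteration to a deterministic scalar recursion on $s_k \triangleq \big(\mathbb{E}[\|\x^k-\x^*\|^2]\big)^{1/2}$ and then solve that recursion. The starting point is the elementary one-step decomposition already used in the maximal-monotone analysis (cf.~\eqref{pps-61}): take $\mathbb{E}[\cdot\mid\mathcal{F}_k]$ of $\|\x^{k+1}-\x^*\|^2 \le \|J_{\lambda}^T(\x^k)-\x^*\|^2 + \|\x^{k+1}-J_{\lambda}^T(\x^k)\|^2 + 2\|J_{\lambda}^T(\x^k)-\x^*\|\,\|\x^{k+1}-J_{\lambda}^T(\x^k)\|$, substitute Lemma~\ref{pps1t} (which gives $\|J_{\lambda}^T(\x^k)-\x^*\|\le(1+\sigma\lambda)^{-1}\|\x^k-\x^*\|$), and invoke Proposition~\ref{prop-res-error} at $j=N_k$ (legitimate once $N_k\ge J$, which holds for all large $k$ since $N_k\to\infty$) to bound $\mathbb{E}[\|\x^{k+1}-J_{\lambda}^T(\x^k)\|^2\mid\mathcal{F}_k]\le b_k^2$ and, by Jensen, $\mathbb{E}[\|\x^{k+1}-J_{\lambda}^T(\x^k)\|\mid\mathcal{F}_k]\le b_k$, where $b_k^2\triangleq\tfrac{\nu_1^2\|\x^k\|^2+\nu_2^2}{2N_k}$. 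Completing the square then gives
\[ \mathbb{E}[\|\x^{k+1}-\x^*\|^2\mid\mathcal{F}_k]\ \le\ \Big(\tfrac{1}{1+\sigma\lambda}\|\x^k-\x^*\|+b_k\Big)^2. \]

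Next I would linearize the state-dependent error: $b_k\le\tfrac{\nu_1\|\x^k\|+\nu_2}{\sqrt{2N_k}}\le\beta_k\|\x^k-\x^*\|+\gamma_k$ with $\beta_k\triangleq\tfrac{\nu_1}{\sqrt{2N_k}}$ and $\gamma_k\triangleq\tfrac{\nu_1\|\x^*\|+\nu_2}{\sqrt{2N_k}}$. Taking unconditional expectations, expanding the square, and using $\mathbb{E}[\|\x^k-\x^*\|]\le s_k$ yields $s_{k+1}\le\theta_k s_k+\gamma_k$ with $\theta_k\triangleq(1+\sigma\lambda)^{-1}+\beta_k$. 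Since $N_k=\lfloor\rho^{-(k+1)}\rfloor$, both $\beta_k$ and $\gamma_k$ are $\mathcal{O}(\rho^{(k+1)/2})$; and since $(1+\sigma\lambda)^{-1}<\sqrt q=\tfrac{\sqrt{1+d}}{1+\sigma\lambda}$, there is a threshold $K_0$ with $\theta_k\le\sqrt q$ for all $k\ge K_0$ --- this is exactly where the hypothesis ``$d$ sufficiently small'' enters: it keeps $q<1$ while $d>0$ supplies the strict gap $\sqrt q>(1+\sigma\lambda)^{-1}$ needed to absorb $\beta_k$. Hence for $k\ge K_0$, $s_{k+1}\le\sqrt q\, s_k+c\,(\sqrt\rho)^{k+1}$ for a constant $c$. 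Unrolling this linear recursion: if $q\ne\rho$ the geometric sums give $s_k=\mathcal{O}\big(\max\{\sqrt q,\sqrt\rho\}^{\,k}\big)$, hence $\mathbb{E}[\|\x^k-\x^*\|^2]=s_k^2=\mathcal{O}(\max\{q,\rho\}^{\,k})$, i.e.\ $\tilde\rho=\max\{q,\rho\}$; if $q=\rho$ the sum acquires an extra factor $k$, so $s_k=\mathcal{O}(k\,(\sqrt q)^{k})=\mathcal{O}((\sqrt q+\delta)^k)$ for any small $\delta>0$, giving $\mathbb{E}[\|\x^k-\x^*\|^2]=\mathcal{O}(\tilde\rho^k)$ with $\tilde\rho=(\sqrt q+\delta)^2$ an arbitrary point of $(q,1)$. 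The finitely many early indices ($k<K_0$ or $N_k<J$) have finite mean-square error by \eqref{vsa1} and are absorbed into the prefactor $\tilde D$ (with $\|\x^0-\x^*\|\le M$ fixing the base case). This proves (a).

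For (b), part (a) shows that $K+1=\big\lceil\ln(\tilde D/\epsilon)/\ln(1/\tilde\rho)\big\rceil=\mathcal{O}(\ln(1/\epsilon))$ outer iterations suffice to reach $\mathbb{E}[\|\x^{K+1}-\x^*\|^2]\le\epsilon$. Since the sample sizes grow geometrically, $\sum_{k=0}^{K}N_k\le\sum_{k=0}^{K}\rho^{-(k+1)}\le\tfrac{\rho^{-(K+1)}}{1-\rho}$, and with $\rho$ taken so that $\tilde\rho=\rho$ (which holds whenever $\rho\ge q$) one has $\rho^{-(K+1)}=\mathcal{O}(\tilde D/\epsilon)$, hence $\sum_{k=0}^{K}N_k=\mathcal{O}(1/\epsilon)$.

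\textbf{Anticipated main obstacle.} The delicate point is the treatment of the resolvent error. Because $\x^{k+1}=\z^k_{N_k}$ is the output of finitely many (SA) steps, it is a \emph{biased} estimator of $J_{\lambda}^T(\x^k)$, so the cross term in the one-step bound cannot be annihilated in conditional expectation and must be absorbed through the mean-square estimate of Proposition~\ref{prop-res-error}; moreover that estimate is \emph{state-dependent} (it scales with $\|\x^k\|^2$), which turns the inequality into a self-referential recursion in $s_k$ and forces one to verify that the effective contraction factor $\theta_k=(1+\sigma\lambda)^{-1}+\mathcal{O}(\rho^{(k+1)/2})$ eventually drops below $\sqrt q$. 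Securing this uniform contraction is precisely what the slack $d>0$ in $q=(1+d)/(1+\sigma\lambda)^2$ provides.
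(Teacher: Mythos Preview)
Your argument is correct and reaches the same conclusion, but the mechanics differ from the paper's in a useful way. The paper does not complete the square; instead it applies Young's inequality $\|a+b\|^2\le(1+d)\|a\|^2+(1+\tfrac{1}{d})\|b\|^2$ directly to $\|\x^{k+1}-\x^*\|^2=\|(J_\lambda^T(\x^k)-\x^*)+e_k\|^2$, combines Lemma~\ref{pps1t} with Proposition~\ref{prop-res-error}, and obtains the recursion $\mathbb{E}[\|\x^{k+1}-\x^*\|^2]\le q\,\mathbb{E}[\|\x^k-\x^*\|^2]+(1+\tfrac{1}{d})\tfrac{D}{N_k}$ with $D=\nu_1^2(2C^2+2\|\x^*\|^2)+\nu_2^2$. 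Here the parameter $d$ enters as the Young-inequality weight, and the state-dependent part of the resolvent error is absorbed into a constant $D$ via an a~priori bound $\|\x^k-\x^*\|\le C$ (inherited from the a.s.\ boundedness argument used in the monotone case). Your route---completing the square and working with $s_k=(\mathbb{E}[\|\x^k-\x^*\|^2])^{1/2}$---handles the state dependence more explicitly: you fold the $\nu_1$-term into the effective contraction $\theta_k=(1+\sigma\lambda)^{-1}+\beta_k$ and use the slack $\sqrt q-(1+\sigma\lambda)^{-1}>0$ furnished by $d>0$ to eventually dominate $\beta_k$. This is arguably cleaner because it avoids invoking the separate iterate-boundedness constant $C$; the paper's route is shorter once that bound is granted. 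The three-case unrolling ($q\lessgtr\rho$ and $q=\rho$) and the geometric-sum computation for part~(b) match the paper's treatment; your remark that the $\mathcal{O}(1/\epsilon)$ sample complexity tacitly needs $\tilde\rho=\rho$ (i.e.\ $\rho\ge q$) is a point the paper leaves implicit when it writes ``cases (ii) and (iii) lead to similar complexities.''
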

\begin{proof}
(a) By invoking Lemma~\ref{pps1t} and Prop.~\ref{prop-res-error},  we obtain the following:
\us{ \begin{align}
\mathbb{E}[\|\x^{k+1}&-\x^*\|^2]  \leq (1+d)\mathbb{E}[\|y_{k+1}-\x^*\|^2] + (1+\tfrac{1}{d})\mathbb{E}[\|\x^{k+1}-y_{k+1}\|^2]\notag \\
	& \le q\mathbb{E}[\|\x^k-\x^*\|^2]+(1+\tfrac{1}{d})\tfrac{D}{{N_{k}}}, \label{eq58p-t}
q \triangleq  \tfrac{(1+d)}{(1+\sigma\lambda)^{2}}, D \triangleq \nu_1^2(2C^2+2\|\x^*\|^2)+\nu_2^2, 
\end{align}
and $d > 0$ is chosen such that $\tfrac{(1+d)}{(1+\sigma \lambda)^2}<1$.}
Recall that $N_k$ can be bounded as seen next.
\us{\begin{align}
N_k= \lfloor \rho^{-(k+1)} \rfloor \ge \left\lceil \tfrac{1}{2}\rho^{-(k+1)} \right\rceil \ge \tfrac{1}{2}\rho^{-(k+1)}. \label{eq59p-t}
\end{align}}
We now consider three cases. \\
\noindent  (i): $q<\rho<1$. \us{Using \eqref{eq59p-t} in \eqref{eq58p-t} and defining $\bar D=2(1+\tfrac{1}{2})D$, $\tilde D \triangleq (M+\tfrac{\bar D}{1-q/p})$, we obtain} 
\begin{align*}
\notag\mathbb{E}[\|&\x^{k+1}-\x^*\|^2] \le q\mathbb{E}[\|\x^k-\x^*\|^2]+\tfrac{(1+\frac{1}{d})D}{N_{k}}\leq\us{q}\mathbb{E}[\|\x^k-\x^*\|^2]+\bar D\rho^{k+1} \\
&\le q^{k+1}\|\x^0-\x^*\|+\bar D\sum_{j=1}^{k+1}q^{k+1-j}\rho^j 
\le Mq^{k+1}+\bar D\rho^{k+1}\sum_{j=1}^{k+1}(\tfrac{q}{\rho})^{k+1-j}\le \tilde{D}\ssc{\rho}^{k+1}.
\end{align*}
\noindent  (ii): $\rho<q<1$. Akin to (i) and defining $\tilde D$ apprioriately,  $\mathbb{E}[\|\x^{k+1}-\x^*\|^2] \le\tilde{D}\ssc{q}^{k+1}$. \\
\noindent (iii): $\rho=q<1$. 
If $\tilde{\rho} \in (q,1)$ and $\widehat{D} > \tfrac{1}{\ln(\tilde{\rho}/q)^e}$, proceeding similarly we obtain
\begin{align*}
\notag\mathbb{E}[&\|\x^{k+1}-\x^*\|^2] \le q^{k+1}\mathbb{E}[\|\x^0-\x^*\|^2]+\bar D\sum_{j=1}^{k+1}q^{k+1}\le Mq^{k+1}+\bar D\sum_{j=1}^{k+1}q^{k+1}\\
& = Mq^{k+1}+\bar D(k+1)q^{k+1} 
\overset{\tiny \cite[\mbox{Lemma}~4]{ahmadi2016analysis}}{\le}  \tilde{D} \tilde{\rho}^{k+1}, \mbox{ where } \tilde D \triangleq (M+\widehat{D}). 
\end{align*}
\noindent Thus, $\{\x^k\}$ converges linearly in an expected-value sense. \\ 
(b) Case (i): If $q<\rho<1$. From (a), it follows that
\begin{align*}
\mathbb{E}[\|\x^{K+1}-\x^*\|^2] &\le \tilde{D}\ssc{\rho}^{K+1}\leq \  \epsilon \Longrightarrow  K \ge  \log_{1/\ssc{\rho}}(\tilde{D}/\epsilon) - 1 .
\end{align*} 
If $K = \lceil \log_{1/\ssc{\rho}}(\tilde{D}/\epsilon)\rceil - 1$, then ({\bf VR-SPP}) requires $\sum_{k=0}^KN_k$ evaluations. Since $N_k=\lfloor \rho^{-(k+1)} \rfloor \le \rho^{-(k+1)}$, then we have 
\us{\begin {align*}
 \quad \sum_{k=0}^{\lceil \log_{1/\ssc{\rho} }(\tilde{D}/\epsilon)\rceil -1}\rho^{-(k+1)}  =
\sum_{t=1}^{\lceil \log_{1/\ssc{\rho} }(\tilde{D}/\epsilon)\rceil}\rho^{-t} 
 \le \tfrac{1}{\rho^2\left(\tfrac{1}{\rho}-1\right)}\left(\tfrac{1}{\rho}\right)^{\lceil \log_{1/\ssc{\rho}}(\tilde{D}/\epsilon)\rceil}\\ 
 \le
\tfrac{1}{\rho\left(\tfrac{1}{\rho}-1\right)}\left(\tfrac{1}{\rho}\right)^{
\log_{1/\ssc{\rho}}(\tilde{D}/\epsilon)+1} 
 \le \tfrac{1}{\left(1-\rho\right)}\left(\tfrac{1}{\rho}\right)^{\log_{1/\rho}(\tilde{D}/\epsilon)}
  \le \tfrac{1}{(1-\rho)}\left(\tfrac{\tilde{D}}{\epsilon}\right).
\end{align*}}
We omit cases (ii) and (iii) which lead to similar complexities.
%If $\rho<q<1$: Similarly, we have 
%$\sum_{k=0}^KN_k \le \left(\frac{\tilde{D}}{\epsilon}\right)^2\tfrac{1}{q^2(1-q^2)}.$\\
%If $q=\rho<1$: From (a), we have that $\mathbb{E}[\|x_{K+1}-\x^*\|] \le (M+\hat{D})\tilde{\rho}^{K+1}$. It follows that
%$\sum_{k=0}^KN_k \le \left(\frac{M+\hat{D}}{\epsilon}\right)^2\frac{1}{\tilde{\rho}^2(1-\tilde{\rho}^2)}.$
\end{proof}
\noindent {\bf Remark 1.} \us{Several aspects deserve additional emphasis.\\ 
\noindent (a)  {\em Rates and asymptotics.} To the best of our knowledge,  we remain unaware of a.s. convergence (an exception being Bianchi~\cite{bianchi2016ergodic}) and rate statements under state-dependent noise requirements for either monotone or strongly monotone inclusions. Note that Bianchi~\cite{bianchi2016ergodic} develops a stochastic proximal-point scheme that does not come equipped with rate statements; however, since the resolvent requires computing at every step, its practical behavior for large-scale regimes tends to be poorer when the resolvent is challenging to compute.\\
\noindent (b) {\em Algorithm parameters.} The inner steplengths of the (SA) scheme utilize the user-specified proximal parameter while the outer steps in {\bf (VR-SPP)} employ a constant user-specified steplength. The sample-sizes are also free of algorithm parameters. The minimum number of steps $J_1$ in each inner step do require knowing $M_1$ but this may be possible to obviate by using an increasing sequence of minimal number of steps. \\
\noindent (c) {\em Lipschitzian parameters.} Unlike (SA) schemes, this scheme does not tend to be as hampered by ill-conditioning since outer steplengths are not contingent on Lipschitzian parameters while inner steps are also user-specified. 
}

\noindent (d) {\em Practical implementations of VR schemes.} To achieve an error of $\epsilon=10^{-3}$, ({\bf VR-SPP}) requires $\mathcal{O}(10^9)$ samples for the maximal monotone mapping or $\mathcal{O}(10^3)$ samples for the strongly monotone mapping, respectively. In a typical finite sum optimization problem $\min\mathbb{E}[f(\x)]:=\tfrac{1}{n}\sum_{i=1}^nf_i(\x)$, $n$ is larger than $10^9$, thus the number of samples needed in both of our schemes is not expensive. If a sharper rate is wanted in this case, we just set $N_k=n$ to prevent unboundedness of $N_k$.

\subsection{Broader applicability of scheme for monotone stochastic inclusions} \label{sec:3.5}
%The generalized equation represents a crucial mathematical
%object in the field of optimization theory and represents a
%set-valued generalization to the more standard root-finding
%problem that requires solving $F(\x) = 0$, where $F$ is a
%single-valued map defined as $F: \Real^n \to \Real^n$.
%Specifically, if $T$ is a set-valued map, defined as $T: \Real^n \rightrightarrows \Real^n$, then the generalized equation (GE), alternately
%referred to as an inclusion, takes the form 
%\begin{align}\tag{GE} 0 \in T(\x).
%\end{align}
%While such objects have a storied history, an excellent overview was first
%provided by Robinson~\cite{robinson83generalized}. Generalized equations have
%been extensively examined since the 70s when
%Rockafellar~\cite{rockafellar1976monotone} developed a proximal point scheme
%for a generalized equation characterized by monotone operators. In fact, this
%scheme subsumes a range of well known schemes such as the augmented Lagrangian
%method~\cite{glowinski1989augmented}, Douglas-Rachford
%splitting~\cite{douglas1956numerical}, amongst others. It can be observed that a large class of optimization and equilibrium problems can be modeled as \eqref{GE},
%including the necessary conditions of nonlinear programming problems,
%variational inequality and complementarity problems, and a broad range of
%equilibrium problems (cf.~\cite{robinson83generalized}).  
The variance-reduced proximal-point framework has broader applicability  in addressing the {\em stochastic} counterpart of generalized equations~\cite{robinson83generalized},
a class of problems that has seen recent study via sample-average
approximation (SAA) techniques~\cite{chen19convergence}.  Formally, the
stochastic generalized equation requires an $\x \in\Real^n$
such that 
\begin{align} \tag{SGE}
0 \in \mathbb{E}[T(\x,\xi(\omega))], 
\end{align}
where the components of the map $T$ are denoted by $T_i$, $i=1,\dots,n$, $\xi:\Omega \to \Real^d$ is a random variable, $T_i:
\Real^n \times \Omega \rightrightarrows \Real^n$ is a set-valued map, $\mathbb{E}[\cdot]$ denotes the expectation, and the associated
probability space is given by
$(\Omega, {\cal F}, \mathbb{P})$.The expectation of a set-valued map leverages the Aumann integral~\cite{aumann1965integrals} and is formally defined as
$\mathbb{E}[T_i(\x,\xi(\omega))]= \left\{ \int v_i(\omega)dP(\omega)\mid \quad v_i(\omega)\in T_i(\x,\xi(\omega)) \right\}.$
Consequently, the expectation $\mathbb{E}[T(\x,\omega)]$ can be defined as a Cartesian product of the sets $\mathbb{E}[T_i(\x,\omega)]$, defined as 
%In short, we are interested in an $x^* \in \Real^n$ such that 
$\mathbb{E}[T(\x,\omega)]  \ \triangleq \ \prod_{i=1}^n\mathbb{E}[T_i(\x,\omega)].$ 
We motivate (SGE) by considering some examples.  Consider
the stochastic convex optimization
problem~\cite{dantzig2010linear,birge2011introduction,shapiro2014lectures} given by
${\displaystyle \min_{\x \in \mathcal{X}}} \, \mathbb{E}[g(\x,\omega)],$ where
$g(\bullet,\omega)$ is a convex function for every $\omega$
and $\mathcal{X}$ is a closed and convex set. Such a problem can be
equivalently stated as $0 \in T(\x) \triangleq
\mathbb{E}[G(\x,\omega)] + \mathcal{N}_\mathcal{X}(\x)$, where $G(\x,\omega) = \partial g(\x,\omega)$ and $N_\mathcal{X}(\x)$ denotes the normal cone of $\mathcal{X}$ at $\x$. In
fact, both the single-valued~\cite{jiang08stochastic,juditsky2011solving,shanbhag2013stochastic} and  multi-valued~\cite{ravat17existence} stochastic variational inequality problems
can be cast as stochastic inclusions as well as seen by $0 \in T(\x)
\triangleq \mathbb{E}[F(\x,\omega)] + \mathcal{N}_\mathcal{X}(\x)$,
where $F(\bullet,\omega)$ is either single-valued or set-valued. This
introduces a pathway for examining stochastic analogs of traffic
equilibrium~\cite{ravat17existence} and Nash equilibrium
problems~\cite{ravat2011characterization} as well as a host of other problems
subsumed by variational inequality problems~\cite{facchinei2007finite}.

%--------------------------------------------
\section{Partially distributed  schemes for hierarchical potential games}\label{sec:4}
%--------------------------------------------
In this section, we again consider an $\Nbold$-player noncooperative game $\Gcal$ where the $i$th player's problem is defined by the parametrized hierarchical problem (Player$_i(\x^{-i})$), defined in Section~\ref{sec:spp}, and restated next.  
\begin{align} \tag{Player$_i(\x^{-i})$}
    \min_{\x^i \in \Xscr^i} \ f_i(\x^i,\x^{-i}) \triangleq \mathbb{E}\left[\tilde{f}_i(\x^i, \y^i(\x,\omega),\x^{-i},\omega)\right], 
\end{align}
where $\tilde{f}_i(\x^i,\y^i(\x,\omega), \x^{-i},\omega) \triangleq \tilde{g}_i(\x^i,\x^{-i},\omega) +\tilde{h}_i(\x^i,\y^i(\x,\omega),\omega)$.
In this section, under the assumption that for any $\omega \in \Omega$, $\tilde{f}_i(\x^i,\x^{-i},\y^i(\x,\omega),\omega)$ is convex in $\x^i$ over $\Xscr_i$ for any $\x^{-i} \in \prod_{j\neq i} \Xscr_j$ and $\Gcal$ admits a suitable
potentiality assumption,   we propose and prove the
asymptotic convergence of an  asynchronous smoothed proximal best-response
scheme (and its relaxed counterpart) for computing an approximate  Nash
equilibrium in Section~\ref{sec:4.1}. This scheme relies on computing
increasingly accurate best-responses, which are provided via a zeroth-order
method that processes the implicit form of the hierarchical problem. To this
end,   we introduce and  discuss a zeroth-order framework for computing an
approximate solution of a hierarchical  stochastic convex program in
Section~\ref{sec:4.2}. 

\subsection{A smoothing-based framework for hierarchical games}\label{sec:4.1}
Recall that for $i \in \{1, \cdots, \Nbold\}$,  $f_i(\bullet,\x^{-i})$ is
convex but not necessarily $L$-smooth on $\Xscr_i$ for every $\x^{-i} \in
\Xscr^{-i}$. In fact, it may be recalled that $$ f_{i} (\x^i,\x^{-i})
\triangleq \mathbb{E}[ \tilde{f}_i(\x^i, \y_i(\x,\omega), \x^{-i},\omega)]$$ and computing even a subgradient is
not immediate. Instead, the function may be evaluated, suggesting the
development of a gradient-free method facilitated by  introducing a randomized
smoothing of $f_i$. This smoothing allows for both claiming the
$L_{i,\eta}$-smoothness of the smoothed function (for a suitable $L_{i,\eta}$)
and providing a relation between $f_i$ and $f_{i,\eta}$. Such smoothing
techniques have a storied history, traceable to the 1900s~\cite{steklov1} and
employed for resolving nonsmooth convex
optimization~\cite{yousefian10adaptive,nesterov17} and monotone
games~\cite{yousefian2013self}.  We formally define an $\eta$-smoothed game $\Gcal_{\eta}$, given a game $\Gcal \in \Gscr^{\rm chl}_{\rm pot}$. We make the following assumption.

\begin{assumption} \label{bound-grad} \em For $i = 1, \cdots, \Nbold$,  $f_i(\bullet,\x^{-i})$ has uniformly bounded subgradients over $\Xscr$, i.e. for every $\x \in \Xscr$, we have that $\|\tilde{d}_i\| \leq \mathcal{L}_0$ where $\tilde{d}_i \in \partial_{\x^i} f_i(\x^i,\x^{-i}).$
\end{assumption}
Naturally, one might ask if such an assumption is indeed valid in the current setting. Inspired by~\cite{patriksson99stochastic} and ~\cite[Prop.~1]{cui21zeroth}, we provide Prop.~\ref{prop-lip-func} in Appendix {\bf A.2.} that provides conditions under which the above assumption holds. 

\begin{definition}[{\bf An $\eta$-smoothed noncooperative game $\Gcal_{\eta}$}] \em Consider a game $\Gcal \in \Gscr^{\rm chl}_{\rm pot}$ in which the $i$th player solves (Player$_i(\x^{-i})$). Suppose $\Gcal_{\eta}$ denotes a related game in which for $i = 1, \cdots, \Nbold$, the $i$th player's smoothed problem is defined as
\begin{align} \tag{Player$_{i,\eta}(\x^{-i})$}
\min_{\x^i \in \Xscr^i} \ f_{i,\eta}(\x^i,\x^{-i}) \triangleq \mathbb{E}_{u_i \in \mathbb{B}_i} \left[ \mathbb{E}\left[ \tilde{f}_i(\x^i +\eta u_i,\y^i(\x^i +\eta u_i,\x^{-i},\omega), \x^{-i},\omega)\right] \right],
\end{align}
where %\iv{$f_{i,\eta}(\x^i,\x^{-i}) \triangleq \mathbb{E}_{u\in \mathbb{B}_i}[f_i(\x^i+\eta u_i,\x^{-i})]$} where 
$\mathbb{B}_i \subseteq \Real^{n_i}$ is a sphere centered at the origin and $u_i$ is independent of $\omega$.
\end{definition}

Absent such a smoothing, while techniques are available for resolving this
hierarchical problem (which is in effect an MPEC)
(cf.~\cite{luo96mathematical,outrata98nonsmooth}), we remain unaware of
techniques that can provide $\epsilon$-solutions of such problems in finite
time. In fact, in recent work~\cite{cui21zeroth}, we have developed a
zeroth-order framework for precisely such problems and in this paper, we
consider a variant of such a scheme for contending with the proximal best-response problem in Section~\ref{sec:4.2}. Next, we discuss the impact of smoothing on the convexity and Lipschitz continuity of the gradient  of $f_i(\bullet, \x^{-i})$ via a result from ~\cite[Lemma~1]{cui21zeroth}.   

\begin{lemma} [{\bf Convexity and smoothness of $f_{i,\eta}(\bullet,\x^{-i}$}]\label{lem:conv-heta} \em For $i = 1, \cdots, \Nbold$,  suppose  $f_{i,\eta}(\bullet,\x^{-i})$ is
    defined as \us{$f_{i,\eta} (\x^i,\x^{-i}) \triangleq \mathbb{E}_{u^i\in \mathbb{B}_i} [f_i(\x^i+\eta u_i,\x^{-i})]$} where $u_i$
    is uniformly distributed in a ball $\mathbb{B}_i \subseteq \Real^{n_i}$. Then
    there exists an $(\alpha_i,
    \beta_i)$ such that the following hold.

    \noindent (a) $f_{i,\eta}(\bullet,\x^{-i})$ is convex and  $\alpha_i/\eta$-smooth for every $\x^{-i} \in \Xscr^{-i}$, i.e. 
    $$\|\nabla_{\x^i} f_{i,\eta}(\x^i,\x^{-i})-\nabla_{\x^i} f_{i,\eta}(\y^i,\x^{-i}) \| \leq \tfrac{\alpha_i}{\eta} \|\x^i-\y^i\| \mbox{ for any } \x^i, \y^i \in \Xscr_i.$$

    \noindent (b) For any $\x \in \Xscr$,  \begin{align}
        \label{bd-smooth} \us{f_i(\x) \leq f_{i,\eta}(\x) \leq f_i(\x) + \eta \beta_i}.
\end{align} 
Furthermore, $\bar \alpha = \displaystyle \max_{i=1,\cdots, \Nbold} \alpha_i$ and $\bar \beta = \displaystyle \max_{i=1, \cdots, \Nbold} \beta_i$.
\qed
\end{lemma}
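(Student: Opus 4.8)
The plan is to treat Lemma~\ref{lem:conv-heta} as a statement about the ball-smoothing of a convex, Lipschitz function. By Fubini's theorem the defining expression coincides with $f_{i,\eta}(\x^i,\x^{-i})=\mathbb{E}_{u_i\in\mathbb{B}_i}[f_i(\x^i+\eta u_i,\x^{-i})]$, i.e.\ the average of $f_i(\bullet+\eta u_i,\x^{-i})$ over $u_i$ uniform on the unit ball $\mathbb{B}_i$. First I would record that Assumption~\ref{bound-grad} forces $f_i(\bullet,\x^{-i})$ to be $\mathcal{L}_0$-Lipschitz on $\Xscr_i$, uniformly in $\x^{-i}$, since a uniform bound $\|\tilde d_i\|\le\mathcal{L}_0$ on all subgradients of a convex function is equivalent to $\mathcal{L}_0$-Lipschitz continuity; this is the only regularity on $f_i$ that will be used.

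For convexity and part (b): for fixed $\x^{-i}$ and fixed $u_i$, the map $\x^i\mapsto f_i(\x^i+\eta u_i,\x^{-i})$ is convex as a translate of a convex function, and averaging over $u_i$ preserves convexity, so $f_{i,\eta}(\bullet,\x^{-i})$ is convex. The lower bound $f_i(\x)\le f_{i,\eta}(\x)$ is Jensen's inequality for the convex function $f_i(\bullet,\x^{-i})$ together with $\mathbb{E}[u_i]=0$; the upper bound follows from $|f_i(\x^i+\eta u_i,\x^{-i})-f_i(\x^i,\x^{-i})|\le\mathcal{L}_0\eta\|u_i\|\le\mathcal{L}_0\eta$ after taking expectations. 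Hence part (b) holds with $\beta_i=\mathcal{L}_0$.

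For part (a): I would pass to the integral form $f_{i,\eta}(\x^i,\x^{-i})=\tfrac{1}{\mathrm{vol}(\eta\mathbb{B}_i)}\int_{\eta\mathbb{B}_i}f_i(\x^i+w,\x^{-i})\,dw$, apply the divergence theorem componentwise, and use that the outer normal on $\partial(\eta\mathbb{B}_i)$ is $w/\eta$ to obtain the surface-integral representation $\nabla_{\x^i}f_{i,\eta}(\x^i,\x^{-i})=\tfrac{n_i}{\eta}\,\mathbb{E}_{v_i}[f_i(\x^i+\eta v_i,\x^{-i})\,v_i]$, with $v_i$ uniform on $\partial\mathbb{B}_i$; this also shows $f_{i,\eta}(\bullet,\x^{-i})$ is $C^1$. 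Then for any $\x^i,\y^i\in\Xscr_i$, $\|\nabla_{\x^i}f_{i,\eta}(\x^i,\x^{-i})-\nabla_{\x^i}f_{i,\eta}(\y^i,\x^{-i})\|\le\tfrac{n_i}{\eta}\,\mathbb{E}_{v_i}[|f_i(\x^i+\eta v_i,\x^{-i})-f_i(\y^i+\eta v_i,\x^{-i})|\,\|v_i\|]\le\tfrac{n_i\mathcal{L}_0}{\eta}\|\x^i-\y^i\|$, so (a) holds with $\alpha_i=n_i\mathcal{L}_0$ (a sharper $\sqrt{n_i}$ dependence is obtainable by subtracting a constant inside the expectation before bounding, but is not needed here). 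The identities $\bar\alpha=\max_{i}\alpha_i$ and $\bar\beta=\max_i\beta_i$ are then immediate.

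The one point requiring care is justifying the gradient identity for a merely Lipschitz $f_i$: one first establishes it for smooth $f_i$ via the divergence theorem, then passes to the limit through a standard mollification argument, or equivalently observes that the surface-integral expression on the right is continuous in $\x^i$ and therefore is the gradient of the (continuous) ball-average. Everything else is elementary, and since the statement is exactly \cite[Lemma~1]{cui21zeroth}, I would cite that reference for the remaining details.
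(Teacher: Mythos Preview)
Your proposal is correct and follows essentially the same approach as the paper: the paper's proof is a brief sketch that invokes Jensen's inequality for the lower bound in (b), the uniform subgradient bound (Assumption~\ref{bound-grad}) for the upper bound, and cites external lemmas (in \cite{yousefian10convex} and \cite{cui21zeroth}) for convexity and $\alpha_i/\eta$-smoothness. You have simply unpacked those cited lemmas---the surface-integral gradient representation via the divergence theorem and the resulting Lipschitz estimate---into a self-contained argument, arriving at the same constants $\beta_i=\mathcal{L}_0$ and $\alpha_i=n_i\mathcal{L}_0$ and ultimately pointing to the same reference \cite[Lemma~1]{cui21zeroth}.
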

\begin{proof} We provide a proof sketch.  (a) While convexity of $f_{i,\eta}(\bullet,\x^{-i})$ for any $\x^{-i} \in \Xscr^{-i}$ follows from ~\cite[Lemma~2(a)]{yousefian10convex}, $\tfrac{\alpha_i}{\eta}-$smoothness of $f_{i,\eta}(\bullet,\x^{-i})$ follows from ~\cite[Lemma~2(c)]{yousefian10convex} by invoking the uniform boundedness of the subgradients. (b) The left-hand side of \eqref{bd-smooth} is a consequence of employing Jensen's inequality while the right-hand side is a result of the subgradient inequality and the uniform boundedness of subgradients.     
\end{proof}
\noindent {\bf Comment.} Note that if Assumption~\ref{bound-grad} is weakened to the uniform bound that  $\|\tilde{d}_i\|\leq \mathcal{B}_0\|\x\|^2 + \mathcal{L}_0$ where $\tilde{d}_i \in \partial_{\x^i} f_i(\x)$, we may still invoke this result under the requirement that $\Xscr_i$ is a bounded set for $i=1, \cdots, \Nbold$. \\

Throughout this section, we make the following ground assumption.
\begin{tcolorbox}{\bf Ground Assumption (G3)}
Consider the $\Nbold$-player game $\Gcal$ in which the $i$th player is defined as (Player$_i(\x^{-i}))$ for $i=1, \cdots, \Nbold$. For $i=1, \cdots, \Nbold$, the parametrized lower-level mapping $F_i(\bullet,\x,\omega)$ is a strongly monotone map for $\x \in \Xscr$ and for every $\omega \in \Omega$. Assumption~\ref{bound-grad} holds, $\mathscr{G}_{\eta}$ is a potential game for any $\eta > 0$, $P_{\eta}(\x)$ denotes its potential function, and $P_{\eta}(\x) \geq \tilde{P}_{\eta}$ for every $\x \in \Xscr$. %  The associated map $T$, defined as \eqref{def-T}, is monotone. 
\end{tcolorbox}
%  assume the potentiality of $\Gcal_{\eta}$ for any $\eta > 0$. 
%\begin{assumption}[{\bf Potentiality of $\Gcal_{\eta}$}] \em For any $\eta > 0$, suppose $\mathscr{G}_{\eta}$ is a potential game for any $\eta > 0$ and $P_{\eta}(\x)$ denotes its potential function.
%\end{assumption}

We should emphasize that in many settings, potentiality of $\Gcal$ implies potentiality of $\Gcal_{\eta}$. For purposes of brevity, we do not discuss this further.  Associated with $\Gcal$, we define the proximal best-response~\cite{pang09nash} of player $i$  as follows, given rival decisions $\x^{-i}$.
\begin{align} \tag{PBR$_i({\x})$}
    B_{i}(\x) & \triangleq \argmin{\vv^i \in \Xscr_i} \ \left[ f_i(\x^i,\x^{-i})+ \tfrac{c}{2}\|\vv^i-\x^{i}\|^2 \right] \\
    \mbox{ where } f_i(\x^i,\x^{-i}) & \triangleq 
\mathbb{E}\left[\tilde{f}_{i} (\x^i,\y^i(\x^i, \x^{-i},\omega), \x^{-i},\omega)\right]. \notag
\end{align}
Similarly, we may define  the $\eta$-{\em smoothed} proximal best-response of player $i$ as follows. 
\begin{align} \tag{SPBR$_{i,\eta}({\x})$}
    B_{i,\eta}(\x) & \triangleq \argmin{\vv^i \in \Xscr_i} \ \left[ f_{i,\eta}(\x^i,\x^{-i}) + \tfrac{c}{2}\|\vv^i-\x^{i}\|^2 \right] \\ 
    \mbox{ where } f_{i,\eta}(\x^i,\x^{-i}) & \triangleq \mathbb{E}_{u_i \in \mathbb{B}_i} \left[\mathbb{E}\left[\tilde{f}_{i} (\x^i+\eta u_i,\y^i(\x^i+\eta u_i, \x^{-i},\omega), \x^{-i},\omega)\right]\right]. \notag
\end{align}
 Our next result  provides a
 deeper understanding of the  relationship between $B_i(\x)$ and $B_{i,\eta}(\x)$.

    \begin{proposition}[{\bf Proximal best-response map (PBR) and its smoothed variant (SPBR$_{\eta}$)}]\label{prop-spbr} \em Consider a game $\Gcal \in \Gscr^{\rm chl}_{\rm pot}$. For any $i \in \{1, \cdots, \Nbold\}$, suppose $f_i(\bullet,\x^{-i})$ is a convex function for any $\x^{-i} \in \Xscr_{-i}$ and $\Xscr_i \subseteq \Real^{n_i}$ is a closed and convex set. For any $i \in \{1, \cdots, \Nbold\}$, suppose $f_{i,\eta}(\bullet,\x^{-i})$ denotes the $\eta-$smoothing of $f_i(\bullet,\x^{-i})$. Suppose B$_i(\x)$ and B$_{i,\eta}(\x)$ denote the proximal best-response and smoothed proximal-response for $i \in \{1, \cdots, \Nbold\}.$ Then the following hold for any $i \in \{1, \cdots, \Nbold\}$.\\

\noindent (a) Both $B_i(\x)$ and $B_{i,\eta}(\x)$ are single-valued maps for $\x \in \Xscr$, {where $\Xscr \triangleq \prod_{i=1}^\Nbold \Xscr_i$.}

\smallskip

\noindent (b)  For any $i \in \{1, \cdots, \Nbold\}$ and $\x^{-i} \in \prod_{j \neq i} \Xscr_j$, $f_{i,\eta}(\bullet,\x^{-i})$ converges continuously to $f_i(\bullet,\x^{-i})$, i.e. $f_{i,\eta}(\x^i_{\eta},\x^{-i}) \to f_i(\x^i,\x^{-i})$ for all $\x^i_{\eta} \to \x^i$ where $\x^i_{\eta} \in \Xscr_i$ and $\x^i \in \Xscr_i$. Further, $f_{i,\eta}(\bullet,\x^{-i})$ converges uniformly to $f_i(\bullet,\x^{-i})$ on every bounded subset of $\Real^{n_i}$.

%\smallskip
%\noindent (c) \iv{Let} $x^{i,\eta} = B_{i,\eta}(\x)$. Consider a sequence $\{\eta_k\} \to 0$ and a convergent subsequence $\{\x^{i,\eta_k}\}_{k \in \Kscr}$ with limit point $\bar{\x}^i$. Then $\bar{\x}^i = B_i(\x).$ 

    \end{proposition}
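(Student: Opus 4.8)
The plan is to treat parts (a) and (b) separately, leaning on Lemma~\ref{lem:conv-heta} throughout. For part (a), the key observation is that for each $i$ and each fixed $\x^{-i} \in \Xscr_{-i}$, the objective of (PBR$_i(\x)$) is the sum of the convex function $\vv^i \mapsto f_i(\vv^i,\x^{-i})$ and the strongly convex quadratic $\tfrac{c}{2}\|\vv^i - \x^i\|^2$, hence is $c$-strongly convex on the closed convex set $\Xscr_i$. A strongly convex function on a nonempty closed convex set attains its minimum at a unique point, so $B_i(\x)$ is a well-defined single-valued map; the same argument applies verbatim to (SPBR$_{i,\eta}(\x)$) once we note that $f_{i,\eta}(\bullet,\x^{-i})$ is convex by Lemma~\ref{lem:conv-heta}(a) (it is in fact $\tfrac{\alpha_i}{\eta}$-smooth, but only convexity is needed here). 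I would state this cleanly, invoking standard existence/uniqueness for strongly convex minimization over a closed convex set, and noting that well-definedness requires $\Xscr_i$ nonempty, which is implicit in (G1).

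For part (b), I would first establish continuous convergence. Fix $i$ and $\x^{-i}$, and take any sequence $\x^i_\eta \to \x^i$ with $\x^i_\eta, \x^i \in \Xscr_i$, as $\eta \downarrow 0$. Write
\begin{align*}
|f_{i,\eta}(\x^i_\eta,\x^{-i}) - f_i(\x^i,\x^{-i})| \leq |f_{i,\eta}(\x^i_\eta,\x^{-i}) - f_i(\x^i_\eta,\x^{-i})| + |f_i(\x^i_\eta,\x^{-i}) - f_i(\x^i,\x^{-i})|.
\end{align*}
The first term is bounded by $\eta \bar\beta$ (equivalently $\eta\beta_i$) using the two-sided estimate \eqref{bd-smooth} of Lemma~\ref{lem:conv-heta}(b), hence $\to 0$. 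The second term $\to 0$ because $f_i(\bullet,\x^{-i})$ is convex and finite on $\Xscr_i$, hence continuous on the (relative) interior, and more robustly because Assumption~\ref{bound-grad} gives uniform boundedness of subgradients, so $f_i(\bullet,\x^{-i})$ is $\mathcal{L}_0$-Lipschitz on $\Xscr_i$: $|f_i(\x^i_\eta,\x^{-i}) - f_i(\x^i,\x^{-i})| \leq \mathcal{L}_0 \|\x^i_\eta - \x^i\| \to 0$. This proves continuous convergence.

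For uniform convergence on bounded subsets of $\Real^{n_i}$: on any bounded set $S \subseteq \Xscr_i$ (the statement is really about $S \cap \Xscr_i$, since $f_i$ is only defined there, though one can also absorb indicator terms), the inequality $0 \leq f_{i,\eta}(\x^i,\x^{-i}) - f_i(\x^i,\x^{-i}) \leq \eta\bar\beta$ from \eqref{bd-smooth} holds for \emph{every} $\x^i$ simultaneously, with the bound $\eta\bar\beta$ independent of $\x^i$. Hence $\sup_{\x^i \in S}|f_{i,\eta}(\x^i,\x^{-i}) - f_i(\x^i,\x^{-i})| \leq \eta\bar\beta \to 0$, which is exactly uniform convergence. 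I expect the main subtlety — not a deep obstacle, but the point requiring care — is the precise domain over which these statements are asserted: $f_i(\bullet,\x^{-i})$ is an implicit (MPEC-derived) function that, by the standing convexity assumption of the section, is convex on $\Xscr_i$, and one must be careful that "bounded subset of $\Real^{n_i}$" is interpreted compatibly with the domain of $f_i$, or else phrase the uniform estimate using the convention that $f_i$ is extended by $+\infty$ off $\Xscr_i$, in which case the $\eta\bar\beta$ bound still controls the gap on the common effective domain. Everything else reduces to the two-sided smoothing estimate and the Lipschitz bound from Assumption~\ref{bound-grad}.
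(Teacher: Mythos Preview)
Your argument for part (a) is exactly the paper's: it simply states that single-valuedness ``follows from strong convexity of the proximal problems,'' which is what you spell out.

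For part (b), your route differs from the paper's. The paper dispatches (b) in one line by citing \cite[Cor.~3.3]{ermoliev95minimization}, a general result on continuous and locally uniform convergence of averaged (Steklov-type) functions to the original. You instead give a direct argument using only tools already in the paper: the two-sided bound \eqref{bd-smooth} from Lemma~\ref{lem:conv-heta}(b) controls $|f_{i,\eta}-f_i|$ by $\eta\beta_i$ uniformly in $\x^i$, which immediately yields uniform convergence (on any set where the bound holds, bounded or not), and combined with the $\mathcal{L}_0$-Lipschitz continuity from Assumption~\ref{bound-grad} gives continuous convergence via your triangle-inequality split. Your approach is more elementary and self-contained; the paper's citation is terser but imports an external result. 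Your caveat about the domain (``bounded subset of $\Real^{n_i}$'' versus $\Xscr_i$) is apt: the bound \eqref{bd-smooth} is asserted only for $\x \in \Xscr$, so strictly speaking your uniform estimate lives on $\Xscr_i$ (or on the effective domain), which is the natural reading here and is also what the cited Ermoliev result would deliver under the standing assumptions.
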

\begin{proof}
    (a) follows from strong convexity of the proximal problems while (b) is a consequence of ~\cite[Cor.~3.3]{ermoliev95minimization}. 
\end{proof}
In fact, it can be shown that a fixed-point of the $\eta$-smoothed proximal best-response map is an $\eta$-approximate Nash equilibrium.

\begin{proposition}[{\bf Fixed-point of (SPBR$_{\eta}$) is NE of $\Gcal_{\eta}$}] \em Consider an $\Nbold$-player noncooperative game $\Gcal$ where the $i$th player solves (Player$_i(\x^{-i})$), given rival decisions $\x^{-i}$. For $i = 1, \cdots, \Nbold$, suppose $f_i(\bullet,\x^{-i})$ is convex on $\Xscr_i$ for any $\x^{-i} \in \Xscr_{-i}$.  Suppose $\x^{\eta} \triangleq \{\x^{1,\eta}, \cdots, \x^{\Nbold,\eta}\}$ is a fixed point of the $\eta$-smoothed best-response map. Then the following hold.

    \noindent (a) $\x^{\eta}$ is a fixed point of (SPBR$_{\eta}(\bullet)$), i.e. $\x^{i,\eta} = B_{i,\eta}(\x^{i,\eta},\x^{-i,\eta})$ for $i = 1, \cdots, \Nbold$ if and only if $\x^{\eta}$ is a Nash equilibrium of $\Gcal_{\eta}$. 

    \noindent (b) If $\x^\eta$ is a fixed point of SPBR$_{\eta}(\bullet)$, then $\x^{\eta}$ is an ${\eta} \bar{\beta}$-Nash equilibrium of $\Gcal$ where $\bar{\beta} \triangleq \displaystyle \max_{i \in \{1, \cdots, \Nbold\}} \beta_i$.

\end{proposition}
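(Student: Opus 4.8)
The plan is to obtain (a) as the standard characterization of a fixed point of a proximal best-response map, and then get (b) by sandwiching the smoothed payoffs between $f_i$ and $f_i + \eta\beta_i$ via Lemma~\ref{lem:conv-heta}(b).

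For (a), I would first record that, by Lemma~\ref{lem:conv-heta}(a), $f_{i,\eta}(\bullet,\x^{-i})$ is convex and continuously differentiable, so the objective defining (SPBR$_{i,\eta}(\x)$) is strongly convex, its minimizer $B_{i,\eta}(\x)$ is unique, and (since $\Xscr_i$ is closed and convex) the minimizer is characterized by the stationarity inclusion $0 \in \nabla_{\x^i} f_{i,\eta}(\vv^i,\x^{-i}) + c(\vv^i-\x^i) + \mathcal{N}_{\Xscr_i}(\vv^i)$. The crucial observation is that at a fixed point the proximal center coincides with the candidate point, so the term $c(\vv^i-\x^i)$ vanishes: $\x^{i,\eta} = B_{i,\eta}(\x^{i,\eta},\x^{-i,\eta})$ holds if and only if $0 \in \nabla_{\x^i} f_{i,\eta}(\x^{i,\eta},\x^{-i,\eta}) + \mathcal{N}_{\Xscr_i}(\x^{i,\eta})$, and by convexity this inclusion is necessary and sufficient for $\x^{i,\eta}$ to solve (Player$_{i,\eta}(\x^{-i,\eta})$). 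Carrying this out for every $i=1,\dots,\Nbold$ gives the asserted equivalence; the converse direction is the same chain of equivalences read backwards, re-inserting the (vanishing) proximal gradient to recover the optimality condition for the proximal problem.

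For (b), I would use that, by part (a), $\x^\eta$ is a Nash equilibrium of $\Gcal_\eta$, so for each $i$ and every $\x^i \in \Xscr_i$ we have $f_{i,\eta}(\x^{i,\eta},\x^{-i,\eta}) \le f_{i,\eta}(\x^i,\x^{-i,\eta})$. Applying the lower bound in \eqref{bd-smooth} on the left and the upper bound in \eqref{bd-smooth} on the right yields $f_i(\x^{i,\eta},\x^{-i,\eta}) \le f_{i,\eta}(\x^{i,\eta},\x^{-i,\eta}) \le f_{i,\eta}(\x^i,\x^{-i,\eta}) \le f_i(\x^i,\x^{-i,\eta}) + \eta\beta_i \le f_i(\x^i,\x^{-i,\eta}) + \eta\bar\beta$, which is precisely the $(\eta\bar\beta)$-Nash inequality; since $i$ and $\x^i$ were arbitrary, $\x^\eta$ is an $\eta\bar\beta$-Nash equilibrium of $\Gcal$.

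I do not anticipate a substantive obstacle: the argument is a routine application of convex optimality conditions together with the smoothing bound. The only points requiring some care are (i) justifying that the first-order inclusion is both necessary and sufficient, which rests on the convexity and finiteness of $f_{i,\eta}(\bullet,\x^{-i})$ supplied by Lemma~\ref{lem:conv-heta} and on $\Xscr_i$ being closed and convex; and (ii) handling the implicit nature of the fixed-point equation---the candidate point appears both as the argument being minimized and as the proximal center---which is resolved cleanly because the proximal gradient is exactly zero at a fixed point.
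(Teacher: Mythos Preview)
Your proposal is correct and matches the paper's approach: for (a) the paper simply cites \cite[Prop.~1.5]{pang09nash}, which is exactly the optimality-condition argument you spell out, and for (b) the paper uses the identical sandwich via \eqref{bd-smooth} and the Nash property of $\Gcal_\eta$.
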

\begin{proof} (a) follows directly from \cite[Prop.~1.5]{pang09nash}. We proceed to prove (b).  Suppose $\x^{\eta} \triangleq \{\x^{1,\eta}, \cdots, \x^{\Nbold,\eta}\}$ is a fixed point of the $\eta$-smoothed best-response map (SPBR$(\bullet)$). Then we have that 
    \begin{align*}
        \x^{j,\eta} & = B_{j,\eta}{(\x^{j,\eta},\x^{-j,\eta})}, \qquad j = 1, \cdots, \Nbold.
    \end{align*}
    From (a), we have that $\x^{\eta}$ is a Nash equilibrium of $\Gcal_{\eta}$.  It follows that 
    \begin{align}\label{nash-smooth}
     f_{j,\eta}(\x^{j,\eta},\x^{-j,\eta}) \leq f_{j,\eta}(\x^j, \x^{-j,\eta}), \qquad \forall \x^j \in \Xscr_j \mbox{ for }  j = 1, \cdots, \Nbold. 
    \end{align}
    By leveraging the property of the smoothed function $f_{j,\eta}(\bullet,\x^{-j})$, we have that  
    \begin{align*}
        f_{j}(\x^{j,\eta},\x^{-j,\eta}) & \overset{\eqref{bd-smooth}}{\leq} f_{j,\eta}(\x^{j,\eta},\x^{-j,\eta})  \\
                                        & \overset{\eqref{nash-smooth}}{\leq} f_{j,\eta}(\x^j, \x^{-j,\eta}), \qquad \quad \qquad \forall \x^j \in \Xscr_j \\
                                        & \overset{\eqref{bd-smooth}}{\leq} f_{j} (\x^j, \x^{-j,\eta})  + \eta \beta_j,  \qquad  \quad \forall \x^j \in \Xscr_j  
    \end{align*}
 It follows that $\x^{\eta}$ is an $\ic{\eta} \bar{\beta}$-Nash equilibrium of $\Gcal$ where $\bar{\beta} \triangleq \max_{i \in \{1, \cdots, \Nbold\}} \beta_i$.  
\end{proof}

We now turn to the question of deriving error bounds on the best-response residual for the original game by leveraging solutions of the $\eta$-smoothed game. This avenue requires proving a simple result  that relates $B_i(\x)$ and $B_{i,\eta}(\x)$ for any $i \in \{1, \cdots, \Nbold\}$ and $\eta > 0$. 

    \begin{proposition}[{\bf Relating equilibria of $\Gcal_{\eta}$ to Equilibria of $\Gcal$}]\em Suppose the conditions of Prop.~\ref{prop-spbr} hold. Then the following hold. 

\noindent (i) For any $\x \in \Xscr$, we have that 
$ \|B_i(\x) - B_{i,\eta}(\x)\|^2 \leq \tfrac{2\eta \beta}{c}.$
\smallskip

\noindent (ii)  Suppose $\x \in \Xscr$.  Then the best-response residual for the original game $\Gcal$ is bounded as follows. 
\begin{align*}
 \sum_{i=1}^{\Nbold} \|\x^i - B_i(\x)\|^2 & \leq  2\sum_{i=1}^{\Nbold} \|\x^i - B_{i,\eta}(\x)\|^2 + \tfrac{8\Nbold \eta^2 \beta^2}{c^2}. 
\end{align*}

\smallskip

\noindent (iii) Suppose $\x_{\eta}^{*} \triangleq \left\{\x^{1,*}_{\eta} \cdots, \x^{\Nbold,*}_{\eta}\right\}$ denotes an equilibrium of $\Gcal_{\eta}$. Then the best-response residual for the original game is bounded as follows. 
\begin{align*}
    \sum_{i=1}^{\Nbold} \|\x^{i,*}_\eta - B_i(\x^*_\eta)\|^2 & \leq  \tfrac{8\Nbold \eta^2 \beta^2}{c^2}. 
\end{align*}

\end{proposition}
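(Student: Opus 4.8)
\noindent\emph{Proof plan.} The estimate in part (i) is the linchpin; parts (ii) and (iii) then follow by elementary steps. For part (i), fix $i \in \{1,\dots,\Nbold\}$ and $\x \in \Xscr$, abbreviate $b \triangleq B_i(\x)$ and $b_{\eta} \triangleq B_{i,\eta}(\x)$ (well-defined single-valued points of $\Xscr_i$ by Prop.~\ref{prop-spbr}(a)), and set $\phi(\vv) \triangleq f_i(\vv,\x^{-i}) + \tfrac{c}{2}\|\vv-\x^i\|^2$ and $\phi_{\eta}(\vv) \triangleq f_{i,\eta}(\vv,\x^{-i}) + \tfrac{c}{2}\|\vv-\x^i\|^2$, so that $b$ and $b_{\eta}$ are the minimizers over $\Xscr_i$ of the $c$-strongly convex functions $\phi$ and $\phi_{\eta}$, respectively. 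The idea is to control the optimality gap of $b$ for the $\phi_{\eta}$-problem via the one-sided smoothing estimate \eqref{bd-smooth} of Lemma~\ref{lem:conv-heta}(b), namely $0 \le f_{i,\eta}(\cdot) - f_i(\cdot) \le \eta\beta_i \le \eta\beta$ on $\Xscr_i$: since $\phi_{\eta} - \phi = f_{i,\eta}(\cdot,\x^{-i}) - f_i(\cdot,\x^{-i})$, I would chain
\[
\phi_{\eta}(b) = \phi(b) + \big(f_{i,\eta}-f_i\big)(b,\x^{-i}) \;\le\; \phi(b_{\eta}) + \eta\beta \;=\; \phi_{\eta}(b_{\eta}) + \big(f_i-f_{i,\eta}\big)(b_{\eta},\x^{-i}) + \eta\beta \;\le\; \phi_{\eta}(b_{\eta}) + \eta\beta,
\]
where the first inequality uses $\phi(b)\le\phi(b_{\eta})$ (optimality of $b$ for $\phi$, with $b_{\eta}\in\Xscr_i$) together with $f_{i,\eta}-f_i\le\eta\beta$, and the last uses $f_i-f_{i,\eta}\le 0$. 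Hence $\phi_{\eta}(b)-\phi_{\eta}(b_{\eta})\le\eta\beta$, and $c$-strong convexity of $\phi_{\eta}$ at its minimizer $b_{\eta}$ gives $\tfrac{c}{2}\|b-b_{\eta}\|^2 \le \phi_{\eta}(b)-\phi_{\eta}(b_{\eta}) \le \eta\beta$, which is part (i).

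\noindent For part (ii), I would apply the elementary bound $\|p+q\|^2 \le 2\|p\|^2 + 2\|q\|^2$ with $p \triangleq \x^i - B_{i,\eta}(\x)$ and $q \triangleq B_{i,\eta}(\x) - B_i(\x)$, giving $\|\x^i - B_i(\x)\|^2 \le 2\|\x^i - B_{i,\eta}(\x)\|^2 + 2\|B_{i,\eta}(\x)-B_i(\x)\|^2$ for each $i$; summing over $i=1,\dots,\Nbold$ and substituting the per-player bound from part (i) yields the asserted residual inequality. For part (iii), I would invoke the fixed-point characterization of equilibria of the smoothed game $\Gcal_{\eta}$ established earlier in this section: an equilibrium $\x^*_{\eta}$ of $\Gcal_{\eta}$ is a fixed point of the $\eta$-smoothed proximal best-response map, so $\x^{i,*}_{\eta} = B_{i,\eta}(\x^*_{\eta})$ for every $i$; substituting $\x = \x^*_{\eta}$ into the inequality of part (ii) then annihilates the first sum on the right-hand side, leaving only the constant term, which is the claim.

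\noindent I do not anticipate a genuine obstacle here: the argument is essentially a perturbation-stability estimate for proximal best responses, re-expressed in the game setting. The one place that needs a little care is the chaining in part (i) --- using the one-sided inequalities $f_i \le f_{i,\eta} \le f_i + \eta\beta$ in the right positions, rather than the coarser two-sided bound $|f_{i,\eta}-f_i|\le\eta\beta$ applied twice --- and, in part (iii), ensuring that the equilibrium-as-fixed-point statement invoked is the one for $\Gcal_{\eta}$ (and not for $\Gcal$) and that it refers to the proximal best response $B_{i,\eta}$ rather than the ordinary best response.
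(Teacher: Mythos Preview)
Your proposal is correct and follows essentially the same route as the paper. For part (i) there is a minor stylistic difference: the paper writes the two strong-convexity inequalities (one for the $f_i$-proximal objective at its minimizer $B_i(\x)$, one for the $f_{i,\eta}$-proximal objective at $B_{i,\eta}(\x)$), adds them so the proximal terms cancel, and bounds the resulting difference $|f_i-f_{i,\eta}|$ at two points by $2\eta\beta$, whereas you chain one-sidedly through $\phi_\eta$ alone and invoke strong convexity once --- both yield the same bound $\tfrac{2\eta\beta}{c}$; parts (ii) and (iii) are handled identically in both.
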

\begin{proof}
    By strong convexity, we have that 
    \begin{align*}
        f_{i}(B_{i,\eta}(\x),\x^{-i}) & \geq f_i(B_i(\x),\x^{-i}) + \tfrac{c}{2} \|B_i(\x) - B_{i,\eta}(\x)\|^2 \\ 
            f_{i,\eta}(B_{i}(\x),\x^{-i}) & \geq f_{i,\eta}(B_{i,\eta}(\x),\x^{-i}) + \tfrac{c}{2} \|B_i(\x) - B_{i,\eta}(\x)\|^2. 
    \end{align*}
    Adding the above inequalities, we obtain the result as follows. 
    \begin{align*}
        c \|B_i(\x) - B_{i,\eta}(\x)\|^2 & \leq f_{i}(B_{i,\eta}(\x),\x^{-i})  - f_{i,\eta}(B_{i,\eta}(\x),\x^{-i}) + f_{i,\eta}(B_{i}(\x),\x^{-i}) - f_i(B_i(\x),\x^{-i}) \notag \\
                                         & \leq |f_{i}(B_{i,\eta}(\x),\x^{-i})  - f_{i,\eta}(B_{i,\eta}(\x),\x^{-i})| + |f_{i,\eta}(B_{i}(\x),\x^{-i}) - f_i(B_i(\x),\x^{-i})| \notag \\
                                          & \leq 2\eta \beta. 
    \end{align*}

\noindent (ii) This result follows by noting that 
\begin{align*}
\notag \sum_{i=1}^{\Nbold} \|\x^i - B_i(\x)\|^2 & \leq 2\sum_{i=1}^{\Nbold} \left(\|\x^i - B_{i,\eta}(\x)\|^2 +  \|B_i(\x)-B_{i,\eta}(\x)\|^2\right) \\
                                         & \leq 2\sum_{i=1}^{\Nbold} \|\x^i - B_{i,\eta}(\x)\|^2 + \tfrac{8\Nbold \eta^2 \beta^2}{c^2}. 
\end{align*}

\noindent (iii) This follows from (ii) and by noting that $\sum_{i=1}^{\Nbold}\|\x^i - B_{i,\eta}(\x)\|^2 = 0$ for $\x =\x_{\eta}^{*}$.  
\end{proof}

We will now examine the question of whether the sequence of equilibria $\{\x^*_{\eta}\}_{\eta \downarrow 0}$, where $\x^*_{\eta}$ is an equilibrium of the smoothed game $\Gcal_{\eta}$, converges to an equilibrium of $\Gcal$. We begin by providing the following definition for multi-epiconvergence of a collection of functions  from~\cite[Def.~1]{gurkan09approximations}.

  \begin{definition}[{\bf Multi-epiconvergence}] \em Suppose $f_{i,\eta}: \Real^{n_i} \to \Real$ for $i = 1, \cdots, \Nbold$. The family of functions $\{f_{i,\eta}\}_{i=1}^{\Nbold}$ multi-epiconverges to the functions $\{f_i\}_{i=1}^{\Nbold}$ on $\Xscr$ if the following two conditions hold for every $i = 1, \cdots, \Nbold$ and every $\x \in \Xscr$. 

        \smallskip

        \noindent ({\bf ME(i)}) For every sequence $\{\x^{-i}_{\eta}\} \subset \Xscr_{-i}$ converging to $\x^{-i}$, there exists a sequence $\{\x^{i}_{\eta}\} \subset \Xscr_i$ converging to $\x^i$ such that 
            \begin{align*}
                \limsup_{\eta \to 0} f_{i,\eta}(\x^{i}_{\eta}, \x^{-i}_{\eta}) \leq f_i(\x^i,\x^{-i}).
            \end{align*}

        \smallskip 
        \noindent ({\bf ME(ii)}) For every sequence $\{\x_{\eta}\} \subset \Xscr$ converging to $\x$, 
    \begin{align*}
                \liminf_{\eta \to 0} f_{i,\eta}(\x^{i}_{\eta}, \x^{-i}_{\eta}) \geq f_i(\x^i,\x^{-i}).
            \end{align*}

    \end{definition}
  In~\cite{gurkan09approximations}, by leveraging the property of multi-epiconvergence, convergence of the sequence of approximate equilibria to its true counterpart is proven.  We reproduce this result here. 

    \begin{theorem}[{\bf Convergence of approximate Nash equilibria~\cite[Thm.~1]{gurkan09approximations}}] \label{thm-conv-xeta}\em Consider the game $\Gcal$ and suppose the following hold.  
\smallskip 

\noindent ({\bf C.I.}) For $i=1, \cdots, \Nbold$, suppose $\Xscr_i \subseteq \Real^{n_i}$ is a closed and convex set. 

\smallskip

\noindent ({\bf C.II.}) Suppose that the family $\{\{ f_{i,\eta}\}_{i=1}^{\Nbold} \}$ multi-epiconverges to the functions $\{f_i\}_{i=1}^{\Nbold}$. 
\smallskip

If the sequence $\{\x^*_\eta\}$ converges to $\x^*$  where $\x^*_{\eta}$ is an equilibrium of $\Gcal_{\eta}$ with functions $\{f_{i,\eta}\}_{i=1}^{\Nbold}$, then $\x^*$ is a Nash equilibrium of $\Gcal$.  
    \end{theorem}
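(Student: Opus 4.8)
The plan is to fix an arbitrary player $i \in \{1,\cdots,\Nbold\}$ and an arbitrary unilateral deviation $\x^i \in \Xscr_i$, and to derive the Nash inequality $f_i(\x^{i,*},\x^{-i,*}) \le f_i(\x^i,\x^{-i,*})$ by squeezing $f_i$ between the $\liminf$ and $\limsup$, as $\eta \downarrow 0$, of the smoothed payoffs $f_{i,\eta}$ evaluated along the equilibrium sequence $\{\x^*_\eta\}$. First I would record that $\x^* \in \Xscr$: since each $\Xscr_i$ is closed by ({\bf C.I.}), the product $\Xscr$ is closed, and $\x^*$ is the limit of the feasible points $\x^*_\eta \in \Xscr$, so $\x^*$ is a legitimate equilibrium candidate. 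Next, the equilibrium property of $\x^*_\eta$ in $\Gcal_\eta$ gives, for every $\eta > 0$ and every $\x^i_\eta \in \Xscr_i$,
\[ f_{i,\eta}(\x^{i,*}_\eta,\x^{-i,*}_\eta) \ \le\ f_{i,\eta}(\x^i_\eta,\x^{-i,*}_\eta). \]

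The two halves of the multi-epiconvergence hypothesis ({\bf C.II.}) are then applied to the two sides of this inequality. Applying {\bf ME(ii)} to the sequence $\{\x^*_\eta\} \subset \Xscr$, which converges to $\x^*$, yields $\liminf_{\eta\to 0} f_{i,\eta}(\x^{i,*}_\eta,\x^{-i,*}_\eta) \ge f_i(\x^{i,*},\x^{-i,*})$. Applying {\bf ME(i)} to the competitor sequence $\{\x^{-i,*}_\eta\} \subset \Xscr_{-i}$, which converges to $\x^{-i,*}$, produces a recovery sequence $\{\x^i_\eta\} \subset \Xscr_i$ with $\x^i_\eta \to \x^i$ and $\limsup_{\eta\to 0} f_{i,\eta}(\x^i_\eta,\x^{-i,*}_\eta) \le f_i(\x^i,\x^{-i,*})$. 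Substituting this particular recovery sequence into the displayed equilibrium inequality and using the elementary fact that $a_\eta \le b_\eta$ forces $\liminf a_\eta \le \liminf b_\eta$, I obtain the chain
\[ f_i(\x^{i,*},\x^{-i,*}) \le \liminf_{\eta\to0} f_{i,\eta}(\x^{i,*}_\eta,\x^{-i,*}_\eta) \le \liminf_{\eta\to0} f_{i,\eta}(\x^i_\eta,\x^{-i,*}_\eta) \le \limsup_{\eta\to0} f_{i,\eta}(\x^i_\eta,\x^{-i,*}_\eta) \le f_i(\x^i,\x^{-i,*}). \]
Since $i$ and $\x^i$ were arbitrary, $\x^*$ satisfies the Nash inequality for every player and every deviation, hence is a Nash equilibrium of $\Gcal$.

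The main obstacle here is conceptual rather than computational: the argument collapses if one only knows that $f_{i,\eta}(\cdot,\x^{-i}) \to f_i(\cdot,\x^{-i})$ in the epigraphical sense for each \emph{fixed} competitor profile $\x^{-i}$, because {\bf ME(i)} must manufacture the recovery sequence $\{\x^i_\eta\}$ against the \emph{moving} profile $\{\x^{-i,*}_\eta\}$. This is precisely what the joint notion of multi-epiconvergence encodes, and the real work in invoking Theorem~\ref{thm-conv-xeta} in our setting is verifying ({\bf C.II.}) for the family $\{f_{i,\eta}\}$; this will follow from the continuous/uniform convergence of $f_{i,\eta}$ to $f_i$ established in Proposition~\ref{prop-spbr}(b) together with the convexity and $\alpha_i/\eta$-smoothness estimates of Lemma~\ref{lem:conv-heta}, which make the perturbation $\x^i \mapsto \x^i + \eta u_i$ control the gap between $f_{i,\eta}$ and $f_i$ uniformly in the competitor variables. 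A minor technical point is that $\eta \downarrow 0$ is a continuous index, so all $\liminf$/$\limsup$ statements are read in that sense, or equivalently one fixes an arbitrary sequence $\eta_k \downarrow 0$ and argues along it; no difficulty arises because every bound above is uniform in $\eta$.
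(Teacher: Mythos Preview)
The paper does not supply its own proof of Theorem~\ref{thm-conv-xeta}; it merely reproduces the statement from \cite[Thm.~1]{gurkan09approximations} and then proceeds to verify its hypotheses via Corollary~\ref{cor-conv-xeta}. Your argument is the standard one for this type of result and is correct: ME(ii) lower-bounds the left side of the equilibrium inequality along $\{\x^*_\eta\}$, ME(i) furnishes the recovery sequence needed to upper-bound the right side against the moving rival profile $\{\x^{-i,*}_\eta\}$, and chaining yields the Nash inequality at $\x^*$.

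One small remark: your final paragraph drifts from proving the theorem itself into discussing how the paper will \emph{verify} hypothesis ({\bf C.II.}) in its specific smoothing setting. That discussion is not part of the proof of Theorem~\ref{thm-conv-xeta} (which takes multi-epiconvergence as a black-box assumption), and in fact the paper does not verify ({\bf C.II.}) directly; it instead invokes Corollary~\ref{cor-conv-xeta}, which replaces ({\bf C.II.}) by the easier conditions ({\bf D.I.})--({\bf D.II.}) under player-wise convexity. So while your commentary is in the right spirit, it would be cleaner to keep the theorem's proof self-contained and leave the verification of hypotheses to the subsequent proposition, as the paper does.
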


    Note that Theorem~\ref{thm-conv-xeta} does not necessitate even the
    convexity of the player-specific objectives. Naturally, this result
    provides asymptotic guarantees but does not address the computability of
    the $\eta$-smoothed equilibrium problem with nonconvex player-specific
    problems. Furthermore, in our case, our problem is blessed with convexity
    and consequently, we may employ a corollary of Theorem~\ref{thm-conv-xeta},
    restated next with an explicit prescription of the condition (Pc) from ~\cite[Cor.~1]{gurkan09approximations}.
  %  which requires defining the uni-component functions $\psi_{i,\eta}$.   For every $i=1, \cdots, \Nbold$ and every sequence $\{\x^{-i}_{\eta}\} \subset \Xscr_{-i}$ converging to some $\x^{-i} \in \Xscr_{-i}$, the uni-component functions $\psi_{i,\eta}$ and $\psi_{i,0}$ are  defined as \begin{align} \label{def-psi} \begin{aligned} \psi_{i,\eta} (\x^i)  & \equiv f_{i,\eta}(\x^i, \x^{-i}_{\eta}), \qquad \,\x^i \in \Xscr_i \\ \mbox{ and } \psi_{i,0}(\x^i) & \equiv f_i(\x^i,\x^{-i}), \qquad \quad \x^i \in \Xscr_i, \end{aligned} \end{align} respectively. 

  \begin{corollary}[{\bf Convergence of approximate Nash equilibria under convexity~\cite[Cor.~1]{gurkan09approximations}}]\label{cor-conv-xeta} \em The conclusions of Theorem~\ref{thm-conv-xeta} hold under the following conditions. 

        \smallskip

    \noindent ({\bf D.I.}) For every $i = 1, \cdots, \Nbold$ and every $\eta >0$, the function $f_{i,\eta}(\bullet,\x^{-i})$ is convex for every $\x^{-i} \in \Xscr^{-i}.$

    \smallskip

    \noindent ({\bf D.II.}) For every $i = 1, \cdots, \Nbold$, the following holds 
    \begin{align}
    \lim_{\eta \to 0} f_{i,\eta}(\x^i,\x^{-i}_{\eta}) = f_i(\x^i,\x^{-i}) 
    \end{align}
    for every $\x^i \in \Xscr_i$ and every sequence $\{\x^{-i}_{\eta}\} \subset \Xscr^{-i}$ converging to $\x^{-i} \in \Xscr^{-i}$.
\end{corollary}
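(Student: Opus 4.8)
The plan is to reduce everything to Theorem~\ref{thm-conv-xeta}: since hypothesis ({\bf C.I.}) (closedness and convexity of each $\Xscr_i$) is a standing assumption of the paper, it suffices to show that ({\bf D.I.})--({\bf D.II.}) force ({\bf C.II.}), i.e. that the family $\{\{f_{i,\eta}\}_{i=1}^{\Nbold}\}$ multi-epiconverges to $\{f_i\}_{i=1}^{\Nbold}$ on $\Xscr$. So the entire argument amounts to verifying the two conditions {\bf ME(i)} and {\bf ME(ii)} for a fixed but arbitrary $i \in \{1,\dots,\Nbold\}$ and $\x \in \Xscr$, after which the conclusion follows by quoting Theorem~\ref{thm-conv-xeta} verbatim. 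I would also remark in passing that, in the present setting, ({\bf D.I.}) is not genuinely an extra hypothesis: it is a consequence of the standing convexity of $f_i(\bullet,\x^{-i})$ together with Lemma~\ref{lem:conv-heta}(a).

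First I would dispatch {\bf ME(i)}, which is essentially free. Given any $\{\x^{-i}_{\eta}\} \subset \Xscr_{-i}$ with $\x^{-i}_{\eta} \to \x^{-i}$, take the constant recovery sequence $\x^i_{\eta} \equiv \x^i \in \Xscr_i$; then ({\bf D.II.}) yields $f_{i,\eta}(\x^i,\x^{-i}_{\eta}) \to f_i(\x^i,\x^{-i})$, hence $\limsup_{\eta\to 0} f_{i,\eta}(\x^i_{\eta},\x^{-i}_{\eta}) = f_i(\x^i,\x^{-i})$, which is exactly the required inequality (with equality).

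The substantive step is {\bf ME(ii)}. Fix $\{\x_{\eta}\}\subset\Xscr$ with $\x_{\eta}\to\x$. The cleanest route here uses that Ground Assumption (G3) includes Assumption~\ref{bound-grad}: exactly as in the proof of Lemma~\ref{lem:conv-heta}, each map $\x^i \mapsto f_{i,\eta}(\x^i,\x^{-i}_{\eta}) = \mathbb{E}_{u_i\in\mathbb{B}_i}[f_i(\x^i+\eta u_i,\x^{-i}_{\eta})]$ is $\mathcal{L}_0$-Lipschitz on $\Xscr_i$, uniformly in $\eta$. Consequently $f_{i,\eta}(\x^i_{\eta},\x^{-i}_{\eta}) \ge f_{i,\eta}(\x^i,\x^{-i}_{\eta}) - \mathcal{L}_0\,\|\x^i_{\eta}-\x^i\|$, and letting $\eta\to 0$ — invoking ({\bf D.II.}) for the first term and $\x^i_{\eta}\to\x^i$ for the second — gives $\liminf_{\eta\to 0} f_{i,\eta}(\x^i_{\eta},\x^{-i}_{\eta}) \ge f_i(\x^i,\x^{-i})$, which is {\bf ME(ii)}. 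An alternative route, closer to~\cite{gurkan09approximations}, would be: by ({\bf D.II.}) the convex functions $h_\eta := f_{i,\eta}(\bullet,\x^{-i}_{\eta})$ converge pointwise on a fixed open convex neighborhood of $\Xscr_i$ to the finite convex function $f_i(\bullet,\x^{-i})$, whence the convergence is uniform on compact subsets and $h_\eta(\x^i_{\eta}) \to f_i(\x^i,\x^{-i})$, which even yields the full limit.

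The hard part — indeed the only place requiring care — is {\bf ME(ii)} at boundary points of $\Xscr_i$: the pure-convexity argument of~\cite{gurkan09approximations} leans on pointwise convergence of the $h_\eta$ on an \emph{open} set and on $\x^i_{\eta}$ eventually lying in a compact subset of that set, so one has to make sure the smoothed functions are well-defined and finite on a neighborhood of $\Xscr_i$ and that the pointwise limit $f_i(\bullet,\x^{-i})$ is finite-valued. In this paper that wrinkle evaporates, because the uniform $\mathcal{L}_0$-Lipschitz estimate from Assumption~\ref{bound-grad} delivers the $\liminf$ inequality directly at interior and boundary points alike. Everything else — verifying ({\bf C.I.}) and passing from multi-epiconvergence to the stated conclusion — is immediate, the latter being nothing more than an application of Theorem~\ref{thm-conv-xeta}.
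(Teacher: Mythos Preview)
The paper does not prove this corollary at all: it is simply quoted from \cite[Cor.~1]{gurkan09approximations} as a black box, so there is no ``paper's own proof'' to compare against. Your reconstruction is correct in spirit and in detail; both {\bf ME(i)} and {\bf ME(ii)} are established, and the reduction to Theorem~\ref{thm-conv-xeta} is exactly right.

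One remark worth making. Your primary route for {\bf ME(ii)} invokes the uniform $\mathcal{L}_0$-Lipschitz bound from Assumption~\ref{bound-grad}, which is \emph{not} among the hypotheses ({\bf D.I.})--({\bf D.II.}) of the corollary as stated. That route therefore proves something slightly weaker than the corollary claims (it proves the corollary under the paper's standing assumptions, which is all the paper needs, but not the general statement of \cite{gurkan09approximations}). Your ``alternative route'' --- pointwise convergence of the convex functions $h_\eta = f_{i,\eta}(\bullet,\x^{-i}_\eta)$ on an open convex set, hence uniform convergence on compacta, hence $h_\eta(\x^i_\eta)\to f_i(\x^i,\x^{-i})$ --- is the argument actually used in \cite{gurkan09approximations} and uses only ({\bf D.I.})--({\bf D.II.}). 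You correctly flag the boundary issue for that route; in the general setting one handles it by the standard device that a finite convex function on a closed convex set extends to a neighborhood, or by working with the relative interior and lower semicontinuity. For the purposes of this paper either route suffices.
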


{We now prove that Corollary~\ref{cor-conv-xeta} can be invoked under suitable requirements. } 

\begin{proposition}[{\bf Asymptotic convergence of $\{\x_{\eta}\}$}]\em  Consider the game $\Gcal \in \Gscr^{\rm chl}$, its  smoothed counterpart $\Gcal_{\eta}$, and the sequence $\{\x_{\eta}\}$. For $i =1, \cdots, \Nbold$,  suppose $f_i(\bullet,\x^{-i})$ is a strongly lower semicontinuous function on $\Xscr_{i}$ for every $\x^{-i} \in \Xscr_{-i}$ and  $f_i(\x^i,\bullet)$ is a continuous function for every $\x^i \in \Xscr_i$. Then $\x$ is an equilibrium of $\Gcal$. 
    \end{proposition}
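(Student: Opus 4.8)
The plan is to deduce the claim from Corollary~\ref{cor-conv-xeta}, the convex specialization of the G\"urkan--Pang theorem (Theorem~\ref{thm-conv-xeta}): that result asserts exactly that if $\{\x_\eta\}$ is a sequence of Nash equilibria of the smoothed games $\{\Gcal_\eta\}$ converging to some $\x$, then $\x$ is a Nash equilibrium of $\Gcal$, provided conditions ({\bf D.I.}) and ({\bf D.II.}) hold for the family $\{f_{i,\eta}\}$ (the closedness/convexity of the $\Xscr_i$ required by ({\bf C.I.}) being part of the definition of $\Gscr^{\rm chl}$). So it suffices to verify ({\bf D.I.}) and ({\bf D.II.}). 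Two preliminary facts enable this: since $\Gcal\in\Gscr^{\rm chl}$, each $f_i(\bullet,\x^{-i})$ is convex on $\Xscr_i$ for every $\x^{-i}\in\Xscr_{-i}$, and by Ground Assumption (G3) the subgradients of $f_i(\bullet,\x^{-i})$ are uniformly bounded (Assumption~\ref{bound-grad}); hence Lemma~\ref{lem:conv-heta} applies to each player $i$ and each $\eta>0$. The assumed strong lower semicontinuity of $f_i(\bullet,\x^{-i})$ and continuity of $f_i(\x^i,\bullet)$ supply the remaining regularity underlying Corollary~\ref{cor-conv-xeta}.

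Condition ({\bf D.I.}) is immediate from Lemma~\ref{lem:conv-heta}(a): for each $i$ and $\eta>0$, $f_{i,\eta}(\bullet,\x^{-i})$ is convex (indeed $\alpha_i/\eta$-smooth) on $\Xscr_i$ for every $\x^{-i}\in\Xscr_{-i}$. For ({\bf D.II.}), fix $i$ and $\x^i\in\Xscr_i$, and let $\{\x^{-i}_\eta\}\subset\Xscr_{-i}$ with $\x^{-i}_\eta\to\x^{-i}$. Then
\begin{align*}
\bigl|f_{i,\eta}(\x^i,\x^{-i}_\eta) - f_i(\x^i,\x^{-i})\bigr|
&\le \bigl|f_{i,\eta}(\x^i,\x^{-i}_\eta) - f_i(\x^i,\x^{-i}_\eta)\bigr| \\
&\quad + \bigl|f_i(\x^i,\x^{-i}_\eta) - f_i(\x^i,\x^{-i})\bigr|.
\end{align*}
Since $(\x^i,\x^{-i}_\eta)\in\Xscr$, the sandwich bound \eqref{bd-smooth} of Lemma~\ref{lem:conv-heta}(b) gives $0\le f_{i,\eta}(\x^i,\x^{-i}_\eta) - f_i(\x^i,\x^{-i}_\eta)\le\eta\beta_i$, so the first term vanishes as $\eta\to0$, uniformly in the rival block. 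The second term vanishes as $\eta\to0$ by continuity of $f_i(\x^i,\bullet)$ and $\x^{-i}_\eta\to\x^{-i}$. Hence $\lim_{\eta\to0}f_{i,\eta}(\x^i,\x^{-i}_\eta)=f_i(\x^i,\x^{-i})$, which is ({\bf D.II.}).

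With ({\bf D.I.}) and ({\bf D.II.}) verified, Corollary~\ref{cor-conv-xeta} yields that $\x$ is a Nash equilibrium of $\Gcal$. The only substantive step is the decoupling in ({\bf D.II.}): it succeeds precisely because the smoothing error $f_{i,\eta}-f_i$ is controlled by $\eta\beta_i$ \emph{uniformly} over $\Xscr$, so that the smoothing perturbation and the perturbation of the rival decisions may be treated separately, the latter via ordinary continuity. The residual technical point is that $\x^i+\eta u^i$ must lie in a set on which $f_i$ — hence the bound of Lemma~\ref{lem:conv-heta}(b) — is defined; this is ensured by boundedness of $\mathbb{B}_i$ together with the convention recorded after Lemma~\ref{lem:conv-heta} (for instance, $\Xscr_i$ bounded, or $f_i$ extending to an open neighborhood of $\Xscr_i$).
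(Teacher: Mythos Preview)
Your proof is correct and follows essentially the same approach as the paper: both reduce the claim to verifying conditions ({\bf D.I.}) and ({\bf D.II.}) of Corollary~\ref{cor-conv-xeta}, obtain ({\bf D.I.}) from Lemma~\ref{lem:conv-heta}(a), and obtain ({\bf D.II.}) by combining the uniform sandwich bound \eqref{bd-smooth} with continuity of $f_i(\x^i,\bullet)$. The only cosmetic difference is that the paper applies the squeeze $f_i(\x^i,\x^{-i}_\eta)\le f_{i,\eta}(\x^i,\x^{-i}_\eta)\le f_i(\x^i,\x^{-i}_\eta)+\eta\beta_i$ directly and passes to the limit on both sides, whereas you reach the same conclusion via a triangle-inequality decomposition.
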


    \begin{proof} To invoke Corollary~\ref{cor-conv-xeta}, it suffices to show that conditions ({\bf D.I.}) and ({\bf D.II.}) hold. 

        Since $\Gcal$ is a convex hierarchical game, we have that $f_i(\bullet,\x^{-i})$ is convex on $\Xscr_i$ for every $\x^{-i} \in \Xscr^{-i}$. We may then invoke  Lemma~\ref{lem:conv-heta} to claim that $f_{i,\eta}(\bullet,\x^{-i})$ is convex on $\Xscr_i$ for every $\x^{-i} \in \Xscr_{-i}$. Therefore, ({\bf D.I.}) holds. 

   Suppose $\x^i \in \Xscr_i$ and a sequence $\{\x^{-i}_{\eta}\} \subset \Xscr^{-i}$ converges to $\x^{-i} \in \Xscr^{-i}$. Since $f_i(\x) \leq f_{i,\eta}(\x)\leq f_i(\x) + \eta \beta_i$ (by Lemma~\ref{lem:conv-heta}(b)), it follows that
     \begin{align*}
         \lim_{\eta \to 0} \ f_i(\x^i,\x_\eta^{-i}) \leq  \lim_{\eta \to 0} \ f_{i,\eta}(\x^i,\x^{-i}_{\eta}) \leq \lim_{\eta \to 0} \ (f_i(\x^i,\x^{-i}_\eta)+\eta \beta_i). 
    \end{align*}
    By continuity of $f_i(\x^i,\bullet)$ and by noting that $\lim_{\eta \to 0} \x^{-i}_\eta = \x^{-i} \in \Xscr^{-i}$, we have that
      \begin{align*}
          f_i(\x^i,\x^{-i}) \leq  \lim_{\eta \to 0} \ f_{i,\eta}(\x^i,\x^{-i}_{\eta}) \leq  f_i(\x^i,\x^{-i}). 
    \end{align*}
Consequently, ({\bf D.II.}) holds.

    \end{proof}
\noindent {\bf Comment.}  We note that the convergence claim can be strengthened to a claim of subsequential convergence as long as $\Xscr$ is a compact set. This allows for claiming the existence of a convergent subsequence, whose limit point via the above result is the desired equilibrium of the original game.

 %---------------------------------------------------------------------------------------
\begin{comment}
In some settings, one can directly deduce the potentiality of $\Gscr_{\eta}$ from that of $\Gscr$.
\begin{lemma} Suppose $P(\x)$ denotes a potential function for the prescribed noncooperative game.
    \begin{enumerate}
        \item[(a)] Suppose $f_i(\x^{i},\x^{-i}) \triangleq f_i(\x^i) + h(\x,\y(\x)).$
    \end{enumerate}
    Then $P_{\eta}$ is a potential function for the $\eta$-smoothed game.
\end{lemma}
\begin{proof}
    By definition, we have that 
    \begin{align}
        P(\x^i,\x^{-i}) - P(\y^i,\x^{-i}) = f_i(\x^i,\x^{-i}) - f_i(\y^i,\x^{-i}), \qquad \forall \x^i, \y^i \in \Xscr_i. 
    \end{align}
    By definition of $f_{i,\eta}$, we have that 
    \begin{align*}
        f_{i,\eta}(\x^i,\x^{-i}) - f_{i,\eta}(\y^i,\x^{-i}) & = \mathbb{E}_{u^i} [f_i(\x^i+u^i,\x^{-i})] - \mathbb{E}_{u^i} [f_i(\y^i+u^i,\x^{-i})] \\
                                                            & = \mathbb{E}_{u^i} [f_i(\x^i+u^i,\x^{-i}) - f_i(\y^i+u^i,\x^{-i})] \\
                                                            & = \mathbb{E}_{u^i} [P(\x^i+u^i,\x^{-i}) - P(\y^i+u^i,\x^{-i})] \\
                                                            & = P_{\eta}(\x^i,\x^{-i}) - P_{\eta}(\y^i,\x^{-i}).
    \end{align*}
   
\end{proof}
\end{comment}
%------------------------------------------------------------------------------------------

\subsection{An asynchronous smoothed proximal best-response framework}\label{sec:4.2}
{Prior to presenting our asynchronous smoothed relaxed best-response
scheme, we provide some background.  Recall that in best-response schemes,   each player selects a
best-response (BR), given  current rival strategies
\cite{fudenberg98theory,basar99dynamic}. Such avenues have been applied on
engineering applications~\cite{scutari2010mimo}, where  the BR is expressible
in closed form.  Proximal BR schemes appear to have been  first
discussed by Facchinei and Pang~\cite{FPang09}, where they showed that the  set
of fixed points of  the proximal BR map is equivalent to the set of Nash
equilibria under convexity of the player-specific problems.  Asynchronous
BR schemes have been shown to be convergent by Altman et
al.~\cite{altman2007evolutionary}.  Recently,
in~\cite{pang2017two},   two synchronous schemes were proposed for computing an
equilibrium of a noncooperative game with risk-averse players under a
contractivity assumption on the proximal BR map. Under related
assumptions, we develop rate and asymptotic guarantees for randomized
synchronous and asynchronous variants~\cite{lei20synchronous}. In 2011,
Facchinei et al.~\cite{facchinei2011decomposition}  proposed  several
regularized Gauss-Seidel BR schemes for  generalized potential games, where it
was shown that limit points are Nash equilibria when each player's subproblem is
convex. Extensions to stochastic regimes were considered
in~\cite{lei20asynchronous} where almost-sure convergence guarantees were
provided for an efficient asynchronous best-response scheme where the
best-responses were solved with increasing accuracy under the assumption that
player-specific objectives were $L_i$-smooth uniformly in rival decisions.}

{{\bf Gaps in prior schemes.} Unfortunately, the scheme
in~\cite{lei20asynchronous} cannot be applied since it requires player-specific
smoothness properties and does not incorporate a relaxation. This motivates the
development of a scheme that can accommodate (i) nonsmoothness and (ii)
relaxation.}  

{Accordingly, we} develop an asynchronous relaxed {inexact} smoothed BR scheme
{(ARSPBR)}. At every step in (ARSPBR),  player $i$ is randomly selected based
on a prescribed probability ${p_i}$. Then player $i$ takes an inexact relaxed
best-response step based on $\gamma_k$ and $\epsilon^{i,k}$ while other players
do not update their strategy. If $\gamma_k = 1$ and $c$, the proximal weight,
is sufficiently large, then  this step reduces to an inexact best-response
step. Step (2) of the algorithm necessitates an inexact solution to the
hierarchical problem (Player$_{\eta}(\x^{-i,k})$). We propose a zeroth-order
scheme recently developed in a parallel paper and articulate both the scheme
and its error analysis in Section~\ref{sec:4.2}. 
\begin{tcolorbox}{{\bf Asynchronous relaxed smoothed  proximal best-response (ARSPBR) scheme}}
\be
\item[(0)] Let $k = 0$, $\z^{i,0} = \x^{i,0} \in \Xscr_i$ for $i = 1, \cdots, \Nbold$, and $p_i \in (0,1)$ for $i = 1, \cdots, \Nbold$ with $\sum_{i=1}^\Nbold p_i = 1$. {Given $\eta > 0$ and relaxation sequence $\{\gamma_k\}.$ }  
\item[(1)] Select a player $i_k = i \in \{1,\cdots, \Nbold\}$ with probability $p_i > 0$. 
\item[(2)] Update $\z^{k+1}$ and $\x^{k+1}$ as follows.
    \begin{align*} \tag{{ARSPBR}}
& \begin{aligned}
    \z^{i,k+1} & :=  \begin{cases} (1-\gamma_{k}) \x^{i,k} + \gamma_k \left(B_{i,\eta} (\x^k)\right); & i = i_k \\
    \x^{\ic{i},k}; & i \neq i_k \end{cases} \\ 
        \x^{i,k+1} & := \begin{cases} \z^{i,k+1} + \epsilon^{i,k+1}; & \qquad \qquad \qquad \quad i= i_k  \\ 
        {\z^{i,k+1}}; & \qquad \qquad \qquad \quad i \neq i_k. 
        \end{cases}
        \end{aligned}
\end{align*}
 %   J_{\lambda_i}^{T_i}(\x^{k})+ \epsilon^{i,k+1} \\ 
\item[(3)] Stop if $k > K$, Stop; else return to Step 1, $k:=k+1.$
\ee
\end{tcolorbox}

It can be observed that the update for $\x^{i,k+1}$ for $i = i_k$ can be rewritten as follows.
\begin{align} \label{rel-inex-BR}
   \notag  \x^{i,k+1} &:= \z^{i,k+1} + \epsilon^{i,k+1} \\
   \notag       & = (1-\gamma_{k}) \x^{i,k} + \gamma_k B_{i,\eta}(\x^k) + \epsilon^{i,k+1} \\
                & = (1-\gamma_{k}) \x^{i,k} + \gamma_k \left(B_{i,\eta}(\x^k) + \tfrac{\epsilon^{i,k+1}}{\gamma_k}\right).
\end{align}
In effect, $\x^{i,k+1}$ is a consequence of averaging between the previous belief $\x^{i,k}$ and an inexact best-response using the relaxation weight $\gamma_k$. When $\gamma_k = 1$, this reduces to the unrelaxed scheme and \eqref{rel-inex-BR} reduces to 
    \begin{align} \label{inex-BR}
            \x^{i,k+1} = B_{i,\eta}(\x^k)+\epsilon^{i,k+1}. 
    \end{align}
  This inexact best-response is an  
an $\tfrac{\epsilon^{i,k}}{\gamma_k}$-optimal solution of the best-response problem. Prior schemes on resolving protypical hierarchical problems of this form (i.e. MPECs) are not equipped with non-asymptotic rate guarantees. However, in Section~\ref{sec:4.2}, we develop a zeroth-order scheme with non-asymptotic rate guarantees when each player's objective is convex given rival decisions.  Before proceeding, we define the history of the process. Suppose $\Fscr_0 \triangleq \{\x^0\}$. Suppose $\Fscr'_k$ and $\Fscr_{k}$ are defined as
\begin{align*}
    \Fscr'_1 & = \Fscr_0 \cup \{i_1\}, \\
        \Fscr_1 & = \Fscr'_0 \cup \{\omega_{1,j_1}, \cdots, \omega_{1,j_1}\}, \\
                & \vdots \\ 
        \Fscr'_k & = \Fscr_{k-1} \cup \{i_{k-1}\}, \\ 
        \Fscr_k & = \Fscr'_{k-1} \cup \{\omega_{k-1,j_1}, \cdots, \omega_{k-1,j_{k-1}}\}. 
            \end{align*}
{Note that the samples $\{\omega_{k,j_1}, \cdots, \omega_{k,j_k}\}$ are
employed in computing an approximate best-response in iteration $k$ via a
zeroth-order Monte-Carlo sampling scheme.} We are now ready
to derive asymptotic guarantees for the asynchronous relaxed inexact
best-response scheme. 

\begin{proposition}[{\bf Almost-sure convergence for asynchronous relaxed inexact best-response scheme}] \em Consider a game $\Gcal \in \Gscr^{\rm chl}$. For any $i \in \{1, \cdots, \Nbold\}$, suppose $f_i(\bullet,\x^{-i})$ is a convex function for any $\x^{-i} \in \Xscr_{-i}$ and $\Xscr_i \subseteq \Real^{n_i}$ is a closed and convex set. Consider the smoothed counterpart of $\Gcal$, denoted by $\Gcal_{\eta}$ where $\Gcal_{\eta} \in \Gscr^{\rm chl}_{\rm pot}$; for any $i \in \{1, \cdots, \Nbold\}$, suppose $f_{i,\eta}(\bullet,\x^{-i})$ denotes the $\eta$-smoothing of $f_i(\bullet,\x^{-i})$. Suppose $P_{\eta}$ denotes the potential function of $\Gcal_{\eta}$ where $P_{\eta}(\x) \geq \tilde{P}$ for any $\x \in \Xscr + \eta \mathbb{B}$. Here $\tilde{P}$ denotes a lower bound on $P_{\eta}(\x)$. Suppose B$_i(\x)$ and B$_{i,\eta}(\x)$ denote the proximal best-response and smoothed proximal-response for $i \in \{1, \cdots, \Nbold\}.$ Then the following hold for any $i \in \{1, \cdots, \Nbold\}$.  Consider a sequence $\{\x^k\}$ generated by (ASRPBR) scheme. Then the following hold. 

    \smallskip
    \noindent (a) For $k \geq {0}$, the following holds almost surely. 
    \begin{align} \label{main_rec}
        \mathbb{E}[P_{\eta}(\x^{k+1})- \tilde{P}_{\eta} \mid \Fscr_k]  & \leq  
        (P_{\eta}(\x^{k}) - \tilde{P}_{\eta})   - \gamma_k\left(c - \tfrac{L\gamma_k}{2}\right) \|B_{i,\eta}(\x^k)-\x^{i,k}\|^2 \notag
                                                                    \\ & + \sum_{i=1}^{\Nbold} M_i\mathbb{E}[\|\epsilon^{i,k+1}\| \mid \Fscr_k].
    \end{align}

    \noindent (b) Suppose one of the following hold. (i) $\{\gamma_k\}$ is a decreasing non-summable but square-summable sequence where $\gamma_k < \tfrac{2c}{L}$ for every $k$; (ii) $\gamma_k = \gamma = 1$ and $c > \tfrac{L}{2}$. {Furthermore, suppose $\sum_{k=0}^{\infty} \sum_{i=1}^{\Nbold} M_i\mathbb{E}[\|\epsilon^{i,k+1}\| \mid \Fscr_k] < \infty$}. Then 
    \begin{align} \label{fp} \lim_{k \to \infty} \sum_{i=1}^{\Nbold} \|\x^{i,k} - B_{i,\eta}(\x^{k})\|^2 = 0 \mbox{ almost surely}. \end{align} 

    \smallskip

    \noindent (c) Suppose \eqref{fp} holds. Then $\{\x^k\}$ converges to the set of Nash equilibria of $\Gcal_{\eta}$ in an a.s. sense.

\end{proposition}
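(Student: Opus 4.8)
The plan is to establish parts (a), (b), (c) in that order, with (a) supplying a super\-martingale recursion (exactly of the type required by Lemma~\ref{robbins}) and (b),(c) extracting the asymptotic consequences.

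\emph{Part (a).} Fix $k$ and condition on the event $\{i_k = i\}$. Since only block $i$ is updated, the potential identity for $\Gcal_{\eta}$ gives $P_{\eta}(\x^{k+1}) - P_{\eta}(\x^k) = f_{i,\eta}(\x^{i,k+1},\x^{-i,k}) - f_{i,\eta}(\x^{i,k},\x^{-i,k})$. With $\x^{i,k+1} = \z^{i,k+1} + \epsilon^{i,k+1}$ and $\z^{i,k+1} = (1-\gamma_k)\x^{i,k} + \gamma_k B_{i,\eta}(\x^k)$, split this difference into $[f_{i,\eta}(\z^{i,k+1},\x^{-i,k}) - f_{i,\eta}(\x^{i,k},\x^{-i,k})] + [f_{i,\eta}(\x^{i,k+1},\x^{-i,k}) - f_{i,\eta}(\z^{i,k+1},\x^{-i,k})]$. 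For the first bracket, apply the descent lemma for the $L$-smooth function $f_{i,\eta}(\bullet,\x^{-i,k})$ (Lemma~\ref{lem:conv-heta}(a), with $L = \bar\alpha/\eta$) along the direction $\z^{i,k+1}-\x^{i,k} = \gamma_k(B_{i,\eta}(\x^k)-\x^{i,k})$, then use convexity of $f_{i,\eta}$ together with the $c$-strong-convexity optimality inequality for $B_{i,\eta}(\x^k)$, namely $f_{i,\eta}(\x^{i,k},\x^{-i,k}) \ge f_{i,\eta}(B_{i,\eta}(\x^k),\x^{-i,k}) + c\|B_{i,\eta}(\x^k)-\x^{i,k}\|^2$; this yields $f_{i,\eta}(\z^{i,k+1},\x^{-i,k}) - f_{i,\eta}(\x^{i,k},\x^{-i,k}) \le -\gamma_k\big(c - \tfrac{L\gamma_k}{2}\big)\|B_{i,\eta}(\x^k)-\x^{i,k}\|^2$. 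The second bracket is at most $M_i\|\epsilon^{i,k+1}\|$, where $M_i$ is the Lipschitz modulus of $f_{i,\eta}(\bullet,\x^{-i,k})$, finite by Assumption~\ref{bound-grad} (bounded subgradients of $f_i$ transfer to a bounded gradient of its smoothing). Taking $\mathbb{E}[\cdot\mid\Fscr_k]$, i.e.\ averaging over the choice $i_k$ with probabilities $p_i$, and using $P_{\eta}\ge \tilde P_{\eta}$, produces \eqref{main_rec}.

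\emph{Part (b).} Apply Lemma~\ref{robbins} to \eqref{main_rec} with $r_k = P_{\eta}(\x^k)-\tilde P_{\eta}\ge 0$, $u_k\equiv 0$, $\delta_k$ the (nonnegative, since $\gamma_k < 2c/L$) decrease term, and $\psi_k = \sum_i M_i\mathbb{E}[\|\epsilon^{i,k+1}\|\mid\Fscr_k]$, summable by hypothesis. This gives, almost surely, that $\{P_{\eta}(\x^k)\}$ converges and $\sum_k \gamma_k\big(c-\tfrac{L\gamma_k}{2}\big)\,r(\x^k) < \infty$, where $r(\x)\triangleq \sum_i p_i\|B_{i,\eta}(\x)-\x^i\|^2$ (comparable to $\sum_i\|B_{i,\eta}(\x)-\x^i\|^2$ since each $p_i\in(0,1)$). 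In case (ii), $\gamma_k\equiv 1$ and $c>L/2$, so the coefficient is a fixed positive constant and the series forces $r(\x^k)\to 0$, which is \eqref{fp}. In case (i), $c-\tfrac{L\gamma_k}{2}$ is eventually bounded below by a positive constant while $\sum_k\gamma_k=\infty$, so immediately only $\liminf_k r(\x^k)=0$; to upgrade to $\lim_k r(\x^k)=0$ I would bound $\|\x^{k+1}-\x^k\| \le \gamma_k\|B_{i_k,\eta}(\x^k)-\x^{i_k,k}\| + \|\epsilon^{i_k,k+1}\|$, invoke Lipschitz continuity of each $B_{i,\eta}$ (a consequence of $c$-strong convexity of the proximal map and $L$-smoothness of $f_{i,\eta}$) to deduce that $|r(\x^{k+1})-r(\x^k)|$ is dominated by a vanishing quantity, and then run a standard interval/upcrossing argument: if $\limsup_k r(\x^k)>0$ on a positive-probability event, one exhibits infinitely many disjoint blocks of indices on which $r(\x^k)$ stays bounded away from $0$ and whose $\gamma_k$-mass is bounded below, contradicting $\sum_k\gamma_k r(\x^k)<\infty$.

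\emph{Part (c).} Assume \eqref{fp}. Convergence of $\{P_{\eta}(\x^k)\}$ together with level-boundedness of $P_{\eta}$ (or compactness of $\Xscr$) makes $\{\x^k\}$ bounded, hence it has accumulation points a.s. If $\x^{k_j}\to\bar\x$ along a subsequence, continuity of each $B_{i,\eta}$ and \eqref{fp} give $\bar\x^{i} = B_{i,\eta}(\bar\x)$ for every $i$, so $\bar\x$ is a fixed point of the $\eta$-smoothed proximal best-response map; by the earlier proposition that such fixed points are exactly the Nash equilibria of $\Gcal_{\eta}$, $\bar\x$ is a Nash equilibrium of $\Gcal_{\eta}$. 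Since every accumulation point of the bounded sequence $\{\x^k\}$ lies in the closed equilibrium set of $\Gcal_{\eta}$, a contradiction argument shows $\mathrm{dist}(\x^k,\text{equilibria of }\Gcal_{\eta})\to 0$ almost surely. The principal obstacle is the $\liminf\!\Rightarrow\!\lim$ step in case (i) of part (b): the asynchronous update means a non-selected player's residual moves only through the single updated block, so the increment estimate on $r(\cdot)$ must carefully combine the Lipschitz modulus of $B_{i,\eta}$ with the step-size-weighted control on $\|\x^{k+1}-\x^k\|$, and the whole oscillation argument must be carried out pathwise on the almost-sure event where the relevant series converge.
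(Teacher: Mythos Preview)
Your proposal follows essentially the same architecture as the paper's proof. For (a), you obtain the key bound $\nabla_{\x^i} f_{i,\eta}(\x^{i,k},\x^{-i,k})^{\mathsf T}(B_{i,\eta}(\x^k)-\x^{i,k}) \le -c\|B_{i,\eta}(\x^k)-\x^{i,k}\|^2$ by combining convexity of $f_{i,\eta}(\bullet,\x^{-i,k})$ with the $c$-strong-convexity growth inequality of the proximal objective, whereas the paper derives the identical bound from the first-order optimality condition of (SPBR$_{i,\eta}(\x^k)$) together with monotonicity of $\nabla_{\x^i} f_{i,\eta}(\bullet,\x^{-i,k})$; the two routes are equivalent and yield the same descent estimate \eqref{bd-1}. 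The error term is handled identically (Lipschitz/mean-value bound), and the conditional-expectation step with averaging over $i_k$ is the same; note that the displayed inequality \eqref{main_rec} in the statement omits the $\sum_i p_i$ that both you and the paper's proof actually produce.

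For (b)(ii) your argument and the paper's coincide. For (b)(i) you go somewhat further than the paper: after Robbins--Siegmund gives $\sum_k \gamma_k r(\x^k)<\infty$ and hence $\liminf_k r(\x^k)=0$, you propose controlling $|r(\x^{k+1})-r(\x^k)|$ via Lipschitz continuity of the $B_{i,\eta}$ and the step bound $\|\x^{k+1}-\x^k\|\le \gamma_k\|B_{i_k,\eta}(\x^k)-\x^{i_k,k}\|+\|\epsilon^{i_k,k+1}\|$, then running an upcrossing argument. The paper instead argues directly by contradiction on a bad-event subsequence $\Kscr(\omega)$; your increment-control approach is the more standard way to close the $\liminf\Rightarrow\lim$ gap and avoids having to justify that $\sum_{k\in\Kscr(\omega)}\gamma_k=\infty$. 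For (c) the paper simply cites \cite[Th.~1(b)]{lei20asynchronous}, while you sketch the underlying boundedness/continuity/accumulation-point argument; both are acceptable.
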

\begin{proof}
    (a) For ease of exposition, we let $i_k$, the player selected at the $k$th iteration, be denoted by $i$.   
    Since $f_{i,\eta}$ is $L$-smooth where $L = \tfrac{\alpha}{\eta}$, we have that 
    \begin{align}\notag
        f_{i,\eta}(\z^{i,k+1},\x^{-i,k}) &  \leq f_{i,\eta}(\x^{i,k},\x^{-i,k}) + \gamma_k \nabla_{\x^i} f_{i,\eta}(\x^{i,k},\x^{-i,k})^\mathsf{T}(B_{i,\eta}(\x^k)-\x^{i,k}) \\
        \label{eq1}                               & +  \tfrac{L\gamma_k^2}{2}\|B_{i,\eta}(\x^k)-\x^{i,k}\|^2.
    \end{align}
    Furthermore, by the optimality conditions of (SPBR$_{i,\eta}(\x^k)$), we have that 
    \begin{align}\label{eq2}
        \notag 0  & \leq (\nabla_{\x^i} f_{i,\eta}(B_{i,\eta}(\x^k),\x^{-i,k}) + c(B_{i,\eta}(\x^k)-\x^{i,k}))^\mathsf{T}(\x^{i,k}-B_{i,\eta}(\x^k)) \\
                 & {\le -\nabla_{\x^i} f_{i,\eta}(\x^{i,k},\x^{-i,k})}^\mathsf{T}(B_{i,\eta}(\x^k)-\x^{i,k}) - c\|B_{i,\eta}(\x^k)-\x^{i,k}\|^2,
    \end{align}
    a consequence of the monotonicity of $\nabla_{\x^i} f_{i,\eta}(\bullet,\x^{-i,k})$. By adding \eqref{eq1} and $\gamma_k \times $\eqref{eq2}, we obtain that 
    \begin{align}\label{bd-1}
        f_{i,\eta}(\z^{i,k+1} ,\x^{-i,k}) &  \leq f_{i,\eta}(\x^{i,k},\x^{-i,k}) - \gamma_k\left(c - \tfrac{L\gamma_k}{2}\right) \|B_{i,\eta}(\x^k)-\x^{i,k}\|^2. 
        \end{align} 
    Next, we derive a bound on $f_{i,\eta}(\x^{i,k+1},\x^{-i,k}) - f_{i,\eta}(\z^{i,k+1},\x^{-i,k})$ by the mean value theorem. 
    \begin{align} \notag 
        f_{i,\eta}(\x^{i,k+1},\x^{-i,k}) - f_{i,\eta}(\z^{i,k+1},\x^{-i,k}) & = \nabla_{\x^i} f_{i,\eta}(\tilde{x}_i,\x^{-i,k+1})^\mathsf{T} (\x^{i,k+1}-\z^{i,k+1}) \\
        \implies \vert f_{i,\eta}(\x^{i,k+1},\x^{-i,k}) - f_{i,\eta}(\z^{i,k+1},\x^{-i,k}) \vert & \leq  M_i \| \epsilon^{i,k+1}\|, 
        \label{bd-2}
    \end{align}
    where $\tilde{x}_i \in [\x^{i,k+1},\z^{i,k+1}]$.  Consequently, we have that 
       \begin{align*}
           P_{\eta}(\x^{k+1}) - P_{\eta}(\x^{k}) &  = 
           P_{\eta}(\x^{i,k+1},\x^{-i,k}) - P_{\eta}(\x^{i,k},\x^{-i,k})\\
                                                 & =   f_{i,\eta}(\x^{i,k+1},\x^{-i,k}) - f_{i,\eta}(\x^k) \\ 
                                                 & =   f_{i,\eta}(\x^{i,k+1},\x^{-i,k}) - f_{i,\eta}(\z^{i,k+1},\x^{-i,k})+ f_{i,\eta}(\z^{i,k+1},\x^{-i,k}) - f_{i,\eta}(\x^k) \\
        & \overset{\eqref{bd-1}}{\leq} - \gamma_k\left(c - \tfrac{L\gamma_k}{2}\right) \|B_{i,\eta}(\x^k)-\x^{i,k}\|^2 
         + f_{i,\eta}(\x^{i,k+1},\x^{-i,k}) - f_{i,\eta}(\z^{i,k+1},\x^{-i,k}) \\ 
        & \overset{\eqref{bd-2}}{\leq} - \gamma_k\left(c - \tfrac{L\gamma_k}{2}\right) \|B_{i,\eta}(\x^k)-\x^{i,k}\|^2 + M_i\|\epsilon^{i,k+1}\|. 
        \end{align*}
        Since $\epsilon^{j,k+1} = 0$ for $j \neq i$, we have that 
            \begin{align*}
                P_{\eta}(\x^{k+1}) - P_{\eta}(\x^{k}) & \leq - \gamma_k\left(c - \tfrac{L\gamma_k}{2}\right) \|B_{i,\eta}(\x^k)-\x^{i,k}\|^2 + \sum_{i=1}^{\Nbold} M_i\|\epsilon^{i,k+1}\|.
            \end{align*} 
            By taking expectations with respect to $\Fscr_k$, we have that in an a.s. sense that 
    \begin{align*}
        \notag \mathbb{E}[P_{\eta}(\x^{k+1})- \tilde{P}_{\eta} \mid \Fscr_k]  & \leq  
        (P_{\eta}(\x^{k}) - \tilde{P}_{\eta})   - \gamma_k\left(c - \tfrac{L\gamma_k}{2}\right) \mathbb{E}[\|B_{i,\eta}(\x^k)-\x^{i,k}\|^2 \mid \Fscr_k] \\
           \notag                                                            & + \sum_{i=1}^{\Nbold} M_i\mathbb{E}[\|\epsilon^{i,k+1}\| \mid \Fscr_k] \\
                \notag                                                        & =  
                                                                        (P_{\eta}(\x^{k}) - \tilde{P}_{\eta})   - \gamma_k\left(c - \tfrac{L\gamma_k}{2}\right) \mathbb{E}[\mathbb{E}[\|B_{i,\eta}(\x^k)-\x^{i,k}\|^2 \mid {\Fscr'_{k+1}} ] \mid \Fscr_k ] \\
                     \notag                                                  & + \sum_{i=1}^{\Nbold} M_i\mathbb{E}[\|\epsilon^{i,k+1}\| \mid \Fscr_k] \\
& = \notag 
(P_{\eta}(\x^{k}) - \tilde{P}_{\eta})   - \gamma_k\left(c - \tfrac{L\gamma_k}{2}\right) \sum_{i=1}^{\Nbold} p_i\|B_{i,\eta}(\x^k)-\x^{i,k}\|^2 \\
                                                                       & + \sum_{i=1}^{\Nbold} M_i \mathbb{E}[\|\epsilon^{i,k+1}\|\mid \Fscr_k], 
    \end{align*}
    where the second equality follows from the tower law of conditional expectation and the last equality arises from recalling that $\|B_{i,\eta}(\x^k)-\x^{i,k}\|^2$ is adapted to $\Fscr_k$ for every $i \in \{1, \cdots, \Nbold\}$.  

    \noindent (b(i)) By choice, $\gamma_k < \tfrac{2c}{L} = \tfrac{2c\eta}{\alpha}$ for every $k$. Since $\{\gamma_k\}$ is a diminishing sequence with $\sum_{k} \gamma_k^2 < \infty$, for sufficiently large $K$, $c - \tfrac{L\gamma_k}{2} \geq \tilde{c}$. Consequently, it suffices to consider a shifted recursion to claim that for $k > K$, the following holds a.s.   
\begin{align*} 
        \notag \mathbb{E}[P_{\eta}(\x^{k+1})- \tilde{P}_{\eta} \mid \Fscr_k]  & \leq  
        (P_{\eta}(\x^{k}) - \tilde{P}_{\eta})   - \gamma_k \tilde{c} {\sum_{i=1}^{\Nbold}p_i}\|B_{i,\eta}(\x^k)-\x^{i,k}\|^2\\
                                                                       & + \sum_{i=1}^{\Nbold} M_i\mathbb{E}[\|\epsilon^{i,k+1}\| \mid \Fscr_k].
    \end{align*}
    Since $\{(P_{\eta}(\x^{k}) - \tilde{P}_{\eta})\}$ is a nonnegative sequence and $\sum_{k=0}^{\infty} \sum_{i=1}^{\Nbold} M_i\mathbb{E}[\|\epsilon^{i,k+1}\| \mid \Fscr_k] < \infty$, we have that $\{(P_{\eta}(\x^{k}) - \tilde{P}_{\eta})\}$ is convergent a.s. and $\sum_{k=0}^{\infty} \gamma_k \sum_{i=1}^{\Nbold} \|B_{i,\eta}(\x^k)-\x^{i,k}\|^2 < \infty$ a.s. Since $\sum_{k=0}^{\infty} \gamma_k = \infty$, we have that    
    $$ \liminf_{k \to \infty} \sum_{i=1}^{\Nbold}  \|B_{i,\eta}(\x^k)-\x^{i,k}\|^2 = 0. $$ 
Consequently, along some subsequence $\Kscr$, we have that $\lim_{k \in \Kscr, k \to \infty}  \sum_{i=1}^{\Nbold} \|B_{i,\eta}(\x^k)-\x^{i,k}\|^2 = 0.$ It remains to show that $\sum_{i=1}^{\Nbold} \|B_{i,\eta}(\x^k)-\x^{i,k}\|^2 \xrightarrow[k \to \infty]{k \in \Kscr(\omega)} 0$ in an a.s. sense for almost every  $\omega \in \Omega$. We proceed by contradiction. Suppose for $\omega \in \Omega^c \subseteq \Omega$ and $\mathbb{P}(\omega \mid \omega \in \Omega^c) > 0$ , we have that $$\liminf_{k \in \Kscr(\omega)} \sum_{i=1}^{\Nbold} \|B_{i,\eta}(\x^k)-\x^{i,k}\|^2 \geq \bar{v}.$$   
Therefore, for every $\Kscr(\omega)$, there exists a $K(\omega)$ such that $\sum_{i=1}^{\Nbold} \|B_{i,\eta}(\x^k)-\x^{i,k}\|^2 \geq \tfrac{\bar{v}}{2}$ for $k \geq K(\omega)$. This implies that with finite probability, $\sum_{k \in \Kscr(\omega)} \sum_{i=1}^{\Nbold} \gamma_k \|B_{i,\eta}(\x^k)-\x^{i,k}\|^2 \geq \sum_{k \geq K(\omega), k \in \Kscr(\omega)} \gamma_k \|B_{i,\eta}(\x^k)-\x^{i,k}\|^2 \geq  
\sum_{k \geq K(\omega), k \in \Kscr(\omega)}  \tfrac{\gamma_k\bar{v}}{2} = \infty.$ But this contradicts the claim that  
$\sum_{k=1}^{\infty} \gamma_k \sum_{i=1}^{\Nbold} \|B_{i,\eta}(\x^k)-\x^{i,k}\|^2 < \infty$ almost surely. Therefore, 
$\sum_{i=1}^{\Nbold} \|B_{i,\eta}(\x^k)-\x^{i,k}\|^2 \xrightarrow[k \to \infty]{a.s.} 0$.

\medskip

    \noindent (b(ii)) Since $\gamma_k = 1$ for every $k$ and $c > L/2$, \eqref{main_rec} reduces to 
    \begin{align*}
        \notag \mathbb{E}[P_{\eta}(\x^{k+1})- \tilde{P}_{\eta} \mid \Fscr_k]  & \leq  
        (P_{\eta}(\x^{k}) - \tilde{P}_{\eta})   - \left(c - \tfrac{L}{2}\right) \sum_{i=1}^{\Nbold} \|B_{i,\eta}(\x^k)-\x^{i,k}\|^2\\
                                                                       & + \sum_{i=1}^{\Nbold} M_i\mathbb{E}[\|\epsilon^{i,k+1}\| \mid \Fscr_k].
    \end{align*}
    By invoking the Robbins-Siegmund lemma, we have that $\sum_{i=1}^{\Nbold} \|B_{i,\eta}(\x^k)-\x^{i,k}\|^2 < \infty$ a.s., implying that $\sum_{i=1}^{\Nbold}\|B_{i,\eta}(\x^k)-\x^{i,k}\|^2 \xrightarrow[k \to \infty]{a.s.} 0$ for $i = 1, \cdots, \Nbold$.  
   
    \noindent (c) This follows by ~\cite[Th.~1(b)]{lei20asynchronous}.

\end{proof}

\subsection{A zeroth-order scheme for resolving SPBR$_{i,\eta}(\x^{-i})$} \label{sec:4.2}
At the $k$th step of the (ARSPBR) scheme, the relaxed inexact scheme and its unrelaxed counterpart require computing an $\left(\tfrac{\epsilon^{i,k}}{\gamma_k}\right)$-solution to (SPBR$_{i,\eta}(\x^k)$). We develop a scheme for computing such a solution in this subsection; in particular, we consider the inexact resolution of the smoothed best-response problem given by (SPBR$_{i,\eta}(\x^{-i})$). 
\begin{align}\label{prox-prob}
    \min_{\vv^i \in \Xscr_i} \phi_{i,\eta}(\vv^i,\x) \triangleq  \left[f_{i,\eta}(\vv^i,\x^{-i}) + \tfrac{c}{2} \|\vv^i -\x^i\|^2\right].  
\end{align}
We denote an optimal solution to this problem by $\vv^{i,*}$ and our goal lies in developing a scheme that generates a sequence $\{\vv^{i,t}\}$ such that $\mathbb{E}[\| \vv^{i,t}-\vv^{i,*} \|^2 \mid \x] \leq Cq^t$, where $C$ and $q$ are positive scalars and $q \in (0,1)$. We observe  that
$\phi_{\eta}$ is an $\mathcal{O}(\tfrac{1}{\eta})$-smooth and $c$-strongly convex
expectation-valued function. Since $\phi_{i,\eta}$ is
$\mathcal{O}(\tfrac{1}{\eta})$-smooth, one might imagine that a standard
stochastic approximation scheme can be applied for computing an approximate
solution of \eqref{prox-prob}. However, this requires computing a sampled
gradient of $f_{i,\eta}(\bullet,\x^{-i})$. Unfortunately, one may recall that a sampled gradient of $f_{i,\eta}(\bullet,\x^{-i})$ 
 requires computing the
sampled gradient of $\tilde{h}_\eta(\x^i,\y^i(\x,\omega),\omega)$ where $\y^i(\x,\omega)$
represents the solution of a lower-level parametrized variational inequality
problem. Instead, we construct a {\em zeroth-order} scheme that relies only on
function values to approximate the gradient of
$\tilde{h}_{\eta}(\x^i,\y^i(\x,\omega))$. To this end, we develop a randomized
smoothing-based zeroth-order scheme inspired by~\cite{nesterov17}. In
particular, we define $\phi_{\eta}$ as 
\begin{align*} \notag
    & \, \phi_{i,\eta}(\vv^i,\x)   = \mathbb{E}_{u^i \in \mathbb{B}_i}\left[ f_{i,\eta}(\vv^i+\eta u^i,\x^{-i})+ \tfrac{c}{2} \|\vv^i+\eta u^i -\x^i\|^2\right]  \\
       \notag                     & =  \mathbb{E}_{u^i \in \mathbb{B}_i} \left[\mathbb{E}\left[ \tilde{g}_i(\vv^i+\eta u^i,\x^{-i},\omega)+\tilde{h}_{i,\eta}(\vv^i+\eta u^i,\y^i(\vv^i+\eta u^i,\x^{-i},\omega),\omega) \mid u^i \right]+ \tfrac{c}{2} \|\vv^i+\eta u^i -\x^i\|^2\right] \\
    & =  \mathbb{E}_{u^i, \omega} \left[ \tilde{g}_i(\vv^i+\eta u^i,\x^{-i},\omega)+\tilde{h}_{i,\eta}(\vv^i+\eta u^i,\y^i(\vv^i+\eta u^i,\x^{-i},\omega),\omega) + \tfrac{c}{2} \|\vv^i+\eta u^i -\x^i\|^2\right],  
\end{align*}
where $\omega$ and $u^i$ are independent random variables, $\mathbb{B}_i \triangleq \{u^i \in \Real^{n_i} \mid \|u^i\| \leq 1\}$, the inner expectation is with respect to $\omega$, conditional on $u^i$ while the outer expectation is with respect to $u^i$. {The gradient of $\phi_{\eta}(\vv^i,\x)$ is given by the following 
    \begin{align} \label{grad-phi}
    \nabla_{\vv^i} \phi_{i,\eta}(\vv^i,\x)=  \mathbb{E}_{v^i \in \eta \mathbb{S}_i} \left[\left(f_{i,\eta}(\vv^i+v^i,\x^{-i})+ \tfrac{c}{2}\|\vv^i+v^i-\x^i\|^2\right) \tfrac{n_iv^i}{\eta \|v^i\|}\right],
\end{align}
where $\mathbb{S}_i$ denote the surface of the ball $\mathbb{B}_i$, i.e., $\mathbb{S}_i\triangleq \{v^i \in \mathbb{R}^{n_i}\mid \|v^i\| = 1\}$. }
A mini-batch approximation of the zeroth-order approximation of the gradient by using $N_t$ samples $\{\omega_{j},v^{i,j}\}_{j=1}^{N_t}$ is denoted by 
{
\begin{align*} 
    \gb_{i,\eta,\phi,N_t} (\vv,\x) \triangleq \sum_{j=1}^{N_t} \tfrac{ \tfrac{n_i}{\eta }\left[ \tilde{g}_i(\vv^i+\eta v^{i,j},\x^{-i},\omega_j)+\tilde{h}_{i,\eta}(\vv^i+\eta v^{i,j},\y^i(\vv^i+\eta v^{i,j},\x^{-i},\omega_j),\omega_j) + \tfrac{c}{2} \|\vv^i+\eta v^{i,j} -\x^i\|^2\right]\left(\frac{v^{i,j}}{\|v^{i,j}\|}\right)}{N_t}. 
\end{align*}}
We observe that this mini-batch approximation satisfies suitable unbiasedness and moment assumptions in an almost-sure sense, a standard requirement in stochastic approximation approaches.

\begin{lemma}\em ~\cite[Lemma~3]{cui21zeroth}  For any $i \in \{1, \cdots, \Nbold\}$, suppose  
    $w_{i,t} =   {\bf g}_{i,\eta,\phi,N_t}(\vv^{i,t},\x)-\nabla_{\vv^i} \phi_{i,\eta}(\vv^{i,t},\x)$. Then the following hold for any $\vv^{i,t} \in \Xscr_i+\eta \mathbb{B}_i$ and any $\x \in \Xscr$.    

    \noindent (a) $\mathbb{E}_{v^i,\omega}  \left[ w_{i,t} \mid \vv^{i,t},\x\right] = 0$ almost surely. 

    \noindent (b) $\mathbb{E}_{v^i,\omega} \left[   \|w_{i,t}\|^2 \mid \vv^{i,t}, \x \right]\leq  \tfrac{\nu^2}{N_t}$ almost surely. 
\end{lemma}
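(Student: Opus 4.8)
The plan is to treat $ {\bf g}_{i,\eta,\phi,N_t}(\vv^{i,t},\x)$ as an empirical average of $N_t$ independent and identically distributed single-sample estimators, and to reduce both claims to properties of one such estimator. Write ${\bf g}_{i,\eta,\phi,N_t}(\vv^{i,t},\x)=\tfrac{1}{N_t}\sum_{j=1}^{N_t}g^j$, where
\begin{align*}
g^j \triangleq \tfrac{n_i}{\eta}\Big[\tilde{g}_i(\vv^{i,t}+\eta v^{i,j},\x^{-i},\omega_j)+\tilde{h}_{i,\eta}(\vv^{i,t}+\eta v^{i,j},\y^i(\vv^{i,t}+\eta v^{i,j},\x^{-i},\omega_j),\omega_j)+\tfrac{c}{2}\|\vv^{i,t}+\eta v^{i,j}-\x^i\|^2\Big]v^{i,j},
\end{align*}
and $\{(\omega_j,v^{i,j})\}_{j=1}^{N_t}$ are the fresh samples drawn at inner step $t$, with $v^{i,j}$ uniform on $\mathbb{S}_i$ and $\omega_j\sim\mathbb{P}$, jointly independent of $(\vv^{i,t},\x)$ (the latter being determined by $\x$ and the samples of the preceding inner iterations). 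Note $\|v^{i,j}\|=1$, so $v^{i,j}/\|v^{i,j}\|=v^{i,j}$. Conditional on $(\vv^{i,t},\x)$ the $g^j$ are i.i.d., so it suffices to analyze $g^1$.

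For (a), I would condition additionally on $v^{i,1}$ and first take expectation over $\omega_1$. The quadratic term is $\omega$-free, while by the definition of $f_{i,\eta}$ one has $\mathbb{E}_{\omega_1}\!\big[\tilde{g}_i(\vv^{i,t}+\eta v^{i,1},\x^{-i},\omega_1)+\tilde{h}_{i,\eta}(\vv^{i,t}+\eta v^{i,1},\y^i(\cdot),\omega_1)\big]=f_{i,\eta}(\vv^{i,t}+\eta v^{i,1},\x^{-i})$, hence $\mathbb{E}_{\omega_1}[g^1\mid v^{i,1}]=\tfrac{n_i}{\eta}\big[f_{i,\eta}(\vv^{i,t}+\eta v^{i,1},\x^{-i})+\tfrac{c}{2}\|\vv^{i,t}+\eta v^{i,1}-\x^i\|^2\big]v^{i,1}$. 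Taking the expectation over $v^{i,1}$ uniform on $\mathbb{S}_i$ and substituting $v^i=\eta v^{i,1}\in\eta\mathbb{S}_i$ reproduces precisely the right-hand side of the gradient identity \eqref{grad-phi}, i.e. $\mathbb{E}[g^1\mid\vv^{i,t},\x]=\nabla_{\vv^i}\phi_{i,\eta}(\vv^{i,t},\x)$. Averaging over $j$ and subtracting $\nabla_{\vv^i}\phi_{i,\eta}(\vv^{i,t},\x)$ then yields $\mathbb{E}_{v^i,\omega}[w_{i,t}\mid\vv^{i,t},\x]=0$ almost surely. For (b), since $w_{i,t}=\tfrac{1}{N_t}\sum_{j}(g^j-\nabla_{\vv^i}\phi_{i,\eta}(\vv^{i,t},\x))$ is a centered average of $N_t$ i.i.d.\ terms given $(\vv^{i,t},\x)$,
\begin{align*}
\mathbb{E}[\|w_{i,t}\|^2\mid\vv^{i,t},\x]=\tfrac{1}{N_t}\,\mathbb{E}\big[\|g^1-\nabla_{\vv^i}\phi_{i,\eta}(\vv^{i,t},\x)\|^2\mid\vv^{i,t},\x\big]\le\tfrac{1}{N_t}\,\mathbb{E}[\|g^1\|^2\mid\vv^{i,t},\x].
\end{align*}
Using $\|v^{i,1}\|=1$ we have $\|g^1\|^2=\tfrac{n_i^2}{\eta^2}\big|\tilde{g}_i(\cdot)+\tilde{h}_{i,\eta}(\cdot)+\tfrac{c}{2}\|\vv^{i,t}+\eta v^{i,1}-\x^i\|^2\big|^2$, and for $\vv^{i,t}\in\Xscr_i+\eta\mathbb{B}_i$ and $\x\in\Xscr$ the bracketed quantity is bounded by a finite constant $M$ depending only on the diameter of $\Xscr_i$, on $c$, $\eta$, and on uniform bounds for $\tilde{g}_i,\tilde{h}_{i,\eta}$ over the (bounded) relevant set — which hold under Assumption~\ref{bound-grad} together with boundedness of $\Xscr_i$ (cf.\ the Comment following Lemma~\ref{lem:conv-heta}). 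Thus $\mathbb{E}[\|g^1\|^2\mid\vv^{i,t},\x]\le n_i^2M^2/\eta^2=:\nu^2$ and the claim follows.

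\textbf{Main obstacle.} The only non-routine ingredients are the gradient identity \eqref{grad-phi} — the Stokes/divergence-theorem fact that the gradient of the ball-smoothed objective equals the sphere average with weight $n_i v^i/(\eta\|v^i\|)$, which underlies Nesterov-type zeroth-order estimators and is available from \cite{cui21zeroth} and its antecedents — and the uniform boundedness of the function values entering $g^1$, which requires the compactness (or growth-bounded) hypotheses on the strategy sets. One must also keep the conditioning clean: the step-$t$ smoothing samples $\{(\omega_j,v^{i,j})\}$ are independent of $(\vv^{i,t},\x)$, so every conditional expectation above is genuinely only over those fresh samples; the rest is the standard variance-of-the-sample-mean computation.
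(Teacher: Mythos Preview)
The paper does not supply its own proof of this lemma; it merely cites it from \cite[Lemma~3]{cui21zeroth}. Your reconstruction is the standard argument behind that result and is essentially correct: treat the mini-batch estimator as an i.i.d.\ average, identify the conditional mean of a single term with $\nabla_{\vv^i}\phi_{i,\eta}$ via the spherical-smoothing identity \eqref{grad-phi}, and then use the variance-of-the-sample-mean identity together with a uniform bound on the one-sample second moment.

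One notational wrinkle worth tidying: in part (a) you write $\mathbb{E}_{\omega_1}[\tilde{g}_i(\cdot)+\tilde{h}_{i,\eta}(\cdot)]=f_{i,\eta}(\vv^{i,t}+\eta v^{i,1},\x^{-i})$, but integrating only over $\omega$ yields the \emph{un}smoothed objective $f_i$ at the shifted point, not $f_{i,\eta}$; the subsequent sphere average over $v^{i,1}$ is precisely what produces the smoothing and recovers \eqref{grad-phi}. (The paper's own display of $\phi_{i,\eta}$ and \eqref{grad-phi} is itself notationally loose on this point, so the slip is understandable.) Also, in (b) the inequality $\mathbb{E}[\|g^1-\nabla\phi\|^2]\le\mathbb{E}[\|g^1\|^2]$ holds because the mean minimizes the mean-squared deviation, which you use implicitly; it may be worth stating. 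Neither issue affects the substance of the argument.
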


Next, we recall that  $\phi_{i,\eta}(\bullet, \x)$ is Lipschitz continuous on a compact set $\Xscr_i+\eta \mathbb{B}_i$ (which follows from convexity over a compact set $\Xscr_i+\eta \mathbb{B}$ uniformly in $\x$ on $\Xscr$. Further from~\cite[Lemma~1]{cui21zeroth}, we recall that $\phi_{i,\eta}(\bullet, \x)$ is $(\tfrac{L_0 n_i}{\eta}+c)$-smooth on $\Xscr_i+\eta \mathbb{B}_i$ uniformly in $\x$. Both claims are formalized in the next Lemma.   

\begin{lemma} \em Consider the game $\Gcal \in \Gscr^{\rm chl}_{\rm pot}$ and its smoothed counterpart $\Gcal_{\eta}$. Then the following hold. Suppose $\Xscr_i$ is bounded for $i = 1, \cdots, \Nbold.$ 

    \noindent (a) For any $i \in \{1, \cdots, \Nbold\}$, the function $\phi_{i,\eta}(\bullet,\x)$ is convex and Lipschitz continuous on $\Xscr_i+\eta \mathbb{B}_i$ with constant $L_0$ uniformly in $\x$ on $\Xscr$.  

    \noindent (b) The gradient  of  $\phi_{i,\eta}(\bullet,\x)$, defined as \eqref{grad-phi}, is Lipschitz continuous on $\Xscr$ uniformly in $\x$ on $\Xscr$ with constant $\tfrac{L_0n_i}{\eta} + c$.

%\noindent (c) If ${\bf g}_{\eta}(\vv,v)$ is defined as 
%\begin{align} \label{def-g-eta} {\bf g}_{\eta}(\vv,v) \triangleq  \left(\frac{n}{\eta}\right)\left[\tfrac{(h(\vv+v)-h(\vv))v}{\|v\|}\right],
%\end{align}  then $\mathbb{E}_{v \in \eta \mathbb{S}} [\|{\bf g}_{\eta}(\vv,v)\|^2] \leq L_0^2 n^2$ for any $\vv \in \Vscr$.   
%
\end{lemma}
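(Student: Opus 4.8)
The plan is to handle parts (a) and (b) separately, in each case isolating the contribution of the quadratic penalty $\tfrac{c}{2}\|\vv^i-\x^i\|^2$ from that of the already-$\eta$-smoothed term $f_{i,\eta}(\bullet,\x^{-i})$, and then invoking the structural facts about randomized smoothing that are already available. Throughout, the $\phi_{i,\eta}$ referenced in the statement is the ball-averaged objective whose gradient admits the representation \eqref{grad-phi}.

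For (a), convexity is immediate: $f_{i,\eta}(\bullet,\x^{-i})$ is convex for every $\x^{-i}\in\Xscr^{-i}$ by Lemma~\ref{lem:conv-heta}(a), the map $\vv^i\mapsto\tfrac{c}{2}\|\vv^i-\x^i\|^2$ is convex, and ball-averaging of a convex function preserves convexity; hence $\phi_{i,\eta}(\bullet,\x)$ is convex. For Lipschitz continuity, I would note that the gradient of $f_{i,\eta}(\bullet,\x^{-i})$ is an average of subgradients of $f_i(\bullet,\x^{-i})$ at nearby points, so by Assumption~\ref{bound-grad} (or, since the $\Xscr_i$ are bounded, by the weakened variant recorded in the Comment after Lemma~\ref{lem:conv-heta}) it has norm at most $\mathcal{L}_0$, uniformly in $\x^{-i}$; the gradient of the quadratic term is $c(\vv^i-\x^i)$, whose norm is at most $c\,\mathrm{diam}(\Xscr_i+\eta\mathbb{B}_i)$ on the compact set $\Xscr_i+\eta\mathbb{B}_i$. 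Taking $L_0$ to be any uniform upper bound for $\mathcal{L}_0+c\,\mathrm{diam}(\Xscr_i+\eta\mathbb{B}_i)$, a convexity/mean-value argument gives that $\phi_{i,\eta}(\bullet,\x)$ is $L_0$-Lipschitz on $\Xscr_i+\eta\mathbb{B}_i$, uniformly over $\x\in\Xscr$ (the $\x$-dependence enters only through the uniform bound $\mathcal{L}_0$ and through $\x^i\in\Xscr_i$, which is bounded).

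For (b), I would start from \eqref{grad-phi} and split $\nabla_{\vv^i}\phi_{i,\eta}(\vv^i,\x)$ into the gradient of the spherical $\eta$-smoothing of $f_{i,\eta}(\bullet,\x^{-i})$ plus the contribution of the quadratic term. The short computation here is that the quadratic contribution collapses to exactly $c(\vv^i-\x^i)$: expanding $\|\vv^i+v^i-\x^i\|^2$, the $\|\vv^i-\x^i\|^2$ and $\|v^i\|^2$ pieces are even in $v^i$ and are annihilated against the odd kernel $v^i/\|v^i\|$, while the cross term yields $c(\vv^i-\x^i)$ after using $\mathbb{E}_{v^i\in\eta\mathbb{S}_i}[\tfrac{n_i}{\eta^2}v^i(v^i)^{\mathsf{T}}]=I$. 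Thus $\nabla_{\vv^i}\phi_{i,\eta}(\vv^i,\x)$ is the gradient of the $\eta$-spherical smoothing of $f_{i,\eta}(\bullet,\x^{-i})$ plus $c(\vv^i-\x^i)$; the first summand is $\tfrac{L_0 n_i}{\eta}$-Lipschitz by the randomized-smoothing estimate in \cite[Lemma~1]{cui21zeroth} applied to the $L_0$-Lipschitz function $f_{i,\eta}(\bullet,\x^{-i})$ (Lipschitzness being uniform in $\x^{-i}$ by part (a)), and the second is $c$-Lipschitz in $\vv^i$ and carries no smoothing. A triangle inequality then gives that $\nabla_{\vv^i}\phi_{i,\eta}(\bullet,\x)$ is $\big(\tfrac{L_0 n_i}{\eta}+c\big)$-Lipschitz on $\Xscr_i+\eta\mathbb{B}_i$, uniformly in $\x\in\Xscr$.

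The convexity claim and the quadratic-term bookkeeping are routine; the only part carrying real content is verifying the hypotheses needed to invoke \cite[Lemma~1]{cui21zeroth} — namely that $f_{i,\eta}(\bullet,\x^{-i})$ is Lipschitz on (a neighborhood of) $\Xscr_i+\eta\mathbb{B}_i$ with a constant independent of $\x^{-i}$, which rests on the uniform subgradient bound of Assumption~\ref{bound-grad} together with boundedness of the $\Xscr_i$. I expect the main obstacle to be tracking this uniformity in $\x$ cleanly, and, if one insists on the exact constant $\tfrac{L_0 n_i}{\eta}+c$ rather than a bound up to absolute constants, ensuring the spherical-smoothing identities are applied with the precise normalization used in \cite{cui21zeroth}.
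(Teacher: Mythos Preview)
Your proposal is correct and follows essentially the same route as the paper: for (a) the paper simply notes that convexity on the compact set $\Xscr_i+\eta\mathbb{B}_i$ yields Lipschitz continuity, and for (b) it defers directly to \cite[Lemma~1]{cui21zeroth}; you supply the same citation but additionally spell out the quadratic-term bookkeeping (the spherical-average identity reducing the penalty's contribution to $c(\vv^i-\x^i)$), which the paper leaves implicit. One minor inconsistency: in (a) you define $L_0$ as a bound on the gradient of the \emph{full} $\phi_{i,\eta}$ (including the quadratic), but in (b) you apply the smoothing estimate to $f_{i,\eta}$ alone and call it ``$L_0$-Lipschitz''; strictly this gives $\mathcal{L}_0 n_i/\eta + c$ rather than $L_0 n_i/\eta + c$, though since $\mathcal{L}_0\le L_0$ the stated bound still holds.
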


We now consider the application of the following scheme to \eqref{prox-prob}. Given a  $\vv^{i,0} \in \Xscr_i$, a sequence $\{\vv^{i,t}\}$ is constructed as follows. 
\begin{align}
    \tag{ZSOL}
    \vv^{i,t+1} := \Pi_{\Xscr_i} \left[ \vv^{i,t}- \zeta_t \left({\bf g}_{i,\eta,\phi,N_t}(\vv^{i,t},\x)+w_{i,t}\right) \right], \qquad t > 0 
\end{align}
where $w_{i,t} =  {\bf g}_{i,\eta,\phi,N_t}(\vv^{i,t},\x)-\nabla_{\vv^i} \phi_{i,\eta}(\vv^{i,t},\x)$. Suppose $\tilde{\Fscr}_0 = \{\vv^{i,0}\}$ and $\tilde{\Fscr}_t = \tilde{\Fscr}_{t-1} \cup\{\vv_t\}$. 

\begin{lemma} [{\bf Rate statement for zeroth-order scheme for $B_{i,\eta}(\x^k)$}]\em Suppose the scheme (ZSOL) is applied on \eqref{prox-prob} where $\phi_{i,\eta}(\bullet, \x)$ is $c$-strongly convex and $\alpha$-smooth, where $\alpha=\tfrac{L_0n_i}{\eta} + c$. Suppose $\zeta_t = \zeta < \tfrac{c}{\alpha^2}$, $q = (1-2c\zeta +2\zeta^2\alpha^2) < 1$,   and $N_t = \lceil q^{-(t+1)} \rceil$ for every $t$. Then the following holds for a suitable positive scalar $C$,  
    $\mathbb{E}[\|\vv^{i,t} - \vv^{i,*}\|^2 \mid \x] \leq q^t C, \mbox{ for } t > 0.$ 
\end{lemma}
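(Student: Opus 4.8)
The statement is a standard linear-rate result for a projected stochastic (zeroth-order) gradient method applied to a strongly convex, smooth, expectation-valued function, with geometrically increasing batch sizes. The plan is to establish a one-step contraction of the form $\mathbb{E}[\|\vv^{i,t+1}-\vv^{i,*}\|^2\mid\tilde{\Fscr}_t,\x]\le q\,\|\vv^{i,t}-\vv^{i,*}\|^2 + \zeta^2\tfrac{\nu^2}{N_t}$ and then unroll the recursion using $N_t=\lceil q^{-(t+1)}\rceil$. First I would write $\vv^{i,t+1}=\Pi_{\Xscr_i}[\vv^{i,t}-\zeta(\nabla_{\vv^i}\phi_{i,\eta}(\vv^{i,t},\x)+w_{i,t})]$, using that the noisy oracle $\mathbf{g}_{i,\eta,\phi,N_t}(\vv^{i,t},\x)+w_{i,t}$ has conditional mean $\nabla_{\vv^i}\phi_{i,\eta}(\vv^{i,t},\x)$ wait — more carefully, $\mathbf{g}_{i,\eta,\phi,N_t}=\nabla_{\vv^i}\phi_{i,\eta}+w_{i,t}$ by definition of $w_{i,t}$, so the update is a genuine stochastic projected-gradient step with unbiased error $w_{i,t}$ satisfying $\mathbb{E}[w_{i,t}\mid\vv^{i,t},\x]=0$ and $\mathbb{E}[\|w_{i,t}\|^2\mid\vv^{i,t},\x]\le\nu^2/N_t$ by the preceding lemma.

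\smallskip

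Next I would use nonexpansivity of $\Pi_{\Xscr_i}$ together with $\vv^{i,*}=\Pi_{\Xscr_i}[\vv^{i,*}-\zeta\nabla_{\vv^i}\phi_{i,\eta}(\vv^{i,*},\x)]$ (the fixed-point characterization of the minimizer, since $\phi_{i,\eta}(\bullet,\x)$ is convex and differentiable on a set containing $\Xscr_i$). Expanding $\|\vv^{i,t+1}-\vv^{i,*}\|^2$, taking conditional expectation to kill the cross term $\langle w_{i,t},\cdot\rangle$, and bounding the noise term by $\zeta^2\nu^2/N_t$, the deterministic part reduces to $\|(\vv^{i,t}-\vv^{i,*})-\zeta(\nabla_{\vv^i}\phi_{i,\eta}(\vv^{i,t},\x)-\nabla_{\vv^i}\phi_{i,\eta}(\vv^{i,*},\x))\|^2$. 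Expanding this and invoking $c$-strong convexity (to get the inner product lower bound $\ge c\|\vv^{i,t}-\vv^{i,*}\|^2$) and $\alpha$-smoothness (to bound $\|\nabla_{\vv^i}\phi_{i,\eta}(\vv^{i,t},\x)-\nabla_{\vv^i}\phi_{i,\eta}(\vv^{i,*},\x)\|^2\le\alpha^2\|\vv^{i,t}-\vv^{i,*}\|^2$) yields the factor $(1-2c\zeta+\zeta^2\alpha^2)$; a slightly looser bound using $2\zeta^2\alpha^2$ gives exactly $q=1-2c\zeta+2\zeta^2\alpha^2<1$ under $\zeta<c/\alpha^2$. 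Hence
\begin{align*}
\mathbb{E}[\|\vv^{i,t+1}-\vv^{i,*}\|^2\mid\tilde{\Fscr}_t,\x]\le q\,\|\vv^{i,t}-\vv^{i,*}\|^2+\tfrac{\zeta^2\nu^2}{N_t}.
\end{align*}

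\smallskip

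Finally, taking total expectation (conditional on $\x$) and unrolling from $t=0$ gives
\begin{align*}
\mathbb{E}[\|\vv^{i,t}-\vv^{i,*}\|^2\mid\x]\le q^t\|\vv^{i,0}-\vv^{i,*}\|^2+\sum_{s=0}^{t-1}q^{t-1-s}\tfrac{\zeta^2\nu^2}{N_s}.
\end{align*}
With $N_s=\lceil q^{-(s+1)}\rceil\ge q^{-(s+1)}$, each summand is bounded by $q^{t-1-s}\zeta^2\nu^2 q^{s+1}=\zeta^2\nu^2 q^{t}$, so the sum is at most $t\,\zeta^2\nu^2 q^{t}$. This $t\,q^t$ term can be absorbed into $\bar q^t$ for any $\bar q\in(q,1)$ at the cost of enlarging the constant, or one can simply keep $q^t$ and note $t q^t\le C' q^t$ for the adjusted rate; either way, setting $C\triangleq\|\vv^{i,0}-\vv^{i,*}\|^2+C'$ for a suitable $C'$ (or using a slightly larger base) gives $\mathbb{E}[\|\vv^{i,t}-\vv^{i,*}\|^2\mid\x]\le q^t C$. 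The only mildly delicate point — the main obstacle — is handling the arithmetic-geometric $t\,q^t$ factor cleanly; this is exactly the situation covered in \cite[Lemma~4]{ahmadi2016analysis} already cited in the paper, so I would invoke that to conclude, absorbing the polynomial factor and finalizing the constant $C$.
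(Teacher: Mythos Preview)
Your proposal is correct and follows essentially the same approach as the paper: expand the projected step, use $c$-strong convexity and $\alpha$-smoothness to obtain the contraction factor $q=1-2c\zeta+2\zeta^2\alpha^2$, kill the noise cross term by unbiasedness, bound the variance by $\nu^2/N_t$, and unroll using $N_t\ge q^{-(t+1)}$. The paper's proof arrives at the same recursion (with a constant $2\zeta^2\nu^2$ in the noise term instead of your $\zeta^2\nu^2$) and then simply asserts $\le q^{t+1}C$; you are in fact more careful than the paper in flagging the $t\,q^t$ issue and pointing to \cite[Lemma~4]{ahmadi2016analysis} to absorb it, whereas the paper glosses over this step.
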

\begin{proof}
Recall that 
\begin{align*}
    \|\vv^{i,t+1} - \vv^{i,*}\|^2 & \leq \|\vv^{i,t} - \vv^{i,*}\|^2
    - 2\zeta_t(\vv^{i,t}-\vv^{i,*})^\mathsf{T}( {\bf g}_{i,\eta,\phi,N_t}(\vv^{i,t},\x)  - {\bf g}_{i,\eta,\phi,N_t}(\vv^{i,*},\x)) \\
                                  & + 2\zeta_t^2 \|w_{i,t}\|^2 - 2\gamma_t w_{i,t}^\mathsf{T}( \vv^{i,t}-\vv^{i,*})
                                  +  2\zeta_t^2\alpha^2 \|\vv^{i,t}-\vv^{i,*}\|^2 \\
                                  & \leq (1-2c\zeta_t + 2\zeta_t^2\alpha^2) \|\vv^{i,t}-\vv^{i,*}\|^2- 2\gamma_t w_{i,t}^T(\vv^{i,t} - \vv^{i,*})
                                  +  2\zeta_t^2 \|w_{i,t}\|^2. 
\end{align*}
Taking expectations conditioned on $\x$, we obtain that 
\begin{align*}
    \mathbb{E}\left[   \|\vv^{i,t+1} - \vv^{i,*}\|^2 \mid \x \right] & \leq
    \left(1-2c\zeta_t + 2\zeta_t^2\alpha^2\right) \mathbb{E}[ \|\vv^{i,t}-\vv^{i,*}\|^2\mid \x]- 2\zeta_t \mathbb{E}[\underbrace{\mathbb{E}[w_{i,t}  \mid \tilde{\Fscr}_t, \x]}_{ \ = \ 0}\mid \x]^T( \vv^{i,t} - \vv^{i,*}) \\
                                                                     & +  2\zeta_t^2 \mathbb{E}[\underbrace{\mathbb{E}[ \|w_{i,t}\|^2 \mid \tilde{\Fscr}_t, \x]}_{ \ \leq \ \tfrac{\nu^2}{N_t}}\mid \x]
                                                                     \leq  \left(1-2c\zeta_t + 2\zeta_t^2\alpha^2\right) \mathbb{E}[ \|\vv^{i,t}-\vv^{i,*}\|^2 \mid \x]+  \tfrac{2\zeta_t^2 \nu^2}{N_t}.
\end{align*}
By setting $\zeta_t = \zeta$ such that $ (1-2c\zeta + 2\zeta^2\alpha^2) = q < 1$, we have that 
\begin{align*}
    \mathbb{E}\left[   \|\vv^{i,t+1} - \vv^{i,*}\|^2 \mid \x\right] & \leq  q \mathbb{E}[ \|\vv^{i,t}-\vv^{i,*}\|^2 \mid \x]+  2\zeta^2 \nu^2q^{t+1} 
                                                       \leq q^{t+1} C, 
\end{align*}
where $C$ is a suitably defined positive scalar. 
\end{proof}

{We observe that in \eqref{rel-inex-BR},  B$_{i,\eta}(\x^k) = \vv^{i,*}$ and
$\epsilon^{i,k+1} \triangleq \vv^{i,t} - \vv^{i,*}$. Therefore by employing
Jensen's inequality, we may show that  $\mathbb{E}[\|\epsilon^{i,k+1}\| \mid
\x^k] \leq \sqrt{C} q^{t/2}$. We conclude with a comment on the relationship between the two proposed schemes.} 

\medskip

\noindent {\bf Comment on the relationship between ({\bf ARSPBR}) and ({\bf VR-SPP})}.\\ 

\noindent {(i) \em Monotonicity vs Potentiality.} Section 3 focuses on the resolution of a monotone hierarchical game where the ``monotonicity'' of the game corresponds the monotonicity of the concatenated  player-specific subdifferential maps. However, Section 4 considers a class of potential hierarchical game where the potentiality is again with respect to the implicit player-specific objectives.\\ 

\noindent {(ii) \em Gradient-response  vs Best-response.} Section 3 develops a partially distributed stochastic proximal-point scheme for resolving the associated stochastic inclusion problem, where players take gradient-response steps (with a modified proximal term).  Section 4 presents an inexact best-response scheme that can be implemented in a partially distributed regime.

%-------------------------------------------
\section{Numerical Results}\label{sec:numerics}
%-------------------------------------------
In Section~\ref{sec:5.1}, we apply the ({\bf VR-SPP}) scheme to resolving the class of multi-leader multi-follower games considered in Section~\ref{sec:2.3}(b). In addition, we also examine how such schemes cope with expectation-valued constraints. In Section~\ref{sec:5.2}, we apply ({\bf ARSPBR}) to a class of hierarchical games  in uncertain settings as described in Section~\ref{sec:5.2}. % to accommodate   next two subsections, we consider the application of ({\bf VR-SPP}) and ({\bf ARSPBR}) on two distinct hierarchical convex games in uncertain settings.
\subsection{A multi-leader multi-follower problem under uncertainty}\label{sec:5.1}
In this section, we apply ({\bf VR-SPP}) on a 
multi-leader multi-follower game  described in Section \ref{sec:2.3} (Example b). % (Section~\ref{sec:5.1}).

\noindent {\bf 5.1.1. Problem parameters and algorithm specifications.} 
Suppose $\Nbold=13$ leaders and
$\Mbold=10$ followers and let $C_i$ is generated from the distribution $\mathcal{U}(0,100)$ for $i = 1,\cdots,\Nbold$, where $\mathcal{U}(l,u)$ denotes the uniform distribution on the interval $[l,u]$. Furthermore, $c_j=50$, for $j=1,\cdots,\Mbold$, $b=7$ and $a(\omega) \sim \mathcal{U}(33,37)$.  We compare our proposed scheme with a more standard stochastic approximation scheme applicable on monotone inclusion $0 \in T(\x)$ and specify their algorithm parameters. Solution quality is compared by estimating the residual function  $\texttt{res}(\x)=\norm{T_\lambda(\x)}$.  

\noindent {(i) ({SG}): Stochastic subgradient framework.}  Here, we employ the following stochastic subgradient scheme  to generate $\{\{\x^{k,i}\}_{i=1}^{\Nbold}\}$.  
        \begin{align}\tag{SG} \left\{ \begin{aligned}
                \x^{k+1,1}& \coloneqq \Pi_{\mathcal{X}_1} \left[\x^{k,1} - \alpha_k u_{k,1} \right] \\
                    & \qquad \vdots \\
                \x^{k+1,\Nbold}& \coloneqq \Pi_{\mathcal{X}_{\Nbold}} \left[\x^{k, \Nbold} - \alpha_k u_{k,\Nbold} \right]
        \end{aligned}\right\}, \mbox{ where } u_{k,i} \in \partial_{\x^i} \tilde{f}_i(\x,\y(\x,\omega),\omega) 
                \end{align}
            for $i = 1, \cdots, \Nbold.$
    In (SG), $\alpha_k \triangleq \tfrac{\alpha_0}{\sqrt{k}}$, where $\alpha_0=0.1$. $\x^0$ is randomly generated in $[0,1]^M$.

    \noindent {(ii) {\bf (VR-SPP)}. } We apply the ({\bf VR-SPP}) scheme defined in Section~\ref{sec:3.2.3} in which we employ $N_k=\lfloor 1.1^{k+1}\rfloor$, a proximal parameter $\lambda=0.1$ and a diminishing steplength $\tfrac{\alpha_0}{k}$ with $\alpha_0=0.1$ to approximate the resolvent via the (SA) scheme  (also presented in Section~\ref{sec:3.2.3}).
        %We utilize   samples at step $k$ in computing an inexact solution $\x^{k+1}$. of the following fixed-point problem, whose unique solution is denoted by    $\tilde{\x}^{k+1}$: \begin{align}\tag{vr-SPP} \tilde{\x}^{k+1}:= \Pi_{\mathcal{X}} \left[\tilde{\x}^{k+1} - \alpha_k (u(\tilde{\x}^{k+1})+ \tfrac{1}{\lambda} (\tilde{\x}^{k+1}-\x^k))\right],\end{align} where $u_{k+1} \in F(\tilde{\x}^{k+1})$ and $\mathbb{E}[\|\x^{k+1}-\tilde{\x}^{k+1}\|^2 \mid \mathcal{F}_{k}] \leq \tfrac{\nu_1^2 \|\x^{k}\|^2 + \nu_2^2}{N_k}$. 

\smallskip 

\noindent {\bf 5.1.2. Performance comparison and insights.} In Fig.~\ref{tra1},
we compare the numerical performance between ({SG}) and ({\bf VR-SPP}) with
various parameters under the same number of samples. The thick line indicates
the average performance and the transparent area is the variability over 20
simulations. We examine their sensitivities to the number of players,
variability and steplength, respectively, in Table~\ref{time1}. First, both
Fig.\ref{tra1} and Table~\ref{time1} show that on this class of problems, ({\bf
VR-SPP}) significantly outperforms ({SG}) schemes. Second, ({\bf VR-SPP}) takes
far less time than ({SG}) while providing far more accurate solutions. The
distinctions in time emerge since ({\bf VR-SPP}) utilizes an increasing
sample-size policy and thus it takes far fewer resolvent steps than ({SG}). In
each iteration, we use a ({SG}) scheme to evaluate $\norm{T_\lambda(x_k)}$;
therefore ({\bf VR-SPP}) uses far fewer outer iterations, leading to far
shorter runtimes.  
\begin{figure}[htbp]
\centering
\includegraphics[width=.48\textwidth]{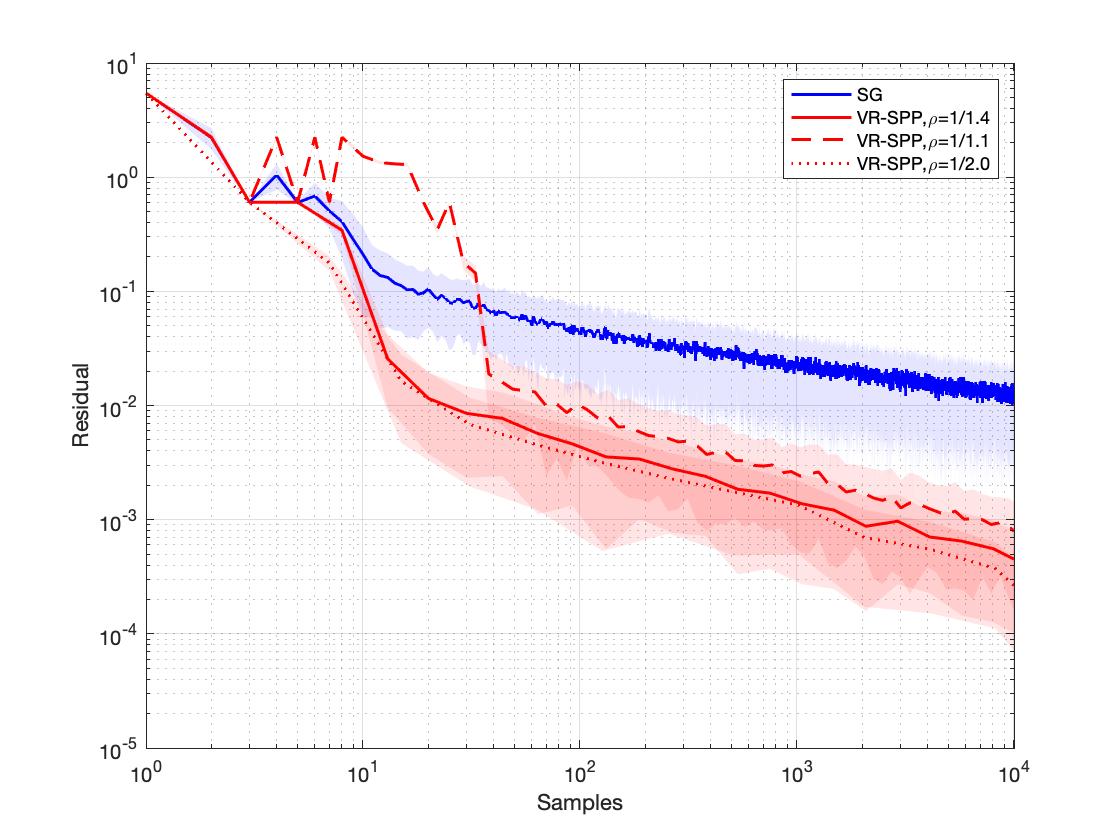}
\includegraphics[width=.48\textwidth]{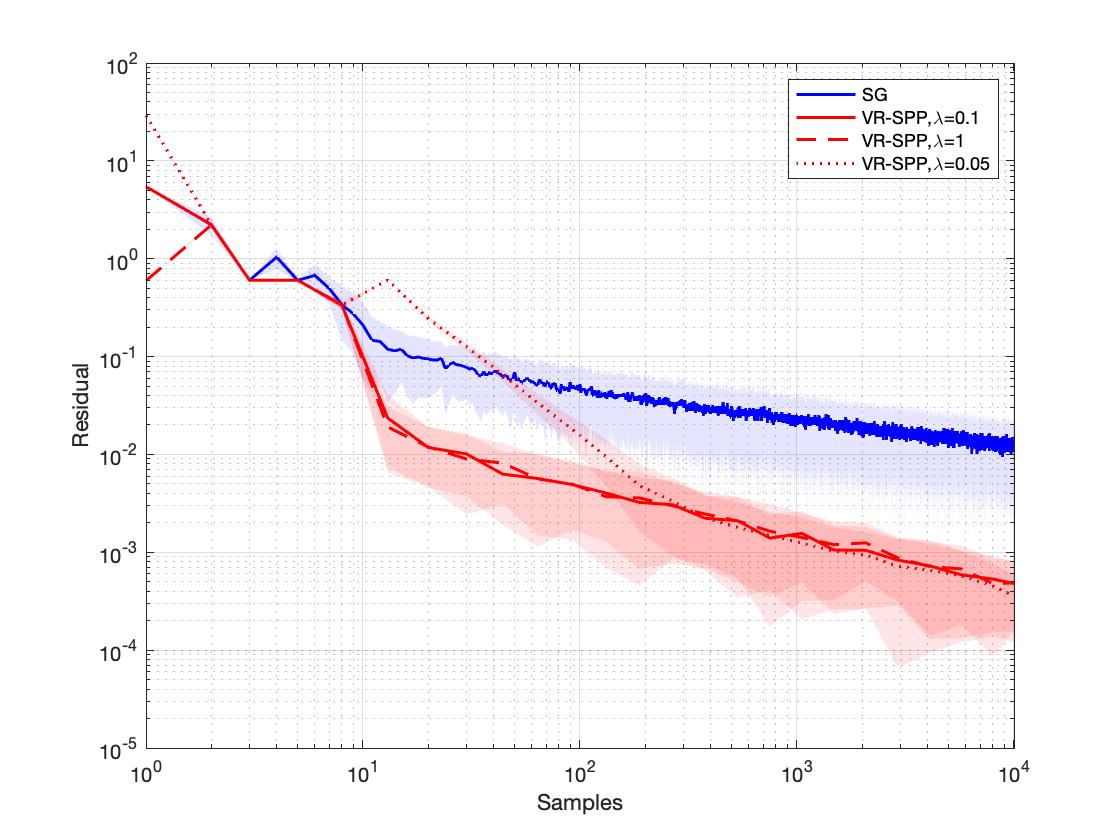} \hfill
\caption{Trajectories for ({SG}) and ({\bf VR-SPP})}
\label{tra1}
%\vspace{-0.2in}
\end{figure}

\begin{table}[htb]
\scriptsize
\caption{Errors and time comparison of ({SG}) and ({\bf VR-SPP}) with various parameters}
%\vspace{-0.2in}
\begin{center}
\begin{threeparttable} 
\begin{tabular}{c}
    \begin{tabular}[t]{ c | c | c | c |  c }
    \hline
    \multirow{2}{*}{$\Nbold$} & \multicolumn{2}{c|}{{SG}} & \multicolumn{2}{c}{\bf VR-SPP}   \\ \cline{2-5}
      & $\texttt{res}(\x^k)$ & Time & $\texttt{res}(\x^k)$ & Time  \\ \hline
      13 & 1.3e-2 & 6.7 & 5.0e-4 & 0.26 \\
      23 & 1.6e-2 & 13.8 & 5.2e-4 & 0.45  \\
     33 & 1.7e-2 & 28.8 & 5.8e-4 & 0.53 \\
     43 & 1.8e-2 & 41.1 & 5.7e-4 & 0.61 \\ \hline
    \end{tabular}  \hspace{0.1in}
    
            \begin{tabular}[t]{ c | c | c | c |  c }
    \hline
    \multirow{2}{*}{$\alpha_0$} & \multicolumn{2}{c|}{{SG}} & \multicolumn{2}{c}{\bf VR-SPP}   \\ \cline{2-5}
      & $\texttt{res}(\x^k)$ & Time & $\texttt{res}(\x^k)$ & Time  \\ \hline
      0.1 & 1.3e-2 & 6.7 & 5.0e-4 & 0.26  \\
      0.2 & 2.5e-2 & 6.6 & 5.7e-4  & 0.26  \\
     0.5 & 2.9e-2 & 6.6 & 1.3e-3 & 0.26 \\
     1 & 3.5e-2 & 6.7 & 1.7e-3 & 0.26  \\ \hline
    \end{tabular}  \\
    
    \begin{tabular}[t]{ c | c | c | c |  c }
    \hline
    \multirow{2}{*}{$a$} & \multicolumn{2}{c|}{{SG}} & \multicolumn{2}{c}{\bf VR-SPP}   \\ \cline{2-5}
      & $\texttt{res}(\x^k)$ & Time & $\texttt{res}(\x^k)$ & Time  \\ \hline
      $[33,37]$ & 1.3e-2 & 6.7 & 5.0e-4 & 0.26  \\
      $[30,40]$ & 3.5e-2 & 6.7 & 7.6e-4 & 0.27  \\
     $[25,45]$ & 5.0e-2 & 6.7 & 1.7e-3 & 0.26 \\
     $[20,50]$ & 6.6e-2 & 6.7 & 2.5e-3 & 0.26 \\ \hline
    \end{tabular} 
   
    \end{tabular}
    
\begin{tablenotes}
\small
\item The errors and time in the table are the average results of 20 runs
    \end{tablenotes}
  \end{threeparttable}
\end{center}
\label{time1}
\end{table}

\noindent {\bf 5.1.3. Incorporating expectation-valued constraints.} We now consider an extension of this game where each player is faced by expectation-valued constraints. Specifically, we impose a constraint $\mathbb{E}[c_i(\x^i,\omega_i)] \leq 0$ where $c_i(\x^i,\omega_i) = \x_i - U_i+\omega_i$ for $i=1, \cdots, \Nbold$.   We choose $U_i=5$ and $\omega_i \sim \mathcal{U}(-1,1)$ for $i =1 , \cdots, \Nbold$. With these additional expectation-valued constraints, we again compare the ({SG}) and ({\bf VR-SPP}) schemes in Table~\ref{timed}. It can be seen that akin to earlier, the ({\bf VR-SPP}) scheme is not overly impaired by the presence of expectation-valued constraints.

\begin{table}[htb]
\scriptsize
\caption{Comparison of ({SG}) and ({\bf VR-SPP}) for games with expectation-valued constraints}
%\vspace{-0.2in}
\begin{center}
\begin{threeparttable} 
\begin{tabular}{c}
    \begin{tabular}[t]{ c | c | c | c |  c }
    \hline
    \multirow{2}{*}{$\Nbold$} & \multicolumn{2}{c|}{{SG}} & \multicolumn{2}{c}{\bf VR-SPP}   \\ \cline{2-5}
      & $\texttt{res}(\x^k)$ & Time & $\texttt{res}(\x^k)$ & Time  \\ \hline
      13 & 1.6e-2 & 6.8 & 5.9e-4 & 0.28 \\
      23 & 1.9e-2 & 14.3 & 6.3e-4 & 0.46  \\
     33 & 2.0e-2 & 29.4 & 6.8e-4 & 0.54 \\
     43 & 2.2e-2 & 42.3 & 7.4e-4 & 0.64 \\ \hline
    \end{tabular}  \hspace{0.1in}
    
            \begin{tabular}[t]{ c | c | c | c |  c }
    \hline
    \multirow{2}{*}{$\alpha_0$} & \multicolumn{2}{c|}{{SG}} & \multicolumn{2}{c}{\bf VR-SPP}   \\ \cline{2-5}
      & $\texttt{res}(\x^k)$ & Time & $\texttt{res}(\x^k)$ & Time  \\ \hline
      0.1 & 1.6e-2 & 6.8 & 5.9e-4 & 0.28  \\
      0.2 & 2.1e-2 & 6.9 & 7.7e-4  & 0.29  \\
     0.5 & 4.6e-2 & 6.9 & 1.3e-3 & 0.28 \\
     1 & 7.1e-2 & 7.0 & 1.8e-3 & 0.30  \\ \hline
    \end{tabular}  \\
    
    \begin{tabular}[t]{ c | c | c | c |  c }
    \hline
    \multirow{2}{*}{$a$} & \multicolumn{2}{c|}{{SG}} & \multicolumn{2}{c}{\bf VR-SPP}   \\ \cline{2-5}
      & $\texttt{res}(\x^k)$ & Time & $\texttt{res}(\x^k)$ & Time  \\ \hline
      $[33,37]$ & 1.6e-2 & 6.8 & 5.9e-4 & 0.28   \\
      $[30,40]$ & 4.7e-2 & 6.8 & 1.3e-4 & 0.28  \\
     $[25,45]$ & 5.3e-2 & 6.9 & 2.6e-3 & 0.28 \\
     $[20,50]$ & 6.9e-2 & 6.8 & 4.4e-3 & 0.28 \\ \hline
    \end{tabular} 
   
    \end{tabular}
    
\begin{tablenotes}
\small
\item The errors and time in the table are the average results of 20 runs
    \end{tablenotes}
  \end{threeparttable}
\end{center}
\label{timed}
\end{table}

\subsection{A monotone stochastic bilevel game}\label{sec:5.2}
We apply ({\bf ARSPBR}) on the game in Section~\ref{sec:2.3}(a). We evaluate the solution quality of
player $i$ by the residual function
$\texttt{res}(\x^i)=\norm{\x^i-B_{i,\eta}(\x^{-i})}$ and use
$\texttt{res}(\x)=\sum_{i=1}^\Nbold\texttt{res}(\x^i)/\Nbold$ to denote the residual
across multiple players in a game.

\noindent {\bf 5.2.1.  Problem parameters and algorithm specifications.} We assume that there are $\Nbold=13$ players and
each with a single follower. Furthermore, let $Q_i=3$ and we reuse the symbol
of $b_i$ and $l_i$, letting $b_i(\x^i)=b_i\x^i$ and $l_i(\x^i)=l_i\x^i$, for $i =1, \cdots, \Nbold$, $b_i$ and $l_i$ are generated from $\mathcal{U}(0,3)$ and
$\mathcal{U}(0,1)$, respectively. Suppose
$\tilde{g}_i(\x^i,\x^{-i})=\tfrac{1}{2}d_i \cdot (\x^i)^2+3\x^i\cdot{\sum_{j=1}^\Nbold}\x^j$,
where $d_i$ is randomly generated from $\mathcal{U}(0,100)$ for $i = 1, \cdots, \Nbold$. The
random parameter is $a_i(\omega) \sim \mathcal{U}(33,37)$, for $i = 1, \cdots, \Nbold$.  At iteration $k$, we run $T_k = \lceil
\log(k^{1.5}) \rceil$ steps in (ZSOL) and we use $N_t=\lceil 1.5^{t+1} \rceil$
samples for $t>0$. In addition, we assume steplength $\zeta_t=0.01, \
\forall t>0$ and smoothing parameter $\eta=0.1$. 

\smallskip 

\begin{figure}[htbp]
\centering
\includegraphics[width=.48\textwidth]{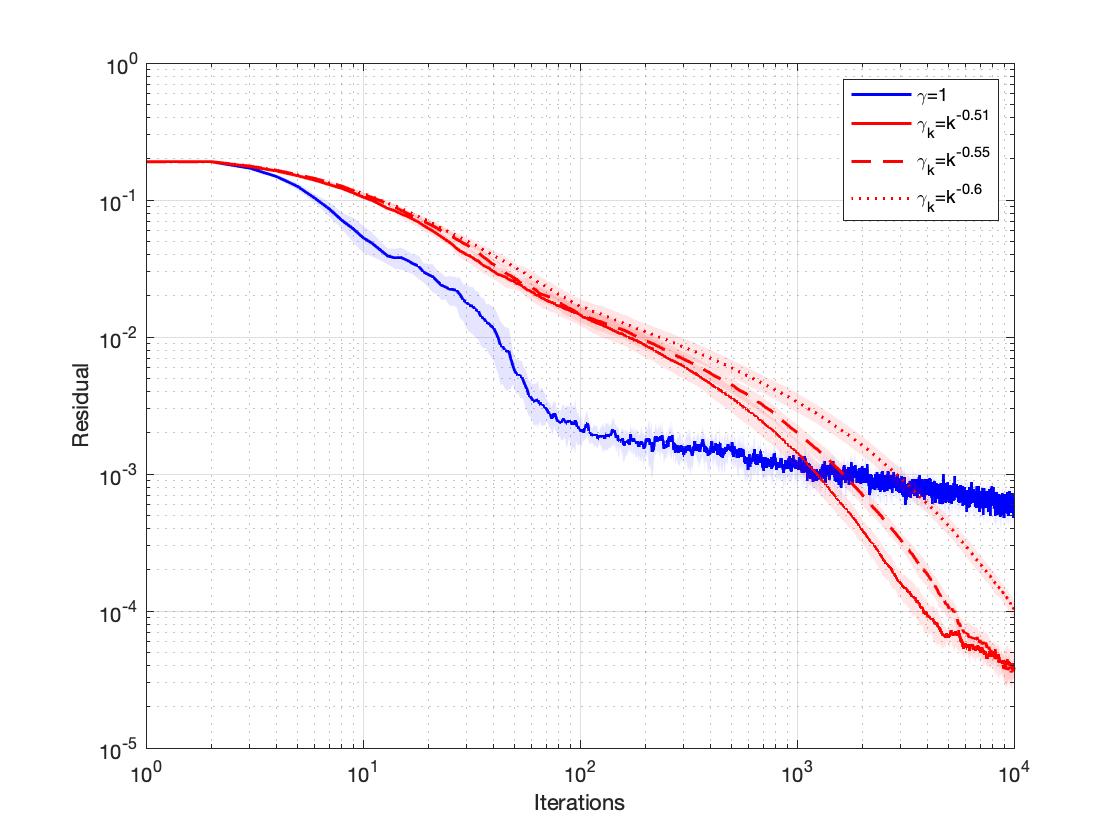}
\includegraphics[width=.48\textwidth]{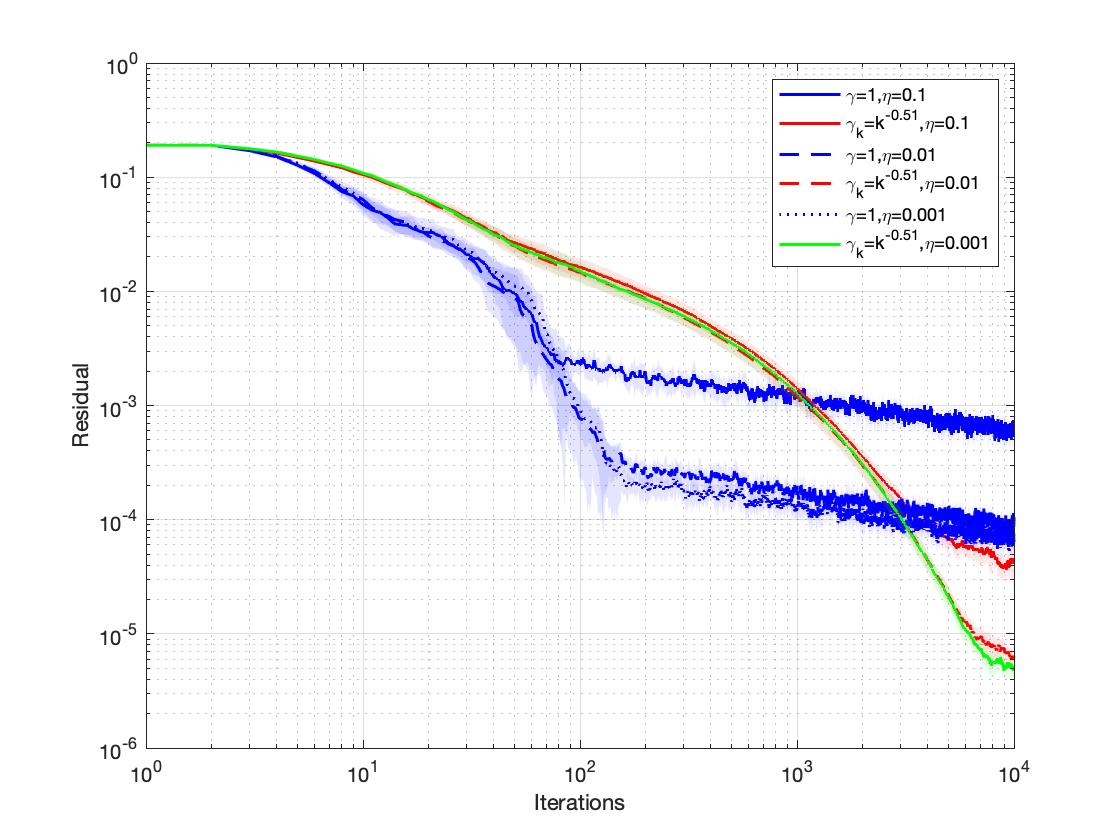} \hfill
\caption{Trajectories for ({\bf ARSPBR}) with different relaxation and smoothing parameters}
\vspace{-0.15in}
\label{tra2}
\end{figure}

\noindent {\bf 5.2.2. Performance comparison and insights.} In Fig.~\ref{tra2},
we compare the plots for ({\bf ARSPBR}) with different relaxation sequences
(left) and varying smoothing parameters (right). All the trajectories clearly
show the schemes converge to the optimal solution. While none of the relaxation schemes
perform better in early stages, they tend to have superior performance and a higher degree of stability
as the process continues. In addition, we examine their sensitivities to
various parameters in Table~\ref{time2}. Again, we note that the relaxation schemes provide more accurate solutions with a similar level of effort.

\begin{table}[htbp]
\scriptsize
\caption{Errors and time of ({\bf ARSPBR}) with ($\gamma_k=1$) and ($\gamma_k=k^{-0.51}$)}
%\vspace{-0.2in}
\begin{center}
\begin{threeparttable} 
\begin{tabular}{c}
    \begin{tabular}[t]{ c | c | c | c |  c }
    \hline
    \multirow{2}{*}{$\Nbold$} & \multicolumn{2}{c|}{$\gamma_k=1$} & \multicolumn{2}{c}{$\gamma_k=k^{-0.51}$}  \\ \cline{2-5}
      & $\texttt{res}(\x^k)$ & Time & $\texttt{res}(\x^k)$ & Time  \\ \hline
      13 & 6.1e-4 & 7.6 & 4.0e-5  & 7.6  \\
      23 & 6.2e-4 & 9.3 & 1.3e-4 & 9.2  \\
     33 & 6.4e-4 & 11.4 & 4.8e-4 & 11.4 \\
     43 & 6.0e-4 & 15.4 & 6.3e-4 & 15.3 \\ \hline
    \end{tabular}  \hspace{0.1in}
    
            \begin{tabular}[t]{ c | c | c | c |  c }
    \hline
    \multirow{2}{*}{$\zeta_t$} & \multicolumn{2}{c|}{$\gamma_k=1$} & \multicolumn{2}{c}{$\gamma_k=k^{-0.51}$}   \\ \cline{2-5}
      & $\texttt{res}(\x^k)$ & Time & $\texttt{res}(\x^k)$ & time  \\ \hline
      1e-2 & 6.1e-4 & 7.6 & 4.0e-5  & 7.6  \\
      5e-3 & 6.8e-4 & 7.4 & 4.8e-5 & 7.3  \\
      2e-3 & 7.2e-4 & 7.5 & 5.2e-5 & 7.4  \\
      1e-3 & 7.6e-4 & 7.7 & 5.6e-5 & 7.6  \\
       \hline
    \end{tabular}  \\
    
    \begin{tabular}[t]{ c | c | c | c |  c }
    \hline
    \multirow{2}{*}{$a$} & \multicolumn{2}{c|}{$\gamma_k=1$} & \multicolumn{2}{c}{$\gamma_k=k^{-0.51}$}   \\ \cline{2-5}
      & $\texttt{res}(\x^k)$ & Time & $\texttt{res}(\x^k)$ & Time  \\ \hline
      $[33,37]$ & 6.1e-4 & 7.6 & 4.0e-5  & 7.6  \\
      $[30,40]$ & 6.8e-4 & 7.7 & 4.3e-5  & 7.4  \\
     $[25,45]$ & 7.2e-4 & 7.5 & 5.0e-5  & 7.6  \\
     $[20,50]$ & 8.0e-4 & 7.7 & 5.1e-5 & 7.8 \\ \hline
    \end{tabular} 
   
    \end{tabular}
    
\begin{tablenotes}
\small
\item The errors and time in the table are the average results of 20 runs
    \end{tablenotes}
  \end{threeparttable}
\end{center}
\label{time2}
\end{table}

\noindent {\bf 5.2.3. Convergence of smoothed equilibria to the true equilibrium.} To show that the sequence of equilibria $\{\x^*_{\eta}\}_{\eta \downarrow 0}$ converges to $\x^*$, an equilibrium of the original game, we provide values of two metrics in Table~\ref{time3}. that are the best-response residual for the smoothed game $\|\x_{\eta}^{*}-B(\x_\eta^*)\|$, where $B(\x_\eta^*) \triangleq \left\{B_1(\x_\eta^*), \cdots, B_\Nbold(\x_\eta^*) \right\}$ and the residual $\|\x_{\eta}^{*}-\x^*\|$. To compute the equilibrium of the original game, we make a slight modification to some algorithm parameters. Here we assume $b_i=3$ and $l_i=1$ for all $i$. It is not difficult to see that in player $i$'s optimization problem, the optimal solution $\x^{i,*}$ should be negative. It follows that $\y^i(\x^{i,*},\omega) \triangleq \max\left\{ Q_i(\omega)^{-1} b_i(\x^{i,*},\omega), \ell_i(\x^{i,*},\omega) \right\}=Q_i(\omega)^{-1} l_i(\x^{i,*},\omega)$. Therefore, in (SGE-a), $\partial_{\x^i} \mathbb{E}[\tilde{{g}}_i(\x^i,\x^{-i},\omega) + {\tilde{h}}_i(\x^i,\omega)]$ is linear and single-valued which means we can use $\texttt{PATH}$ to compute the true equilibrium. 

{\bf Insights.} It can be observed that the distance to the true equilibria diminishes to zero as one gets increasingly accurate equilibria of games with progressively smaller $\eta$. This aligns with the theoretical claim in Section~\ref{sec:4.1}.   

\begin{table}[htbp]
\scriptsize
\caption{Residuals of equilibria of the smoothed games under various smoothing parameters}
%\vspace{-0.2in}
\begin{center}
\begin{threeparttable} 
\begin{tabular}{c}
    \begin{tabular}[t]{ c | c | c | c |  c  | c | c}
    \hline
    & $\eta$ & $0.2$ & $0.1$ & $0.01$ & $0.001$ & $0.0001$ \\ \hline
     \multirow{ 2}{*}{$\Nbold = 13$} &$\|\x_{\eta}^{*}-B(\x_\eta^*)\|$ & 1.1e-3 & 3.3e-4 & 5.6e-5 & 6.0e-5 & 2.3e-5 \\ \cline{2-7}
     & $\|\x_{\eta}^{*}-\x^*\|$ & 1.2e-3 & 3.4e-4 & 5.2e-5 & 3.6e-5 & 3.1e-5 \\\hline
     \multirow{ 2}{*}{$\Nbold = 23$} &$\|\x_{\eta}^{*}-B(\x_\eta^*)\|$ & 5.5e-1 & 2.9e-4 & 5.0e-5 & 3.9e-5 & 3.6e-5 \\ \cline{2-7}
     & $\|\x_{\eta}^{*}-\x^*\|$ & 1.1e-1 & 3.8e-4 & 5.4e-5 & 4.8e-5 & 4.3e-5 \\\hline
     \multirow{ 2}{*}{$\Nbold = 33$} &$\|\x_{\eta}^{*}-B(\x_\eta^*)\|$ & 7.8e-1 & 7.6e-3 & 8.2e-5 & 7.6e-5 & 5.7e-5 \\ \cline{2-7}
     & $\|\x_{\eta}^{*}-\x^*\|$ & 1.8e-1 & 1.2e-3 & 1.0e-4 & 9.2e-5 & 7.4e-5 \\\hline
   \end{tabular}
    \end{tabular}
    
\begin{tablenotes}
\small
    \end{tablenotes}
  \end{threeparttable}
\end{center}
\label{time3}
\end{table}

\section{Concluding remarks} We consider a class of hierarchical convex games
under uncertainty, a class of games in which  the implicit form of the
player-specific problems is convex, given rival decisions. In fact, certain subclasses of
multi-leader multi-follower games are known to lie in the considered class of
games. We present two sets of schemes for computing equilibria of such games.
Of these, the first  is a variance-reduced proximal-point framework and can
contend with monotone regimes, admitting  optimal deterministic rates of
convergence and near-optimal sample complexities. The second can process
smoothed potential variants of such games via an asynchronous relaxed smoothed
proximal best-response scheme. Notably, sequences produced by such schemes
converge almost surely to an $\eta$-approximate Nash equilibrium of the
original game where $\eta$ denotes a fixed smoothing parameter. We develop a
geometrically convergent zeroth-order scheme for computing the best response
which reduces to resolving a mathematical program with equilibrium constraints,
a problem that is known to be strongly convex in its implicit form. While
preliminary numerics are promising, we believe that this is but a first step in
developing a rigorous foundation for a broad class of hierarchical games
complicated by risk, nonsmootheness, and nonconvexity. 

\appendix
\section{Appendix}
\noindent{\bf A.1. Variational inequality problems, Inclusions, and monotonicity.}\\ 

\noindent {\bf (a) Variational inequality problems and inclusions.} 
Consider a variational inequality problem VI$(\mathcal{X}, F)$ where $\mathcal{X}$ is a closed and convex set and $F: \Real^n \to \Real^n$ is a single-valued continuous map. Such a problem requires an $\x$ such that 
    $$ (\tilde{\x}-\x)^\mathsf{T} F(\x) \geq 0, \qquad \forall \tilde{\x} \in\Xscr.$$
Furthermore, VI$(\Xscr,F)$ can also be written as an inclusion problem, i.e.
\begin{align*}
    \x \mbox{ solves } \mbox{VI}(\Xscr,F) \qquad \iff \qquad 0 \in F(\x) + \mathcal{N}_{\Xscr}(\x). 
\end{align*}
     Consider an $\Nbold$-player game $\Gcal$  where for $i=1, \cdots, \Nbold$, the $i$th player minimizes the parametrized smooth convex optimization problem defined as 
   \begin{align} \tag{Agent$_i(\x^{-i})$}
	\min_{\x^i \in \Xscr_i} \ f_i(\x^i, \x^{-i}). 
\end{align}
By convexity assumptions,  the set of Nash equilibria of $\Gcal$ is equivalent to the solution set of the variational inequality problem VI$(\mathcal{X}, F)$ where  $ \mathcal{X} \triangleq \prod_{i=1}^\Nbold \Xscr_i$ and  
\begin{align}
    \label{def-F} F(\x) \triangleq \pmat{ \nabla_{\x^1} f_1(\x) \\
				\vdots \\
        \nabla_{\x^\Nbold} f_\Nbold(\x)}.\end{align}
               If $f$ is a nonsmooth convex function, then the subdifferential $\partial f$ is also a monotone set-valued (or multi-valued) map on $\Xscr$. In addition, if $f_i(\bullet, \x^{-i})$ is not necessarily smooth, then the associated set of equilibria are given by the solution of VI$(\Xscr, T)$ where 
               \begin{align}
               \label{def-t} T(\x) \triangleq \prod_{i=1}^\Nbold \partial_{\x^i} f_i(\x^i,\x^{-i}).\end{align}

               \medskip

               \noindent {\bf (b) Monotonicity properties.}  Consider VI$(\Xscr, F)$.  Then the map  $F$ is monotone on $\mathcal{X}$ if $(F(\x)-F(\y))^\mathsf{T}(\x-\y) \geq 0$ for all $\x, \y \in \Xscr$.  Monotonicity may also emerge in  the context of $\Nbold$-player noncooperative games. In particular, one may view $\Gcal$ as being monotone if and only if the associated map $F$, defined as \eqref{def-F},  is  monotone on $\Xscr$.  In the special case when $\Nbold = 1$, this reduces to the gradient map of a smooth convex function $f$, denoted by $\nabla f$, being monotone. This can also be generalized to set-valued regimes. For instance, the map $T$, defined as \eqref{def-t}, arising from a noncooperative game $\Gcal$ with nonsmooth player-specific objectives is said to be monotone if for any $\x, \y \in \Xscr$ and any $u \in T(\x)$ and $v \in T(\y)$, we have $(u-v)^\mathsf{T}(\x-\y) \geq 0$.\\

          \noindent (c) {\bf Monotonicity in the context of single-leader single-follower.} Consider a single-leader single-follower problem in which the follower's objective $g(\x,\bullet)$ is a strongly convex function on $\Yscr$, a closed and convex set while $\Xscr$ is also a closed and convex.   
\begin{align}
    \tag{Leader} \min_{\x \in \Xscr} \  &f(\x,\y(\x)),  \mbox{ where }\\ 
    \tag{Follower} \y(\x) = \mbox{arg}\hspace{-0.02in}\min_{\y \in \Yscr} \ & g(\x,\y). 
\end{align}
There are many instances when  $f(\bullet, \y(\bullet))$ is a convex function on $\Xscr$ (see~\cite{sherali1984multiple,su2007analysis,sherali83stackelberg,demiguel2009stochastic} for some instances) implying that $\partial f(\bullet, \y(\bullet))$ is a monotone map on $\Xscr$. In other words, the {\em implicit} problem in leader-level decisions can be seen to be characterized by a  convex objective with a {\em monotone} map. However, when viewing the problem in the {\bf full space} of $\x$ and $\y$, i.e.
\begin{align*}
\begin{aligned}
    \min_{\x \in \Xscr, \y} \  f(\x,\y) & \\
    (\tilde{\y}-\y)^\mathsf{T} \nabla_{\y} g(\x,\y) & \geq 0, \qquad \forall \tilde{\y} \ \in \ \Yscr. 
\end{aligned}
\end{align*}
In the full space of $\x$ and $\y$, this is indeed a nonconvex optimization problem~\cite{luo96mathematical}; however, the implicit problem in $x$ may be convex under some assumptions and the resulting subdifferential map is then monotone.\\

\noindent {\bf A.2. Proofs.} \\

\noindent {\bf Proof of Proposition~\ref{mono-g}.} 
\begin{proof}
    In both cases, it is  not difficult to see that $H$ is a monotone map where $ H(\x) \triangleq \prod_{i=1}^\Nbold \partial_{\x^i} h_i(\x^i,\x^{-i})$. Consequently, if $T$ is defined as $T(\x)=G(\x)+H(\x) $, then $T$ is a monotone map which follows from the monotonicity of $G$, defined as
    $G(\x) \triangleq \prod_{i=1}^\Nbold \partial_{\x^i} g_i(\x^i,\x^{-i})$.
\end{proof}

\noindent {\bf Proof of Proposition~\ref{pot-g}.} 
\begin{proof}
    For (a), potentiality follows by noting that for any $\x^i, \tilde{\x}^i \in \Xscr_i$ and $\x^{-i} \in \Xscr^{-i}$, we have
\begin{align*} \widehat{P}(\x^i,\x^{-i}) - \widehat{P}(\tilde{\x}^i,\x^{-i}) &=  P(\x^i,\x^{-i}) - P(\tilde{\x}^i,\x^{-i}) + h_i(\x^i,\y^i(\x^i)) - h_i(\tilde{\x}^i,\y^i(\tilde{\x}^i)) \\&= g_i(\x^i,\x^{-i})+h_i(\x^i,\y^i(\x^i,\x^{-i}))-(g_i(\tilde{\x}^i,\x^{-i})+h_i(\tilde{\x}^i,\y^i(\tilde{\x}^i,\x^{-i}))). \end{align*} \\
For (b), proceeding in a similar fashion, it follows that for any $\x^i, \tilde{\x}^i \in \Xscr_i$ and $\x^{-i} \in \Xscr^{-i}$, we have
\begin{align*} \widehat{P}(\x^i,\x^{-i}) - \widehat{P}(\tilde{\x}^i,\x^{-i}) &=  P(\x^i,\x^{-i}) - P(\tilde{\x}^i,\x^{-i}) + h(\x^i,\x^{-i}) - h(\tilde{\x}^i,\x^{-i})\\  &= g_i(\x^i,\x^{-i})+h_i(\x^i,\y^i(\x^i,\x^{-i}))-(g_i(\tilde{\x}^i,\x^{-i})+h_i(\tilde{\x}^i,\y^i(\tilde{\x}^i,\x^{-i}))). \end{align*}
\end{proof}

\noindent {\bf Proof of Lemma~\ref{sa-rate-quad}.} 
\begin{proof}
Suppose $J_2$ denotes a positive integer such that $(1-2c\alpha_j) \geq 0$ for $j \geq J_2$, i.e. $J_2 = \lceil 2c\theta \rceil \geq 2c\theta.$ Let $J \triangleq \max\{J_1,J_2\}$ and $\mathcal{D} \triangleq \max\left\{ \tfrac{\mathcal{M}^2 \theta^2}{2(2c\theta-1)},J \mathcal{A}_J\right\}.$  For $j = J$, the inductive hypothesis holds trivially. If it holds for some $j >  J$, 
\begin{align*}
\mathcal{A}_{j+1} & \leq (1-2c\alpha_j) \mathcal{A}_j + \tfrac{\alpha_j^2 \mathcal{M}^2}{2} 
 \leq (1-2c\alpha_j) \tfrac{\mathcal{D}}{j} + \tfrac{\alpha_j^2 \mathcal{M}^2}{2}\\
& = (1-2c\alpha_j) \tfrac{\mathcal{D}}{j} + \tfrac{2(2c\theta-1)}{2j}\tfrac{\theta^2 \mathcal{M}^2}{2(2c\theta-1)j} 
 \leq (1-2c\alpha_j) \tfrac{\mathcal{D}}{j} + \tfrac{2c\theta-1}{j}\tfrac{\mathcal{D}}{j} \\
& \leq (1-\tfrac{2c\theta}{j}) \tfrac{\mathcal{D}}{j} + \tfrac{2c\theta-1}{j}\tfrac{\mathcal{D}}{j} 
 = \tfrac{\mathcal{D}}{j} - \tfrac{2c\theta \mathcal{D}}{j^2} + \tfrac{2c\theta \mathcal{D}}{j^2} - \tfrac{\mathcal{D}}{j^2} \le \tfrac{\mathcal{D}}{j} - \tfrac{\mathcal{D}}{j(j+1)} = \tfrac{\mathcal{D}}{(j+1)}. 
\end{align*}
It remains to get a bound on $\mathcal{A}_J$. 
\us{\begin{align} 
\notag \mathcal{A}_J & \leq (1-2c\alpha_{J-1})\mathcal{A}_{\us{J-1}} + \tfrac{\alpha_{J-1}^2 \mathcal{M}^2}{2} 
 \leq \mathcal{A}_{\us{J-1}} + \tfrac{\alpha_{J-1}^2 \mathcal{M}^2}{2}\\		
			& \leq  \left((1-2c\alpha_{J-2}) \mathcal{A}_{J-2} + \tfrac{\alpha_{J-2}^2\mathcal{M}^2}{2}\right)+  \tfrac{\alpha_{J-1}^2\mathcal{M}^2}{2}
\leq \mathcal{A}_{\us{1}} + \mathcal{M}^2\sum_{\ell = 1}^{J-1} \tfrac{\alpha_{\ell}^2 }{2} \notag \\
\label{def-BJ}
& \leq  \mathcal{A}_{\us{1}} + \tfrac{\mathcal{M}^2\theta^2\pi^2}{12} \triangleq \us{\mathcal{A}_1 + B \mathcal{M}^2},  \us{\mbox{ since } \sum_{\ell=0}^{J-1} \tfrac{1}{\ell^2} \leq \tfrac{ \pi^2}{6}.}
\end{align}}
Consequently, for $j \geq J$, 
$\mathcal{A}_{j} \leq \frac{\max\left\{\tfrac{\mathcal{M}^2\theta^2}{2(2c\theta-1)}, J \mathcal{A}_\ssc{J}\right\}}{2j} \leq \frac{\tfrac{\mathcal{M}^2\theta^2}{2(2c\theta-1)}+ J (\us{\mathcal{A}_{\us{1}} + B\mathcal{M}^2})}{2j}.$
\end{proof}

\noindent {\bf Proof of Proposition~\ref{prop-res-error}.} 

\begin{proof}
Throughout this proof, we refer to $J_{\lambda}^T(\x^k)$ by $\z^{k,*}$ to ease the exposition.  Consider the update rule given by (SA), given $\z_{0}  = \x^k$.  We have that 
\begin{align*}
	\|\z_{j+1}-\z^{k,*}\|^2 & = \|\z_j - \alpha_j u_j - \z^{k,*}\|^2 
	  = \|\z_j - \z^{k,*}\|^2 + \alpha_j^2 \|u_j\|^2 - 2\alpha_j u_j^\mathsf{T}(\z_j-\z^{k,*}).
\end{align*} 
Taking expectations on both sides, we obtain that 
\begin{align*}
\mathbb{E}[\|\z_{j+1} &- \z^{k,*}\|^2 \mid \mathcal{F}_{k,j}]  =  
	  \|\z_j - \z^{k,*}\|^2 + \alpha_j^2 \mathbb{E}[\|u_j\|^2 \mid \mathcal{F}_{k,j}]\\
		&  - 2\alpha_j \mathbb{E}[u_j^\mathsf{T}(\z_j-\z^{k,*}) \mid \mathcal{F}_{k,j}] \\
		& = \|\z_j - \z^{k,*}\|^2 + \alpha_j^2 \mathbb{E}[\|u_j\|^2 \mid \mathcal{F}_{k,j}] - 2\alpha_j\mathbb{E}[u_j \mid \mathcal{F}_{k,j}]^\mathsf{T}(\z_j-\z^{k,*}) \\
		& =  \|\z_j - \z^{k,*}\|^2 + \alpha_j^2 \mathbb{E}[\|u_j\|^2\mid\mathcal{F}_{k,j}] - 2\alpha_j \bar{u}_j^\mathsf{T}(\z_j-\z^{k,*}) \\
	& - 2\underbrace{\mathbb{E}[\alpha_j(u_j-\bar{u}_j)^\mathsf{T}(\z_j-\z^{k,*})\mid \mathcal{F}_{k,j}]}_{\ = \ 0}\\
	& = \|\z_j - \z^{k,*}\|^2  + \alpha_j^2 \mathbb{E}[\|u_j\|^2 \mid \mathcal{F}_{k,j}] - 2\alpha_j (\bar{u}_j-\bar{u}_{k}^*)^\mathsf{T}(\z_j-\z^{k,*}),
\end{align*}
where $\bar{u}_j \in F_k(\z_j)$, $\mathbb{E}[\alpha_j(u_j-\bar{u}_j)^\mathsf{T}(\z_j-\z^{k,*})\mid \mathcal{F}_{k,j}] =  \alpha_j(\mathbb{E}[u_j \mid \mathcal{F}_{k,j}]-\bar{u}_j)^\mathsf{T}(\z_j-\z^{k,*}) = 0$, $0 = \bar{u}_k^* \in F_k(\z^{k,*})$ and $(\bar{u}_j-\bar{u}_{k}^*)^\mathsf{T}(\z_j-\z^{k,*}) \geq \tfrac{1}{\lambda}\|\z_j-\z^{k,*}\|^2$ by the $\tfrac{1}{\lambda}$-strong monotonicity of $F_k$. Consequently, we have that 
\begin{align*}
\mathbb{E}&[\|\z_{j+1} - \z^{k,*}\|^2 \mid \mathcal{F}_{k,j}] \le
 (1-\tfrac{2\alpha_j}{\lambda})\|\z_j - \z^{k,*}\|^2  + \alpha_j^2 \mathbb{E}[\|u_j\|^2 \mid \mathcal{F}_{k,j}] \\
& \overset{{\eqref{vsa1}}}{\leq} (1-\tfrac{2\alpha_j}{\lambda})\|\z_j - \z^{k,*}\|^2  + \alpha_j^2 (4M_1^2 \|\x^k\|^2   + 2M_2^2 + (4M_1^2+\tfrac{2}{\lambda^2})\|\z_j-\x^k\|^2) \\
& \leq (1-\tfrac{2\alpha_j}{\lambda})\|\z_j - \z^{k,*}\|^2   + \alpha_j^2 ((\us{8}M_1^2+\tfrac{\us{4}}{\lambda^2}) \|\z_j-\z^{k,*}\|^2 + 4M_1^2\|\x^k\|^2 \\
	& + 2M_2^2 + (8M_1^2+\tfrac{4}{\lambda^2})\|\z^{k,*}-\x^k\|^2) \\
& \leq (1-2\alpha_j(\tfrac{1}{\lambda}-\alpha_j (4M_1^2+\tfrac{2}{\lambda^2}))\|\z_j - \z^{k,*}\|^2\\
&  + \alpha_j^2 (4M_1^2\|\x^k\|^2+ 2M_2^2 + (8M_1^2+\tfrac{4}{\lambda^2})\|\z^{k,*}-\x^k\|^2) \\
& \leq  (1-\tfrac{\alpha_j}{\lambda})\|\z_j - \z^{k,*}\|^2+ \alpha_j^2 (4M_1^2\|\x^k\|^2+ 2M_2^2 + (8M_1^2+\tfrac{4}{\lambda^2})\|\z^{k,*}-\x^k\|^2),
\end{align*}
where the last inequality follows from $\alpha_j (\us{8}M_1^2+\tfrac{\us{4}}{\lambda^2}) \leq \tfrac{1}{2\lambda}$ for $j \geq J_1$ where $j \geq J_1 \triangleq \lceil 2\lambda\theta (\us{8}M_1^2+\tfrac{\us{4}}{\lambda^2})\rceil$. Taking expectations conditioned on $\mathcal{F}_k$ and recalling that $\mathbb{E}[[\|\z_{j+1} - \z^{k,*}\|^2 \mid \mathcal{F}_{k,j}]\mid  \mathcal{F}_k] = \mathbb{E}[\|\z_{j+1} - \z^{k,*}\|^2\mid  \mathcal{F}_k]$ since $\mathcal{F}_k \subset \mathcal{F}_{k,j}$, we obtain the following inequality for \ssc{$j \geq J_1$},
\begin{align*}
\mathbb{E}[\|\z_{j+1} - \z^{k,*}\|^2 \mid  \mathcal{F}_k] &\leq 
(1-\tfrac{\alpha_j}{\lambda})\mathbb{E}[\|\z_j - \z^{k,*}\|^2\mid \mathcal{F}_k]\\
	 + \alpha_j^2 (4M_1^2\mathbb{E}[\|\x^k\|^2 \mid \mathcal{F}_k]+ 2M_2^2 & + (8M_1^2+\tfrac{4}{\lambda^2})\mathbb{E}[\|\z^{k,*}-\x^k\|^2 \mid \mathcal{F}_k]).
\end{align*}
 Consequently, if $\alpha_j = \tfrac{\theta}{j}$, we have a recursion given by 
\begin{align*}
\mathcal{A}_{j+1} \leq (1-2c \alpha_j) \mathcal{A}_j + \tfrac{\alpha_j^2 \mathcal{M}^2}{2}, \quad j \geq J_1
\end{align*}
where $\mathcal{A}_j \triangleq \mathbb{E}[\|\z_{j}-\z^{k,*}\|^2 \mid \mathcal{F}_k]$, $c = \tfrac{1}{2\lambda}$, $\alpha_j = \tfrac{\theta}{j}$, and $\mathcal{M}^2/2 = 4M_1^2\mathbb{E}[\|\x^k\|^2\mid \mathcal{F}_k]+ 2M_2^2 + (8M_1^2+\tfrac{4}{\lambda^2})\mathbb{E}[\|\z^{k,*}-\x^k\|^2\mid\mathcal{F}_k]$. By Lemma~\ref{sa-rate-quad}, we have that 
 \begin{align}\label{sa-bound}
\mathcal{A}_j \leq \frac{\tfrac{\mathcal{M}^2\theta^2}{2(2c\theta-1)}+ J \us{(\mathcal{A}_1+B\mathcal{M}^2)}}{2j}, \quad j \geq J
\end{align}
where $J \triangleq \max\{J_1,J_2\}$, $J_2 \triangleq \lceil 2c\theta \rceil$, and $\us{B \triangleq \tfrac{\theta^2 \pi^2}{12}}$. Since $\mathcal{A}_1 = \mathbb{E}[\|\z^{k,*}-\x^k\|^2 \mid \mathcal{F}_k]$, the numerator in \eqref{sa-bound} may be further bounded as follows.
\begin{align}
& \quad \tfrac{\mathcal{M}^2\theta^2}{2(2c\theta-1)}+J \us{(\mathcal{A}_1+B\mathcal{M}^2)} 
\notag
 = 
\left(\tfrac{\theta^2 }{2(2c\theta-1)}+J \us{B}\right) \mathcal{M}^2  + \us{J\mathcal{A}_1}  \\ 
\notag
	& \leq \left(\tfrac{\theta^2}{2(2c\theta-1)}+J\us{B}\right)(8M_1^2\|\x^k\|^2+ 4M_2^2 + (16M_1^2+\tfrac{8}{\lambda^2})\mathbb{E}[\|\z^{k,*}-\x^k\|^2\mid\mathcal{F}_k]) \\
\notag
	& + J\mathbb{E}[\|\z^{k,*}-\x^k\|^2 \mid \mathcal{F}_k] \\
\notag
	& =  \left(\tfrac{\theta^2}{2(2c\theta-1)}+J \us{B}\right)(8M_1^2\|\x^k\|^2+ 4M_2^2)
\notag
	 \\& + \left(\left(\tfrac{\theta^2}{2(2c\theta-1)}+J\us{B}\right)\left(16M_1^2+\tfrac{8}{\lambda^2}\right)+J\right)\mathbb{E}[\|\z^{k,*}-\x^k\|^2\mid\mathcal{F}_k]. 
\label{ineq-num}
\end{align}
We have that
\begin{align*} & \quad \mathbb{E}[\|\z^{k,*}-\x^k\|^2 \mid \mathcal{F}_k]  \leq 2\|\x^k - \x^*\|^2 + 2\mathbb{E}[\|\z^{k,*} - \x^*\|^2 \mid \mathcal{F}_k] \\
	& = 2\|\x^k - \x^*\|^2 + 2\mathbb{E}[\|J_{\lambda}^T(\x^k) - \x^*\|^2 \mid \mathcal{F}_k] \\
 & \leq 2\|\x^k - \x^*\|^2 + 2\|\x^k - \x^*\|^2 \leq 8 \|\x^k\|^2+8 \|\x^*\|^2, 
 \end{align*} 
where the second inequality follows from $\|J_{\lambda}^T(\x^k)-\x^*\| = \|J^T_{\lambda}(\x^k)-J^T_{\lambda}(\x^*)\| \leq \|\x^k-\x^*\|.$
Consequently, from \eqref{ineq-num},  
$\tfrac{\mathcal{M}^2\theta^2}{2(2c\theta-1)}+J \us{(\mathcal{A}_1+B\mathcal{M}^2)} 
	\leq \nu_1^2 \|\x^k\|^2 + \nu_2^2$, where 
\begin{align*}
\nu_1^2  &  \triangleq 
\left(\left(\tfrac{\theta^2}{2(2c\theta-1)}+JB\right)\left(\us{136}M_1^2+\tfrac{64}{\lambda^2}\right)+\us{8J}\right)\mbox{ and } \\  
 \nu_2^2 &  \triangleq  4\left(\tfrac{\theta^2}{2(2c\theta-1)}+J \us{B}\right)M_2^2  
%\label{def-nu_2}
   +  8\left(\left(\tfrac{\theta^2}{2(2c\theta-1)}+J\us{B}\right)\left(16M_1^2+\tfrac{8}{\lambda^2}\right)+J\right)\|\x^*\|^2. 
\end{align*}
\end{proof}

\begin{proposition}\label{prop-lip-func}\em For $i=1,\cdots, \Nbold$, consider  the problem (Player$_i(\x^{-i}$). Suppose for $i =1, \cdots, \Nbold$, (a.i) and (a.ii) hold.  

    \noindent (a.i) $\Xscr_i \subseteq \Real^{n_i}$ and $\Yscr_i \subseteq \Real^{m_i}$ are closed and convex sets.

    \smallskip

\noindent (a.ii) $F_i(\x,\bullet,\omega)$ is a $\mu_F(\omega)$-strongly monotone and $L_F(\omega)$-Lipschitz continuous map on $\Yscr$ uniformly in $\x \in \Xscr$ for every $\omega \in \Omega$, and there exist scalars $\mu_F,L_F  > 0$ such that $\inf_{\omega \in \Omega}\mu_F(\omega) \geq  \mu_F$ and $\sup_{\omega \in \Omega}L_F(\omega) \leq L_F$.

\smallskip

Suppose $\tilde{f}_i(\x,\y_i,\omega)$ is continuously differentiable on $\Cscr \times \Real^{m_i}$ for every $\omega \in \Omega$ where $\Cscr$ is an open set containing $\Xscr$ and $\Xscr$ is bounded. Then the function {$f_i^{\bf imp}$}, defined as ${f_i^{{\bf imp}}(\x)} \triangleq \mathbb{E}[\tilde{f}(\x,\y(\x,\omega), \omega)]$, is Lipschitz continuous and directionally differentiable on $\Xscr$.
\end{proposition}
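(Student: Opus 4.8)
The plan is to establish Lipschitz continuity and directional differentiability of $f_i^{\bf imp}(\x) = \mathbb{E}[\tilde f_i(\x,\y^i(\x,\omega),\omega)]$ by first transferring the corresponding regularity from the lower-level solution map $\y^i(\bullet,\omega)$ to the integrand, and then interchanging expectation with the relevant limiting operations. The first step is to invoke the classical sensitivity theory for strongly monotone parametric variational inequalities (e.g.\ \cite{facchinei2007finite}): under (a.ii), for each fixed $\omega$, the map $\x \mapsto \y^i(\x,\omega)$ is single-valued and Lipschitz continuous on $\Xscr$ with modulus $L_F(\omega)/\mu_F(\omega) \le L_F/\mu_F =: L_{\y}$, where I use the uniform bounds $\inf_\omega \mu_F(\omega)\ge\mu_F>0$ and $\sup_\omega L_F(\omega)\le L_F$ together with Lipschitz continuity of $F_i$ in its parametrizing argument (which on the bounded set $\Xscr$ follows from the $C^1$ assumption on the data, or can be added as part of (a.ii)). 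Moreover, $\y^i(\bullet,\omega)$ is directionally differentiable on $\Xscr$, with the directional derivative characterized as the solution of an auxiliary affine VI (the standard result for strongly monotone VIs with smooth data).

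The second step is to push this through $\tilde f_i$. Since $\tilde f_i(\bullet,\bullet,\omega)$ is $C^1$ on $\Cscr\times\Real^{m_i}$ and $\Xscr$ is compact, $\y^i(\Xscr,\omega)$ is a bounded set (being the Lipschitz image of a compact set), so on the compact set $\Xscr\times \overline{\y^i(\Xscr,\omega)}$ the gradient $\nabla \tilde f_i(\bullet,\bullet,\omega)$ is bounded by some $L_{\tilde f}(\omega)$; composing with the Lipschitz map $\x\mapsto(\x,\y^i(\x,\omega))$ yields that $\x\mapsto \tilde f_i(\x,\y^i(\x,\omega),\omega)$ is Lipschitz on $\Xscr$ with modulus $L_{\tilde f}(\omega)(1+L_{\y})$, and directionally differentiable by the chain rule for directional derivatives (composition of a $C^1$ function with a directionally differentiable Lipschitz map). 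Here I would require an integrable envelope: $\mathbb{E}[L_{\tilde f}(\omega)(1+L_{\y})] < \infty$, which either follows from a uniform bound on the data or should be stated as a (mild) standing assumption; under it, Lipschitz continuity of $f_i^{\bf imp}$ on $\Xscr$ follows immediately by taking expectations of the pointwise Lipschitz bound and applying the triangle inequality under the integral.

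The third step is directional differentiability of $f_i^{\bf imp}$. For $\x\in\Xscr$ and a feasible direction $\vec{d}$, I would write the difference quotient $\frac{1}{t}\big(\tilde f_i(\x+t\vec d,\y^i(\x+t\vec d,\omega),\omega)-\tilde f_i(\x,\y^i(\x,\omega),\omega)\big)$, observe that for each $\omega$ it converges as $t\downarrow 0$ to the pointwise directional derivative $\tilde f_i'(\x,\y^i(\x,\omega),\omega;(\vec d,\y^{i\prime}(\x,\omega;\vec d)))$, and that the difference quotients are uniformly bounded (in $t$ and $\omega$) by the integrable envelope from Step~2. Dominated convergence then gives $(f_i^{\bf imp})'(\x;\vec d) = \mathbb{E}\big[\tilde f_i'(\x,\y^i(\x,\omega),\omega;(\vec d,\y^{i\prime}(\x,\omega;\vec d)))\big]$, establishing directional differentiability. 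The main obstacle I anticipate is the measurability/integrability bookkeeping: one must verify that $\omega\mapsto\y^i(\x,\omega)$ and $\omega\mapsto\y^{i\prime}(\x,\omega;\vec d)$ are measurable (Carath\'eodory-type arguments, or measurable selection theorems applied to the defining VI), and that the various $\omega$-dependent Lipschitz/gradient bounds admit an integrable majorant over the compact set $\Xscr$ — it is here that the uniform bounds in (a.ii) and the boundedness of $\Xscr$ are doing the real work, and where, if the paper wants a fully self-contained argument, an explicit integrability hypothesis on $\sup_{\x\in\Xscr}\|\nabla\tilde f_i(\x,\y^i(\x,\omega),\omega)\|$ would need to be added. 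The pointwise sensitivity analysis and the dominated-convergence interchange are then routine given the framework of \cite{facchinei2007finite} and \cite{patriksson99stochastic,cui21zeroth}.
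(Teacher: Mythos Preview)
The paper does not actually supply a proof of this proposition: it is stated at the end of Appendix~A.2 without argument, and the surrounding text merely says it is ``inspired by~\cite{patriksson99stochastic} and~\cite[Prop.~1]{cui21zeroth}.'' Your three-step plan---(i) parametric sensitivity of the strongly monotone lower-level VI to obtain Lipschitz continuity and directional differentiability of $\y^i(\bullet,\omega)$, (ii) chain rule through the $C^1$ integrand $\tilde f_i$, (iii) dominated convergence to pass to the expectation---is precisely the route those references take, so in substance you are aligned with the intended argument.

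Two of the caveats you flag are genuine and are not covered by the hypotheses as written. First, the Lipschitz bound $L_F/\mu_F$ on $\x\mapsto\y^i(\x,\omega)$ requires Lipschitz continuity of $F_i(\bullet,\y,\omega)$ in the \emph{parameter} $\x$, which (a.ii) does not state; likewise, directional differentiability of the solution map needs at least semidifferentiability of $F_i$ jointly in $(\x,\y)$, not merely Lipschitz continuity in $\y$. Second, the integrable envelope $\mathbb{E}[\sup_{\x\in\Xscr}\|\nabla\tilde f_i(\x,\y^i(\x,\omega),\omega)\|]<\infty$ is needed for both the Lipschitz and the dominated-convergence steps and is not implied by the stated assumptions. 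These are exactly the standing hypotheses in~\cite[Prop.~1]{cui21zeroth}, so the paper is implicitly importing them; your proposal is correct to make them explicit.
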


%\us{Monotone inclusions represent an important class of problems and their stochastic counterpavr subsume a large class of stochastic optimization and equilibrium problems. We propose two avenues for addressing such problems, the first being a variance-reduced proximal-point framework and the second, being a variance-reduced splitting framework when the map is structured. In all settings, under suitable assumptions on the sample-size, we prove that the schemes display a.s. convergence guarantees and achieve optimal linear and sublinear rates in strongly monotone and monotone regimes while achieving either optimal or near-optimal sample-complexities. By incorporating state-dependent bounds on noise and weakening unbiasedness requirements (in select cases), we develop techniques that are far more general. Preliminary numerics on a class of two-stage stochastic variational inequality problems suggest that the schemes outperform more stochastic proximal-point schemes, stochastic approximation schemes, as well as sample-average approximation approaches. } 

\bibliographystyle{ieeetr}
\bibliography{extrasvi,../Revision_one/extrasvi}

%\begin{appendix}
%\appendix
%\setcounter{secnumdepth}{0}
%\section{Appendix}

\end{document}